\documentclass[11pt,a4paper]{amsart}

\usepackage{amssymb}
\usepackage{amscd}
\usepackage{amsmath,amsfonts,stmaryrd, graphicx, graphics}
\usepackage[usenames,dvipsnames,svgnames]{xcolor}
\usepackage[hypcap=false]{caption}

\usepackage{silence}
\WarningFilter{amsrefs}{The form}

\usepackage[colorlinks, urlcolor=Navy, linkcolor=Navy, citecolor=Navy]{hyperref}
\usepackage{ytableau}
\usepackage{microtype}
\usepackage{rotating}
\usepackage{lscape}
\usepackage{afterpage}

\usepackage[abbrev,nobysame]{amsrefs}
\renewcommand{\MR}[1]{}

\usepackage[all]{xy}

\usepackage{tikz}
\usepackage{tikz-cd}
\usetikzlibrary{arrows,shapes,chains,matrix,positioning,scopes,cd,patterns}

\theoremstyle{plain}
\newtheorem{thm}{Theorem}[section]
\newtheorem{lem}[thm]{Lemma}

\newtheorem{pro}[thm]{Proposition}
\newtheorem{cor}[thm]{Corollary}
\newtheorem{thm*}{Theorem}
\theoremstyle{definition}
\newtheorem{dfn}[thm]{Definition}
\newtheorem{exa}[thm]{Example}
\newtheorem{rem}[thm]{Remark}

\numberwithin{equation}{section}

\DeclareMathOperator{\Hom}{Hom}

\DeclareMathOperator{\End}{End}
\DeclareMathOperator{\Ext}{Ext}

\DeclareMathOperator{\modu}{mod}

\newcommand{\lmod}[1]{\text{$#1$-$\modu$}}


\newcommand{\fb}{fb}


\newcommand{\Ga}{\Gamma}

\DeclareMathOperator{\im}{im}

\DeclareMathOperator{\pd}{pd}

\newcommand{\oTo}{\xymatrix{ \ar@{^{(}->}[r]|{\mathbf{O}}& }} 
\newcommand{\cTo}{\xymatrix{ \ar@{^{(}->}[r]|{\mathbf{|}}& }} 
\newcommand{\coTo}{\xymatrix{ \ar@{^{(}->}[r]|{\mathbf{O}}|{\mathbf{|}}& }} 

\DeclareMathOperator{\Ker}{ker}
\DeclareMathOperator{\coKer}{coker}
\DeclareMathOperator{\Bild}{im}

\DeclareMathOperator{\soc}{soc}
\DeclareMathOperator{\Top}{top}

\DeclareMathOperator{\add}{add}

\newcommand{\kdual}{\mathrm{D}}



\newcommand{\La}{\Lambda}


\newcommand{\N}{\mathbb{N}}

\newcommand{\Z}{\mathbb{Z}}



\newcommand{\mcG}{\mathcal{G}}


\DeclareMathOperator{\gen}{gen}
\DeclareMathOperator{\cogen}{cogen}


 \newcommand{\cohook}{{\rm cohook}}






\usepackage{todonotes}




\renewcommand{\epsilon}{\varepsilon}

\renewcommand{\tilde}{\widetilde}



\setlength{\topmargin}{-1cm}
\setlength{\hoffset}{-1in}
\setlength{\textheight}{25cm}
\setlength{\textwidth}{17cm}
\setlength{\oddsidemargin}{2cm}
\setlength{\evensidemargin}{2cm}
\pagenumbering{arabic}

\begin{document}
\title{Combinatorics of faithfully balanced modules}
\author{William Crawley-Boevey}
\author{Biao Ma}
\author{Baptiste Rognerud}
\author{Julia Sauter}

\address{William Crawley-Boevey, Julia Sauter: Fakult\"at f\"ur Mathematik, Universit\"at Bielefeld, 33501 Bielefeld, Germany}

\email{wcrawley, jsauter@math.uni-bielefeld.de}

\address{Biao Ma: Department of Mathematics, Nanjing University, 22 Hankou Road, Nanjing 210093, People's Republic of China}


\email{biao.ma@math.uni-bielefeld.de}

\address{Baptiste Rognerud: Universit\'e Paris Diderot, Institut de mathématiques de Jussieu - Paris Rive Gauche, 75013 Paris, France}

\email{baptiste.rognerud@imj-prg.fr}
\subjclass[2010]{Primary 16D80,16G10}


\keywords{faithfully balanced module, double centralizer property, tilting module, Nakayama algebra, binary tree}

\thanks{The first, third and fourth authors have been supported by the Alexander von Humboldt Foundation 
in the framework of an Alexander von Humboldt Professorship 
endowed by the German Federal Ministry of Education and Research. The second author is supported by the China Scholarship Council.}

\begin{abstract}
We study and classify faithfully balanced modules for the algebra of triangular $n$ by $n$ matrices and more generally for Nakayama algebras. The theory extends known results about tilting modules, which are classified by binary trees, and counted with the Catalan numbers. The number of faithfully balanced modules is a $2$-factorial number. Among them are $n!$ modules with $n$ indecomposable summands, which can be classified by interleaved binary trees or by increasing binary trees. 
\end{abstract}
\maketitle

\section{Introduction}
We consider the category $\lmod{\Lambda}$ of finitely generated left $\Lambda$-modules, where $\Lambda$ is a finite-dimensional algebra over a field $K$, or more generally an artin algebra. Recall that a module $M$ is said to be \emph{balanced}, or to have the \emph{double centralizer property} if the natural map $\Lambda\to \End_E(M)$ is surjective, where $E=\End_\Lambda(M)$, and it is said to be \emph{faithfully balanced} if the natural map is bijective, or equivalently if $M$ is faithful and balanced.

Balanced and faithfully balanced modules appear in various places in the literature on ring theory, such as Schur-Weyl duality (see for example \cite{KSX01}), and Thrall's notion of a QF-1 algebra \cite{Thrall48}. The main known examples of faithfully balanced modules are faithful modules for a self-injective algebra, and more generally generators and cogenerators for any algebra, and tilting modules and cotilting modules. For more examples see \cite{MS19}.

In general the behaviour of faithfully balanced modules is rather mysterious. We shall illustrate this by studying these modules for the algebra $\Lambda_n$ of $n\times n$ lower triangular matrices over $K$, or equivalently the path algebra of the linearly oriented $A_n$ quiver
\[
1\to 2\to\dots \to n.
\]
The indecomposable modules for $\Lambda_n$ are indexed by the set $I_n = \{ (i,j) : 1\le i\le j\le n\}$, which we display as the blocks of a Young diagram of staircase shape
\[
\scalebox{0.7}{
\tikzstyle{block} = [rectangle, draw, fill=white!20,
    text width=6em, text centered, minimum height=1em, node distance=2.7cm]
\tikzstyle{line} = [draw, very thick, color=black!50, -latex']
\begin{tikzpicture}[scale=4, node distance = 0cm]
    \node [block] (1n) {$(1,n)$};
    \node [block, right of=1n] (1n-1) {$ (1,n-1)$};
     \node [block, right of=1n-1] (1n-2) {$ (1,n-2)$};
     \node [block, right of=1n-2,node distance=4cm] (12) {$(1,2)$};
     \node [block, right of=12] (11) {$(1,1)$};
    \node [block, below of=1n, node distance=0.75cm] (2n) {$(2,n)$};
    \node [block, right of=2n] (2n-1) {$ (2,n-1)$};
     \node [block, right of=2n-1] (2n-2) {$ (2,n-2)$};
     \node [block, right of=2n-2,node distance=4cm] (22) {$(2,2)$};
      \node [block, below of=2n, node distance=0.75cm] (3n) {$(3,n)$};
    \node [block, right of=3n] (3n-1) {$ (3,n-1)$};
     \node [block, below of=3n, node distance=1.75cm] (n-1n) {$ (n-1,n)$};
    \node [block, right of=n-1n] (n-1n-1) {$ (n-1,n-1)$};
      \node [block, below of=n-1n, node distance=0.75cm] (nn) {$(n,n)$};     
     \draw [dotted](1n-2) -- (12) ;
     \draw [dotted](2n-2) -- (22) ;
     \draw [dotted](3n) -- (n-1n) ;
     \draw [dotted](3n-1) -- (n-1n-1) ;
     \draw [loosely dotted](n-1n-1) -- (22) ;
\end{tikzpicture}
}
\]
The element $(i,j)$ corresponds to the module $M_{ij}$ with top and socle the simple modules $S[i]$ and $S[j]$. The left hand column is the indecomposable projective modules, the top row is the indecomposable injective modules and the modules $M_{ii}$ are the simple modules $S[i]$. The Auslander-Reiten quiver is the same picture, with irreducible maps going vertically and to the right, and the Auslander-Reiten translation $\tau = DTr$ takes each module $M_{ij}$ with $j<n$ to $M_{i+1,j+1}$.
By a \emph{leaf} we mean an element of the set $L = \{ (1,0),(2,1),\dots,(n+1,n) \}$. We define \emph{cohooks} for $(i,j) \in I_n$ and \emph{virtual cohooks} for $(i,j)\in L$ by the formula
\[
\cohook(i,j) = \{ M_{kj} : 1\le k<i \} \cup \{ M_{i\ell} : n\ge \ell > j \}
\]

In section \ref{s:fbcond} we prove the following theorem, along with its generalization to Nakayama algebras and a version for balanced modules.

\begin{thm}
\label{t:fban}
A $\Lambda_n$-module $M$ is faithfully balanced if and only if it satisfies the following conditions:
\begin{itemize}
\item[(FB0)]$M_{1n}$ is a summand of $M$;
\item[(FB1)]if $M_{ij}$ is a summand of $M$, $(i,j)\neq(1,n)$, then $\cohook(i,j)$ contains a summand of $M$; and
\item[(FB2)]every virtual cohook contains a summand of $M$.
\end{itemize}
\end{thm}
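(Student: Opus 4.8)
The plan is to work directly with the defining map $\varphi\colon\Lambda_n\to\End_E(M)$, where $E=\End_{\Lambda_n}(M)$, and to analyze faithfulness and balancedness separately, using heavily the concrete combinatorics of $\modu\Lambda_n$. First I would record the module-theoretic meaning of each condition. Condition (FB0) is almost immediately equivalent to $M$ being \emph{faithful}: since $M_{1n}$ is the only indecomposable that is faithful on its own, and a direct sum is faithful iff its annihilator is zero, a short argument with the lattice of ideals of $\Lambda_n$ (equivalently, with which arrows act as zero on each $M_{ij}$) shows $M$ is faithful precisely when $M_{1n}\in\add M$. So the real content is that, given faithfulness, balancedness is equivalent to (FB1) together with (FB2).

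Next I would reformulate balancedness homologically. There is a standard criterion (going back to work on double centralizers, and recalled in the cited references) saying that a faithful module $M$ is balanced if and only if there is an exact sequence $0\to\Lambda\to M_0\to M_1$ with $M_0,M_1\in\add M$ such that applying $\Hom_\Lambda(-,M)$ keeps it exact; equivalently $\domdim$ of a certain algebra is $\ge 2$, or concretely: for each indecomposable projective $P=\Lambda e_i$ one needs a monomorphism $P\hookrightarrow M_0$ with $M_0\in\add M$ whose cokernel embeds into $\add M$ as well (a ``$\add M$-copresentation of the projectives''). For $\Lambda_n$ the indecomposable projective $P_i$ is $M_{in}$, so I would spell out, for each $i$, what it means for $M_{in}$ to admit such a $2$-step $\add M$-approximation. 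Because $\modu\Lambda_n$ is a Nakayama (uniserial) category with a transparent AR-quiver, monomorphisms out of $M_{in}$ go to modules $M_{k n'}$ with $k\le i$ and $n'\ge n$... but here $n'=n$ forces the target to be again of the form $M_{kn}$; the cokernel of $M_{in}\hookrightarrow M_{kn}$ is $M_{k,i-1}$. Iterating, the combinatorial gadget that appears is exactly $\cohook(i,j)$: the set of modules ``directly left of'' or ``directly above'' a given vertex.

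With that dictionary in place the proof becomes a bookkeeping induction on the vertical position (or on $\tau$-orbits). The key steps, in order, are: (1) prove (FB0) $\Leftrightarrow$ $M$ faithful; (2) state and prove the homological balancedness criterion in the uniserial setting, identifying the relevant approximations with cohooks/virtual cohooks — virtual cohooks $\cohook(i,j)$ for $(i,j)\in L$ arise as the ``boundary'' case encoding the requirement that the monomorphisms $P_i\hookrightarrow M_0$ can be chosen so that the process of copresenting all $M_{ij}\in\add M$ terminates (the leaves $(2,1),\dots,(n+1,n)$ record the cokernels that must themselves lie in $\add M$ or be killed), while $(1,0)$ and the interior conditions (FB1) handle $M_{1n}$ and the other summands; (3) show $M$ satisfying (FB0)+(FB1)+(FB2) admits the required exact $0\to\Lambda_n\to M_0\to M_1$ by explicitly assembling it column by column from the cohook data; (4) conversely, show that if $M$ is faithfully balanced then each of (FB1) and (FB2) must hold, by testing the double-centralizer map on suitable elements — if some cohook contained no summand of $M$, one produces an element of $\End_E(M)$ not coming from $\Lambda_n$ (an ``extra'' endomorphism that only sees the missing column/row), contradicting surjectivity of $\varphi$.

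The main obstacle I expect is step (2)/(3): pinning down exactly why virtual cohooks indexed by the enlarged set $L$ (including the ``fake'' vertices $(1,0)$ and $(n+1,n)$) are the right boundary condition, and proving the two-step approximation exists globally rather than just locally at each projective. Concretely, one must show the monomorphisms $P_i\hookrightarrow M_0^{(i)}$ can be chosen compatibly so that the combined cokernel is again in $\add M$; this is where the interior condition (FB1) on every summand (not just the projectives) gets used, via a downward induction along the AR-quiver showing that once $M_{1n}$ is copresented, the copresentation propagates to every other $M_{ij}\in\add M$. I would structure this as a lemma: ``for $\Lambda_n$, $M$ faithful is balanced iff every $M_{ij}\in\add M$ has an $\add M$-monic copresentation'', proved by the uniserial structure, and then a second lemma translating ``$\add M$-monic copresentation of $M_{ij}$ exists'' into ``$\cohook(i,j)$ meets $\add M$'' for $(i,j)\in I_n$, and ``every virtual cohook meets $\add M$'' as the extra condition needed to start/close the induction. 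Everything else — faithfulness, and the converse implications — should be routine manipulation with the ideal lattice of $\Lambda_n$ and with homomorphisms between uniserial modules.
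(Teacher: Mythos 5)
Your overall skeleton is the same as the paper's: (FB0) is equivalent to faithfulness because $M_{1n}$ is the unique indecomposable projective-injective and $\Lambda_n$ is QF-3 (cf.\ Corollary \ref{c:fb_nakayama}), and balancedness of a faithful module is recast as $\Lambda\in\cogen^1(M)$, i.e.\ an exact sequence $0\to\Lambda\to M_0\to M_1$ with $M_0,M_1\in\add(M)$ on which $\Hom_\Lambda(-,M)$ stays exact (Lemma \ref{l:fbchar}). The genuine gap is in your central reduction. The lemma you propose to organize the argument --- ``$M$ faithful is balanced iff every $M_{ij}\in\add M$ has an $\add M$-monic copresentation'' --- is vacuous: any $X\in\add(M)$ lies in $\cogen^1(M)$ trivially via $0\to X\to X\to 0$, so the stated condition holds for \emph{every} module and cannot detect (FB1); correspondingly your second lemma (``such a copresentation of $M_{ij}$ exists iff $\cohook(i,j)$ meets $\add M$'') is false as stated, since its left side is always true. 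The double-centralizer criterion is a condition on the \emph{projectives}, whereas (FB1) and (FB2) are conditions on the \emph{summands of $M$}; bridging these is the real content of the theorem, and your plan does not contain a correct statement for the ``downward induction along the AR-quiver'' to run on. In the paper this bridge is Lemma \ref{l:Nakminapprox}, which identifies the cokernel $C_1\oplus\cdots\oplus C_k$ of the minimal left $\add(M)$-approximation of each projective, together with both directions of Theorem \ref{t:bal_nakayama}: balancedness forces (FB1) on summands only after an argument involving the projective cover of an offending summand and minimality of the approximation, and conversely (FB1)+(FB2) force each $C_i$ into $\cogen(M)$ only after a case analysis choosing summands of maximal length with prescribed socle. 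None of this is ``bookkeeping''.

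Your converse step (4) is also thinner than you suggest. For (FB1) the idea of producing $\theta\in\End_E(M)$ outside the image of $\Lambda_n$ when a summand is neither properly generated nor cogenerated by the rest is sound --- it is essentially the proof of Lemma \ref{Mor1} --- but you give no construction, and the subtlety is that $\theta$ must be $E$-linear, not $\Lambda$-linear, and on a uniserial summand all $\Lambda$-endomorphisms are scalars, so naive diagonal candidates must satisfy nontrivial compatibilities with all maps between summands. For (FB2) (no summand with top $S[k]$ or socle $S[k-1]$) it is not at all clear how to exhibit such an extra endomorphism directly; the paper instead derives this condition from the approximation analysis (the piece $C_1$ must embed in $M$) combined with a duality argument for the opposite algebra. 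So either supply these explicit $E$-endomorphisms, or replace your two organizing lemmas by the cokernel computation of Lemma \ref{l:Nakminapprox} and the case analysis of Theorem \ref{t:bal_nakayama}; as written, the plan has a hole exactly where the theorem's difficulty lies.
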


For example the faithfully balanced modules for $\Lambda_3$ are given by taking copies of the indecomposable modules corresponding to the the black boxes $\blacksquare$ in one of the diagrams in Figure~\ref{fig_fbgencats}, together with an arbitrary subset of the shaded boxes $\boxtimes$. A module is \emph{basic} if its indecomposable summands occur with multiplicity one. The diagrams show the $8+4+2+2+1+2+2=21$ basic faithfully balanced modules for $\Lambda_3$.

Given an algebra $\Lambda$ and a module $M$, we write $\add(M)$ for the full subcategory of $\lmod{\Lambda}$ consisting of the direct summands of direct sums of copies of $M$, $\gen(M)$ for the category of modules generated by $M$, so quotients of a direct sum of copies of $M$, and $\cogen(M)$ for the category of modules cogenerated by $M$, so embeddable in a direct sum of copies of $M$. Recall from Pressland and Sauter \cite[Definition 2.10]{PS18}, that if $M$ is a $\Lambda$-module, then $\gen_1(M)$ is the category of modules $X$ such that there is an exact sequence $M''\to M'\to X\to 0$ with $M',M''\in\add(M)$ and the sequence
\[
\Hom(M,M'')\to \Hom(M,M')\to \Hom(M,X)\to 0
\]
exact, and $\cogen^1(M)$ is the category of modules $X$ such that there is an exact sequence $0\to X\to M'\to M''$ with $M',M''\in\add(M)$ and the sequence
\[
\Hom(M'',M)\to \Hom(M',M)\to \Hom(X,M)\to 0
\]
exact. These are full subcategories of $\lmod{\Lambda}$, closed under direct sums and summands. It is known that $M$ is faithfully balanced if and only if the projective modules are all in $\cogen^1(M)$ or equivalently the injective modules are all in $\gen_1(M)$ (see Lemma \ref{l:fbchar}). It follows that the property of $M$ being faithfully balanced only depends on $\add(M)$, and so one may assume that $M$ is basic. By a (faithfully balanced) \emph{$\gen_1$-category} or \emph{$\cogen^1$-category} we mean a subcategory of $\lmod{\Lambda}$ of the form $\gen_1(M)$ or $\cogen^1(M)$ respectively, where $M$ is some (faithfully balanced) module. We say that a $\Lambda$-module $M$ is \emph{$\gen_1$-critical} if any proper summand $N$ of $M$ has $\gen_1(N) \neq \gen_1(M)$; similarly for $\cogen^1$-critical. In section \ref{s:crit} we prove the following.

\begin{thm}
\label{t:gencrit}
For the algebra $\Lambda_n$, or for any representation-directed algebra $\Lambda$, any $\gen_1$-category $\mcG$ contains a $\gen_1$-critical module $M$ with $\gen_1(M)=\mcG$, which is unique up to isomorphism. For any module $L$, we have $\gen_1(L) = \mcG$ if and only if $\add(M)\subseteq \add(L) \subseteq \mcG$.
\end{thm}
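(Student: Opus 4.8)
The plan is to exploit the fact that for a representation-directed algebra every module is determined up to isomorphism by its composition factors, and more importantly that $\gen_1$ is a closure operation on $\add$-closed subcategories whose "generators" can be pinned down combinatorially. First I would show that for any faithfully balanced-type argument we may restrict attention to basic modules, so a $\gen_1$-category $\mcG = \gen_1(M)$ is recorded by the finite set $\add(M) \cap \indec(\Lambda)$ of indecomposables. The key is the following monotonicity: if $\add(M) \subseteq \add(L)$ then $\gen_1(M) \subseteq \gen_1(L)$ (immediate from the definition of $\gen_1$ via presentations in $\add$), and if moreover $L \in \gen_1(M)$ then in fact $\gen_1(L) = \gen_1(M)$ — this last step is the heart of the matter and uses that a module in $\gen_1(M)$ admits an $\add(M)$-presentation that stays exact after $\Hom(M,-)$, which can be spliced with an $\add(L)$-presentation to produce, via a horseshoe/splicing argument, an $\add(M)$-presentation of any $X \in \gen_1(L)$ that remains exact after $\Hom(M,-)$. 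I expect this splicing lemma to be the main obstacle, since one must track exactness of the $\Hom(M,-)$-complexes carefully; the representation-directedness (no self-extensions, finite representation type, a well-ordering of indecomposables by the "no path backwards" condition) is what makes it go through.

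Granting the splicing lemma, I would construct the $\gen_1$-critical object in $\mcG$ as follows. Among all modules $L$ with $\gen_1(L) = \mcG$, consider those that are basic; by the monotonicity statements, $\gen_1(L) = \mcG$ forces $L \in \mcG$ (apply the splicing lemma with roles reversed) and $M_0 \in \add(L)$ for any fixed $L_0$ with $\gen_1(L_0)=\mcG$ — more precisely, the indecomposables that must appear in every such $L$ form a well-defined subset. I would take $M$ to be the direct sum of exactly those \emph{forced} indecomposables, i.e. an indecomposable $N$ lies in $\add(M)$ iff $N$ belongs to $\add(L)$ for every basic $L$ with $\gen_1(L)=\mcG$; equivalently $N$ is \emph{not} removable. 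One then checks $\gen_1(M) = \mcG$: the inclusion $\subseteq$ is monotonicity; for $\supseteq$ one argues that removing any single non-forced indecomposable from a generator does not change $\gen_1$ (by the splicing lemma, since the removed summand lies in the $\gen_1$ of what remains), and iterates down to $M$. Uniqueness is then automatic: $M$ is by construction the unique basic $\gen_1$-critical module, because $\gen_1$-critical means no proper summand generates the same category, which forces every summand to be forced, hence $\add(M)$ is exactly the set of forced indecomposables.

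For the final "if and only if" — that $\gen_1(L) = \mcG$ iff $\add(M) \subseteq \add(L) \subseteq \mcG$ — the direction $(\Leftarrow)$ is monotonicity squeezing $\gen_1(M) \subseteq \gen_1(L) \subseteq \gen_1(\mcG) = \mcG = \gen_1(M)$, where I use that $\mcG$ is itself generated (in the $\gen_1$ sense) by any additive generator and $\gen_1$ is idempotent. For $(\Rightarrow)$: if $\gen_1(L) = \mcG$ then $L \in \mcG$ gives $\add(L) \subseteq \mcG$, and the splicing lemma applied to $M$ (which lies in $\mcG = \gen_1(L)$) shows each indecomposable summand of $M$ lies in $\gen_1(L)$; but $M$ was chosen as the forced part, so I must upgrade "$M \in \gen_1(L)$" to "$\add(M) \subseteq \add(L)$". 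This is where criticality of $M$ pays off: were some indecomposable summand $N$ of $M$ missing from $\add(L)$, then $L$ is (up to adding summands already in $\mcG$, which doesn't shrink $\gen_1$) a generator of $\mcG$ avoiding $N$, contradicting that $N$ is forced. The one genuinely technical point throughout is the splicing lemma; everything else is bookkeeping with the partial order on indecomposables supplied by representation-directedness, and for $\Lambda_n$ specifically one can alternatively make all of this explicit using the cohook combinatorics of Theorem \ref{t:fban}.
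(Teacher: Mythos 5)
Your central ``monotonicity'' claim --- that $\add(M)\subseteq\add(L)$ implies $\gen_1(M)\subseteq\gen_1(L)$, which you call immediate from the definition --- is false, and it carries most of the weight in your argument. The definition of $X\in\gen_1(L)$ requires an $\add(L)$-presentation of $X$ on which $\Hom(L,-)$ is exact; an $\add(M)$-presentation that is exact under $\Hom(M,-)$ need not stay exact under $\Hom(N,-)$ for the extra summands $N$ of $L$, and enlarging the module also enlarges the minimal right approximation, whose new kernel may fail to lie in $\gen(L)$. Concretely, over $\Lambda_4$ take $M=M_{14}\oplus M_{44}$ and $L=M\oplus M_{23}$. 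The minimal right $\add(M)$-approximation of $M_{13}$ is $M_{14}\to M_{13}$ with kernel $M_{44}\in\gen(M)$, so $M_{13}\in\gen_1(M)$; but since $\Hom(M_{23},M_{14})=0$ the inclusion $M_{23}\hookrightarrow M_{13}$ does not factor through $M_{14}$, so the minimal right $\add(L)$-approximation of $M_{13}$ is $M_{14}\oplus M_{23}\to M_{13}$, whose kernel is $M_{24}\notin\gen(L)$; hence $M_{13}\notin\gen_1(L)$. This kills the squeezing argument for the direction $(\Leftarrow)$ of the final equivalence, the inclusion $\gen_1(M)\subseteq\mcG$ in your construction, and also the idempotency statement ``$\gen_1(\mcG)=\mcG$'' as you use it (the same example, with $\mcG=\gen_1(L)$ and $L'=M$, has $\add(L')\subseteq\mcG$ but $\gen_1(L')\not\subseteq\mcG$). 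The correct substitute is the exchange lemma used by the paper (Lemma 5.1, due to Ma--Sauter): $N\in\gen_1(M)$ if and only if $\gen_1(M\oplus N)=\gen_1(M)$. Your ``splicing lemma'' is essentially the nontrivial direction of this statement, but note it holds over every artin algebra; representation-directedness is not what makes it work, contrary to where you locate the difficulty.

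The second gap is in the construction of $M$ and the uniqueness. You define $M$ via the \emph{forced} indecomposables (those lying in $\add(L)$ for every generator $L$ of $\mcG$) and then claim that a non-forced summand can be removed from any given generator because ``the removed summand lies in the $\gen_1$ of what remains.'' Non-forcedness only says that \emph{some} generator avoids $N$; it does not give $N\in\gen_1(L')$ for the complement $L'$ of $N$ in the particular $L$ you are reducing, which is exactly what the exchange lemma would require. In fact, knowing that the removable and the forced summands coincide is equivalent to the uniqueness assertion you are trying to prove, so the argument is circular, and this is precisely the point where the representation-directed hypothesis must be used. The paper's proof does this by enumerating the indecomposables of $\mcG$ as $X_1,\dots,X_m$ with $\Hom(X_j,X_i)=0$ for $j>i$, so that minimal right approximations of $X_i$ are unaffected by later summands, and then determining inductively that $X_i$ is a summand of a $\gen_1$-critical generator if and only if $X_i\notin\gen_1(M_i)$, where $M_i$ is the already-determined earlier part; uniqueness and the final equivalence then follow with the exchange lemma. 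Without a mechanism of this kind your proposal does not establish uniqueness, nor the inclusion $\add(M)\subseteq\add(L)$ in the direction $(\Rightarrow)$ of the last statement.
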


\begin{figure}[ht]
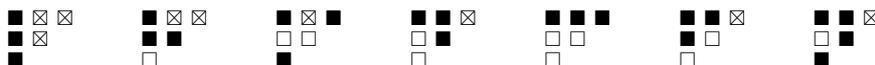

\[
\begin{smallmatrix}
\blacksquare & \boxtimes  & \boxtimes \\[1pt]
\blacksquare & \boxtimes   \\[1pt]
\blacksquare 
\end{smallmatrix}
\qquad
\begin{smallmatrix}
\blacksquare & \boxtimes & \boxtimes  \\[1pt]
\blacksquare & \blacksquare  \\[1pt]
\square  
\end{smallmatrix}
\qquad
\begin{smallmatrix}
\blacksquare & \boxtimes  & \blacksquare \\[1pt]
\square  & \square  \\[1pt]
\blacksquare  
\end{smallmatrix}
\qquad
\begin{smallmatrix}
\blacksquare & \blacksquare & \boxtimes \\[1pt]
\square  & \blacksquare   \\[1pt]
\square  
\end{smallmatrix}
\qquad
\begin{smallmatrix}
\blacksquare & \blacksquare & \blacksquare \\[1pt]
\square  & \square  \\[1pt]
\square   
\end{smallmatrix}
\qquad
\begin{smallmatrix}
\blacksquare & \blacksquare  & \boxtimes  \\[1pt]
\blacksquare & \square  \\[1pt]
\square
\end{smallmatrix}
\qquad
\begin{smallmatrix}
\blacksquare & \blacksquare  & \boxtimes  \\[1pt]
\square & \blacksquare  \\[1pt]
\blacksquare 
\end{smallmatrix}
\]
\caption{The seven faithfully balanced $\gen_1$-categories for~$\Lambda_3$. The black boxes $\blacksquare$ show the summands of the $\gen_1$-critical module, and together with the shaded boxes $\boxtimes$ they show the category $\gen_1(M)$.}\label{fig_fbgencats}
\end{figure}

Figure~\ref{fig_fbgencats} shows the faithfully balanced $\gen_1$-categories for~$\Lambda_3$. 
We say that a module is \emph{minimal faithfully balanced} if it is faithfully balanced and any proper direct summand is not faithfully balanced. Clearly any minimal faithfully balanced module is $\gen_1$- and $\cogen^1$-critical. Any (generalized) tilting module $T$ is faithfully balanced, see \cite{W90} and \cite[Proposition 5]{W88}. In section~\ref{s:crit} we prove the following.

\begin{thm}
\label{t:tilting}
If $T$ is a basic classical tilting module for an artin algebra $\Lambda$, i.e.\ $T$ has projective dimension $\le 1$, then $T$ is $\gen_1$-critical. If in addition $\Lambda$ is hereditary, then $T$ is minimal faithfully balanced. 
\end{thm}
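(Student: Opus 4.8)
\emph{Part 1 (over any artin algebra).} I will use the standard fact that for a (generalized) tilting module $T$ with $\pd_\Lambda T\le 1$ one has $\Gen(T)=T^{\perp}:=\{X:\Ext^1_\Lambda(T,X)=0\}$, the associated tilting torsion class. Fix an indecomposable summand $T_0$ of $T$ and set $N:=T\ominus T_0$ (so $\add L\subseteq\add N$ for every proper summand $L$ of $T$ not involving $T_0$, and every proper summand of $T$ is of this form for a suitable $T_0$). It suffices to prove that $T_0\notin\gen_1(L)$ whenever $\add L\subseteq\add N$: since $T_0\in\gen_1(T)$ (take $0\to T_0\xrightarrow{\,=\,}T_0\to0$), this gives $T_0\in\gen_1(T)\setminus\gen_1(L)$, so $\gen_1(L)\ne\gen_1(T)$. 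Suppose then that $T_0\in\gen_1(L)$; then there is an exact sequence $N_1\xrightarrow{\,g\,}N_0\xrightarrow{\,f\,}T_0\to0$ with $N_0,N_1\in\add L\subseteq\add N$. Put $K:=\operatorname{im}(g)=\ker f$. Since $N_1$ surjects onto $K$ we get $K\in\Gen(N)\subseteq\Gen(T)=T^{\perp}$, hence $\Ext^1_\Lambda(T_0,K)=0$ because $T_0\in\add T$. Applying $\Hom_\Lambda(T_0,-)$ to $0\to K\to N_0\xrightarrow{\,f\,}T_0\to0$ yields $\Hom(T_0,N_0)\to\Hom(T_0,T_0)\to\Ext^1(T_0,K)=0$, so $\operatorname{id}_{T_0}$ factors through $f$, i.e.\ $f$ is a split epimorphism; thus $T_0\in\add N_0\subseteq\add N$, contradicting $T_0\notin\add N$ (as $T$ is basic). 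This proves $\gen_1$-criticality. (Only the existence of an $\add N$-presentation of $T_0$ was used, not its $\Hom(N,-)$-exactness.)

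\emph{Part 2, reduction.} A (generalized) tilting module is faithfully balanced. Now assume $\Lambda$ is hereditary, so $\gldim\Lambda\le1$; then $\id_\Lambda T\le1$ as well, and $T$ is a rigid module with $n:=\#\{\text{simples}\}$ non-isomorphic indecomposable summands and $\pd T,\ \id T\le1$. Over a hereditary algebra such a module is also cotilting (equivalently, $DT$ is a tilting $\Lambda^{\op}$-module, since over a hereditary algebra any rigid module of projective dimension $\le1$ with $n$ indecomposable summands is tilting). Applying Part~1 to $T$, and dually to $DT$ over $\Lambda^{\op}$, we conclude that $T$ is simultaneously $\gen_1$-critical and $\cogen^1$-critical. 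It remains to show that for every indecomposable summand $T_0$ of $T$ the complement $N:=T\ominus T_0$ is not faithfully balanced; by Lemma~\ref{l:fbchar} this amounts to: either some indecomposable injective fails to lie in $\gen_1(N)$, or some indecomposable projective fails to lie in $\cogen^1(N)$.

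\emph{Part 2, the two easy cases.} If $T_0$ is injective, then $T_0$ is a summand of $D\Lambda$; were $N$ faithfully balanced we would have $D\Lambda\in\gen_1(N)$, hence $T_0\in\gen_1(N)$ because $\gen_1(N)$ is closed under direct summands, contradicting $\gen_1$-criticality of $T$. Dually, if $T_0$ is projective then $\Lambda\in\cogen^1(N)$ would force $T_0\in\cogen^1(N)$, contradicting $\cogen^1$-criticality.

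\emph{Part 2, the remaining case and the main obstacle.} The substantive case is that in which $T_0$ is neither projective nor injective (for instance $M_{22}=S[2]$ inside the tilting module $M_{13}\oplus M_{23}\oplus M_{22}$ for $\Lambda_3$). Then $N$ is an almost complete tilting module, and if it were faithfully balanced it would in particular be faithful, so by the theory of complements of almost complete tilting modules $N$ has exactly two complements $T_0\ne T_0^{*}$, joined by an exchange sequence $0\to T_0\to N'\to T_0^{*}\to0$ or $0\to T_0^{*}\to N'\to T_0\to0$, with $N'\in\add N$ a minimal $\add N$-approximation on the relevant side. The task is then to use the approximation property of this exchange sequence, together with $D\Lambda\subseteq\gen_1(N)$ (equivalently $\Lambda\subseteq\cogen^1(N)$), to derive a contradiction — concretely, to show that the failure of $T_0$ to admit a $\Hom_\Lambda(N,-)$-exact $\add N$-presentation (which is exactly $\gen_1$-criticality) propagates to the failure of some indecomposable injective to admit one. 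An alternative, more uniform route is to prove that over a hereditary algebra \emph{every} faithfully balanced module $M$ has at least $n$ non-isomorphic indecomposable summands: each indecomposable injective lies in $\gen_1(M)$, and if one knew that a module in $\gen_1(M)$ always admits a two-term $\add M$-coresolution $0\to M^1\to M^0\to X\to0$ in $\add M$, then the classes $[M_i]$ of the indecomposable summands of $M$ would span $K_0(\Lambda)\cong\mathbb Z^n$ (since they would span all $[I_j]$), which is impossible for $N$ with $|N|=n-1$. Establishing either horn — in particular the two-term coresolution statement, i.e.\ a precise description of $\gen_1(N)$ for an almost complete tilting module $N$ — is the step I expect to be the main obstacle; the rest is bookkeeping.
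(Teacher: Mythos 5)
Your Part 1 is correct and is essentially the paper's argument: the paper also splits the $\add$-presentation of the omitted summand into two short exact sequences and uses rigidity together with $\pd\le 1$ (via $\Ext^1(N,M_1)=0$ and $\Ext^2(N,X_1)=0$) to split off the cokernel; your variant, passing through $\gen(N)\subseteq\gen(T)\subseteq T^\perp$, is the same computation packaged differently, and it correctly yields $T_0\notin\gen_1(L)$ for every summand $L$ avoiding $T_0$.

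Part 2, however, has a genuine gap, in two respects. First, the reduction is invalid: minimal faithfully balanced means that \emph{every} proper direct summand fails to be faithfully balanced, and you only propose to treat the corank-one complements $N=T\ominus T_0$. Failure of faithful balancedness does not pass down to summands (by Morita's criterion, Theorem~\ref{l:morita}, adding an indecomposable summand can destroy the property: e.g.\ $M_{13}\oplus M_{11}\oplus M_{33}$ is faithfully balanced for $\Lambda_3$ while $M_{13}\oplus M_{11}\oplus M_{33}\oplus S[2]$ is not), so ruling out the corank-one summands says nothing about smaller ones. Indeed the paper proves the statement ``non-minimal implies a faithfully balanced summand of corank one'' only for Nakayama algebras and explicitly records that it is open in general, so for an arbitrary hereditary artin algebra your reduction cannot be taken for granted. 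Second, even within your reduction, the substantive case ($T_0$ neither projective nor injective) is not proved: you describe two possible strategies (exchange sequences for almost complete tilting modules, or a two-term $\add M$-coresolution statement feeding a $K_0$ rank count) and state yourself that establishing either is ``the main obstacle,'' so the argument is not completed. For comparison, the paper avoids both issues by taking an \emph{arbitrary} faithfully balanced summand $M$ of $T$: the condition $\Lambda\in\cogen^1(M)$ yields exact sequences $0\to\Lambda\to M_0\to X\to 0$ and $0\to X\to M_1\to Y\to 0$ with $M_i\in\add(M)$ and $\Hom(-,M)$ exact on both; then $M\oplus X$ is a tilting module lying in $\gen(T)\cap\cogen(T)=T^\perp\cap{}^\perp T$ (here heredity is used), so maximal rigidity of tilting modules gives $\add(M\oplus X)=\add(T)$, and the Ext-vanishing coming from the two sequences forces $X\in\add(M)$, whence $\add(M)=\add(T)$ and $M=T$. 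If you want to salvage your approach, you would need both to justify the corank-one reduction for hereditary algebras and to carry out the exchange-sequence (or $K_0$) argument; as written, neither is done.
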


It follows that any $\tau$-tilting module is $\gen_1$-critical and balanced. In Figure~\ref{fig_fbgencats}, the first five $\gen_1$-critical modules are tilting modules. These and the sixth module are minimal faithfully balanced. The last $\gen_1$-critical module is not minimal faithfully balanced. Note that although all minimal faithfully balanced modules for $\Lambda_3$ have 3 indecomposable summands, the module
\[
\begin{smallmatrix}
\blacksquare & \square  & \blacksquare & \square  \\[1pt]
\square  & \square & \blacksquare  \\[1pt]
\blacksquare & \blacksquare  \\[1pt]
\square 
\end{smallmatrix}
\]
is a minimal faithfully balanced for $\Lambda_4$, but it has more than 4 indecomposable summands. To count faithfully balanced modules for $\Lambda_n$ we prove the following in section \ref{s:fbcount}.

\begin{thm}
\label{t:fbcount}
In the expansion of the polynomial
\[
h_n(x_1,\dots,x_n) = 
\prod_{r=1}^n \left( \prod_{s=1}^r (1+x_s) - 1 \right),
\]
the coefficient of the monomial $x_1^{t_1} \dots x_n^{t_n}$ is the number of basic faithfully balanced $\Lambda_n$-modules $M$ with $t_i$ indecomposable summands having top $S[i]$ (or equivalently in row $i$ of the Young diagram), for all $i$. 
\end{thm}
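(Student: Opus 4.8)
The plan is to use the combinatorial characterization of faithfully balanced modules from Theorem~\ref{t:fban}, translating conditions (FB0)--(FB2) into a condition that can be counted row by row. The generating polynomial $h_n$ has a factor $\prod_{s=1}^r(1+x_s) - 1$ for each $r$ with $1 \le r \le n$; I would like to identify the $r$-th factor with the ``choice'' one makes along a certain diagonal or antidiagonal of the staircase Young diagram. First I would fix a basic faithfully balanced module $M$, identified with a subset $S \subseteq I_n$ of boxes, and for each $i$ let $t_i = |\{(i,j) \in S\}|$ be the number of chosen boxes in row $i$. I would reorganize the conditions (FB1) and (FB2): for a box $(i,j) \in S$ with $(i,j) \neq (1,n)$, the cohook $\cohook(i,j)$ consists of the boxes strictly above $(i,j)$ in column $j$ together with the boxes strictly to the right of $(i,j)$ in row $i$; for a virtual cohook at $(i,j) \in L$ (so $j = i-1$), it is the boxes strictly above in column $j$ (if $j \ge 1$) together with the boxes to the right in row $i$ (if $i \le n$). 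So all the conditions are about ``every relevant box has a chosen box somewhere up-and-left along its hook.''

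The key step is to find the right bijective or recursive decomposition. I would argue by induction on $n$, peeling off either the last column (column... actually the bottom-most antidiagonal) or the first row. A natural approach: the boxes $M_{1j}$ in the top row are the injectives, and $M_{1n}$ must be a summand by (FB0); the remaining staircase $\{(i,j) : 2 \le i \le j \le n\}$ is a copy of the staircase for $\Lambda_{n-1}$ (shifted by $\tau$), but the interaction is that a box in row~$1$ may ``serve'' as the cohook witness for boxes below it in the same column. I expect the cleanest route is to consider, for each $r \in \{1,\dots,n\}$, the antidiagonal $D_r = \{(i,j) \in I_n : j = i + (n-r)\}$ — no; rather, I would look for the structure where choosing which boxes of $M$ lie in a given ``staircase corner'' of size $r$ contributes the factor $\prod_{s=1}^r(1+x_s) - 1$: the $(1+x_s)$ records the binary choice of including or not a box with top $S[s]$, and the $-1$ removes the forbidden ``all-empty'' configuration, which is exactly what a cohook / virtual-cohook condition forbids locally.

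Concretely, I would try to set up a chain of subcategories or a filtration of the index set $I_n$ into $n$ ``hook-shaped'' regions $H_1, \dots, H_n$ such that the faithfully balanced condition decomposes as: $M$ restricted to $H_r$ is nonempty for each $r$ (contributing the $-1$), and within $H_r$ the choice of summands in each row $s \le r$ is free (contributing $\prod_{s=1}^r(1+x_s)$), and the choices across different $H_r$ are independent. The virtual cohooks (FB2) should correspond to the ``boundary'' hooks and force the smallest regions to be nonempty, while (FB0) pins $M_{1n}$, which lies in the largest region $H_n$ or $H_1$ depending on the indexing. The monomial bookkeeping is then automatic once the decomposition is correct: the coefficient of $x_1^{t_1}\cdots x_n^{t_n}$ in $h_n$ counts tuples of nonempty-per-region choices with $t_i$ total boxes in row $i$, matching $|\{(i,j)\in S\}| = t_i$.

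The main obstacle will be verifying that the conditions (FB1)--(FB2) really decouple into independent per-region nonemptiness conditions with free per-row choices inside each region — a priori a cohook of a box in row $i$ reaches into row $i$ itself and into column $j$, so the regions are not obviously independent. I would resolve this by choosing the regions so that the cohook of the ``most constrained'' box in region $H_r$ is exactly the set of boxes in regions $H_1,\dots,H_r$ other than that box, perhaps after re-encoding the subset $S$ via a ``frontier'' or ``staircase path'' statistic (which box in each row is the left-most chosen one, say). Getting that re-encoding to be a bijection preserving the row-count statistics $(t_1,\dots,t_n)$, and checking it sends faithfully balanced modules exactly onto the support of $h_n$, is the crux; once it is in place, the theorem follows by comparing coefficients, and as a sanity check one recovers the total count $h_n(1,\dots,1) = \prod_{r=1}^n (2^r - 1)$ and the $\Lambda_3$ count $1 \cdot 3 \cdot 7 = 21$ matching the seven diagrams listed above.
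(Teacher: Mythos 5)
Your proposal is a plan whose decisive step is missing, and the most direct version of that step provably cannot work. You yourself identify "the crux": a decomposition of the staircase into regions $H_1,\dots,H_n$, with $H_r$ containing one box in each row $s\le r$, such that being faithfully balanced is equivalent to "each $H_r$ contains a summand", all other choices being free; no such decomposition (nor the hedged "re-encoding" bijection) is constructed. In fact a literal partition of this kind does not exist, because condition (FB1) of Theorem~\ref{t:fban} is \emph{conditional}: the cohook constraint at $(i,j)$ is imposed only when $M_{ij}$ is itself a summand, so the set of faithfully balanced configurations is not cut out by unconditional nonemptiness constraints. Already for $n=3$ one can check this exhaustively: the singleton region must be $H_1=\{(1,3)\}$ (no other box lies in every faithfully balanced module), leaving exactly four ways to form $H_2$ (one box from $\{(1,1),(1,2)\}$, one from $\{(2,2),(2,3)\}$) with $H_3$ the remaining three boxes. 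The two partitions with $(2,2)\in H_2$ accept $M_{13}\oplus M_{22}\oplus M_{33}$, which is not faithfully balanced since $M_{22}$ needs $M_{12}$ or $M_{23}$ in its cohook; the partition $H_2=\{(1,1),(2,3)\}$ accepts $M_{13}\oplus M_{11}\oplus M_{22}$, again not faithfully balanced for the same reason; and the partition $H_2=\{(1,2),(2,3)\}$ rejects the faithfully balanced module $M_{13}\oplus M_{11}\oplus M_{33}$. So any proof along your lines requires a genuinely nontrivial weight-preserving bijection onto the configurations counted by $h_n$, and producing it is essentially the whole content of the theorem.

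For comparison, the paper never seeks such a product structure at the level of modules. It introduces the generating polynomial $p_n$ of modules satisfying only (FB0)--(FB1), uses inclusion--exclusion over the virtual-cohook conditions (FB2)$_k$ — failing (FB2)$_k$ empties row $k$ and column $k-1$, so the diagram shrinks to a smaller staircase (Lemma~\ref{l:6.1}) — and proves the recursion of Lemma~\ref{l:6.2} via the explicit operation $M\mapsto\bigl(\bigoplus_{i\in I}M_{ii}\bigr)\oplus\bigl(\bigoplus_{M_{ij}\mid M}M_{i,j+1}\bigr)$ relating $\Lambda_n$ to $\Lambda_{n+1}$; the identity $k_n=h_n$ then follows by induction and an algebraic manipulation. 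In other words, the multiplicative form of $h_n$ emerges only after inclusion--exclusion, not from independence of local conditions. To repair your argument you would need either to exhibit the bijective re-encoding explicitly (none is known to be as simple as a partition of boxes), or to fall back on a recursion/inclusion--exclusion scheme of the paper's kind; your sanity checks ($h_n(1,\dots,1)=\prod_{r=1}^n(2^r-1)$, the $\Lambda_3$ count $21$) are consistent but carry no proof weight.
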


It follows that the number of basic faithfully balanced modules for $\Lambda_n$ is the 2-factorial number
\[
[n]_2!:=\prod_{i=1}^n (2^i -1).
\]
For example there are $(2-1)(2^2-1)(2^3-1)=21$ basic faithfully balanced $\Lambda_3$-modules. 
Also, any basic faithfully balanced module for $\Lambda_n$ has at least $n$ summands, and the number with exactly $n$ summands is $n!$. For comparison, note that the number of basic tilting modules for $\Lambda_n$ is the $n$th Catalan number, see \cites{Gab81,hille_vol_tilting,Rin16}. In section~\ref{s:quadratic} we use Theorem~\ref{t:fbcount} to count faithfully balanced modules over certain quadratic Nakayama algebras. One should remark that the number of summands of a faithfully balanced module for a more general algebra may be less than the number of isomorphism classes of simple modules. For example the direct sum of the indecomposable projective-injective modules for an Auslander algebra is faithfully balanced. This can also happen for some non-linear orientations of the path algebra of $A_n$ for $n\geq 5$.

In section \ref{section_interleaved}, we investigate the combinatorics of faithfully balanced modules with exactly $n$ indecomposable summands. Recall that an increasing binary tree with $n$ vertices is a binary tree with a labelling of the vertices by the integers $1,\dots,n$, such that the label of any vertex is less that that of any of its children. See Definition~\ref{d:interleaved} for the notion of an `interleaved tree'.

\begin{thm}
\label{t:fbncomb}
Given $n$, there are explicit bijections between the following types of objects:
\begin{itemize}
\item[(i)]faithfully balanced modules for $\Lambda_n$ with exactly $n$ indecomposable summands;
\item[(ii)]interleaved trees with $n$ vertices;
\item[(iii)]increasing binary trees with $n$ vertices;
\item[(iv)]functions $f:\{1,\dots,n\}\to \{1,\dots,n\}$ which are \emph{self-bounded}, meaning that $f(i)\le i$ for all $i$.
\end{itemize}
These restrict to bijections between basic tilting modules; binary trees; well-ordered increasing binary trees and non-decreasing self-bounded functions.
\end{thm}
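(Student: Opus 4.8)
The plan is to establish the chain of bijections (i)$\leftrightarrow$(iv)$\leftrightarrow$(iii)$\leftrightarrow$(ii), and then verify in each case that tilting modules on the left correspond to the stated combinatorial objects on the right. The anchor of the whole argument will be the bijection between $\fb(n)$ and self-bounded functions, which I would build directly out of Theorem~\ref{t:fban}. Given a basic faithfully balanced module $M$ with exactly $n$ indecomposable summands, the idea is that (FB0) and (FB1) force the summands of $M$ to be ``sparse'' in a very controlled way: I claim that in each row $i$ of the staircase Young diagram there is exactly one summand $M_{i\,f(i)}$, and that the function $i\mapsto f(i)$ is self-bounded. Row count: since any faithfully balanced module has at least $n$ summands (this is asserted in the excerpt after Theorem~\ref{t:fbcount}, and one can re-derive it quickly from the (FB$i$) conditions), having exactly $n$ forces one summand per row once one shows every row is nonempty. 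Row $1$ contains $M_{1n}$ by (FB0). For row $i>1$, I would argue by downward induction using (FB1) applied to the summand in row $i-1$ together with (FB2): the virtual cohook $\cohook(i, i-1)$ must contain a summand, and its ``vertical part'' $\{M_{ki-1}:k<i\}$ lives in rows above, so tracking the constraints shows row $i$ is hit. The precise bookkeeping here is the first place care is needed.

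Next, with the summand in row $i$ written as $M_{i\,f(i)}$, the self-boundedness $f(i)\le i$ is \emph{not} literally the statement; rather one should encode the data as a self-bounded function after a shift, since $f(i)$ ranges over $\{i,\dots,n\}$ (the columns available in row $i$). So I would set $g(i) := n+1-f(i)$ or reparametrise the staircase, and then check that conditions (FB1)+(FB2), when restricted to one-summand-per-row modules, translate \emph{exactly} into $g$ being self-bounded (every value constrained only by its index, no global condition). Concretely: (FB1) for the summand $M_{i\,f(i)}$ says its cohook meets $M$, and since all other summands are the $M_{j\,f(j)}$, this becomes an inequality relating $f(i)$ to some $f(j)$ with $j$ in a controlled range; summing these up over all $i$ and using (FB2) for the virtual cohooks should collapse to the single family of inequalities defining self-bounded functions, with the count $\prod_{i}(i) = n!$ matching. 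This translation step — showing the (FB) conditions are equivalent to self-boundedness and nothing more — is the technical heart, and I expect it to be the main obstacle, because one must show no extra constraint survives and that every self-bounded function is realised.

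For (iv)$\leftrightarrow$(iii), I would use the standard recursive bijection between increasing binary trees on $\{1,\dots,n\}$ and such functions: an increasing binary tree is determined by, for each non-root vertex, its parent, and the ``increasing'' condition (each child label exceeds its parent's) together with the left/right split gives exactly a self-bounded-type datum; alternatively one passes through permutations, since increasing binary trees on $n$ vertices are famously counted by $n!$ and biject with $S_n$ via in-order reading, and self-bounded functions biject with $S_n$ via the Lehmer code. I would make one such bijection explicit and check it is the one compatible with the module side. For (iii)$\leftrightarrow$(ii), ``interleaved trees'' are presumably defined in Section~\ref{section_interleaved}; the bijection with increasing binary trees should be a direct structural correspondence given there, so here I would simply cite/assemble it. Finally, the restriction to tilting modules: by Theorem~\ref{t:tilting} the basic tilting modules for the hereditary algebra $\Lambda_n$ are minimal faithfully balanced, hence in particular $\gen_1$-critical with exactly $n$ summands (the Catalan-many of them), so they form a distinguished subset of $\fb(n)$; I would identify which self-bounded functions they correspond to — non-decreasing ones, with count the Catalan number $C_n$, consistent with the known count of $\Lambda_n$-tilting modules cited from \cites{Gab81,hille_vol_tilting,Rin16} — by checking that the classical tilting condition $\Ext^1(M,M)=0$ translates, summand-by-summand via the AR-quiver description (the cohooks are exactly the Ext-support), into monotonicity of $f$ (equivalently of $g$). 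The parallel claims ``binary trees'' and ``well-ordered increasing binary trees'' then follow by transporting monotonicity through the already-established bijections, which is routine once the main correspondences are in place.
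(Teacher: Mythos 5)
The anchor of your argument fails at its first step: it is not true that a basic faithfully balanced $\Lambda_n$-module with exactly $n$ indecomposable summands has exactly one summand in each row of the staircase diagram. For $n=3$ the module $M=M_{13}\oplus M_{23}\oplus M_{22}$ satisfies (FB0)--(FB2) of Theorem \ref{t:fban} (indeed $\cohook(2,3)=\{M_{13}\}$, $\cohook(2,2)\ni M_{23}$, and every virtual cohook meets $\add(M)$; this module appears in the Hasse diagram of $\fb(3)$ in Section \ref{s:poset}), so it lies in $\fb(3)$, yet it has two summands in row $2$ and none in row $3$; dually $\kdual\Lambda_3=M_{13}\oplus M_{12}\oplus M_{11}$ has all three summands in row $1$. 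In fact the coefficient of $x_1x_2\cdots x_n$ in the generating polynomial of Theorem \ref{t:fbcount} is $1$, so $\Lambda_n$ is the \emph{only} element of $\fb(n)$ with one summand per row. Consequently no encoding of the form ``row $i$ carries the single summand $M_{i,f(i)}$'' can account for the $n!$ self-bounded functions, the promised translation of (FB1)+(FB2) into independent per-row inequalities has nothing to apply to, and the tilting step (deducing monotonicity of $f$ from Ext-vanishing row by row) collapses with it. Your steps (iv)$\leftrightarrow$(iii) and (iii)$\leftrightarrow$(ii) are standard in isolation, but without a correct (i)$\leftrightarrow$(iv) or (i)$\leftrightarrow$(ii) the chain never starts.

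What is actually required is a global, recursive encoding, and this is how the paper proceeds. From $M$ one builds a graph $T_M$ in the (rotated) diagram by joining each vertex and each leaf to its left and right parents; Lemma \ref{lem_trees} shows this graph has at least $n$ vertices and is a binary tree precisely when $M$ has exactly $n$ summands, and Lemma \ref{lem_inter} packages the result as an \emph{interleaved tree} (a binary tree together with an interleaving function recording how the leaves of the right subtree sit among all leaves), giving (i)$\leftrightarrow$(ii). The passage (ii)$\leftrightarrow$(iii) is a recursive untangling/reordering procedure, and the self-bounded function is defined only after a recursive labelling of the vertices: $f(i)$ is the position of the rightmost leaf of the subtree rooted at the vertex labelled $i$. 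In module terms $f$ records the tops of the summands listed in a recursively defined total order, not by row, and several summands may share a top. The tilting restriction is likewise not a per-row check: a non-trivial interleaving corresponds exactly to a pair of summands $M_{ac}$, $M_{bd}$ with $a<b<c<d$, hence to a self-extension by Lemma 8.1 of \cite{hille_vol_tilting}, so tilting modules correspond to binary trees, and separately one proves that $T$ is a binary tree if and only if $f_T$ is non-decreasing. Note finally that the paper explicitly remarks that its bijection between interleaved trees and self-bounded functions is \emph{not} the composite of the untangling procedure with Fran\c{c}on's bijection, so even after repairing the module-to-function map you would still have to verify that your chosen (iv)$\leftrightarrow$(iii) step is compatible with the module side rather than take it for granted.
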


Writing $\fb (n)$ for the set of faithfully balanced modules with exactly $n$ indecomposable summands, we also prove that there is a simple bijection between $\fb (n)$ and the set of tree-like tableaux of size $n$ in the sense of \cite{tree-like}.

In section \ref{s:poset} we study the poset structure $\unlhd$ on $\fb (n)$ given by $N \unlhd M$ if and only if $\cogen (N) \subseteq \cogen (M)$ and $\gen (N) \supseteq \gen (M)$. 
See Figure~\ref{fig_intro} for the cases $n=3$ and 4.
We prove the following.

\begin{figure}[ht]
\[
\raisebox{0cm}{
\scalebox{0.5}{
\xymatrix{  \; &{\begin{smallmatrix}
\blacksquare & \square  & \square  \\[1pt]
\blacksquare  & \square\\[1pt]
\blacksquare 
\end{smallmatrix}} \ar[ddr]\ar[dl] & && \\
 {\begin{smallmatrix}
\blacksquare & \square  & \square  \\[1pt]
\blacksquare  & \blacksquare\\[1pt]
\square 
\end{smallmatrix}} \ar[d] && && \\
  {\begin{smallmatrix}
\blacksquare & \blacksquare  & \square  \\[1pt]
\blacksquare  & \square\\[1pt]
\square 
\end{smallmatrix}}\ar[d] && {\begin{smallmatrix}
\blacksquare & \square  & \blacksquare  \\[1pt]
\square  & \square\\[1pt]
\blacksquare 
\end{smallmatrix}} \ar[ddl] && \\
 {\begin{smallmatrix}
\blacksquare & \blacksquare  & \square  \\[1pt]
\square  & \blacksquare\\[1pt]
\square 
\end{smallmatrix}}\ar[dr]&& &&\\
 &{\begin{smallmatrix}
\blacksquare & \blacksquare  & \blacksquare  \\[1pt]
\square  & \square\\[1pt]
\square 
\end{smallmatrix}} & &&}}  } 
\raisebox{-4cm}{
\begin{tikzpicture}
\coordinate (1) at (1.414,.707,0);
\coordinate (2) at (1.414,0,.707);
\coordinate (3) at (1.414,-.707,0);
\coordinate (4) at (1.414,0,-.707);
\coordinate (5) at (-1.414,.707,0);
\coordinate[label=left:\scalebox{0.7}{$\begin{smallmatrix}
\blacksquare & \blacksquare  & \blacksquare & \blacksquare  \\[1pt]
\square  & \square & \square  \\[1pt]
\square & \square  \\[1pt]
\square 
\end{smallmatrix}$}] (6) at (-1.414,0,.707);
\coordinate(7) at (-1.414,-.707,0);
\coordinate (8) at (-1.414,0,-.707);
\draw (-1.414,0,.707) circle (3pt);
\coordinate (9) at (.707,1.414,0);
\coordinate (10) at (0,1.414,.707);
\coordinate (11) at (-.707,1.414,0);
\coordinate[label=above:\scalebox{0.7}{$\begin{smallmatrix}
\blacksquare & \square  & \square & \square  \\[1pt]
\blacksquare  & \square & \square  \\[1pt]
\blacksquare & \square  \\[1pt]
\blacksquare 
\end{smallmatrix}$}] (12) at (0,1.414,-.707);
\fill (0,1.414,-.707) circle (3pt);
\coordinate (13) at (.707,-1.414,0);
\coordinate (14) at (0,-1.414,.707);
\coordinate (15) at (-.707,-1.414,0);
\coordinate (16) at (0,-1.414,-.707);
\coordinate (17) at (.707,0,1.414);
\coordinate (18) at (0,.707,1.414);
\coordinate (19) at (-.707,0,1.414);
\coordinate (20) at (0,-.707,1.414);
\coordinate (21) at (.707,0,-1.414);
\coordinate (22) at (0,.707,-1.414);
\coordinate (23) at (-.707,0,-1.414);
\coordinate (24) at (0,-.707,-1.414);
\draw[black] (1.414,.707,0) -- (1.414,0,.707) -- (1.414,-.707,0) -- (1.414,0,-.707) -- cycle;
\draw[black,dashed] (-1.414,.707,0) -- (-1.414,0,.707) -- (-1.414,-.707,0) -- (-1.414,0,-.707) -- cycle;
\draw[black] (.707,1.414,0) -- (0,1.414,.707) -- (-.707,1.414,0) -- (0,1.414,-.707) -- cycle;
\draw[black,dashed] (.707,-1.414,0) -- (0,-1.414,.707) -- (-.707,-1.414,0) -- (0,-1.414,-.707) -- cycle;
\draw[black] (.707,0,1.414) -- (0,.707,1.414) -- (-.707,0,1.414) -- (0,-.707,1.414) -- cycle;
\draw[black,dashed] (.707,0,-1.414) -- (0,.707,-1.414) -- (-.707,0,-1.414) -- (0,-.707,-1.414) -- cycle;
\draw (1.414,.707,0) -- (.707,1.414,0)
  (1.414,0,.707) -- (.707,0,1.414)
  (0,1.414,.707) -- (0,.707,1.414)
  (1.414,-.707,0) -- (.707,-1.414,0)
  (-.707,0,1.414) -- (-1.414,0,.707)
  (0,-.707,1.414) -- (0,-1.414,.707)
  (-.707,1.414,0) -- (-1.414,.707,0)
  (-1.414,-.707,0) -- (-.707,-1.414,0);
   \draw[dotted] (8) -- (23);
   \draw[dotted] (12) -- (22);
    \draw[dotted] (21) -- (4);
     \draw[dotted] (24) -- (16);
     \draw[black] (20) -- (3);
       \draw[dotted] (21) -- (13);
        \draw[black] (5) -- (6);
          \draw[black] (6) -- (7);
          \draw[black] (15) -- (14);
          \draw[black] (14) -- (13);
\end{tikzpicture}}
\]
\caption{The Hasse diagram of $(\fb(3), \unlhd)$ and the graph of the Hasse diagram of $(\fb(4), \unlhd )$. For more details see Figures~\ref{fig_posetfb3} and \ref{fig_fb4}.}\label{fig_intro}
\end{figure}
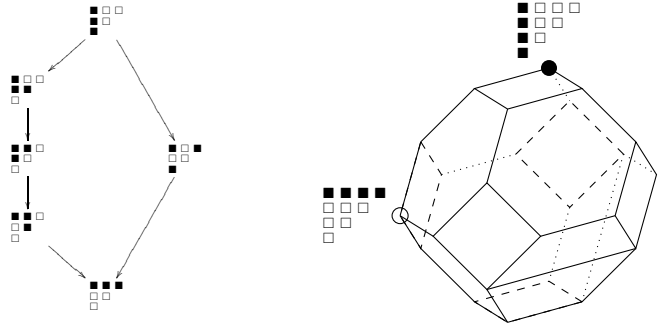

\begin{thm}\label{thm-lattice}
\begin{enumerate}
\item The poset $(\fb(n),\unlhd)$ is lattice.
\item The Tamari lattice is a sub-lattice of $(\fb(n),\unlhd)$.
\item The cover relations in $(\fb(n),\unlhd)$ are given by exchanging exactly one indecomposable summand.
\end{enumerate}
\end{thm}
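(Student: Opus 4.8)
The plan is to reduce the statement to an explicit combinatorial computation. First I would record, using that $\Lambda_n$ is a Nakayama algebra, that $\gen(M_{ij})=\add\{M_{ik}:i\le k\le j\}$ and $\cogen(M_{ij})=\add\{M_{kj}:i\le k\le j\}$, so that for any $M$ the categories $\gen(M)$ and $\cogen(M)$ are the additive closures of the unions of these over the indecomposable summands of $M$. Thus $\cogen(M)$ is recorded by the tuple $\mathbf c(M)=(c_j)_j$ with $c_j=\min\{i:M_{ij}\mid M\}$, undefined if column $j$ carries no summand of $M$, and $\gen(M)$ by $\mathbf r(M)=(r_i)_i$ with $r_i=\max\{j:M_{ij}\mid M\}$. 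A direct check then shows that, for $N,M\in\fb(n)$, one has $N\unlhd M$ if and only if: for every $j$, $c_j(N)$ defined implies $c_j(M)$ defined with $c_j(M)\le c_j(N)$; and for every $i$, $r_i(M)$ defined implies $r_i(N)$ defined with $r_i(N)\ge r_i(M)$. Writing $Q_j$ for the chain $\text{undefined}<j<j-1<\dots<1$ and $R_i$ for the chain $\text{undefined}<i<i+1<\dots<n$, this says that $M\mapsto(\mathbf c(M),\mathbf r(M))$ is an order embedding of $(\fb(n),\unlhd)$ into the distributive lattice $\mathbb D=\bigl(\prod_jQ_j\bigr)\times\bigl(\prod_iR_i\bigr)^{\mathrm{op}}$; in particular $\unlhd$ is antisymmetric. (One could equally work inside one of the combinatorial models of Section~\ref{section_interleaved}, and I expect $\mathbf c$ and $\mathbf r$ to correspond to those data after the obvious re-indexing.)

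For part~(1), I would first locate the extreme elements: the module $\Lambda_n=\bigoplus_iM_{in}$ lies in $\fb(n)$, it generates $\lmod{\Lambda_n}$, and by Lemma~\ref{l:fbchar} every faithfully balanced module cogenerates all indecomposable projectives, while $\cogen(\Lambda_n)=\add(\text{proj})$; hence $\Lambda_n\unlhd M$ for all $M$, and dually the sum of the indecomposable injectives is the maximum. Since $\fb(n)$ is finite and bounded, it will be a lattice as soon as its image is closed under the meet $\wedge_{\mathbb D}$ of the ambient distributive lattice. For $M,N\in\fb(n)$ this meet is explicit: its $\mathbf c$-component is $j\mapsto\max(c_j(M),c_j(N))$ (undefined where either $c_j(M)$ or $c_j(N)$ is), and its $\mathbf r$-component is $i\mapsto\max(r_i(M),r_i(N))$ (undefined only where both are). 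I would then construct the module $P$ carrying exactly this column and row data and verify, using Theorem~\ref{t:fban}, that $P\in\fb(n)$ (in particular that it has exactly $n$ summands); by the order criterion $P=M\wedge N$, and the join formula is dual. I expect the most transparent form of this construction to be in the increasing-binary-tree model, as a natural interleaving of the two trees.

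For parts~(2) and~(3), recall that by the bijections of Section~\ref{section_interleaved} the basic tilting modules for $\Lambda_n$ are precisely the elements of $\fb(n)$ corresponding to binary trees, equivalently to non-decreasing self-bounded functions. Restricted to tilting modules, $\unlhd$ is the usual order $T\unlhd T'\iff\gen T'\subseteq\gen T$, and on classical tilting modules over $\Lambda_n$ this is the Tamari lattice --- which one checks either by specialising the order criterion above and comparing with the rotation description of Tamari, or via \cites{Gab81,hille_vol_tilting,Rin16}. To upgrade this from an induced subposet that happens to be a lattice to a genuine sublattice, I would show that the explicit meet and join constructed above send a pair of binary trees (non-decreasing functions) again to a binary tree (non-decreasing function); this should follow at once from the interleaving description. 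For part~(3), I would determine the cover relations directly in the combinatorial model and check that each alters the combinatorial data minimally, corresponding on the module side to the exchange of a single indecomposable summand; conversely, if $N\unlhd M$ in $\fb(n)$ differ by one exchange, any intermediate $L\in\fb(n)$ satisfies $\gen M\subseteq\gen L\subseteq\gen N$ and $\cogen N\subseteq\cogen L\subseteq\cogen M$, which I would use to force $L\in\{N,M\}$.

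The main obstacle is the meet and join construction of part~(1), together with its restriction to the Tamari locus in part~(2): producing a bona fide faithfully balanced module with exactly $n$ indecomposable summands realising the prescribed column and row data and checking it against Theorem~\ref{t:fban}, and then verifying that when both inputs are tilting so is the output. Once the combinatorial description of $\unlhd$ and this closure property are established, the remaining points --- the extreme elements, antisymmetry, the identification with the Tamari lattice, and the cover relations --- are comparatively routine.
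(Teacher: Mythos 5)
Your encoding of $\unlhd$ by the column data $c_j$ and row data $r_i$ is correct, and so are the identification of $\La_n$ and $\kdual\La_n$ as minimum and maximum (antisymmetry, i.e.\ injectivity of the encoding on $\fb(n)$, silently uses Theorem~\ref{t:mfb-gen-cogen}). But the step your whole plan rests on fails: the image of $\fb(n)$ in the product of chains is \emph{not} closed under the componentwise meet, so there is in general no module ``carrying exactly'' the data $j\mapsto\max(c_j(M),c_j(N))$, $i\mapsto\max(r_i(M),r_i(N))$. The paper's own example following Proposition~\ref{p:lattice} is a counterexample: for $M=M_{14}\oplus M_{13}\oplus M_{11}\oplus M_{33}$ and $N=M_{14}\oplus M_{13}\oplus M_{12}\oplus M_{22}$ in $\fb(4)$, your prescription demands $r_2=2$, i.e.\ $M_{22}$ as a summand, while simultaneously demanding that column $2$ be empty (it does not appear in $\cogen(M)\cap\cogen(N)$) --- impossible. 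The actual meet is $M_{14}\oplus M_{13}\oplus M_{23}\oplus M_{33}$, whose $\gen$-category strictly contains $\gen(M)\cup\gen(N)$ (it contains $M_{23}$, which neither $M$ nor $N$ generates). This is precisely the difficulty the paper's proof of Proposition~\ref{p:lattice} is built to handle: the meet is $C\oplus G'$, where $\add(C)$ is the minimal cocover of $\cogen(M)\cap\cogen(N)$ and $G'$ consists, for each generator $G_{i_\alpha}$ of $\gen(M)\cup\gen(N)$, of a submodule of $C$ of \emph{minimal length} having $G_{i_\alpha}$ as a quotient; one then needs the inclusion--exclusion count over virtual cohooks to get $\lvert C\oplus G'\rvert=n$, and the minimality of the chosen summands to see that any other lower bound $L'$ satisfies $\gen(C\oplus G')\subseteq\gen(L')$, i.e.\ that one really has the greatest lower bound. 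None of this is supplied by the componentwise recipe, so part (1), which you yourself flag as the main obstacle, remains open in your proposal.

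Parts (2) and (3) inherit the gap, since you make them depend on the explicit meet/join. Moreover, the direction of (3) you sketch in detail is false as stated: exchanging a single indecomposable summand need not give a cover. If $M=U\oplus X$ with $X=Z_i$ in its internal cohook, then $U\oplus Z_j$ for $j>i+1$, or for $j=i+1$ with $Z_{i+1}$ already a summand of $U$, is comparable to $M$ but the relation factors through intermediate elements of $\fb(n)$; the final corollary of Section~\ref{s:poset} characterises exactly which exchanges are covers. The actual content of (3) is the converse implication --- every covering pair differs in exactly one summand (Corollary~\ref{c:covering-relation}) --- which the paper proves via the lemma producing, for any $N\unlhd M$, an intermediate element differing from $M$ or from $N$ in exactly one summand, an argument running on splitting projective/injective summands rather than on an embedding into a distributive lattice.
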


Experiments were carried out using the GAP-package QPA \cite{qpa} and SageMath \cite{sage}.

\section{Characterizations of (faithfully) balanced modules}
In this section we consider finitely generated modules for an artin algebra $\Lambda$ and write $\Hom$ for $\Hom_\Lambda$. Recall that a morphism $f:X\to M'$ is called a \emph{left $\add(M)$-approximation} of $X$ if $M'\in\add(M)$ and any morphism $g: X\to M$ factors as $g = hf$ for some $h:M'\to M$. It is a \emph{minimal left approximation} if in addition $f$ is a \emph{left minimal} morphism, which means that for any $\phi\in\End(M')$, if $\phi f=f$ then $\phi$ is an automorphism, or equivalently that $\Bild\phi$ is not contained in a proper direct summand of $M'$, see \cite[\S1]{ASm1}. Dually there is the notion of a (minimal) right $\add(M)$-approximation. Minimal $\add(M)$-approximations exist, and are unique up to isomorphism, see \cite[Propositions 3.9, 4.2]{ASm1}. The combination of (a) and (b) is the case $k=1$ of \cite[Lemma 2.2]{MS19}.

\begin{lem}
\label{l:balX}
Let $X$ and $M$ be $\Lambda$-modules and let $E=\End(M)$. 

\noindent
(a) The following are equivalent:
\begin{itemize}
\item[(i)]the natural map $X\to \Hom_E(\Hom(X,M),M)$ is injective;
\item[(ii)]$X \in\cogen(M)$;
\item[(iii)]the minimal left $\add(M)$-approximation $\theta:X\to M'$ is injective.
\end{itemize}
(b) The following are equivalent:
\begin{itemize}
\item[(i)]the natural map $X\to \Hom_E(\Hom(X,M),M)$ is surjective;
\item[(ii)]there is a sequence $X\to M'\to M''$, exact in the middle and with $M',M''\in\add(M)$, such that the sequence 
$
\Hom(M'',M) \to \Hom(M',M) \to \Hom(X,M)\to 0
$
is exact:
\item[(iii)]the minimal left $\add(M)$-approximation $\theta:X\to M'$ 
has cokernel in $\cogen(M)$.
\end{itemize}
\end{lem}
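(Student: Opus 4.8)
The plan is to prove part~(a) directly and then to deduce part~(b) from it by closing the cycle of implications $(\mathrm{iii})\Rightarrow(\mathrm{ii})\Rightarrow(\mathrm{i})\Rightarrow(\mathrm{iii})$.

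For part~(a), I would first identify the kernel of the natural map $\eta_X\colon X\to\Hom_E(\Hom(X,M),M)$, $x\mapsto(\phi\mapsto\phi(x))$, with the submodule $R:=\bigcap_{\phi\colon X\to M}\Ker\phi$ (the reject of $M$ in $X$). As $\Lambda$ is an artin algebra, $X$ has finite length, so $R=0$ if and only if finitely many maps $X\to M$ already have zero common kernel, which is exactly the condition $X\in\cogen(M)$; this gives $(\mathrm{i})\Leftrightarrow(\mathrm{ii})$. For $(\mathrm{iii})$, I would observe that every map $X\to M$ factors through the minimal left $\add(M)$-approximation $\theta\colon X\to M'$, whereas $\theta$ itself, composed with the projections of an embedding $M'\hookrightarrow M^{n}$, is a family of maps into copies of $M$; hence $\Ker\theta=R$, and $\theta$ is injective precisely when $R=0$.

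For part~(b) I would first isolate two structural inputs. $(\star)$: for $P\in\add(M)$ the natural map $\eta_P\colon P\to\Hom_E(\Hom(P,M),M)$ is an isomorphism of $\Lambda$-modules --- immediate for $P=M$ (both sides are $M$, as $E=\End(M)$), then by additivity and functoriality for all of $\add(M)$. $(\star\star)$: for any finitely generated right $E$-module $W$ one has $\Hom_E(W,M)\in\cogen(M)$, since a presentation $E^{n}\twoheadrightarrow W$ yields, upon applying the left-exact functor $\Hom_E(-,M)$, an embedding $\Hom_E(W,M)\hookrightarrow M^{n}$; here one uses that $E$ is an artin algebra (module-finite over the centre of $\Lambda$), so that submodules of $\Hom(M',M)$ are automatically finitely generated. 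With these in hand: $(\mathrm{iii})\Rightarrow(\mathrm{ii})$ --- given $C:=\coKer\theta\in\cogen(M)$, compose $\theta$ with an injective left $\add(M)$-approximation $C\hookrightarrow M''$ (injective by part~(a)); then $X\xrightarrow{\theta}M'\to M''$ is exact in the middle, surjectivity of $\Hom(M',M)\to\Hom(X,M)$ is the approximation property, and any $M'\to M$ killed by $\theta$ factors through $C$ and hence through $M''$, giving the remaining exactness. $(\mathrm{ii})\Rightarrow(\mathrm{i})$ --- apply $\Hom_E(-,M)$ to the exact sequence $\Hom(M'',M)\to\Hom(M',M)\to\Hom(X,M)\to0$ of right $E$-modules; by $(\star)$ and the naturality of $\eta$, the resulting left-exact sequence is identified with the image of $X\to M'\to M''$ under $\eta$, and a diagram chase shows $\Bild\eta_X$ is carried onto $\Ker(M'\to M'')=\Bild(X\to M')$ inside $M'$, so $\eta_X$ is surjective. $(\mathrm{i})\Rightarrow(\mathrm{iii})$ --- with $N:=\Ker\bigl(\Hom(M',M)\to\Hom(X,M)\bigr)$, applying $\Hom_E(-,M)$ to $0\to N\to\Hom(M',M)\to\Hom(X,M)\to0$ and using $(\star)$ gives a left-exact sequence $0\to\Hom_E(\Hom(X,M),M)\to M'\to\Hom_E(N,M)$ whose first map is $\eta_X$ followed by the canonical embedding; surjectivity of $\eta_X$ then identifies $\Bild\theta$ with $\Hom_E(\Hom(X,M),M)$ inside $M'$, so $\coKer\theta$ embeds into $\Hom_E(N,M)$, which lies in $\cogen(M)$ by $(\star\star)$, and $\cogen(M)$ is closed under submodules.

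The step I expect to be the main obstacle is the diagram chase in $(\mathrm{ii})\Rightarrow(\mathrm{i})$ (and its mirror in $(\mathrm{i})\Rightarrow(\mathrm{iii})$): one must verify carefully that $\Hom_E(\Hom(-,M),M)$ is naturally isomorphic to the identity functor on $\add(M)$ via the maps of $(\star)$, that $\eta$ is a natural transformation from the identity into this double-dual functor, and that these are compatible, so that applying $\Hom_E(-,M)$ to a $\Hom(-,M)$-exact sequence reproduces precisely the morphisms that $\eta$ defines --- keeping the left/right $E$-module bookkeeping straight throughout is where mistakes are most likely to creep in.
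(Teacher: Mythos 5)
Your proposal is correct, and for most of the lemma it coincides with the paper's argument: part (a) is treated as elementary there too, and your proofs of (iii)$\Rightarrow$(ii) (compose $\theta$ with an injective approximation of the cokernel) and of (ii)$\Rightarrow$(i) (dualize the $\Hom$-exact sequence, use that the evaluation map is an isomorphism on $\add(M)$, chase the diagram) are exactly the paper's. The only genuine divergence is in (i)$\Rightarrow$(iii). The paper again builds a second approximation: it takes $\phi\colon M'\to M''$ to be the composite of $M'\to\coKer\theta$ with a left $\add(M)$-approximation of $\coKer\theta$, dualizes, and uses surjectivity of $\eta_X$ together with $\eta_{M'}$ being an isomorphism to chase out exactness of $X\to M'\to M''$ at $M'$, which says precisely that $\coKer\theta$ embeds in $M''\in\add(M)$. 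You instead dualize the short exact sequence $0\to N\to\Hom(M',M)\to\Hom(X,M)\to 0$, identify the image of $\theta$ with the kernel of $M'\to\Hom_E(N,M)$ via surjectivity of $\eta_X$, and then invoke your observation $(\star\star)$ that $E$-duals of finitely generated right $E$-modules are cogenerated by $M$ (via a free presentation of $N$, using that $E$ is an artin algebra so $N$ is finitely generated). Both routes are sound; the paper's stays entirely inside approximation language and needs no finiteness of $N$, while yours avoids constructing the second approximation at the cost of the extra (standard) input $(\star\star)$ and the bimodule bookkeeping you flag, which is indeed the only place requiring care and does go through.
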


\begin{proof}
Part (a) is straightforward, see for example \cite[Lemma VI.1.8]{ASS}; we prove (b). For (i) implies (iii), we consider the sequence $X\xrightarrow{\theta} M'\xrightarrow{\phi} M''$ where $\phi$ is the composition of $M'\to\coKer\theta$ and a left $\add(M)$-approximation of $\coKer\theta$. Then the sequence
\[
\Hom(M'',M)\to\Hom(M',M)\to\Hom(X,M)\to 0
\]
is exact. This gives a commutative diagram
\[
\begin{CD}
& & X @>>> M' @>>> M''  \\
& & @VfVV @VgVV @VhVV \\
0 @>>> \Hom_E(\Hom(X,M),M) @>>> \Hom_E(\Hom(M',M),M) @>>> \Hom_E(\Hom(M'',M),M)
\end{CD}
\] 
in which the bottom row is exact. Since $M'\in\add(M)$ it follows that $g$ is an isomorphism. By (i) the map $f$ is surjective. By diagram chasing the top row is exact, giving (iii).

(iii) implies (ii). 
One takes $\phi$ to be the composition of the map $M'\to\coKer\theta$ followed by a left $\add(M)$-approximation $\coKer\theta\to M''$ of $\coKer\theta$.

(ii) implies (i).
We have a commutative diagram as displayed above with exact rows.
Since $M',M''\in\add(M)$ the maps $g,h$ are isomorphisms. It follows that $f$ is a surjection.
\end{proof}

Considering the duals of $X$ and $M$ as $\Lambda^{op}$-modules (see \cite[\S II.3]{ARS}) gives the following.

\begin{lem}
\label{l:dualbalX}
Let $X$ and $M$ be $\Lambda$-modules and let $E=\End(M)$. 

\noindent
(a) The following are equivalent:
\begin{itemize}
\item[(i)]the natural map $\Hom(M,X)\otimes_E M\to X$ is surjective;
\item[(ii)]$X \in\gen(M)$;
\item[(iii)]the minimal right $\add(M)$-approximation $\theta:M'\to X$ is surjective.
\end{itemize}
(b) The following are equivalent:
\begin{itemize}
\item[(i)]the natural map $\Hom(M,X)\otimes_E M\to X$ is injective;
\item[(ii)]there is a sequence $M''\to M'\to X$, exact in the middle and with $M',M''\in\add(M)$, such that the sequence 
$
\Hom(M,M'') \to \Hom(M,M') \to \Hom(M,X)\to 0
$ 
is exact;
\item[(iii)]the minimal right $\add(M)$-approximation $\theta:M'\to X$ 
has kernel in $\gen(M)$.
\end{itemize}
\end{lem}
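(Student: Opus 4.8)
The plan is to deduce this statement directly from Lemma \ref{l:balX} by applying the standard $K$-duality $\kdual = \Hom_K(-,K)$ (or $\Hom_R(-,E_0)$ over an artinian base $R$ with injective envelope $E_0$ of the residue field), which sends left $\Lambda$-modules to right $\Lambda$-modules, i.e.\ left $\Lambda^{\op}$-modules, contravariantly and exactly, and which is a duality on finitely generated modules. First I would record the relevant compatibilities: for finitely generated modules $Y,Z$ over $\Lambda$ there is a natural isomorphism $\Hom_\Lambda(Y,Z) \cong \Hom_{\Lambda^{\op}}(\kdual Z, \kdual Y)$; in particular $\kdual$ induces a ring isomorphism $\End_\Lambda(M) \cong \End_{\Lambda^{\op}}(\kdual M)^{\op}$, so if $E = \End_\Lambda(M)$ then $\kdual M$ is naturally a left $\Lambda^{\op}$- and right $E$-module, matching the setup of Lemma \ref{l:balX} applied over $\Lambda^{\op}$ with the module $\kdual M$ in the role of "$M$". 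Moreover $\kdual$ interchanges $\add(M)$ with $\add(\kdual M)$, interchanges left and right $\add(M)$-approximations, interchanges kernels and cokernels, interchanges injectivity and surjectivity of maps, and interchanges $\gen(M)$ with $\cogen(\kdual M)$ (a surjection $M^{(k)} \twoheadrightarrow X$ dualizes to an injection $\kdual X \hookrightarrow (\kdual M)^k$, and conversely).

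Next I would match up the three tensor-hom expressions. The key point is the natural isomorphism
\[
\kdual\bigl( \Hom_\Lambda(M,X) \otimes_E M \bigr) \;\cong\; \Hom_{E}\bigl( \Hom_\Lambda(M,X), \kdual M \bigr) \;\cong\; \Hom_{E^{\op}}\bigl( \kdual X \text{'s hom-space}, \kdual M\bigr),
\]
more precisely $\Hom_E(U \otimes_E V, K) \cong \Hom_E(U, \Hom_K(V,K)) = \Hom_E(U, \kdual V)$ for a left $E$-module $U$ and $(E$-$\La)$-bimodule $V$, together with the identification $\Hom_\Lambda(M,X) \cong \Hom_{\Lambda^{\op}}(\kdual X, \kdual M)$ as right $E^{\op}$-modules. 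Under these identifications the natural evaluation map $\Hom_\Lambda(M,X)\otimes_E M \to X$ dualizes to the natural map $\kdual X \to \Hom_{E^{\op}}(\Hom_{\Lambda^{\op}}(\kdual X, \kdual M), \kdual M)$ appearing in Lemma \ref{l:balX}. Hence surjectivity of the former is equivalent to injectivity of the latter, and injectivity of the former is equivalent to surjectivity of the latter — exactly swapping parts (a)(i) and (b)(i) of Lemma \ref{l:balX} into parts (a)(i) and (b)(i) here. Chasing the remaining conditions (ii) and (iii) through $\kdual$ is then routine given the compatibilities listed above: a sequence $M''\to M'\to X$ exact in the middle with the stated $\Hom(M,-)$-sequence exact dualizes precisely to a sequence $\kdual X \to \kdual M' \to \kdual M''$ exact in the middle with the corresponding $\Hom(-,\kdual M)$-sequence exact, and "$\theta\colon M'\to X$ a minimal right approximation with kernel in $\gen(M)$" dualizes to "$\kdual\theta\colon \kdual X\to \kdual M'$ a minimal left approximation with cokernel in $\cogen(\kdual M)$".

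The one genuine point requiring care — and the step I expect to be the main (small) obstacle — is the naturality bookkeeping: one must check that the tensor-hom adjunction isomorphism intertwines the honest evaluation map $\Hom_\Lambda(M,X)\otimes_E M \to X$ with the honest biduality map of Lemma \ref{l:balX}, rather than with it up to some sign or twist, and that all the $E$ versus $E^{\op}$ side conventions are consistent (this is why $\kdual$ produces the opposite ring, and why applying Lemma \ref{l:balX} "over $\Lambda^{\op}$" automatically uses $E^{\op}$, which is $\End_{\Lambda^{\op}}(\kdual M)$). Once that diagram is seen to commute, the equivalences (i)$\Leftrightarrow$(ii)$\Leftrightarrow$(iii) of both parts transfer verbatim. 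Since the excerpt already signals this by the sentence "Considering the duals of $X$ and $M$ as $\Lambda^{op}$-modules gives the following", I would keep the written proof to a single short paragraph invoking $\kdual$ and the adjunction, rather than reproving anything.
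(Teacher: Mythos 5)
Your proposal is correct and is exactly the paper's route: the paper proves this lemma with the single remark ``Considering the duals of $X$ and $M$ as $\Lambda^{\op}$-modules gives the following,'' i.e.\ by applying the duality $\kdual$ to Lemma~\ref{l:balX}, which is precisely what you carry out (with the $E$ versus $E^{\op}$ and tensor--hom bookkeeping made explicit). No gap; your write-up is just a more detailed version of the paper's one-line argument.
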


Using the additivity property of minimal approximations, one gets the following.

\begin{lem}
For a module $M$ the following are equivalent:
\begin{itemize}
\item[(i)]$M$ is balanced;
\item[(ii)]for every indecomposable projective module $P$, the minimal left $\add(M)$-approximation $\theta:P\to M'$ has cokernel in $\cogen(M)$;
\item[(iii)]for every indecomposable injective module $I$, the minimal right $\add(M)$-approximation $\theta:M'\to I$ has kernel in $\gen(M)$.
\end{itemize}
\end{lem}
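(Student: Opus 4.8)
The plan is to deduce the lemma directly from Lemma~\ref{l:balX} and its dual Lemma~\ref{l:dualbalX}, after the key observation that \emph{balanced} is precisely the case $X=\Lambda$ of the surjectivity statement in part~(b)(i) of those lemmas. Concretely: there is a canonical isomorphism $\Hom_\Lambda(\Lambda,M)\cong M$ of right $E$-modules (where $E=\End(M)$), hence $\Hom_E(\Hom_\Lambda(\Lambda,M),M)\cong\End_E(M)$, and under this identification the natural evaluation map $\Lambda\to\Hom_E(\Hom_\Lambda(\Lambda,M),M)$ becomes the canonical ring homomorphism $\Lambda\to\End_E(M)$ given by left multiplication. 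Thus $M$ is balanced if and only if the natural map of Lemma~\ref{l:balX}(b)(i) is surjective for $X=\Lambda$, which by the equivalence (i)$\Leftrightarrow$(iii) of that lemma holds if and only if the minimal left $\add(M)$-approximation $\Lambda\to M'$ has cokernel in $\cogen(M)$.

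For (i)$\Leftrightarrow$(ii) I would then use the additivity of minimal approximations (quoted just before the lemma): writing $\Lambda=\bigoplus_i P_i$ as a left $\Lambda$-module with the $P_i$ indecomposable projective (with multiplicities), the minimal left $\add(M)$-approximation of $\Lambda$ is the direct sum of the minimal left $\add(M)$-approximations $\theta_i\colon P_i\to M_i'$, with cokernel $\bigoplus_i\coKer\theta_i$. Since $\cogen(M)$ is closed under direct sums and summands, $\coKer$ of the approximation of $\Lambda$ lies in $\cogen(M)$ if and only if each $\coKer\theta_i$ does; and the $P_i$ exhaust the indecomposable projectives up to isomorphism (while minimal approximations and membership in $\cogen(M)$ depend only on the isomorphism class). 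Combining with the previous paragraph gives (i)$\Leftrightarrow$(ii).

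For (i)$\Leftrightarrow$(iii) I would apply the duality $D\colon\lmod{\Lambda}\to\lmod{\Lambda^{\op}}$. It induces a ring isomorphism $\End_\Lambda(M)\cong\End_{\Lambda^{\op}}(DM)^{\op}$, and the double centralizer property is self-dual, so $M$ is balanced over $\Lambda$ if and only if $DM$ is balanced over $\Lambda^{\op}$. Applying the already-proved equivalence (i)$\Leftrightarrow$(ii) to $\Lambda^{\op}$ and $DM$, and then translating back through $D$ — which sends indecomposable projective $\Lambda^{\op}$-modules to indecomposable injective $\Lambda$-modules, $\add(DM)$ to $\add(M)$, left $\add(DM)$-approximations to right $\add(M)$-approximations (preserving minimality), cokernels to kernels, and $\cogen(DM)$ to $\gen(M)$ — yields exactly condition~(iii). (Equivalently, one can run the second paragraph's argument verbatim with Lemma~\ref{l:dualbalX}(b) in place of Lemma~\ref{l:balX}(b), using $X=I$ running over the indecomposable injectives, i.e.\ the summands of the injective cogenerator $D\Lambda$.)

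The only real content is the identification in the first paragraph; everything after it is bookkeeping. The step I expect to need the most care is the translation in the third paragraph: one must state the self-duality of the double centralizer property cleanly and check that $D$ exchanges the relevant approximation/kernel/cokernel and $\gen$/$\cogen$ notions, so that condition~(ii) for $(\Lambda^{\op},DM)$ really transports to condition~(iii) for $(\Lambda,M)$. I anticipate no genuine obstacle beyond that.
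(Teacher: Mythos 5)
Your proposal is correct and is essentially the argument the paper intends: it leaves the lemma unproved except for the remark that it follows from Lemma~\ref{l:balX}, Lemma~\ref{l:dualbalX} and additivity of minimal approximations, which is exactly your identification of balancedness with the case $X=\Lambda$ of Lemma~\ref{l:balX}(b), the summand-wise reduction to indecomposable projectives, and the dual/injective version. Nothing further is needed.
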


As mentioned in the introduction, following Pressland and Sauter \cite{PS18}, we write $\gen_1(M)$ (respectively $\cogen^1(M)$) for the full subcatetegory of $\lmod{\Lambda}$ consisting of the modules $X$ satisfying the conditions in parts (a) and (b) of Lemma~\ref{l:dualbalX} (respectively Lemma~\ref{l:balX}). These subcategories are closed under direct sums and summands. The following consequence is already in \cite{PS18}.

\begin{lem}\label{l:fbchar}
For a module $M$, the following conditions are equivalent.
\begin{itemize}
\item[(i)]$M$ is faithfully balanced;
\item[(ii)]all projective $\Lambda$-modules are in $\cogen^1(M)$;
\item[(iii)]all injective $\Lambda$-modules are in $\gen_1(M)$.
\end{itemize}
\end{lem}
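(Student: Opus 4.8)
The plan is to prove Lemma~\ref{l:fbchar} by unwinding the double centralizer condition through the characterizations already established in Lemmas~\ref{l:balX} and~\ref{l:dualbalX}. Recall that $M$ being faithfully balanced means the natural map $\Lambda \to \End_E(M)$ is bijective, where $E = \End_\Lambda(M)$. The key observation is that $\Lambda$, as a left $\Lambda$-module, decomposes as a finite direct sum of indecomposable projective modules $P_1, \dots, P_m$, and that $\End_E(M) = \Hom_E(M,M)$ can be identified with $\Hom_E(\Hom_\Lambda(\Lambda, M), M)$ since $M \cong \Hom_\Lambda(\Lambda, M)$ canonically as an $E$-module. So the natural map in question is precisely the map $\Lambda \to \Hom_E(\Hom_\Lambda(\Lambda, M), M)$ appearing in Lemma~\ref{l:balX}, applied with $X = \Lambda$.

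First I would observe that this map is always injective: $M$ is faithful if and only if $\Lambda \to \Hom_E(\Hom_\Lambda(\Lambda,M),M)$ is injective, and $M$ faithful is equivalent (by Lemma~\ref{l:balX}(a), applied to each summand $P_i$ of $\Lambda$) to every $P_i \in \cogen(M)$, which is in turn equivalent to all projective modules lying in $\cogen(M)$. Actually, more is needed: being faithfully balanced requires the map to be bijective, i.e.\ injective and surjective simultaneously. Using the additivity of the constructions over direct summands, the natural map $\Lambda \to \Hom_E(\Hom_\Lambda(\Lambda,M),M)$ is bijective if and only if, for each indecomposable projective $P_i$, the natural map $P_i \to \Hom_E(\Hom_\Lambda(P_i,M),M)$ is bijective. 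By Lemma~\ref{l:balX}(a) the latter is injective iff $P_i \in \cogen(M)$, and by Lemma~\ref{l:balX}(b) it is surjective iff the minimal left $\add(M)$-approximation of $P_i$ has cokernel in $\cogen(M)$. Combining these two, bijectivity of the map for $P_i$ is exactly the statement that $P_i$ lies in $\cogen^1(M)$, i.e.\ satisfies the conditions of parts (a) and (b) of Lemma~\ref{l:balX}. Hence (i) $\iff$ (ii).

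For the equivalence (i) $\iff$ (iii), I would run the dual argument: consider $DM$ as a $\Lambda^{\op}$-module, and note that $M$ is faithfully balanced over $\Lambda$ if and only if $DM$ is faithfully balanced over $\Lambda^{\op}$ (the double centralizer property is self-dual under $D$, since $\End_{\Lambda^{\op}}(DM) \cong \End_\Lambda(M)^{\op}$ and the natural map transposes). Applying the equivalence (i) $\iff$ (ii) already proven to $DM$ over $\Lambda^{\op}$, faithful balancedness becomes: all projective $\Lambda^{\op}$-modules lie in $\cogen^1_{\Lambda^{\op}}(DM)$. Applying $D$ and using that $D$ sends projective $\Lambda^{\op}$-modules to injective $\Lambda$-modules and intertwines $\cogen^1_{\Lambda^{\op}}(DM)$ with $\gen_1(M)$ (this is the duality already invoked in the passage from Lemma~\ref{l:balX} to Lemma~\ref{l:dualbalX}), we get exactly condition (iii).

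The main obstacle, and the step deserving the most care, is the reduction ``bijectivity of $\Lambda \to \End_E(M)$ decomposes summand-by-summand into bijectivity of $P_i \to \Hom_E(\Hom(P_i,M),M)$,'' which requires checking that the natural transformation is additive in the first argument and that the identification $\End_E(M) \cong \Hom_E(\Hom_\Lambda(\Lambda,M),M)$ is compatible with the structure maps — essentially a Yoneda-type bookkeeping argument. Once this is in place, everything else is a direct invocation of Lemmas~\ref{l:balX} and~\ref{l:dualbalX}. Since the excerpt remarks that this lemma ``is already in \cite{PS18}'', I would keep the write-up brief, citing \cite[Definition 2.10 and surrounding discussion]{PS18} for the part that is standard and spelling out only the translation into the language of Lemmas~\ref{l:balX} and~\ref{l:dualbalX}.
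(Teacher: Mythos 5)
Your proposal is correct and is essentially the argument the paper has in mind: it identifies the natural map $\Lambda\to\End_E(M)$ with the evaluation map of Lemma~\ref{l:balX} at $X=\Lambda$, uses additivity (i.e.\ closure of $\cogen^1(M)$ under sums and summands) to reduce to the indecomposable projectives, and obtains (iii) by the same $\kdual$-duality that turns Lemma~\ref{l:balX} into Lemma~\ref{l:dualbalX}. This is precisely the route taken in \cite{PS18}, which the paper cites instead of repeating the proof, so your write-up fills in exactly the intended details.
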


\section{Approximations for Nakayama algebras}
In this section $\Lambda$ is a Nakayama algebra, meaning that all indecomposable projective and injective modules are uniserial, see for example \cite[\S IV.2]{ARS}. It follows that any indecomposable module $X$ is uniserial, determined up to isomorphism by its length $\ell(X)$ and either its socle $\soc(X)$ or top $\Top(X)$.

We fix an indecomposable module $X$.

\begin{lem}\label{l:3.1}
Let $\phi:X\to U$ and $\phi':X\to U'$ be non-zero homomorphisms with $U,U'$ indecomposable. 
\begin{itemize}
\item[(i)]If $\theta\in\End(U)$ satisfies $\theta\phi=\phi$, then $\theta$ is invertible.
\item[(ii)]$\phi' = \theta\phi$ for some morphism $\theta:U\to U'$ $\Leftrightarrow$ $\ell(\Ker\phi) \le \ell(\Ker\phi')$ and $\ell(\coKer\phi) \le \ell(\coKer\phi')$.
\item[(iii)]$\phi' = \theta\phi$ for some isomorphism $\theta:U\to U'$ $\Leftrightarrow$ $\ell(\Ker\phi) = \ell(\Ker\phi')$ and $\ell(\coKer\phi) = \ell(\coKer\phi')$.
\end{itemize}
\end{lem}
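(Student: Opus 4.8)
The plan is to work entirely inside the module category of a Nakayama algebra, using the fact that indecomposable modules are uniserial and hence determined up to isomorphism by length together with either socle or top. Fix the indecomposable module $X$ and a nonzero homomorphism $\phi\colon X\to U$ with $U$ indecomposable. Since $X$ and $U$ are uniserial, $\Ker\phi$ is the unique submodule of $X$ of its length and $\Bild\phi$ is the unique submodule of $U$ of length $\ell(X)-\ell(\Ker\phi)$; in particular $\Bild\phi$ is itself uniserial with $\soc(\Bild\phi)=\soc(U)$. The essential observation is that a nonzero $\phi$ factors as $X\twoheadrightarrow \Bild\phi \hookrightarrow U$, and among all nonzero maps from $X$ to an indecomposable $U$, the data $(\ell(\Ker\phi),\ell(\coKer\phi))$ records exactly ``how far down'' we project and ``how far up'' we include.

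For part (i): suppose $\theta\in\End(U)$ with $\theta\phi=\phi$. Then $\theta$ restricted to $\Bild\phi$ is the identity, so $\theta|_{\Bild\phi}$ is injective, which forces $\theta$ itself to be injective because $U$ is uniserial and every nonzero submodule of $U$ contains $\soc(U)\subseteq \Bild\phi$ — more precisely, $\Ker\theta\cap\Bild\phi=0$ and $\Ker\theta\ne 0$ would give two submodules of a uniserial module with trivial intersection, impossible unless one is zero; since $\Bild\phi\ne 0$ we get $\Ker\theta=0$. An injective endomorphism of a finite-length module is an automorphism, so $\theta$ is invertible.

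For part (ii), the forward direction: if $\phi'=\theta\phi$ then $\Ker\phi\subseteq\Ker\phi'$ and $\theta$ induces a surjection $\coKer\phi\twoheadrightarrow\coKer\phi'$ (apply the snake lemma, or simply note $\Bild\phi'=\theta(\Bild\phi)$ so $\ell(\Bild\phi')\le\ell(\Bild\phi)$ and $\ell(U')\ge$ ... ) — one needs to be slightly careful here because $U$ and $U'$ may differ in length, so I would argue directly with the factorizations $X\twoheadrightarrow\Bild\phi\hookrightarrow U$ and $X\twoheadrightarrow\Bild\phi'\hookrightarrow U'$: a map $\theta\colon U\to U'$ with $\theta\phi=\phi'$ must send $\Bild\phi$ onto $\Bild\phi'$, and since both are the unique uniserial quotients of $X$ of their respective lengths, $\Bild\phi'$ is a quotient of $\Bild\phi$, giving $\ell(\Ker\phi)\le\ell(\Ker\phi')$; and $\Bild\phi'\subseteq U'$ with $\soc(U')=\soc(\Bild\phi')$ and the embedding having cokernel of length $\ell(U')-\ell(\Bild\phi')=\ell(\coKer\phi')$... the inequality $\ell(\coKer\phi)\le\ell(\coKer\phi')$ is the one requiring the argument via the structure of $\theta$ on tops. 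Conversely, given the two inequalities, construct $\theta$ explicitly: factor $\phi=\iota\pi$ and $\phi'=\iota'\pi'$ with $\pi,\pi'$ surjective and $\iota,\iota'$ the inclusions of $\Bild\phi,\Bild\phi'$; the inequality on kernels gives a (unique, by uniseriality) surjection $\Bild\phi\twoheadrightarrow\Bild\phi'$ compatible with $\pi,\pi'$, and the inequality on cokernels gives that $U'$ has a submodule isomorphic to $\Bild\phi'$ with the right quotient length, so one can extend along the inclusion $\Bild\phi\hookrightarrow U$ using that $U$ is uniserial and injective objects / the relevant $\Ext$ vanishing for Nakayama algebras — concretely, any map from a submodule of the uniserial $U$ into $U'$ that ``points the same direction'' extends, which is a direct check on composition factors. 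Part (iii) is then immediate: $\theta$ is an isomorphism iff it is injective and surjective iff $\ell(U)=\ell(U')$ and the induced maps on image are isomorphisms iff both pairs of lengths agree, using part (i) to upgrade ``some $\theta$'' to ``$\theta$ an isomorphism'' once lengths match.

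The main obstacle I anticipate is the existence half of (ii) — producing $\theta$ from the inequalities — because it requires knowing that a homomorphism from the uniserial module $\Bild\phi$ (sitting inside $U$) into $U'$ extends to all of $U$; this is where the full strength of the Nakayama hypothesis (projective-injective uniserial modules, equivalently control of $\Ext^1$ between uniserial modules) enters, as opposed to the purely formal parts (i) and the forward direction of (ii). I would handle it by reducing to the case $\phi$ surjective (replace $U$ by $\Bild\phi$, $U'$ by the appropriate submodule of $U'$, adjust lengths) and then observing that a surjection between uniserial modules with a common top extends/lifts along inclusions of uniserial modules sharing that top, which one verifies by comparing the finite lists of composition factors.
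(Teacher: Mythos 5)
Your part (i) is correct, and your treatment of the kernel inequality in (ii) is fine, but both halves of the cokernel statement in (ii) --- which is the real content of the lemma --- are left unproved, and the ideas you gesture at would not close them. In the forward direction, your first suggestion (a surjection $\coKer\phi\twoheadrightarrow\coKer\phi'$ via the snake lemma) is backwards: the natural induced map goes $\coKer\phi\to\coKer\phi'$, and what must be shown is that it is \emph{injective}. This is exactly where uniseriality of $U$ enters: since $\phi'=\theta\phi\neq 0$ we have $\Bild\phi\not\subseteq\Ker\theta$, so $\Ker\theta\subseteq\Bild\phi$, hence $\theta^{-1}(\Bild\phi')=\Bild\phi+\Ker\theta=\Bild\phi$ and the induced map on cokernels is injective, giving $\ell(\coKer\phi)\le\ell(\coKer\phi')$. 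You acknowledge that this inequality ``requires the argument via the structure of $\theta$ on tops'' but never supply it.

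The existence half of (ii) has a more serious gap. Your construction rests on the claim that a map defined on the submodule $\Bild\phi\subseteq U$ into $U'$ always extends to $U$ (``a direct check on composition factors''), yet the cokernel inequality never actually enters your argument --- and without it the claim is false. Concretely, in $\Lambda_3$ take $X=M_{23}$, let $\phi\colon X\hookrightarrow U=M_{13}$ be the inclusion and $\phi'\colon X\twoheadrightarrow U'=M_{22}=S[2]$ the quotient by the socle. Then $\alpha\colon\Bild\phi=M_{23}\twoheadrightarrow S[2]$ is a surjection of uniserials with common top, but it does not extend to $U$, since $\Hom(M_{13},S[2])=0$; here $\ell(\coKer\phi)=1>0=\ell(\coKer\phi')$, exactly as the lemma predicts. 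Your proposed reduction ``replace $U$ by $\Bild\phi$'' does not help, because the whole difficulty is defining $\theta$ on all of $U$; note also that $\Bild\phi$ and $U$ share a \emph{socle}, never a top (unless $\phi$ is onto), so ``inclusions of uniserial modules sharing that top'' does not describe the situation. The paper closes this step by extending $\alpha$ to a map $\beta\colon I\to I'$ between the injective envelopes of $\Bild\phi$ and $\Bild\phi'$ (these are uniserial, with $U\subseteq I$ and $U'\subseteq I'$), checking $\Ker\beta=\Ker\alpha$, and then using the inequality $\ell(\coKer\phi)\le\ell(\coKer\phi')$ in a length count to get $\ell(\beta(U))\le\ell(U')$, whence $\beta(U)\subseteq U'$ by uniseriality of $I'$ and $\theta=\beta|_U$ works. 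Some explicit use of the cokernel inequality of this kind is unavoidable, since the example shows the extension genuinely fails when it is violated. (Part (iii) is then best deduced by composing the two maps from (ii) and applying (i), as the paper does; your sketch of (iii) is loose but repairable once (ii) is in place.)
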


\begin{proof}
(i) Since $U$ is indecomposable and $\phi$ is non-zero, $\phi:X\to U$ is left minimal.

(ii) If there is $\theta$, then trivially $\ell(\Ker \phi)\le \ell(\Ker \phi')$. Moreover since $\phi'\neq 0$, we have $\Bild \phi\not\subseteq \Ker\theta$, so since $U$ is uniserial, $\Ker\theta\subseteq \Bild \phi$. Thus $\theta^{-1}(\Bild \phi') = \Ker\theta+\Bild\phi = \Bild\phi$, so $\theta$ induces an injection from $\coKer\phi$ to $\coKer\phi'$, giving the other inequality.

Conversely if the inequalities hold, then $\Bild \phi$ and $\Bild \phi'$ are both quotients of the uniserial module $X$, so the inequality $\ell(\Ker \phi)\le \ell(\Ker \phi')$ ensures the existence of a surjective map $\alpha:\Bild \phi\to\Bild \phi'$ with $\phi' = \alpha\phi$. Taking the injective envelopes $I$ and $I'$ of $\Bild\phi$ and $\Bild\phi'$, the map $\alpha$ extends to a map $\beta:I\to I'$. Now $I$ and $I'$ are indecomposable, hence uniserial, since the modules $\Bild\phi$ and $\Bild\phi'$ have simple socle. Moreover $U$ embeds in $I$ and $U'$ in $I'$. Now $\Ker\beta \cap \Bild \phi = \Ker\alpha$, and $\Bild\phi\not\subseteq \Ker\beta$, so since $I$ is uniserial,
$\Ker\beta\subseteq\Bild\phi$, so $\Ker\beta=\Ker\alpha$. Then 
\[
\ell(\beta(U)) = \ell(U)-\ell(\Ker\beta) = \ell(U)-\ell(\Ker\alpha) 
\]
\[
= \ell(U) - \ell(\Bild\phi) + \ell(\Bild\phi')  = \ell(\coKer\phi) + \ell(U') - \ell(\coKer\phi')
\le \ell(U')
\]
by the inequality. Thus $\beta(U)\subseteq U'$, and one can take $\theta$ to be the restriction of $\beta$ to $U$.

(iii) Follows from (i) and (ii).
\end{proof}

In view of the lemma, when $U$ is indecomposable, a non-zero morphism $\phi:X\to U$ is determined (up to an isomorphism of $U$) by the pair of natural numbers $(s,t) = (\ell(\Ker\phi),\ell(\coKer\phi))$. We denote a representative of this morphism by $\phi_{st}:X\to X(s,t)$. Clearly if $s\le s'<\ell(X)$, then there is map $p_{st}^{s'}:X(s,t)\to X(s',t)$, necessarily an epimorphism, with $\phi_{s',t} = p_{st}^{s'} \phi_{st}$. 

Now let $M$ be an arbitrary module. We define $M_X$ to be the set of pairs $(s,t)$ such that $X(s,t)$ is a direct summand of $M$. The set $M_X$ inherits the partial ordering from $\Z^2$.

\begin{lem}
\label{l:Nakminapprox}
The map 
\[
\phi = (\phi_i) : X \to \bigoplus_{i=1}^k X(s_i,t_i),
\]
is a minimal left $\add(M)$-approximation of $X$, where  $(s_1,t_1),\dots,(s_k,t_k)$ are the minimal elements of $M_X$, ordered so that $s_1 > \dots > s_k$ and $t_1 < \dots < t_k$, and $\phi_i = \phi_{s_i,t_i}$. Assuming that $k>0$, or equivalently that $\Hom(X,M)\neq 0$, we have
\[
\coKer\phi \cong C_1\oplus \dots \oplus C_k
\]
where $C_1$ is the quotient of $X(s_1,t_1)$ of length $t_1$ and $C_i = X(s_{i-1},t_i)$ for $i>1$.
\end{lem}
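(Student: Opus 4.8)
The plan is to prove this in two stages: first that the stated map $\phi$ is indeed a minimal left $\add(M)$-approximation, and second to identify its cokernel.

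For the approximation property, I would argue as follows. Any morphism $g: X \to M$ decomposes, via the decomposition of $M$ into indecomposables, into components $X \to X(s,t)$ for various $(s,t) \in M_X$ (together with zero maps to summands of $M$ not receiving a nonzero map from $X$; these don't matter). By Lemma~\ref{l:3.1}(ii), a nonzero map $X \to X(s,t)$ factors through $\phi_{s',t'}: X \to X(s',t')$ precisely when $s' \le s$ and $t' \le t$, i.e.\ when $(s',t') \le (s,t)$ in the partial order on $\Z^2$. Since every element of $M_X$ dominates some minimal element $(s_i,t_i)$ of $M_X$, each component of $g$ factors through the corresponding $\phi_i$, and hence $g$ factors through $\phi = (\phi_i)$. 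Here I should note that the minimal elements, having strictly decreasing first coordinates, automatically have strictly increasing second coordinates (two minimal elements are incomparable, so if $s_i > s_j$ then we cannot have $t_i \ge t_j$), which justifies the stated ordering. For minimality: $\phi$ is left minimal because if $\psi \in \End(\bigoplus X(s_i,t_i))$ satisfies $\psi\phi = \phi$, then looking at the $(i,i)$-component and using that the other components $\phi_j$ with $j \ne i$ do \emph{not} factor through $\phi_i$ (as $(s_i,t_i)$ is minimal, nothing strictly below it is in $M_X$, and the $(s_j,t_j)$ are incomparable to $(s_i,t_i)$) forces the $(i,i)$-entry of $\psi$ to be an automorphism of $X(s_i,t_i)$ by Lemma~\ref{l:3.1}(i); a triangularity/Fitting argument then shows $\psi$ is an automorphism. (Alternatively one invokes uniqueness of minimal approximations: the constructed $\phi$ has no superfluous summand because deleting any $\phi_i$ destroys the factorization of $\phi_{s_i,t_i}$ itself.)

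For the cokernel, the key point is that $\Bild\phi$ sits in $\bigoplus_i X(s_i,t_i)$ as the image of the diagonal-type map, and one computes the quotient summand by summand using the uniserial structure. I would use the maps $p$ and $i$ introduced just before the lemma: arrange the summands in the order $i = 1, \dots, k$ and perform column operations. Concretely, the map $\phi_i = \phi_{s_i,t_i}$ and $\phi_{i+1} = \phi_{s_{i+1},t_{i+1}}$ with $s_{i+1} < s_i$ and $t_{i+1} > t_i$; the module $X(s_{i-1},t_i)$ receives $p_{s_{i-1},t_{i-1}}^{?}$... more carefully: there are canonical maps relating $X(s_i,t_i)$, $X(s_{i+1},t_i)$ and $X(s_i, t_{i+1})$. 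The cleanest approach is to change basis in the target so that $\Bild\phi$ becomes the image of a single map into the first summand plus ``graph'' identifications between consecutive summands, and then read off that the contributions glue into the intervals $X(s_{i-1},t_i)$ for $i > 1$ and the length-$t_1$ quotient of $X(s_1,t_1)$ for $i = 1$. A dimension check — $\ell(\coKer\phi) = \sum_i \ell(X(s_i,t_i)) - \ell(\Bild\phi)$ where $\ell(\Bild\phi) = \ell(X) - \ell(\Ker\phi)$ and $\ell(\Ker\phi) = \min_i s_i = s_k$ (since $\Ker\phi = \bigcap \Ker\phi_i$ and these are nested submodules of the uniserial $X$) — serves as a useful consistency check: $\sum (\ell(X) - s_i - t_i) - (\ell(X) - s_k) = t_1 + \sum_{i=2}^k (\ell(X) - s_{i-1} - t_i)$, which matches $\ell(C_1) + \sum_{i>1}\ell(X(s_{i-1},t_i))$.

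I expect the second stage — the precise identification of the cokernel, rather than just its composition factors — to be the main obstacle, since it requires genuinely tracking how the images of the $\phi_i$ overlap inside the direct sum, not merely a length count. The uniseriality of everything in sight makes this tractable: every submodule of every module here is determined by its length, so all the relevant subquotients are forced, and the only real work is bookkeeping with the partial order on $M_X$ and the compatibility maps $p^{s'}_{st}$, $i^{t'}_{st}$. The first stage is comparatively routine given Lemma~\ref{l:3.1}, the one subtlety being to spell out why incomparability of the minimal elements yields the claimed monotone ordering of both coordinates.
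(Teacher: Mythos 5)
Your first stage does match the paper: the approximation property is read off from Lemma~\ref{l:3.1}(ii) because every element of $M_X$ dominates a minimal one, and your observation that incomparability of the minimal elements forces the monotone ordering of both coordinates is correct. The minimality step, however, is not justified as you state it. If $\psi=(\psi_{ij})$ satisfies $\psi\phi=\phi$, the $i$-th row reads $\phi_i=\psi_{ii}\phi_i+\sum_{j\neq i}\psi_{ij}\phi_j$, not $\psi_{ii}\phi_i=\phi_i$, so Lemma~\ref{l:3.1}(i) does not apply directly, and the fact you cite (that $\phi_j$ does not factor through $\phi_i$) is not the relevant one. What is needed is the quantitative estimate that any composite $X\xrightarrow{\phi_j}X(s_j,t_j)\to X(s_i,t_i)$ with $(s_j,t_j)$ incomparable to $(s_i,t_i)$ has image of length strictly less than $\ell(X)-s_i$ (a kernel bound when $s_j>s_i$, a cokernel/length bound when $t_j>t_i$), combined with the fact that in the uniserial target a finite sum of submodules is just the largest one; only then is $\psi_{ii}$ forced to be injective, hence an automorphism, and the radical argument finishes. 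This estimate is precisely the computation in the paper's proof, which packages minimality differently: it compares $\phi$ with the minimal approximation $\theta:X\to M'$ and shows each $X(s_i,t_i)$ must occur as a summand of $M'$. Your parenthetical alternative (``deleting $\phi_i$ destroys the factorization of $\phi_{s_i,t_i}$'') has the same hole: one must rule out factorizations through sums of the remaining summands, which again requires this estimate and does not follow from Lemma~\ref{l:3.1}(ii) alone.

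The cokernel identification — the part of the lemma the paper actually uses later — is where you explicitly stop: you describe a base-change strategy and call the rest bookkeeping, but you never produce the maps or verify exactness. The paper does this concretely: using Lemma~\ref{l:3.1}(ii) it constructs $\sigma_i$ with $\pi_i\phi_i=\sigma_i\phi_{i-1}$ and writes the explicit bidiagonal matrix $\psi$ with entries $\pi$, $-i_{s_{i-1},t_i}^{t_{i-1}}$ and $p_{s_i,t_i}^{s_{i-1}}$, so that $X\xrightarrow{\phi}\bigoplus_i X(s_i,t_i)\xrightarrow{\psi}\bigoplus_i C_i\to 0$ is exact; that construction is the content of the second half of the lemma and is missing from your sketch. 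Finally, your consistency check has a sign slip: $\ell(X(s,t))=\ell(X)-s+t$, not $\ell(X)-s-t$; as written your displayed identity is off by $2t_1$, although with the correct lengths (and $\ell(\Ker\phi)=s_k$, which is right) the two sides do agree.
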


\begin{proof}
The fact that $\phi$ is a left approximation follows immediately from part (ii) of Lemma \ref{l:3.1}. To show that $\phi$ is a minimal approximation, it suffices to show that if $\theta:X\to M'$ is a minimal $\add(M)$-approximation of $X$, then each $X(s_i,t_i)$ is a summand of $M'$. Now up to isomorphism we may write $M'$ as a direct sum of modules $X(s,t)$ for various $(s,t)$, with the components of $\theta$ being the maps $\phi_{st}$. By assumption the map $\phi_i$ factors through $\theta$. Consider a composition 
\[
X \xrightarrow{\phi_{st}} X(s,t) \xrightarrow{\alpha} X(s_i,t_i).
\]
If $s>s_i$ then the first map has kernel of length $>s_i$, and hence so does the composition, so $\ell(\Bild \alpha\phi_{st}) < \ell(X)-s_i$. If $t>t_i$ then $\alpha$ has kernel of length at least $\ell(X(s,t))-\ell(X(s_i,t_i)) = -s+t+s_i-t_i$, and so
\[
\ell(\alpha(\Bild \phi_{st})) \le \max\{\ell(X)+ t_i - t - s_i,0\} < \ell(X) - s_i.
\]
Since the map $\phi_i$ factors through $\theta$, and it has image of length $\ell(X)-s_i$, we deduce that some summand $X(s,t)$ has $(s,t)\le (s_i,t_i)$. By minimality $(s,t)=(s_i,t_i)$, so $X(s_i,t_i)$ must occur as a summand of $M'$. Since the modules $X(s_i,t_i)$ have distinct lengths, we deduce that they are all summands of $M'$, as required.

Now suppose $k>0$. Let $\pi_1:X(s_1,t_1)\to C_1$ be the projection. For $i>1$, let $\pi_i=p^{s_{i-1}}_{s_i,t_i}: X(s_i,t_i)\to X(s_{i-1},t_i)$, then the composition $\pi_i \phi_i$ is non-zero, it has kernel of length $s_{i-1}$ and cokernel of length $t_i$, and $(s_{i-1},t_{i-1})\le (s_{i-1},t_i)$, so by Lemma \ref{l:3.1} (ii) there is map $\sigma_i:X(s_{i-1},t_{i-1})\to C_i$ with $\pi_i\phi_i = \sigma_i \phi_{i-1}$. This gives a sequence
\[
X \xrightarrow{\phi} \bigoplus_{i=1}^k X(s_i,t_i) \xrightarrow{\psi} \bigoplus_{i=1}^k C_i \to 0
\]
where
\[
\psi = \begin{pmatrix}
\pi_1 & 0 & \\ 
-\sigma_2 & \pi_2 & \\
& & & \ddots \\
& & & & \pi_{k-1} & 0 \\
0 & & & & -\sigma_k & \pi_k
\end{pmatrix}.
\]
Since the $\pi_i$ are all epimorphisms, so is $\psi$. Also, it's easy to check that $\Bild \phi=\Ker\psi$ and hence the sequence is exact.
\end{proof}

We may use Lemma~\ref{l:Nakminapprox} to compute $\cogen^1(M)$ for a module $M$, written as a direct sum of indecomposable modules, say $M = M_1\oplus \dots \oplus M_m$. Let $X$ be an indecomposable module and let $\coKer \phi = C_1\oplus\dots\oplus C_k$ as in Lemma~\ref{l:Nakminapprox}.

\begin{lem}
\label{l:cogenchar}
We have the following for an indecomposable module $X$.
\begin{itemize}
\item[(i)]$X\in\cogen(M)$ $\Leftrightarrow$ $X$ is isomorphic to a submodule of $M_j$ for some $j$.
\item[(ii)]$X\in\cogen^1(M)$ $\Leftrightarrow$ $X,C_1,\dots,C_k$ are in $\cogen(M)$.
\end{itemize}
\end{lem}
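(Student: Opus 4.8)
The plan is to deduce both parts directly from Lemma~\ref{l:balX}(b) together with the explicit description of minimal left $\add(M)$-approximations and their cokernels given in Lemma~\ref{l:Nakminapprox}, using the characterisation of $\cogen(M)$ for uniserial modules.

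\textbf{Part (i).}
First I would observe that over a Nakayama algebra every indecomposable module is uniserial, so $X\in\cogen(M)$ means $X$ embeds in a direct sum $M^{(n)}$; projecting onto the components and using that $X$ has simple socle (being uniserial and nonzero), one of the compositions $X\inj M^{(n)}\twoheadrightarrow M$ must be nonzero on the socle, hence injective. Decomposing $M=M_1\oplus\dots\oplus M_m$ and again using simplicity of $\soc(X)$, that injection lands in a single summand $M_j$. This gives $(\Leftarrow)$ trivially and $(\Rightarrow)$ as above; this is the standard ``uniserial source'' argument and should be short.

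\textbf{Part (ii).}
By definition $X\in\cogen^1(M)$ iff $X$ satisfies the equivalent conditions of Lemma~\ref{l:balX}(a) and (b). Condition (a) is exactly $X\in\cogen(M)$. Assuming this, the minimal left $\add(M)$-approximation $\theta:X\to M'$ is injective, and by Lemma~\ref{l:balX}(b)(iii) the extra condition for $X\in\cogen^1(M)$ is that $\coKer\theta\in\cogen(M)$. Now I invoke Lemma~\ref{l:Nakminapprox}: if $\Hom(X,M)\neq 0$ then $\coKer\theta\cong C_1\oplus\dots\oplus C_k$ with the $C_i$ as described, and since $\cogen(M)$ is closed under direct sums and summands, $\coKer\theta\in\cogen(M)$ iff each $C_i\in\cogen(M)$. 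This yields the stated equivalence. I also need to handle the degenerate case $\Hom(X,M)=0$: then $X\in\cogen(M)$ forces $X=0$ (a nonzero submodule of $M$ would give a nonzero hom), in which case $k=0$, the list $C_1,\dots,C_k$ is empty, and the equivalence holds vacuously; alternatively one notes $X\in\cogen^1(M)$ and $X\in\cogen(M)$ are both equivalent to $X=0$ here.

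\textbf{Main obstacle.}
I expect no serious obstacle; the content is already packaged in the preceding lemmas. The one point requiring a little care is making sure the reduction ``$X\in\cogen^1(M)\Rightarrow X\in\cogen(M)$, and then $\coKer\theta$ is computed by Lemma~\ref{l:Nakminapprox}'' is clean — specifically that the $\theta$ appearing in Lemma~\ref{l:balX}(b)(iii) is literally the minimal left approximation computed in Lemma~\ref{l:Nakminapprox}, and that injectivity of $\theta$ (guaranteed by Lemma~\ref{l:balX}(a)(iii) once $X\in\cogen(M)$) is consistent with the cokernel formula. After that, part~(ii) is immediate from closure of $\cogen(M)$ under finite direct sums and summands.
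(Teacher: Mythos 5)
Your proposal is correct and matches the paper's intended derivation: the paper states this lemma without a separate proof precisely because, as you argue, part (i) is the standard simple-socle/essential-socle argument for uniserial modules, and part (ii) is immediate from Lemma~\ref{l:balX}(a),(b)(iii) combined with the cokernel formula of Lemma~\ref{l:Nakminapprox} (the two minimal approximations coincide by uniqueness) and closure of $\cogen(M)$ under direct sums and summands. The only slip is cosmetic: in the degenerate case $\Hom(X,M)=0$ the indecomposable $X$ is nonzero, so rather than ``$X=0$'' one simply notes that $X\notin\cogen(M)$ and hence $X\notin\cogen^1(M)$, so both sides of (ii) fail and the equivalence still holds.
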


\section{(Faithfully) balanced modules for Nakayama algebras}
\label{s:fbcond}

In this section $\Lambda$ is again a Nakayama algebra. Recall that a module $X$ is a \emph{subquotient} of $Y$ if $X \cong Y''/Y'$ for some submodules $Y' \subseteq Y'' \subseteq Y$; it is a \emph{proper} subquotient if $Y'\neq 0$ or $Y'' \neq Y$.

\begin{thm}
\label{t:bal_nakayama}
If $\Lambda$ is Nakayama, then a module $M$ is balanced if and only if it satisfies the following two conditions:
\begin{itemize}
\item[(B1)]if $X$ is an indecomposable summand of $M$ and $X$ is a proper subquotient of some indecomposable summand of $M$, then $X$ is a proper submodule or proper quotient of some indecomposable summand of~$M$, and
\item[(B2)]if $S,T$ are simple modules with $\Ext^1(T,S)\neq 0$ and $S$ or $T$ is a composition factor of $M$, then $\Hom(M,S)\neq 0$ or $\Hom(T,M)\neq 0$. 
\end{itemize}
\end{thm}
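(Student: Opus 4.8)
The plan is to deduce this from the approximation criterion for balancedness in the lemma just before Lemma~\ref{l:fbchar}: $M$ is balanced if and only if for every indecomposable projective $P$ the minimal left $\add(M)$-approximation $P\to M'$ has cokernel in $\cogen(M)$, and equivalently if and only if for every indecomposable injective $I$ the minimal right $\add(M)$-approximation $M'\to I$ has kernel in $\gen(M)$. Both forms will be used, since the two alternatives in the conclusion of (B1) --- proper submodule versus proper quotient --- correspond to the projective and the injective side.

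First I would make the criterion fully explicit on the projective side (the injective side being dual). Fix an indecomposable projective $P$; if $\Hom(P,M)=0$ the condition is vacuous (the approximation is $P\to 0$), so assume $\Hom(P,M)\ne 0$. By Lemma~\ref{l:Nakminapprox} the cokernel is $C_1\oplus\dots\oplus C_k$, indexed by the minimal elements $(s_1,t_1),\dots,(s_k,t_k)$ of $M_P$; a direct computation from the description of the modules $X(s,t)$ shows that $C_1$ is the quotient of the summand $P(s_1,t_1)$ lying strictly above the top-most copy of $\Top(P)$ in it, and that for $i>1$ the piece $C_i$ is a proper quotient of the summand $P(s_i,t_i)$ whose socle equals the socle of the summand $P(s_{i-1},t_{i-1})$. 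Since $\cogen(M)$ is closed under summands and finite direct sums, and since by part~(i) of the lemma following Lemma~\ref{l:Nakminapprox} an indecomposable is in $\cogen(M)$ exactly when it is a submodule of some indecomposable summand of $M$ --- which, all indecomposables being uniserial, means that $M$ has a summand with the same socle and at least the same length --- the criterion for $P$ reduces to finitely many assertions of the shape ``$M$ has a summand with a prescribed socle and length at least a prescribed bound'', one per $C_i$. Dually the injective side gives such assertions with socle replaced by top.

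It remains to prove that this combined list of assertions, over all indecomposable projectives and injectives, is equivalent to (B1)\,$\wedge$\,(B2). For ``(B1)$\wedge$(B2)$\Rightarrow$balanced'' one runs through the pieces: each $C_i$ with $i>1$ is a proper quotient of a summand whose socle is the socle of another summand $M'$, and, using (B1) for $M'$ and propagating the conclusion along the chain of minimal elements of $M_P$, one shows $C_i\in\cogen(M)$; the top piece $C_1$, when nonzero, is short and satisfies $\Ext^1(\soc C_1,\Top P)\ne 0$ with both these simples occurring in $M$, so (B2) at that wall (together, when needed, with the dual injective computation) supplies the required summand. For the converse one argues by contradiction: if (B1) fails for an indecomposable summand $X$ --- a proper subquotient of a summand but neither a proper submodule nor a proper quotient of any summand --- one feeds the projective cover of $\Top(X)$, or the injective envelope of $\soc(X)$, into the criterion and finds a piece that cannot be a submodule of (resp.\ a quotient of) any summand; and if (B2) fails at a wall $\Ext^1(T,S)\ne 0$ with, say, $S$ a composition factor of $M$, one exhibits from a suitable projective a short cokernel piece whose socle is forced to be $S$ and which is therefore not in $\cogen(M)$.

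The step I expect to be the main obstacle is this last translation. It is \emph{not} a term-by-term matching of the $C_i$ with instances of (B1) and (B2): as the computation shows, $C_i$ for $i>1$ is always itself a proper quotient of a summand, so (B1) holds for it for free, and one must instead propagate the conditions along the chains of minimal elements that appear in both the projective and the injective computations, carefully tracking which of the two alternatives in (B1) is actually produced and switching between the cokernel and the kernel form of the criterion accordingly. One should also check separately that the degenerate features of cyclic-quiver Nakayama algebras (indecomposables longer than every projective; a simple adjacent to itself) cause no difficulty; since Lemmas~\ref{l:3.1}--\ref{l:Nakminapprox} are proved uniformly, this should need only minor care.
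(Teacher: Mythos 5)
Your framework is the same as the paper's: the criterion that $M$ is balanced if and only if, for every indecomposable projective $P$, the minimal left $\add(M)$-approximation has cokernel in $\cogen(M)$ (and dually on the injective side), combined with the explicit cokernel pieces $C_1,\dots,C_k$ from Lemma~\ref{l:Nakminapprox}, and the reduction of ``$C_i\in\cogen(M)$'' to ``$M$ has an indecomposable summand with socle $\soc(C_i)$ and length at least $\ell(C_i)$''. The two implications ``balanced $\Rightarrow$ (B1)'' and ``balanced $\Rightarrow$ (B2)'' are sketched essentially as in the paper and are fine up to small slips (in the (B2) direction the offending piece is $C_1$, whose socle is $T$, not $S$; and when only $T$ is a composition factor of $M$ one must pass to $\kdual M$ over $\Lambda^{\op}$ or argue on the injective side, since the projective cover of $S$ may admit no nonzero map to $M$).

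The genuine gap is the converse, which you flag as ``the main obstacle'' but do not close, and the one concrete mechanism you do propose is insufficient. For $C_1$: condition (B2) only gives $\Hom(T,M)\neq 0$ with $T=\soc C_1$, i.e.\ \emph{some} summand $U$ of $M$ with socle $T$; it does not give one of length at least $\ell(C_1)=t_1$ (and $C_1$ is not ``short'' in general). The paper must take $U$ of maximal length with socle $T$ and, when $\ell(U)<\ell(C_1)$, observe that $U$ is then a proper subquotient of the summand $P(s_1,t_1)$, apply (B1) to $U$, and rule out both alternatives: ``proper submodule'' contradicts the maximality of $\ell(U)$, while ``proper quotient'' produces an element $(s,h)\in M_P$ with $h<t_1\le t_r$ for all $r$, contradicting minimality of the $(s_i,t_i)$. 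An analogous maximal-length argument, with more delicate length bookkeeping, is needed for each $C_i$ with $i>1$, where (B1) is applied to the summand $P(s_{i-1},t_{i-1})$ (note that (B1) is a condition on summands of $M$, so it cannot be ``satisfied by $C_i$ for free''; what must be proved is $C_i\in\cogen(M)$). Your phrase ``propagating the conclusion along the chain of minimal elements'' does not substitute for these maximality-plus-contradiction arguments, which constitute the actual content of the paper's proof of this direction; as written, the proposal establishes the setup but not the equivalence itself.
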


\begin{proof}
Assuming that $M$ is balanced, we prove (B1). Let $U$ be an indecomposable direct summand of $M$ which is a proper subquotient of some other indecomposable summand, and suppose that $U$ is not a proper submodule or quotient of any indecomposable summand of $M$. We derive a contradiction. Let $\theta:P\to U$ be the projective cover of $U$. Clearly $P$ is indecomposable and we consider the minimal left $\add(M)$-approximation $\phi$ of $P$ given by Lemma~\ref{l:Nakminapprox}, involving modules of the form $P(s,t)$. Letting $s = \ell(\Ker\theta)$, we can identify $U = P(s,0)$ and $\theta = \phi_{s0}$. Then $(s,0)\in M_P$, and it is minimal since if $s'<s$ then $U$ is a proper quotient of $P(s',0)$. Now $U$ is a proper subquotient of some indecomposable summand $Y$ of $M$. Say $U \cong Y''/Y'$ for $Y' \subset Y'' \subset Y$. Then $\theta$ lifts to a map $P\to Y''$, and by inclusion this gives a map $\psi:P\to Y$. We write it in the form $\phi_{s',t'}:P\to P(s',t')$. Now $(s,0)$ is incomparable with $(s',t')$, so the minimal approximation of $P$ has at least $k\ge 2$ terms, and the first term must be $(s_1,t_1) = (s,0)$. The second term $(s_2,t_2)$ gives rise to a summand $C_2 = P(s,t_2)$ of $\coKer\phi$. Now $U = P(s,0)$ embeds via the map $i_{s,t_2}^0$ in $C_2$. By assumption $M$ is balanced, so $\coKer\phi$ is cogenerated by $M$. Thus $C_2$ and hence also $U$ embeds in an indecomposable summand of $M$, a contradiction. Thus there can be no such summand $U$, so (B1) holds.

Next, assuming still that $M$ is balanced, we prove (B2). First assume that $S$ is a composition factor of $M$. Let $\phi$ be the minimal $\add(M)$-approximation of the projective cover $P$ of $S$ given by Lemma~\ref{l:Nakminapprox}. Since $\Hom(P,M)\neq 0$ we have $k>0$ in the lemma. Now if $\Hom(M,S)=0$, then $\Hom(P(s_1,t_1),S)=0$, so $S$ is not the top of $P(s_1,t_1)$, so $\phi_1$ is not surjective. Thus $C_1 = \coKer\phi_1$ is non-zero with socle $T$. Since $M$ is balanced, $C_1$ embeds in $M$, hence $\Hom(T,M)\neq 0$. On the other hand, if $T$ is a composition factor of $M$, then since the dual module $DM$ is a balanced $\Lambda^{op}$-module and $\Ext^1(DS,DT)\neq 0$, this argument shows that $\Hom(DM,DT)\neq 0$ or $\Hom(DS,DM)\neq 0$, so $\Hom(T,M)\neq 0$ or $\Hom(M,S)\neq 0$, as required.

For the converse, we now assume that (B1) and (B2) hold. Fix a simple $S$ and consider the minimal left $\add(M)$-approximation $\phi$ of the projective cover $P$ of $S$ as in Lemma~\ref{l:Nakminapprox}. We need to show that the summands $C_i$ of $\coKer\phi$ are in $\cogen(M)$. If $k=0$ there is nothing to check, so suppose that $k>0$. Thus $S$ is a composition factor of $M$.

First we consider the term $C_1$. If $\Hom(M,S)\neq 0$, then $M$ has a summand with top $S$. It follows that the first of the minimal elements of $M_P$ is of the form $(s,0)$. But then $C_1=0$, so there is nothing to check for this term. On the other hand, if $\Hom(M,S)=0$, then the first of the minimal elements of $M_P$ is of the form $(s_1,t_1)$ with $t_1\neq 0$. Then $C_1$ is non-zero, say with socle $T$. Clearly $\Ext^1(T,S)\neq 0$, so by condition $(B2)$ there is an indecomposable summand $U$ of $M$ with socle $T$, and, say, length $h$. Take $h$ maximal with this property. If $h\ge \ell(C_1)$, then $C_1$ embeds in $U$, as required. Otherwise $h < \ell(C_1)$. Then $U$ embeds in $C_1$, so it is a proper subquotient of $P(s_1,t_1)$. Thus by (B1), $U$ is a proper quotient or submodule of a summand $U'$ of $M$. Both are impossible. Indeed, if there is a proper surjection $\alpha:U'\to U$, then the top of $\Ker\alpha$ is $S$, so $\Ker\alpha$ is the image of a map $\psi:P\to U'$. But then $\ell(\coKer\psi) = \ell(U) = h < \ell(C_1) = t_1 \le t_r$ for all $r$. This is impossible since $U' \cong P(s,h)$ for some $s$ so $(s,h)\in M_P$, contradicting the fact that the $(s_i,t_i)$ are the minimal elements. If $U$ is a proper submodule of $U'$ then $h$ was not maximal.

Next we consider the term $C_i$ for $1<i\le k$. It is a quotient of $P(s_i,t_i)$ and it has a submodule isomorphic to $P(s_{i-1},t_{i-1})$. Thus $P(s_{i-1},t_{i-1})$ is a proper subquotient of an indecomposable summand of $M$. Since $(s_{i-1},t_{i-1})$ is a minimal element of $M_P$, it follows that $P(s_{i-1},t_{i-1})$ is not a proper quotient of any indecomposable summand of $M$. Thus by (B1) it is a proper submodule of an indecomposable summand $U'$ of $M$. Take $\ell(U')$ to be maximal. If $\ell(U')\ge \ell(C_i)$, then $C_i$ embeds in $U'$, as required. Thus for a contradiction suppose that $\ell(U')<\ell(C_i)$. Then $U'$ properly embeds in $C_i$. Thus $U'$ is a subquotient of $P(s_i,t_i)$, so by condition (B1), $U'$ is a proper submodule or quotient of an indecomposable summand $W$ of $M$. If it is a proper submodule of $W$, then $\ell(U')$ is not maximal. Thus $U'$ is a proper quotient of $W$. Now the composition $f$ of $\phi_{i-1}$ with the inclusion $P(s_{i-1},t_{i-1})\to U'$ lifts to a map $g:P\to W$. Then $\ell(\Bild g) >\ell(\Bild f) = \ell(\Bild \phi_{i-1})$ and $\ell(\coKer g) = \ell(\coKer f) > \ell(\coKer \phi_{i-1})$. By assumption $(\ell(\Ker g),\ell(\coKer g)) \ge (s_j,t_j)$ for some $j$. We must have $j>i$ since $\ell(\Bild g) > \ell(\Bild \phi_{i-1})$. On the other hand, $\ell(\coKer g) = \ell(\coKer f) < \ell(\coKer \alpha) = \ell(\coKer \phi_i)$, where $\alpha$ is the composition of $\phi_{i-1}$ with the inclusion $U'\to C_i$. Thus $j\ge i$ is not possible.
\end{proof}

We recall that Nakayama algebras are QF-3, so a module is faithful if and only if it has every indecomposable projective-injective module as a summand, see \cite[Theorem 32.2]{AF74}. For faithfully balanced modules, Theorem~\ref{t:bal_nakayama} takes the following form.

\begin{cor}
\label{c:fb_nakayama}
If $\Lambda$ is Nakayama, then a module $M$ is faithfully balanced if and only if it satisfies the following conditions:
\begin{itemize}
\item[(FB0)]every indecomposable projective-injective module is a summand of $M$,
\item[(FB1)]if $X$ is an indecomposable summand of $M$ and $X$ is not projective-injective, then $X$ is a proper submodule or proper quotient of some indecomposable summand of~$M$, and
\item[(FB2)]if $S,T$ are simple modules with $\Ext^1(T,S)\neq 0$, then $\Hom(M,S)\neq 0$ or $\Hom(T,M)\neq 0$. 
\end{itemize}
\end{cor}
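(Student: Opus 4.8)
The plan is to derive the corollary from Theorem~\ref{t:bal_nakayama} together with the fact (recalled just above, from \cite[Theorem~32.2]{AF74}) that a module over a Nakayama algebra is faithful exactly when it satisfies (FB0). Since ``faithfully balanced'' means ``faithful and balanced'', and balanced is characterized by (B1) and (B2), it suffices to show that for a module $M$ satisfying (FB0) the conditions (B1), (B2) are equivalent to (FB1), (FB2).

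Condition (FB0) makes $M$ faithful, and a faithful module over an artin algebra has every simple module as a composition factor: if $S=\Lambda e/\rad(\Lambda e)$ were not a composition factor of $M$, then every nonzero map $\Lambda e\to M$ would have image a quotient of $\Lambda e$ with top $S$ lying inside $M$, forcing $eM=\Hom_\Lambda(\Lambda e,M)=0$, so that the nonzero left ideal $\Lambda e$ annihilates $M$ --- impossible. Hence the extra hypothesis ``$S$ or $T$ is a composition factor of $M$'' in (B2) is automatic, and (B2) becomes (FB2). One implication between (B1) and (FB1) is also formal: a nonzero indecomposable projective-injective $Q$ cannot be a proper subquotient $Y''/Y'$ of an indecomposable (hence uniserial) $Y$, since projectivity splits $Y''\twoheadrightarrow Q$, forcing $Q=Y''$ and $Y'=0$, and then injectivity splits $Q=Y''\hookrightarrow Y$, forcing $Q=Y$, contradicting properness; so any indecomposable summand of $M$ that is a proper subquotient of an indecomposable summand is automatically not projective-injective, whence (FB1) implies (B1).

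For the remaining implication I would prove the following claim about the algebra and combine it with (FB0): \emph{over any Nakayama algebra, every indecomposable module that is not projective-injective is a proper subquotient of some indecomposable projective-injective module.} Granting this, if $M$ satisfies (FB0) and (B1) and $X$ is an indecomposable summand of $M$ that is not projective-injective, then $X$ is a proper subquotient of an indecomposable projective-injective $Q$, which by (FB0) is a summand of $M$, so applying (B1) to $X$ yields that $X$ is a proper submodule or quotient of an indecomposable summand, which is (FB1).

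It remains to prove the claim, which I expect to be the only substantial point. One may assume $\Lambda$ connected, so its quiver is linearly oriented $A_n$ or an oriented cycle, and each indecomposable $X$ is uniserial, determined by the allowed path $\gamma_X$ along which its composition factors lie; a uniserial $W$ is a subquotient of a uniserial $Y$ exactly when $\gamma_W$ is a contiguous subpath of $\gamma_Y$, and properly so unless $\gamma_W=\gamma_Y$. Starting from $X$, put $M_1=P(\Top X)$, the longest uniserial module with top $\Top X$, so $\gamma_X$ is an initial subpath of $\gamma_{M_1}$; if $M_1$ is injective we stop, and otherwise $M_1\subsetneq M_2:=I(\soc M_1)$ is a submodule relation, so $\gamma_{M_1}$ is a terminal subpath of $\gamma_{M_2}$ and $\gamma_X$ is still a contiguous subpath of it. Iterating with $M_{2k}=I(\soc M_{2k-1})$ and $M_{2k+1}=P(\Top M_{2k})$, each $M_j$ contains $\gamma_X$ as a contiguous subpath, satisfies $M_j\subseteq M_{j+1}$, and is projective for $j$ odd and injective for $j$ even; since the lengths of indecomposable $\Lambda$-modules are bounded by the largest Kupisch number, the chain stabilises, giving $Q:=M_j=M_{j+1}$ which is then both projective and injective, and $X$ is a subquotient of $Q$, properly unless $X=Q$, which is excluded. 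The points to check with care are that the operations $I(\soc-)$ and $P(\Top-)$ act on paths as stated (including the wrap-around in the cyclic case) and that stabilisation genuinely produces a projective-injective module --- which is precisely the condition that the path can be extended neither forward nor backward; everything else is a direct translation of Theorem~\ref{t:bal_nakayama} through the QF-3 characterization of faithfulness.
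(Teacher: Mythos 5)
Your argument is correct and is essentially the proof the paper intends but leaves implicit after its QF-3 remark: faithfulness is exactly (FB0) by the cited criterion of \cite{AF74}, and under faithfulness the conditions (B1), (B2) of Theorem~\ref{t:bal_nakayama} become (FB1), (FB2), the only substantive extra input being your claim that every indecomposable module which is not projective-injective is a proper subquotient of an indecomposable projective-injective one. One small slip in your proof of that claim: at the even-to-odd steps $M_{2k}$ is a \emph{quotient} of $M_{2k+1}=P(\Top M_{2k})$, not a submodule; this is harmless (lengths are still non-decreasing, $X$ remains a subquotient at every stage, and equal lengths still force $M_j=M_{j+1}$), and in fact the claim also follows more quickly from the QF-3 fact you already invoke: the direct sum of the indecomposable projective-injectives is faithful, hence $\Lambda$ embeds in a finite direct sum of copies of it, so the uniserial projective $P(\Top X)$, having simple essential socle, embeds in a single indecomposable projective-injective $Q$, and $X$, being a quotient of $P(\Top X)$ not isomorphic to $Q$, is then a proper subquotient of $Q$.
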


Specializing to the algebra $\Lambda_n$, which is a Nakayama algebra, this gives Theorem~\ref{t:fban}.

\section{Critical modules and Minimal faithfully balanced modules}
\label{s:crit}
Let $\Lambda$ be an artin algebra. 

\begin{lem}
\label{l:masauterprop}
Given modules $N,M$, we have 
\begin{itemize}
\item[(i)]$N\in \gen_1(M)$ if and only if $\gen_1(M\oplus N) = \gen_1(M)$.
\item[(ii)]$N\in \cogen^1(M)$ if and only if $\cogen^1(M\oplus N) = \cogen^1(M)$.
\end{itemize}
\end{lem}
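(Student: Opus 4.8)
The plan is to prove both statements by a symmetric argument, so I focus on (i), the claim about $\gen_1$; then (ii) follows by dualizing (passing to $\Lambda^{\op}$ and applying $\kdual$, exactly as Lemma~\ref{l:dualbalX} was deduced from Lemma~\ref{l:balX}). For (i), the ``if'' direction is immediate: if $\gen_1(M\oplus N)=\gen_1(M)$, then since $N$ is trivially a summand of $M\oplus N$ and every summand of an object of $\gen_1(-)$ lies in $\gen_1(-)$ (the categories are closed under summands, as noted after Lemma~\ref{l:dualbalX}), we get $N\in\gen_1(M\oplus N)=\gen_1(M)$. So the content is the ``only if'' direction: assuming $N\in\gen_1(M)$, show $\gen_1(M\oplus N)=\gen_1(M)$.

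One inclusion is easy. Since $\add(M)\subseteq\add(M\oplus N)$, any module admitting a presentation $M''\to M'\to X\to 0$ with $M',M''\in\add(M)$ that stays exact after applying $\Hom(M,-)$ also has $M',M''\in\add(M\oplus N)$; and the sequence $\Hom(M\oplus N,M'')\to\Hom(M\oplus N,M')\to\Hom(M\oplus N,X)\to 0$ breaks up as the direct sum of the sequence for $\Hom(M,-)$ and the one for $\Hom(N,-)$, so exactness for $M$ together with $N\in\gen_1(M)$ forces exactness for $\Hom(N,-)$ as well — here I would invoke that $\gen_1(M)$ is characterized by part (b)(ii) of Lemma~\ref{l:dualbalX}, applied to $N$ and to each $M',M'',X\in\gen_1(M)$, noting $M',M''\in\add(M)\subseteq\gen_1(M)$ and $X\in\gen_1(M)$. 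Hence $\gen_1(M)\subseteq\gen_1(M\oplus N)$.

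For the reverse inclusion $\gen_1(M\oplus N)\subseteq\gen_1(M)$, take $X\in\gen_1(M\oplus N)$, so there is $(M\oplus N)''\to(M\oplus N)'\to X\to 0$ with the two outer terms in $\add(M\oplus N)$ and the sequence exact after $\Hom(M\oplus N,-)$. The idea is to replace each occurrence of $N$ by $M$-data using $N\in\gen_1(M)$: pick a presentation $N_2\to N_1\to N\to 0$ with $N_1,N_2\in\add(M)$ exact after $\Hom(M,-)$, and ``splice'' it in. Concretely, I would argue that $\gen_1(M)$ is closed under the relevant extensions/cokernels: since $\gen_1(M)$ is closed under direct sums and summands and contains $M$ and (by hypothesis) $N$, it contains $\add(M\oplus N)$; then one shows $\gen_1(M)$ is closed under cokernels of maps $A\to B$ with $A,B\in\gen_1(M)$ that remain exact after $\Hom(M,-)$ — this is a diagram-chase using the snake lemma analogous to the proof of Lemma~\ref{l:balX}(b), combining a presentation of $A$ over $M$ with one of $B$ over $M$ to build one of $\coKer(A\to B)$. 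Applying this with $A=(M\oplus N)''$, $B=(M\oplus N)'$ gives $X\in\gen_1(M)$.

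The main obstacle is the closure-under-cokernels step: one must check that splicing $M$-presentations of $A$ and of $B$ along the map $A\to B$ really produces a two-term $\add(M)$-presentation of $\coKer(A\to B)$ that stays exact under $\Hom(M,-)$, and this requires the hypothesis that $A\to B$ is itself exact under $\Hom(M,-)$ (which holds here because it is part of an exact-under-$\Hom(M\oplus N,-)$, hence exact-under-$\Hom(M,-)$, presentation of $X$). I expect this to be a short homological-algebra argument once set up correctly, paralleling the ``(ii) implies (i)'' and ``(i) implies (iii)'' parts of Lemma~\ref{l:balX}; alternatively, the whole lemma can be quoted from \cite[\S2]{PS18} or deduced formally from the fact that $\gen_1(M)$ is a resolving-type subcategory relative to $\add(M)$, but giving the direct argument keeps the section self-contained.
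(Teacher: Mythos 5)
Your plan for the ``if'' direction, the reduction of (ii) to (i) by duality, and the inclusion $\gen_1(M\oplus N)\subseteq\gen_1(M)$ are all sound (the last one does work: $\add(M\oplus N)\subseteq\gen_1(M)$ by hypothesis and closure under sums and summands, and closure of $\gen_1(M)$ under cokernels of $\Hom(M,-)$-exact maps between its objects follows either by your splicing argument or, faster, by the five lemma applied to the natural maps $\Hom(M,-)\otimes_E M\to(-)$, $E=\End(M)$, along the two right-exact rows). The genuine gap is in the inclusion $\gen_1(M)\subseteq\gen_1(M\oplus N)$: your claim that the given $\add(M)$-presentation of $X$ automatically stays exact under $\Hom(N,-)$ when $N\in\gen_1(M)$ is false, and no application of Lemma~\ref{l:dualbalX}(b)(ii) to $N$, $M'$, $M''$, $X$ produces it. Concretely, take $\Lambda=\Lambda_2$, $M=\Lambda=M_{12}\oplus M_{22}$ and $N=X=S[1]=M_{11}$. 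Then $\gen_1(M)$ is all of $\lmod{\Lambda}$, so $N,X\in\gen_1(M)$, and the projective presentation $0\to M_{22}\to M_{12}\to M_{11}\to 0$ is $\Hom(M,-)$-exact; but $\Hom(N,M_{12})=0$ while $\Hom(N,X)\neq 0$, so the $\Hom(N,-)$-summand of your displayed sequence is not exact. Of course $X\in\gen_1(M\oplus N)$ still holds (use the presentation with middle term $N=X\in\add(M\oplus N)$ and zero left term), but this witnesses exactly the point your argument misses: one must change the presentation, not reuse the old one.

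A correct repair goes through characterization (iii) of Lemma~\ref{l:dualbalX}(b). For $X\in\gen_1(M)$ let $\theta\colon M_0\to X$ be the minimal right $\add(M)$-approximation, so $\theta$ is onto and $\Ker\theta\in\gen(M)$, and let $\psi\colon N_1\to X$ be a right $\add(N)$-approximation; then $(\theta,\psi)\colon M_0\oplus N_1\to X$ is a surjective right $\add(M\oplus N)$-approximation. Its kernel lies in $\gen(M)$: choose an epimorphism $p\colon P_1\to N_1$ with $P_1\in\add(M)$ (possible since $N\in\gen(M)$) and a lift $g\colon P_1\to M_0$ of $\psi p$ through $\theta$; the map $(-g,p)\colon P_1\to\Ker(\theta,\psi)$ surjects onto $N_1$ under the projection, so $\Ker(\theta,\psi)$ is generated by its image together with $\Ker\theta$, hence lies in $\gen(M)\subseteq\gen(M\oplus N)$. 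Since the kernel of the minimal right $\add(M\oplus N)$-approximation is a direct summand of this kernel, Lemma~\ref{l:dualbalX} gives $X\in\gen_1(M\oplus N)$. Note finally that the paper itself offers no argument at this point—it quotes \cite{MS19}*{Lemma 3.3} for (ii) and dualizes—so once you repair this inclusion your write-up would in fact be more self-contained than the original.
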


\begin{proof}
Part (ii) is due to Ma and Sauter \cite[Lemma 3.3]{MS19}, and part (i) is dual.
\end{proof}

Recall that a (faithfully balanced) \emph{$\gen_1$-category} is a subcategory of $\lmod{\Lambda}$ of the form $\gen_1(M)$, where $M$ is a (faithfully balanced) module.

\begin{proof}[Proof of Theorem \ref{t:gencrit}]
Clearly $\mcG$ contains at least one $\gen_1$-critical module $M$ with $\gen_1(M)=\mcG$. We shall show that $M$ is uniquely determined.

By assumption $\Lambda$ is representation-directed \cite[\S IX.3]{ASS}, so we can enumerate the indecomposable modules in $\mcG$ as $X_1,X_2,\dots,X_m$ with $\Hom(X_j,X_i)=0$ for $j>i$ and each $\End(X_i)$ a division algebra.

We show by induction on $i$ how to determine whether or not $X_i$ is a summand of $M$. Let $M_i$ be the direct sum of all $X_j$ with $j<i$ which occur as summands of $M$. By the inductive hypothesis this is uniquely determined.

We show that $X_i$ is a summand of $M$ if and only if $X_i \notin \gen_1(M_i)$. Namely, if $X_i$ is a summand of $M$, write $M = M'\oplus X_i$. 
By the ordering of the $X_i$, the minimal right $\add(M')$-approximation of $X$ is the same as the minimal right $\add(M_i)$-approximation. Thus if $X_i \in\gen_1(M_i)$, then $X_i\in\gen_1(M')$. But then $\gen_1(M')=\gen_1(M'\oplus X_i) = \gen_1(M)$ by Lemma \ref{l:masauterprop}, contradicting the criticality of $M$. Conversely, if $X_i$ is not a summand of $M$, then the minimal right $\add(M)$-approximation of $X$ is the same as the minimal right $\add(M_i)$-approximation, so if $X_i \notin \gen_1(M_i)$ then $X_i\notin\gen_1(M) = \mcG$, which is nonsense. 

The final part of the theorem follows from Lemma~\ref{l:masauterprop}.
\end{proof}

In our example in the introduction, we have illustrated the faithfully balanced $\gen_1$-categories for~$\Lambda_n$ with $n=3$. The next proposition shows that in order to understand arbitrary $\gen_1$-categories for~$\Lambda_n$ it is equivalent to understand faithfully balanced $\gen_1$-categories for $\Lambda_{n+1}$. Let $\mathcal{C}$ be the category of $\Lambda_{n+1}$-modules vanishing at vertex $1$, and  $F:\mathcal{C}\to\lmod{\Lambda_n}$ the equivalence of categories which forgets vertex $1$.

\begin{pro}
The assignment $\mathcal{G}\mapsto F(\mathcal{G}\cap \mathcal{C})$ gives a 1:1 correspondence between faithfully balanced $\gen_1$-categories for $\lmod{\Lambda_{n+1}}$ and arbitrary $\gen_1$-categories for $\Lambda_n$. The inverse  sends $\mathcal{H}$ to the category of $\Lambda_{n+1}$-modules which are the direct sum of an injective module and a module in $\mathcal{C}$ whose image under $F$ is in $\mathcal{H}$.
\end{pro}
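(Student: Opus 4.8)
The statement asserts that the assignment $\mathcal{G}\mapsto F(\mathcal{G}\cap\mathcal{C})$ is a bijection between faithfully balanced $\gen_1$-categories for $\Lambda_{n+1}$ and all $\gen_1$-categories for $\Lambda_n$, with an explicitly described inverse. I would organise the proof around the critical modules supplied by Theorem~\ref{t:gencrit}: since both $\Lambda_n$ and $\Lambda_{n+1}$ are representation-directed, every $\gen_1$-category is $\gen_1(M)$ for a unique $\gen_1$-critical $M$, and I would track what happens to these critical modules under the two assignments. The first step is to understand the functor $F$ and the subcategory $\mathcal C$ concretely. In the staircase picture for $\Lambda_{n+1}$, the modules vanishing at vertex $1$ are exactly those indexed by $(i,j)$ with $i\ge 2$, i.e.\ everything below the top row; $F$ identifies this with $\lmod{\Lambda_n}$ by shifting indices $(i,j)\mapsto(i-1,j-1)$. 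The top row of the $\Lambda_{n+1}$-diagram is precisely the indecomposable injective $\Lambda_{n+1}$-modules $M_{1j}$, and by (FB0) every faithfully balanced module contains $M_{1,n+1}$; more is true, since the injectives are exactly the modules not in $\mathcal C$ that are ``forced'' to appear.

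\textbf{Key steps.} First I would check that $\mathcal C$ is closed under submodules, quotients and extensions (it is the category of representations supported away from a source vertex, so this is immediate), and that $F$ is exact; hence $F$ commutes with the formation of kernels and cokernels of $\add$-approximations among modules in $\mathcal C$. Second, I would analyse, for a faithfully balanced $\Lambda_{n+1}$-module $M$, the decomposition $M = M_{\mathrm{inj}}\oplus M'$ where $M_{\mathrm{inj}}$ is the sum of the indecomposable injective summands and $M'\in\add(\mathcal C)$: the point is that the minimal left $\add(M)$-approximation of a module $X\in\mathcal C$ can be computed using only $M'$, up to the caveat that maps from $X$ into injective summands must be accounted for — but since $X$ vanishes at vertex $1$ while every injective $M_{1j}$ has socle $S[j]$ and composition factors down to $S[1]$, a careful length/socle argument (in the spirit of Lemma~\ref{l:3.1} and Lemma~\ref{l:Nakminapprox}) shows these maps factor through $M'$-summands, so $F$ of the minimal left $\add(M)$-approximation of $X$ is the minimal left $\add(FM')$-approximation of $FX$ in $\lmod{\Lambda_n}$. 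Consequently $X\in\cogen^1(M)\cap\mathcal C$ iff $FX\in\cogen^1(FM')$; but by Lemma~\ref{l:fbchar} the $\gen_1$-category $\gen_1(M)$ is determined by $\cogen^1(M)$ via duality, so I would either work throughout on the $\cogen^1$ side or dualise at the end. Either way, this yields $F(\gen_1(M)\cap\mathcal C) = \gen_1(FM')$, which is an arbitrary $\gen_1$-category for $\Lambda_n$ (any $\Lambda_n$-module arises as such an $M'$). Third, for the reverse map: given $\mathcal H = \gen_1(N)$ for $\Lambda_n$, lift $N$ to $\tilde N\in\add(\mathcal C)$ with $F\tilde N = N$ and set $M = \tilde N \oplus (\text{all indecomposable injectives of }\Lambda_{n+1})$. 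One checks $M$ is faithfully balanced: it is faithful because it contains all projective-injectives (use that $\Lambda_{n+1}$ is QF-3, as in Corollary~\ref{c:fb_nakayama}), and it is balanced because conditions (FB1), (FB2) of Corollary~\ref{c:fb_nakayama} hold — (FB2) holds since for each adjacent pair of simples $S[j],S[j+1]$ the injective $M_{1,j}$ (or, for the boundary, the projective-injectives) provides the needed hom, and (FB1) holds for the injective summands trivially and for summands of $\tilde N$ by translating the corresponding condition for $N$ inside $\lmod{\Lambda_n}$ via $F$. Finally, I would verify the two composites are identities: $F(\mathcal H)\mapsto$ the described lift $\mapsto$ back to $\mathcal H$ is the content of step two applied to the lifted module, and conversely starting from a faithfully balanced $\gen_1(M)$, one recovers $M$ up to the canonical choice of injective summands because (FB0) forces all injectives to be present and Theorem~\ref{t:gencrit} pins down the $\gen_1$-critical module inside $\mathcal C$.

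\textbf{The main obstacle.} The genuinely delicate point is step two: showing that the injective summands $M_{1j}$ of $M$ are ``invisible'' to the approximation theory of modules in $\mathcal C$, i.e.\ that adding or removing them does not change $\gen_1(M)\cap\mathcal C$ after applying $F$, and simultaneously that they contribute nothing new outside $\mathcal C$ beyond injectives. Concretely one must rule out that a map $X\to M_{1j}$ with $X\in\mathcal C$ carries information not already captured by maps into $\add(M')$; this requires the uniserial structure of $\Lambda_{n+1}$-modules and an argument that the cokernel pieces $C_i$ of Lemma~\ref{l:Nakminapprox} coming from an injective summand are themselves either zero or already cogenerated once the corresponding $\Lambda_n$-data is, since such a $C_i$ is a quotient of $M_{1j}$ and hence again a (possibly virtual-cohook-type) module whose cogeneration reduces to the lower algebra. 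I expect this to be the bulk of the work; the faithfulness/balancedness verification in step three and the formal bijection bookkeeping in the final step are routine given Corollary~\ref{c:fb_nakayama} and Theorem~\ref{t:gencrit}.
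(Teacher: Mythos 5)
Your overall bookkeeping (decompose a faithfully balanced module into its injective part and its part in $\mathcal{C}$, lift back by adding all injectives, pin everything down with Theorem \ref{t:gencrit} resp.\ Lemma \ref{l:masauterprop}) is in the right spirit, but the step you yourself identify as the heart of the argument is carried out on the wrong side and is false as stated. You work with \emph{left} $\add(M)$-approximations, i.e.\ with $\cogen^1$, and claim that maps from $X\in\mathcal{C}$ into injective summands $M_{1j}$ factor through the summands of $M$ lying in $\mathcal{C}$, so that $X\in\cogen^1(M)\cap\mathcal{C}$ iff $F(X)\in\cogen^1(F(M'))$. This is not true: modules in $\mathcal{C}$ map nontrivially into the injectives (e.g.\ $M_{2j}\hookrightarrow M_{1j}$), and these maps need not factor through anything in $\mathcal{C}$. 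Concretely, for $\Lambda_2$ take the faithfully balanced module $M=M_{12}\oplus S[1]$, so $M'=0$, and $X=S[2]\in\mathcal{C}$: then $X\in\cogen^1(M)$ (its minimal left approximation is $S[2]\hookrightarrow M_{12}$ with cokernel $S[1]\in\add(M)$), while $\cogen^1(F(M'))=\{0\}$ does not contain $F(X)$. So the equivalence you want to prove on the $\cogen^1$ side simply fails, and no length/socle refinement will rescue it. The auxiliary claim that ``$\gen_1(M)$ is determined by $\cogen^1(M)$ via duality by Lemma \ref{l:fbchar}'' is also unjustified: that lemma only characterizes faithful balancedness; the $K$-duality relates $\gen_1$ of $M$ to $\cogen^1$ of $\kdual M$ over the \emph{opposite} algebra, which you do not set up (and doing so would exchange $\mathcal{C}$ and the injectives for their opposite-side analogues, changing the problem).

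The correct observation — and the one the paper's proof rests on — lives on the $\gen_1$ side, where it is essentially trivial rather than delicate: every nonzero quotient of an injective $M_{1j}$ has top $S[1]$, so $\Hom(I,X)=0$ for every injective $I$ and every $X\in\mathcal{C}$. Hence right $\add(M'\oplus I)$-approximations of modules in $\mathcal{C}$ involve only $M'$, and since $F\colon\mathcal{C}\to\lmod{\Lambda_n}$ is an equivalence, $X\in\gen_1(M'\oplus I)$ iff $F(X)\in\gen_1(F(M'))$ for $M'\in\mathcal{C}$, $I$ injective. Combining this with Lemma \ref{l:masauterprop} (every faithfully balanced $\gen_1$-category is of the form $\gen_1(M'\oplus\kdual\Lambda_{n+1})$ with $M'\in\mathcal{C}$, because all injectives lie in it, and such a category is determined inside $\mathcal{C}$ by $\gen_1(F(M'))$ and outside $\mathcal{C}$ by containing all injectives) yields both the bijection and the stated inverse. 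To repair your write-up, replace the $\cogen^1$/left-approximation analysis by this vanishing statement for $\Hom(\text{injective},\mathcal{C})$; your steps one, three and four can then stay largely as they are.
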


\begin{proof}
Observe that the indecomposable modules for $\Lambda_{n+1}$ are either injective or in $\mathcal{C}$. If $M \in \mathcal{C}$ and $I$ is injective, it is easy to see that a module $X\in \mathcal{C}$ is in $\gen_1(M\oplus I)$ if and only if $F(X)\in\gen_1(F(M))$. Using Lemma~\ref{l:masauterprop}, the result follows.
\end{proof}

According to our computer calculations, the number of faithfully balanced $\gen_1$-categories in $\lmod{\Lambda_n}$ for $n=1,\dots,6$ is $1,2,7,39,325,3875$, and the number of minimal faithfully balanced $\Lambda_n$-modules is $1,2,6,25,134,881$.

\begin{lem}
\label{l:rigidpdone}
A basic module $T$ with $\pd T\leq 1$ and which is rigid (i.e.\ $\Ext^1(T,T)=0$), is $\gen_1$-critical. 
\end{lem}

\begin{proof}
Assume $T=M\oplus N$ and $\gen_1(M)=\gen_1(T)$. Then we have $N\in \gen_1(M)$ and so there is an exact sequence $M_1\to M_0\to N\to 0$ with $M_0, M_1\in \add(M)$ and $\Hom(M,-)$ exact on it. Thus we obtain two short exact sequences
\[
\begin{aligned}
0 \to X_1 \to &M_1 \to X_0 \to 0, \\
0 \to X_0 \to &M_0 \to N \to 0.
\end{aligned}
\]
Applying $\Hom(N, -)$ to the first exact sequence yields an exact sequence
\[
0=\Ext^1(N, M_1)\to \Ext^1(N, X_0)\to \Ext^2(N, X_1)=0
\]
since $T$ is rigid and $\pd N\leq \pd T\leq 1$. This means the second short exact sequence is split and so $N\in \add(M)$. It follows that $\add(M)=\add(T)$ and therefore $M=T$ since $T$ is basic. 
\end{proof}

\begin{proof}[Proof of Theorem \ref{t:tilting}]
The first part is a special case of Lemma \ref{l:rigidpdone}.
Now suppose that $T$ is a basic tilting module and $\Lambda$ is hereditary. Let $M$ be a faithfully balanced summand of $T$. Then we have two exact sequences 
\[
\begin{aligned}
0 \to \La \to &M_0 \to X \to 0, \\
0 \to X \to &M_1 \to Y \to 0
\end{aligned}
\]
with $M_i\in \add (M)$ such that $\Hom_{\La} (-, M)$ is exact on both short exact sequences. It is straightforward to check that $T'=M \oplus X$ is a tilting module. By definition $T' \in \gen(T) \cap \cogen (T)=T^{\perp} \cap {}^{\perp} T$, so $T \oplus T'$ is rigid and since tilting modules are maximal rigid we conclude that $\add (T)= \add (T')$. By applying $\Hom (-,M)$ to the second short exact sequence we obtain $\Ext^1(Y,M)=0$. By applying $\Hom (Y,-)$ to the first exact sequence we obtain $\Ext^1(Y,X)=0$. Thus the second short exact sequence splits, so $X \in \add(M)$. This implies $\add (T) = \add(M \oplus X) = \add (M)$, and since $T$ is basic, it follows that $M=T$.
\end{proof}

We refer to \cite{AIR} for the notion of a support $\tau$-tilting module.

\begin{cor}
Every basic support $\tau$-tilting module is $\gen_1$-critical and balanced. 
\end{cor}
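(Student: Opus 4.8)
The plan is to reduce to the case of an honest $\tau$-tilting module and then to identify $\gen_1(M)$ with the torsion class that $M$ generates. First I would invoke the support $\tau$-tilting reduction of Adachi, Iyama and Reiten: a basic support $\tau$-tilting $\Lambda$-module $M$ is a basic $\tau$-tilting $\Lambda'$-module, where $\Lambda' = \Lambda/\langle e\rangle$ and $\Lambda e$ is the projective part of the corresponding support $\tau$-tilting pair; in particular $eM = 0$, so $M$ is genuinely a $\Lambda'$-module. Neither property in the statement changes under this quotient: putting $E = \End_\Lambda(M) = \End_{\Lambda'}(M)$, the natural map $\Lambda \to \End_E(M)$ annihilates $\langle e\rangle$ and hence factors through $\Lambda'$, so $M$ is balanced over $\Lambda$ iff over $\Lambda'$; and every module in $\gen(M)$ is a $\Lambda'$-module on which $\Hom_\Lambda = \Hom_{\Lambda'}$, so $\gen_1(M)$, and its being critical, are the same computed over $\Lambda$ or $\Lambda'$. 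Thus we may assume $M$ is $\tau$-tilting over $\Lambda$.

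The key input is then: for a $\tau$-tilting module $M$ the subcategory $\gen(M) = \Fac(M)$ is a torsion class with torsion-free class $\mathcal{F} = \{Y : \Hom(M,Y) = 0\}$; the module $M$ is $\Ext$-projective in it, i.e.\ $\Ext^1(M,\Fac(M)) = 0$; and moreover $\gen_1(M) = \Fac(M)$. For the last equality the inclusion $\subseteq$ is trivial, and for $\supseteq$ one uses that for $X \in \Fac(M)$ the minimal right $\add(M)$-approximation $M_0 \to X$ is surjective (Lemma~\ref{l:dualbalX}(a)) with kernel again in $\Fac(M)$ --- the syzygy remaining in the torsion class being exactly what the $\tau$-tilting reduction provides --- so that iterating yields an $\add(M)$-presentation $M_1 \to M_0 \to X \to 0$ exhibiting $X \in \gen_1(M)$ via Lemma~\ref{l:dualbalX}(b). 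I expect this to be the main obstacle: the equality $\gen_1(M) = \Fac(M)$ genuinely needs $M$ to be a \emph{full} support $\tau$-tilting module and not merely $\tau$-rigid (it fails already for $M = \Lambda e_1$ over the path algebra of $1 \to 2$, where $\Fac(M) = \add(\Lambda e_1 \oplus S_1)$ but $S_1 \notin \gen_1(M)$). I would cite \cite{PS18}, \cite{MS19} and the $\tau$-tilting literature for it.

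Granting this, $\gen_1$-criticality is immediate. Let $M = N \oplus M'$ with $M' \ne 0$; as $M$ is basic, $M'$ has no indecomposable summand in $\add(N)$. If $M' \in \gen_1(N)$, there is a short exact sequence $0 \to K \to N_0 \to M' \to 0$ with $N_0 \in \add(N)$ and $K \in \gen(N) \subseteq \Fac(M)$ (Lemma~\ref{l:dualbalX}(b)). Since $\Ext^1(M,K) = 0$, applying $\Hom(M,-)$ shows $\Hom(M,N_0) \to \Hom(M,M')$ is surjective; lifting the projection $M \to M'$ and restricting it to $M'$ splits $N_0 \to M'$, so $M' \in \add(N)$, a contradiction. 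Hence $\gen_1(N) \subsetneq \gen_1(M)$, so $M$ is $\gen_1$-critical. (This step uses only $\tau$-rigidity of $M$, via $\Ext^1(M,\gen(M)) = 0$, which in turn follows from the Auslander--Reiten formula.)

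For balancedness I would use the characterization (the lemma preceding Lemma~\ref{l:fbchar}) that $M$ is balanced iff for every indecomposable injective $I$ the minimal right $\add(M)$-approximation $\theta\colon M_0 \to I$ has $\ker\theta \in \gen(M)$. The image of $\theta$ is the sum of the images of all maps $M'' \to I$ with $M'' \in \add(M)$, which is precisely the largest submodule of $I$ lying in $\Fac(M)$, namely the torsion submodule $tI$ of $I$. Hence $\theta$ factors as $M_0 \twoheadrightarrow tI \hookrightarrow I$, so $\ker\theta = \ker(M_0 \to tI)$ is the kernel of a right $\add(M)$-approximation of $tI$; since $tI \in \Fac(M) = \gen_1(M)$, Lemma~\ref{l:dualbalX}(b) gives that this kernel lies in $\gen(M)$ (directly for the minimal approximation, and hence --- up to a summand in $\add(M) \subseteq \gen(M)$ --- for $\theta$). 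Therefore $M$ is balanced, completing the proof.
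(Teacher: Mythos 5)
Your argument is correct in substance, but it takes a genuinely different route from the paper. The paper quotients by the full annihilator: over $\La/\operatorname{ann}(M)$ the module $M$ becomes a classical $1$-tilting module (an Adachi--Iyama--Reiten fact), Theorem~\ref{t:tilting} applies there, and both properties are then transported back along the fully faithful exact embedding of module categories. You instead argue torsion-theoretically. Your criticality step is the cleanest part: it uses only $\tau$-rigidity of $M$, i.e.\ $\Ext^1(M,\gen(M))=0$, together with Lemma~\ref{l:dualbalX}; it needs no reduction at all (it runs verbatim over $\La$), and it in fact proves the stronger statement that every basic $\tau$-rigid module is $\gen_1$-critical, which subsumes both the first half of Theorem~\ref{t:tilting} and the criticality claim of this corollary. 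Your balancedness argument --- factoring the approximation of an injective through its torsion submodule $tI\in\Fac(M)$ and absorbing the non-minimal part of the approximation into a summand in $\add(M)$ --- is also correct and avoids the detour through faithful balancedness over a quotient algebra.

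The one step you outsource, the equality $\gen_1(M)=\Fac(M)$ for a $\tau$-tilting module, is true, but be careful how you justify it: Jasso's ``$\tau$-tilting reduction'' is not what provides it, and passing to $\La/\langle e\rangle$ alone does not suffice, because a $\tau$-tilting module need not be faithful over that quotient (already for radical-square-zero quotients of linearly oriented $A_3$ there are $\tau$-tilting modules omitting a projective--injective summand), so classical tilting theory is not directly available there. The standard proof is exactly the paper's device: every $X\in\Fac(M)$ is killed by $\operatorname{ann}(M)$, over $\La/\operatorname{ann}(M)$ the module $M$ is a classical tilting module with $\gen(M)=\{X\mid \Ext^1(M,X)=0\}$, and applying $\Hom(M,-)$ to $0\to K\to M_0\to X\to 0$, where $M_0\to X$ is a right $\add(M)$-approximation, gives $\Ext^1(M,K)=0$ and hence $K\in\gen(M)$. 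So your route ultimately re-imports the same input as the paper's proof; what it buys is the sharper criticality statement (basic $\tau$-rigid implies $\gen_1$-critical) and a transfer-free proof of balancedness.
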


\begin{proof}
Let $I=\rm{ann} (M)$ be the annihilator ideal of a support $\tau$-tilting module $M$. By \cite[Proposition 2.2]{AIR}, ${}_{\La/I}M$ is a classical tilting module, so it is faithfully balanced and $\gen_1$-critical as a $\La/I$-module by Theorem~\ref{t:tilting}. This implies that ${}_{\La}M$ is balanced. Assume $N \in \add({}_{\La}M)$ such that $\gen_1(M)=\gen_1(N)$ and consider the fully faithful and exact functor $i\colon \La/I\!-\!\modu \to \La \!-\!\modu$ which has a left adjoint $q= \La / I \otimes_{\La} - $. Now we have $M \cong iq(M)$, therefore $N =i(N')\in \add(M) \subset \Bild i$. 
We have $i(\gen_1 (N'))= \gen_1(N)\cap \Bild i = \gen_1(M) \cap \Bild i = i (\gen_1({}_{\La/I}M))$ and since $i$ is fully faithful, $\gen_1 (N')=\gen_1({}_{\La/I}M)$. This implies $\add (N')=\add ({}_{\La/I} M)$ since ${}_{\La/I}M$ is $\gen_1$-critical and then apply $i$ to conclude $\add(N)= \add (M)$. This proves $M$ is $\gen_1$-critical. 
\end{proof}

The following result is due to Morita \cite[Theorem 1.1]{Morita}.

\begin{thm}
\label{t:morita}
If $M$ is a faithfully balanced module for an algebra $\Lambda$ and $X$ is indecomposable, then $M\oplus X$ is faithfully balanced if and only if $X\in\gen(M)$ or $X\in\cogen(M)$.
\end{thm}

For convenience we give the proof of this in the next two lemmas. Observe that  
one direction, Lemma~\ref{Mor1}, holds with the weaker assumption that $M$ is faithful.

\begin{lem}\label{Mor1}
Let $M$ be faithful and $X$ be indecomposable. If $M\oplus X$ is (faithfully) balanced, then we have either $X\in \gen(M)$ or $X\in\cogen(M)$.
\end{lem}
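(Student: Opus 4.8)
We may assume $X$ is not a direct summand of $M$, for otherwise $X\in\add(M)\subseteq\cogen(M)$ and there is nothing to prove. Put $N=M\oplus X$ and $E=\End(N)$. Since $M$, and hence $N$, is faithful, ``$N$ balanced'' is equivalent to ``the natural map $\rho\colon\Lambda\to\End_E(N)$ is an isomorphism''. I will introduce the reject $K=\mathrm{Rej}_M(X)=\bigcap_{g\in\Hom(X,M)}\Ker(g)$ and the trace $T=\mathrm{Tr}_M(X)=\sum_{g\in\Hom(M,X)}\Bild(g)$; these are $\End(X)$-stable submodules of $X$, and by Lemma~\ref{l:balX}(a) and Lemma~\ref{l:dualbalX}(a) one has $X\in\cogen(M)\iff K=0$ and $X\in\gen(M)\iff T=X$. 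So I assume for contradiction that $K\neq 0$ and $T\subsetneq X$, and the goal is to produce an element of $\End_E(N)$ that is not in $\rho(\Lambda)$.

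\textbf{Identifying $\End_E(N)$.} The first step is a direct computation: an $E$-linear endomorphism of $N$ must commute with the idempotents of $E$ projecting onto the summands $M$ and $X$, so it has the form $(\delta_M,\delta_X)$ with $\delta_M\in\End_{\End(M)}(M)$ and $\delta_X\in\End_{\End(X)}(X)$; commuting with all morphisms $M\to X$ and $X\to M$ then translates into the intertwining relations $\delta_X\circ g=g\circ\delta_M$ for all $g\in\Hom(M,X)$ and $\delta_M\circ h=h\circ\delta_X$ for all $h\in\Hom(X,M)$. Under this identification $\rho$ sends $\lambda$ to $(\lambda\cdot{}|_M,\ \lambda\cdot{}|_X)$, and faithfulness of $M$ says exactly that the first coordinate already determines $\lambda$: if $\delta_M=0$ then $\lambda=0$, hence also $\delta_X=0$.

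\textbf{Constructing the bad endomorphism.} The clean case is this: suppose there is a nonzero $\End(X)$-endomorphism $\delta_X$ of $X$ with $\Bild(\delta_X)\subseteq K$ and $T\subseteq\Ker(\delta_X)$ --- concretely, a nonzero composite $X\twoheadrightarrow X/T\to K\hookrightarrow X$ respecting the $\End(X)$-action. Then $(0,\delta_X)\in\End_E(N)$: the relation $\delta_X\circ g=0$ holds because every $g\in\Hom(M,X)$ has image in $T\subseteq\Ker(\delta_X)$, and $h\circ\delta_X=0$ holds because every $h\in\Hom(X,M)$ kills $K\supseteq\Bild(\delta_X)$. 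By the last sentence of the previous paragraph, $(0,\delta_X)\notin\rho(\Lambda)$, so $N$ is not balanced. This already handles, for instance, the degenerate situations $\Hom(X,M)=0$ (then $K=X$) and $\Hom(M,X)=0$ (then $T=0$). In the remaining ``mixed'' case one has to perturb the $M$-component as well: exploit that $X/K\in\cogen(M)$ and $T\in\gen(M)$, and feed the minimal left $\add(N)$-approximation of the projective cover of $X$ (or, dually, the minimal right $\add(N)$-approximation of the injective envelope) into the approximation characterization of balanced modules from Section~2 to either build a compatible pair $(\delta_M,\delta_X)$ outside $\rho(\Lambda)$ or to exhibit a cokernel not lying in $\cogen(N)$, contradicting balancedness.

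\textbf{Main obstacle.} The routine parts are the reductions, the computation of $\End_E(N)$ as compatible pairs, and the verification that the constructed pair escapes $\rho(\Lambda)$. The genuine difficulty --- and where Morita's argument is delicate --- is the mixed case, where both $\Hom(M,X)$ and $\Hom(X,M)$ are nonzero and the simple choice $\delta_M=0$ need not yield a nonzero $\delta_X$ (e.g.\ for uniserial $X$ the required map $X/T\to K$ can be obstructed by top/socle constraints); the bicommutant element one needs there must simultaneously use the non-balancedness of $M$ itself and the intertwining with all maps into and out of $X$.
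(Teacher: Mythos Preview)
Your setup and the ``clean case'' are exactly right, and in fact \emph{the clean case is the whole proof}: there is no mixed case. You are missing the one-line observation that makes the required $\delta_X$ exist unconditionally.

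Since $X$ is indecomposable over an artin algebra, $R:=\End(X)$ is a local ring, so there is a \emph{unique} simple $R$-module $S$. Both $K$ and $T$ are $R$-submodules of $X$ (they are visibly stable under postcomposition with endomorphisms of $X$). Now $X/T\neq 0$ forces $S$ to occur as a quotient of $X/T$, and $K\neq 0$ forces $S$ to occur as a submodule of $K$; hence $\Hom_R(X/T,K)\neq 0$. Any nonzero element of this Hom gives a nonzero $R$-linear $\delta_X\colon X\to X$ with $T\subseteq\Ker\delta_X$ and $\Bild\delta_X\subseteq K$, and then your pair $(0,\delta_X)\in\End_E(N)\setminus\rho(\Lambda)$ finishes the argument exactly as you wrote. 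This is precisely the paper's proof; the ``obstruction'' you worry about (e.g.\ top/socle constraints on a uniserial $X$) never materializes, because you only need $\Hom_R$, not $\Hom_\Lambda$, between $X/T$ and $K$.
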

\begin{proof}
Let $E=\End_{\La}(X)$, then $E$ is a local ring and hence there is a unique simple $E$-module, say $S$. Define $X_1=\sum_{f:M\to X} \im(f)$ and $X_0=\bigcap_{g:X\to M} \Ker(g)$. Then $X_1$ and $X_0$ are submodules of $X$. By definition, we have $X\in \gen(M)$ if and only if $X_1=X$ and $X\in \cogen(M)$ if and only if $X_0=0$.
Now assume $X_1\neq X$ and $X_0\neq 0$. Then $X/X_1\neq 0$ and hence has $S$ as a quotient. This implies $\Hom_E(X/X_1, X_0)\neq 0$. Thus there exists a non-zero $E$-endomorphism $\theta: X\to X$ such that $X_1\subseteq \Ker(\theta)$ and $\im(\theta)\subseteq X_0$. Let $\Ga=\End_{\La}(M)$, then we have 
$$\End_{\La}(M\oplus X)=\begin{pmatrix} \Ga & \Hom_{\La}(X, M)\\ \Hom_{\La}(M, X) & E
\end{pmatrix},$$
and $M\oplus X$ is a left $\End_{\La}(M\oplus X)$-module. We claim that $\begin{pmatrix} 0 & 0 \\ 0 & \theta \end{pmatrix} $ is an $\End_{\La}(M\oplus X)$-endomorphism of $M\oplus X$, that is, for any element $\begin{pmatrix} a & b \\ c & d \end{pmatrix} \in \End_{\La}(M\oplus X)$ we have $$\begin{pmatrix} 0 & 0 \\ 0 & \theta \end{pmatrix} \begin{pmatrix} a & b \\ c & d \end{pmatrix} =\begin{pmatrix} a & b \\ c & d \end{pmatrix} \begin{pmatrix} 0 & 0 \\ 0 & \theta \end{pmatrix}.$$ 
To prove the claim we need to show $\theta c=0$, $b\theta=0$ and $\theta d=d\theta$. Now $\im(c)\subseteq X_1\subseteq \Ker(\theta)$ gives $\theta c=0$, $\im(\theta)\subseteq X_0$ gives $b\theta=0$ and the fact that $\theta$ is an $E$-endomorphism gives $\theta d=d\theta$. By assumption, $M\oplus X$ is balanced and this implies that the action of $\begin{pmatrix} 0 & 0 \\ 0 & \theta \end{pmatrix} $ is given by the multiplication of some element $\lambda\in \La$. Now we must have $\lambda M=0$ which forces $\lambda=0$ since $M$ is faithful as a $\La$-module. Thus we have $\theta=0$, a contradiction.
\end{proof}

\begin{lem}\label{Mor2}
Let $M$ be faithfully balanced. If either $X\in \gen(M)$ or $X\in\cogen(M)$, then $M\oplus X$ is also faithfully balanced.
\end{lem}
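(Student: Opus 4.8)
The plan is to prove Lemma~\ref{Mor2} directly from the characterization of faithful balancedness in terms of $\cogen^1$ and $\gen_1$ given in Lemma~\ref{l:fbchar}, using Lemma~\ref{l:masauterprop}. By that characterization, since $M$ is faithfully balanced, all projective modules lie in $\cogen^1(M)$ and all injectives in $\gen_1(M)$. To show $M \oplus X$ is faithfully balanced it suffices to show that all projectives remain in $\cogen^1(M \oplus X)$, say, and that $M \oplus X$ is still faithful (which is automatic since $M$ already is). So the whole content is to control how $\cogen^1$ and $\gen_1$ behave when one adds the single summand $X$.

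First I would treat the case $X \in \cogen(M)$. The key observation is that $\cogen^1(M) \subseteq \cogen^1(M\oplus X)$ always, since $\cogen^1$ is monotone in the additive closure (more summands can only enlarge the class of modules admitting the required coresolutions) — this is immediate from the definition, as any $\add(M)$-coresolution is an $\add(M\oplus X)$-coresolution. Hence all projectives lie in $\cogen^1(M\oplus X)$, and so $M \oplus X$ is faithfully balanced by Lemma~\ref{l:fbchar}(ii). Dually, if $X \in \gen(M)$, then $\gen_1(M) \subseteq \gen_1(M \oplus X)$, so all injectives lie in $\gen_1(M\oplus X)$ and we conclude via Lemma~\ref{l:fbchar}(iii).

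Actually the monotonicity remark alone already does it and does not even need $X \in \gen(M)$ or $X \in \cogen(M)$ — but that cannot be right, since then \emph{every} $M \oplus X$ would be faithfully balanced, contradicting Theorem~\ref{t:fban}. The subtlety is that $\cogen^1$ is \emph{not} monotone: enlarging $\add(M)$ weakens the exactness condition $\Hom(M'',M)\to\Hom(M',M)\to\Hom(X,M)\to 0$ on one hand but strengthens it on the other (more test objects $M$). So the correct argument must genuinely use the hypothesis. The clean route is: if $X\in\cogen(M)$, then by Lemma~\ref{l:balX} the minimal left $\add(M)$-approximation $\theta\colon X\to M'$ is injective with cokernel... no — we want $\cogen^1$, which by Lemma~\ref{l:balX}(b) requires $\coKer\theta\in\cogen(M)$. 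That need not hold. Instead I would argue at the level of $\gen_1$: show that $X\in\cogen(M)$ forces every injective $I$, which lies in $\gen_1(M)$, to still lie in $\gen_1(M\oplus X)$. By Lemma~\ref{l:dualbalX}(b), $I\in\gen_1(M)$ means there is $M''\to M'\to I$ with the $\Hom(M,-)$ sequence exact; this is still an $\add(M\oplus X)$-presentation, and one must check the $\Hom(M\oplus X,-)$ sequence is exact, i.e.\ additionally that $\Hom(X,M'')\to\Hom(X,M')\to\Hom(X,I)\to 0$ is exact. Surjectivity on the right holds since $M'\to I$ is already a right $\add(M)$-approximation hence a fortiori... and here is where $X\in\cogen(M)$ or $X\in\gen(M)$ must enter to get exactness in the middle.

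The main obstacle is precisely pinning down which of the two hypotheses ($X\in\gen(M)$ versus $X\in\cogen(M)$) controls which of the two conditions ($\gen_1$ of injectives versus $\cogen^1$ of projectives), and verifying the extra exactness. My expectation is that the honest proof mirrors Lemma~\ref{Mor1}'s bimodule argument in reverse: given a $\Lambda$-module homomorphism $M\oplus X\to\Hom_E(\Hom(M\oplus X,M\oplus X),M\oplus X)$-type situation, decompose $\End(M\oplus X)$ into the matrix form used in the proof of Lemma~\ref{Mor1} and check directly that the double-centralizer map is bijective, using faithfulness and balancedness of $M$ together with the fact that $X\in\gen(M)$ (so $\Hom(M,X)\otimes_E M\to X$ is onto) or $X\in\cogen(M)$ (so $X\to\Hom_E(\Hom(X,M),M)$ is injective). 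So the structure I would write is: (1) reduce to checking the double-centralizer map $\Lambda\to\End_{E'}(M\oplus X)$ is bijective where $E'=\End(M\oplus X)$; (2) injectivity is clear as $M\oplus X$ is faithful; (3) for surjectivity, take $\psi\in\End_{E'}(M\oplus X)$, restrict to the $M$-component to get, by balancedness of $M$, an element $\lambda\in\Lambda$ agreeing with $\psi$ on $M$; (4) use $X\in\gen(M)$ or $X\in\cogen(M)$ to propagate the agreement from $M$ to $X$, since $X$ is then squeezed between copies of $M$ in a way compatible with the $E'$-action; (5) conclude $\psi=\lambda\cdot(-)$. Step (4) is the crux and is where the hypothesis is indispensable.
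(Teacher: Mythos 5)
Your write-up never actually proves the lemma: both routes you sketch stop exactly at the point where the hypothesis $X\in\gen(M)$ or $X\in\cogen(M)$ has to do its work. In the $\gen_1/\cogen^1$ route you rightly discard monotonicity and even land on the correct pairing ($X\in\cogen(M)$ with injectives staying in $\gen_1$; dually $X\in\gen(M)$ with projectives staying in $\cogen^1$, which is the case the paper treats), but the step you leave open is precisely the content of the lemma, and your parenthetical ``surjectivity on the right holds since $M'\to I$ is already a right $\add(M)$-approximation'' is exactly what is \emph{not} automatic: maps from $X$ need not factor through that approximation without the hypothesis. The missing idea is a one-line trick exploiting that the test module is projective (resp.\ injective). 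The paper takes the exact sequence $0\to\La\xrightarrow{f}M_0\to M_1$ witnessing $\La\in\cogen^1(M)$ and shows $f$ remains a left $\add(M\oplus X)$-approximation: for $h\colon\La\to X$, since $X\in\gen(M)$ the minimal right $\add(M)$-approximation $p\colon M^X\to X$ is an epimorphism, so projectivity of $\La$ lifts $h$ to $i\colon\La\to M^X$, which factors as $i=jf$, whence $h=(pj)f$; together with $\coKer f\in\cogen(M)\subseteq\cogen(M\oplus X)$ this gives $\La\in\cogen^1(M\oplus X)$, and Lemma \ref{l:fbchar} concludes. The case $X\in\cogen(M)$ is dual, with injectivity of $I$ replacing projectivity of $\La$.

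Your fallback plan (1)--(5), reversing the bimodule computation of Lemma \ref{Mor1}, is a workable alternative, but step (4) is stated only as an expectation, not proved. It can be completed: any $\psi\in\End_{E'}(M\oplus X)$ with $E'=\End(M\oplus X)$ commutes with the idempotents of $E'$, hence preserves $M$ and $X$; balancedness of $M$ yields $\lambda\in\La$ with $(\psi-\lambda)|_M=0$; if $X\in\gen(M)$, every element of $X$ is a sum of elements $f(m)$ with $f\in\Hom(M,X)$ viewed inside $E'$, and $E'$-linearity gives $(\psi-\lambda)(f(m))=f((\psi-\lambda)(m))=0$; if $X\in\cogen(M)$, then $g((\psi-\lambda)(x))=(\psi-\lambda)(g(x))=0$ for all $g\in\Hom(X,M)$ forces $(\psi-\lambda)(x)=0$. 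Until some such argument is written out, the proposal has a genuine gap at its self-acknowledged crux; note also that the paper's approximation argument is different in spirit from this bimodule route and gives the statement directly from Lemmas \ref{l:balX} and \ref{l:fbchar}.
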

\begin{proof}
We will prove the case $X\in \gen(M)$; the case $X\in \cogen(M)$ is dual. Since $M$ is faithfully balanced, there is an exact sequence
$$0\to \La \xrightarrow{f} M_0 \xrightarrow{g} M_1$$
such that $f$ and $\coKer(f)\to M_1$ are minimal left $\add(M)$-approximations. We claim that the map $f$ is also a left $\add(M\oplus X)$-approximation. To this end, it is enough to show that any map $h:\La\to X$ factors through $f$. Consider the following diagram
$$\xymatrix{0\ar[r]& \La \ar@{-->}[ld]_{i}\ar[d]_(0.4){h}\ar[r]^{f} & M_0\ar@{-->}[lld]^(0.4){j} \\  M^X \ar[r]^{p} & X \ar[r] & 0}$$
where $p$ is the minimal right $\add(M)$-approximation of $X$. Since $X\in \gen(M)$, $p$ is an epimorphism and so there is an $i:\La\to M^X$ such that $h=pi$. Then $i$ factors as $i=jf$ and we have $h=pi=(pj)f$. This proves the claim. Now since $\coKer(f)\in \cogen(M)\subseteq \cogen(M\oplus X)$ we conclude that $\La \in \cogen^1(M\oplus X)$. This proves $M\oplus X$ is faithfully balanced.
\end{proof}

For Nakayama algebras, the conditions (FB1) and (FB2) in Corollary~\ref{c:fb_nakayama} allow a different approach to minimal faithfully balanced modules. We begin with some constructions which work for a module $M$ for an arbitrary algebra. Recall \cite{ASm1} that a module $X\in\add(M)$ is a \emph{splitting projective} if every epimorphism $M'\to X$ with $M'\in\add(M)$ is a split epimorphism, and it is a \emph{splitting injective} if every monomorphism $X\to M'$ is a split monomorphism. We write $M^g$ for the direct sum of one copy of each of the splitting projective summands of $M$ and $M^c$ for the direct sum of one copy of each of the splitting injective summands of $M$. By \cite[Theorem 2.3]{ASm1}, $\add(M^g)$ is a minimal cover for $\add(M)$, so $M^g$ is a minimal summand of $M$ with $\gen(M^g) = \gen(M)$, and it is unique up to isomorphism with this property. Similarly for $M^c$ with $\cogen(M^c)=\cogen(M)$.

For Nakayama algebras, Morita's Theorem~\ref{t:morita} can be used to construct all faithfully balanced modules from minimal faithfully balanced modules. This follows from the following lemma:

\begin{lem} 
Let $\La$ be a Nakayama algebra. If $M$ is faithfully balanced but not minimal faithfully balanced, then there is a faithfully balanced summand $N$ of $M$ with $\lvert M \rvert =\lvert N \rvert +1$. 
\end{lem}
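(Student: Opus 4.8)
The plan is to find a single indecomposable summand $X$ of $M$ that can be dropped while keeping faithful balancedness. Since $M$ is faithfully balanced but not minimal, there exists a proper faithfully balanced summand $M'$ of $M$; write $M = M' \oplus Y$ for some nonzero $Y \in \add(M)$, and let $X$ be any indecomposable summand of $Y$. Then $M = N \oplus X$ where $N$ has $M' \subseteq \add(N)$, so $N$ is faithfully balanced (faithful balancedness depends only on $\add$, and $\add(M') \subseteq \add(N)$ together with $\add(N) \subseteq \add(M)$ pins $\cogen^1(N)$ between $\cogen^1(M')$ and $\cogen^1(M)$, both of which contain all projectives by Lemma~\ref{l:fbchar}). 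The issue is that $N$ might still not be a \emph{summand} of $M$ in the literal "one less indecomposable" sense if $X$ occurs in $M$ with multiplicity greater than one; but then $M$ itself has a proper faithfully balanced summand (namely $M$ with the multiplicity of $X$ reduced by one), and one can simply take $N$ to be that, so $\lvert M\rvert = \lvert N\rvert + 1$. Hence we may assume from now on that $M$ is basic.

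So the real content is: if $M$ is basic, faithfully balanced, and not minimal faithfully balanced, then some single indecomposable summand can be removed. By hypothesis there is a proper summand $M'$ that is faithfully balanced; among all such proper faithfully balanced summands choose one, $M'$, that is \emph{maximal}, i.e. not properly contained (as an $\add$) in any other proper faithfully balanced summand of $M$. Write $M = M' \oplus X_1 \oplus \cdots \oplus X_t$ with the $X_i$ distinct indecomposables, $t \ge 1$. I claim $t = 1$, which gives the lemma with $N = M'$. Suppose $t \ge 2$. By Morita's Theorem~\ref{l:morita} applied with the faithfully balanced module $M'$ and the indecomposable $X_1$: if $X_1 \in \gen(M') \cup \cogen(M')$ then $M' \oplus X_1$ is faithfully balanced, a proper summand of $M$ strictly larger than $M'$, contradicting maximality. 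So $X_1 \notin \gen(M')$ and $X_1 \notin \cogen(M')$, and likewise each $X_i \notin \gen(M') \cup \cogen(M')$. The task is now to derive a contradiction from the existence of two indecomposables $X_1, X_2$, neither generated nor cogenerated by $M'$, such that $M' \oplus X_1 \oplus X_2 \oplus \cdots$ is faithfully balanced while $M'$ alone already is.

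Here is where I would use the explicit cohook description for Nakayama algebras (Corollary~\ref{c:fb_nakayama}, conditions (FB0)--(FB2)). Since $M'$ is faithful, it contains all projective-injectives, so (FB0) holds for $M'$; conditions (FB0) and (FB2) for $M$ are thus automatically inherited by $M'$. Therefore the \emph{only} way $M'$ could fail to be faithfully balanced would be a violation of (FB1) — but we assumed $M'$ \emph{is} faithfully balanced, so (FB1) holds for $M'$ too. The point of the maximality of $M'$ is then to be combined with: for a non-projective-injective indecomposable $Z$ in $\add(M)$, the property "$Z$ is a proper submodule or proper quotient of an indecomposable summand of $M$" for the larger module $M$ must, by (FB1) for $M$, be witnessed, and I want to argue that the witness can be taken inside $M'$ unless $Z$ itself is forced to be one of the $X_i$. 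Concretely: take $X_1$ with, say, $\ell(X_1)$ maximal among the $X_i$; then $X_1$ is not a proper submodule or quotient of any $X_j$ (by length, and using that the $X_i$ are pairwise non-isomorphic with a maximal one of each length as in the proof of Theorem~\ref{t:bal_nakayama}), so by (FB1) for $M$, $X_1$ is a proper submodule or proper quotient of some summand of $M'$; but a proper submodule of an indecomposable summand of $M'$ lies in $\cogen(M')$, and a proper quotient lies in $\gen(M')$ — contradicting $X_1 \notin \gen(M') \cup \cogen(M')$. This contradiction forces $t=1$.

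The main obstacle I anticipate is the bookkeeping in the last step: one must be careful that "proper submodule of an indecomposable summand of $M'$" genuinely gives membership in $\cogen(M')$ (it does, since an indecomposable Nakayama module is uniserial and any submodule embeds in it, hence embeds in a summand of $M'$), and dually for quotients and $\gen$, and one must make sure the length-maximality argument rules out $X_1$ being a proper subquotient \emph{only} of the other $X_j$'s. If the maximal-length $X_i$ is not unique, pick any one; a proper submodule or quotient has strictly smaller length, so it cannot be one of the chosen $X_i$, and any $X_j$ of equal length is non-isomorphic hence not equal, and cannot have $X_1$ as a proper sub or quotient either. So the argument goes through, and the only subtlety is stating (FB1) in the contrapositive form cleanly and invoking uniseriality to pass between "proper sub/quotient of a summand of $M'$" and "$\in\cogen(M')\cup\gen(M')$".
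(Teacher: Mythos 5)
Your proof of the lemma is correct in substance, but it follows a genuinely different route from the paper. The paper takes an \emph{arbitrary} faithfully balanced proper summand $L$, writes $M=L\oplus U$, removes from $U$ an indecomposable summand of \emph{minimal} length, and checks the cohook conditions (FB0)--(FB2) of Corollary~\ref{c:fb_nakayama} directly: (FB0) and (FB2) are witnessed inside $L$, and (FB1) holds because a summand of minimal length in the complement cannot be the unique witness for anything that remains. You instead take a \emph{maximal} proper faithfully balanced summand $M'$ and show its complement is a single indecomposable: Morita's Theorem~\ref{l:morita} forces every complementary summand out of $\gen(M')\cup\cogen(M')$ when the complement has at least two summands, while (FB1) for $M$ applied to a complementary summand of maximal length forces it into $\cogen(M')$ or $\gen(M')$, a contradiction. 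Both arguments rest on the Nakayama cohook characterization plus a length-extremality trick; the paper's is shorter and avoids Morita, yours buys the extra structural fact that the complement of a maximal proper faithfully balanced summand is a single indecomposable. One small detail you should make explicit: to invoke (FB1) for $X_1$ you need $X_1$ not projective-injective, which follows because $M'$ is faithful, Nakayama algebras are QF-3 so $M'$ contains every projective-injective, and $M$ is basic.

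There is, however, one genuinely false claim in your first paragraph, even though your final argument does not use it: the assertion that $\add(M')\subseteq\add(N)\subseteq\add(M)$ ``pins $\cogen^1(N)$ between $\cogen^1(M')$ and $\cogen^1(M)$''. The operation $\cogen^1$ is not monotone with respect to inclusion of $\add$-subcategories, and being sandwiched between two faithfully balanced modules does not make a module faithfully balanced. For $\Lambda_3$ take $M'=M_{13}\oplus M_{11}\oplus M_{33}$ (faithfully balanced), $M$ the direct sum of all indecomposables (faithfully balanced), and $N=M'\oplus M_{22}$; then $\add(M')\subseteq\add(N)\subseteq\add(M)$, but $M_{22}\notin\gen(M')\cup\cogen(M')$, so $N$ is not faithfully balanced by Theorem~\ref{l:morita} (equivalently, (FB1) fails for $M_{22}$), and $\Lambda_3\in\cogen^1(M')\setminus\cogen^1(N)$. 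Indeed, were your parenthetical claim true, the whole lemma (and much of the paper, cf.\ Lemma~\ref{l:masauterprop}) would be trivial. The claim is harmless here only because the role of your first paragraph is merely the reduction to the basic case, which needs nothing more than the observation that deleting one copy of a repeated summand does not change $\add(M)$; I recommend deleting the parenthetical justification and keeping only that observation.
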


\begin{proof}
Let $L$ be a faithfully balanced proper summand of $M$. Let $M=L\oplus U$. Pick an indecomposable summand $U'\in \add (U)$ of minimal length and let $U=U' \oplus V$. 
Then $N:=V\oplus L$ still fulfills the cohook conditions and therefore is a faithfully balanced module. Indeed, the condition (FB2) is satisfied by the summand $L$ and the hypothesis on the length of $U'$ implies that no other indecomposable modules are generated or co-generated by $U'$ so condition (FB1) also holds. 
\end{proof}

\begin{rem}
We don't know whether this result holds without the assumption that $\Lambda$ is Nakayama.
\end{rem}

\begin{lem}
\label{l:nakgc}
\label{l:cc+gc}
If $M$ is a minimal faithfully balanced module for a Nakayama algebra $\Lambda$, then
any indecomposable summand $X$ of $M$ is a summand of $M^g$ or $M^c$, and $X$ is a summand of both if and only if $X$ is projective-injective. Thus
\[
M \oplus P \cong M^g \oplus M^c,
\]
where $P$ is the direct sum of the indecomposable projective-injective $\Lambda$-modules.
\end{lem}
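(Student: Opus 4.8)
The plan is to combine the cohook characterisation of faithfully balanced modules for Nakayama algebras (Corollary~\ref{c:fb_nakayama}) with an explicit description of $M^g$ and $M^c$. First I would note that, since faithful balancedness depends only on $\add(M)$, a minimal faithfully balanced module is basic, so I may assume $M$ is basic. Next I would describe $M^g$ concretely: since any two indecomposable summands of $M$ with the same top $S$ are uniserial quotients of $P(S)$, they are totally ordered by length with the longer one surjecting onto the shorter, so the fact that $M^g$ is the minimal summand of $M$ with $\gen(M^g)=\gen(M)$ forces $M^g$ to be the direct sum, over all simple $S$ occurring as a top of a summand of $M$, of the longest indecomposable summand of $M$ with top $S$. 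Dually $M^c$ is the direct sum, over all simple $T$ occurring as a socle of a summand, of the longest summand with socle $T$. In particular every indecomposable projective-injective module $P(S)=I(T)$ is a summand of $M$ by (FB0) and lies in both $M^g$ and $M^c$, since it is already the longest uniserial module with top $S$ (resp.\ socle $T$), hence certainly the longest summand with that top (resp.\ socle).

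The heart of the argument is to show that every indecomposable summand $X$ of $M$ is a summand of $M^g$ or of $M^c$. I would argue by contradiction: if $X$ is a summand of neither, then $M$ has a summand with the same top as $X$ but strictly longer, and a summand with the same socle as $X$ but strictly longer; in particular $X$ is not projective-injective. Let $N$ be obtained from $M$ by deleting the summand $X$; I claim $N$ still satisfies (FB0), (FB1), (FB2), so is faithfully balanced by Corollary~\ref{c:fb_nakayama}, contradicting minimality of $M$. Condition (FB0) holds because $X$ is not projective-injective. For (FB2): if $\Hom(M,S)\neq 0$ the witnessing summand has top $S$, and if it equals $X$ we replace it by the strictly longer summand with top $S$ (which still has top $S$ and is $\neq X$); dually for $\Hom(T,M)\neq 0$. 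For (FB1): any summand $Z\neq X$ of $M$ is a proper submodule or proper quotient of some summand $Y$ of $M$, and if $Y=X$ then $Z$ is a proper submodule (resp.\ quotient) of $X$, hence also of the strictly longer summand of $M$ with the same socle as $X$ (resp.\ the same top as $X$), which is $\neq X$ and a summand of $N$. So $N$ is faithfully balanced, the desired contradiction.

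Having shown each summand of $M$ lies in $M^g$ or $M^c$, I would next identify the ones lying in both: if $X$ is a summand of both $M^g$ and $M^c$, then $X$ is the longest summand with its top and the longest with its socle, so by (FB1) it is not a proper quotient or submodule of any other summand, which forces $X$ to be projective-injective. Together with the earlier observation that projective-injective summands lie in both, this shows the summands of $M$ in $M^g\cap M^c$ are exactly the projective-injective ones. Finally $M\oplus P\cong M^g\oplus M^c$ follows by comparing multiplicities of each indecomposable $Z$: if $Z$ is projective-injective it is a summand of $M$ (by (FB0)) and occurs with multiplicity $2$ on each side; if $Z$ is a non-projective-injective summand of $M$ it occurs with multiplicity $1$ on each side (being in exactly one of $M^g,M^c$); and otherwise it occurs in neither side.

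The step I expect to be the main obstacle is the verification of (FB1) for $N$: one must make sure that whenever a summand $Z\neq X$ of $M$ is witnessed only by $X$, the replacement summand — longer, with the same top or socle as $X$ — is genuinely distinct from $X$ and still witnesses $Z$. This is precisely where the two hypotheses $X\notin\add(M^g)$ and $X\notin\add(M^c)$ are both used in an essential way; the remaining parts are routine bookkeeping with uniserial modules.
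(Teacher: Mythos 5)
Your proof is correct and follows essentially the same route as the paper: projective-injectivity versus (FB1) controls membership in both $M^g$ and $M^c$, and if $X$ lies in neither, deleting $X$ still satisfies the conditions of Corollary~\ref{c:fb_nakayama}, contradicting minimality. You simply make explicit what the paper leaves implicit, namely the description of $M^g$ and $M^c$ as the longest summands with a given top or socle, the detailed check of (FB0)--(FB2) for the complement, and the multiplicity count giving $M\oplus P\cong M^g\oplus M^c$.
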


\begin{proof}
Since $M$ is faithfully balanced, by condition (FB1) in Corollary~\ref{c:fb_nakayama}, every indecomposable summand $X$ of $M$ which is not projective-injective is a proper submodule or quotient of another summand of $M$. Thus $X$ cannot be a summand of both $M^g$ and $M^c$. On the other hand, if $X$ is a summand of neither, then it is both a proper submodule and quotient of other summands of $M$. But then the complement of $X$ still satisfies the conditions of Corollary~\ref{c:fb_nakayama}, so is faithfully balanced, contradicting minimality.
\end{proof}

\begin{thm}
\label{t:mfb-gen-cogen}
Let $M$ be a minimal faithfully balanced module for a Nakayama algebra $\Lambda$. If $N$ is a module with
$\gen(N)\cap\cogen(N) = \gen(M)\cap\cogen(M)$, then $N$ is faithfully balanced and $M$ is a summand of $N$.
\end{thm}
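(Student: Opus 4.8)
The plan is to use the decomposition $M \oplus P \cong M^g \oplus M^c$ from Lemma~\ref{l:nakgc}, together with Morita's Theorem~\ref{l:morita} and the minimal-summand properties of $M^g$ and $M^c$. First I would record the consequence of Lemma~\ref{l:nakgc} at the level of generated/cogenerated categories: since $M^g$ is a summand of $M$ with $\gen(M^g)=\gen(M)$ and $M^c$ is a summand with $\cogen(M^c)=\cogen(M)$, and since $M$ is a summand of $M^g\oplus M^c$, we get $\gen(M^g\oplus M^c)=\gen(M)$ and $\cogen(M^g\oplus M^c)=\cogen(M)$; more importantly, adjoining the projective-injectives $P$ changes nothing, so $\gen(N)\cap\cogen(N)=\gen(M)\cap\cogen(M)$. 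The key observation is that every indecomposable summand $X$ of $M$ lies in $\gen(M)\cap\cogen(M)$: indeed $X\in\add(M)$ so trivially $X\in\gen(M)\cap\cogen(M)$. Combined with the hypothesis, $X\in\gen(N)\cap\cogen(N)$, so in particular $X\in\gen(N)$ or $X\in\cogen(N)$.

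Next I would establish that $N$ is faithfully balanced. The natural candidate is to show $\gen(M)\cap\cogen(M)$ contains all indecomposable projective-injectives and that these force faithfulness, but the cleaner route is: every indecomposable projective-injective $Q$ is a summand of $M$ (condition (FB0) in Corollary~\ref{c:fb_nakayama}), hence $Q\in\gen(M)\cap\cogen(M)=\gen(N)\cap\cogen(N)$, so $Q\in\gen(N)$, meaning $Q$ is a quotient of a sum of copies of $N$; since $Q$ is projective this quotient splits, so $Q\in\add(N)$. Thus $N$ has every indecomposable projective-injective as a summand, so $N$ is faithful by the QF-3 property quoted before Corollary~\ref{c:fb_nakayama}. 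To upgrade faithful to faithfully balanced, I would build $N$ up from the faithful module $P'$ (the sum of indecomposable projective-injectives) one indecomposable summand at a time, but Morita's lemma requires a faithfully \emph{balanced} starting point, so instead I would argue via the cohook conditions of Corollary~\ref{c:fb_nakayama} directly: (FB0) holds as just shown; for (FB2), note $M$ satisfies (FB2) and $\Hom(M,S)\neq 0$ (resp.\ $\Hom(T,M)\neq 0$) means $M$ has a summand with top $S$ (resp.\ socle $T$), and that summand lies in $\gen(M)\cap\cogen(M)=\gen(N)\cap\cogen(N)$ and is projective-cover/injective-envelope-related to $N$ in a way that transfers the $\Hom$ non-vanishing to $N$ — here I would use that a module with top $S$ in $\gen(N)$ forces $\Hom(N,S)\neq 0$, and dually. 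For (FB1), if $X\in\add(N)$ is not projective-injective, I would need to show $X$ is a proper sub or quotient of a summand of $N$; this is where $\gen(N)\cap\cogen(N)=\gen(M)\cap\cogen(M)$ and the minimality of $M$ enter.

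Finally, to show $M$ is a summand of $N$: each indecomposable summand $X$ of $M$ satisfies $X\in\gen(N)$ or $X\in\cogen(N)$ as noted; if $X\in\gen(N)$, then the minimal right $\add(N)$-approximation of $X$ is an epimorphism, and I would combine this with the fact that $X\in\cogen(N)$ \emph{or} $X\in\gen(N)$ to run the argument of Lemma~\ref{Mor1}/Morita, but that shows $N\oplus X$ is faithfully balanced, not that $X\in\add(N)$. The correct mechanism: since $X\in\add(M)\subseteq M^g\oplus M^c$, either $X\mid M^g$ or $X\mid M^c$; if $X\mid M^g$ then $X$ is a splitting projective summand of $M$, and I would like to conclude $X$ is a summand of $N$ from $X\in\gen(N)$ — this needs that $X$, being a splitting projective in $\add(M)$, remains one in any larger additive subcategory containing it inside $\gen(M)$. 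The hard part will be precisely this last transfer: deducing membership $X\in\add(N)$ (not merely $X\in\gen(N)\cap\cogen(N)$) from the equality of generated/cogenerated categories, which presumably requires isolating the indecomposable summands of $M$ that are splitting projective (resp.\ injective) as the ``extremal'' modules of $\gen(M)$ (resp.\ $\cogen(M)$) — i.e.\ showing $M^g$ is characterized as the minimal $L$ with $\gen(L)=\gen(N)$ and similarly $M^c$, so that $M^g,M^c\in\add(N)$ and hence $M\mid M^g\oplus M^c\mid N\oplus N$, giving $M\in\add(N)$ by Krull--Schmidt. I expect this identification of $M^g$ and $M^c$ as the canonical minimal (co)generating summands, via \cite[Theorem 2.3]{Asm1}, to be the crux of the argument.
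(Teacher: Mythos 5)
You have gathered the same ingredients the paper itself uses (Lemma \ref{l:nakgc}, the uniqueness of minimal covers and cocovers from \cite{Asm1}, Morita's Theorem \ref{l:morita}), but as written the argument has a gap at its very first step and leaves both conclusions unproved. The hypothesis is only an equality of intersections, $\gen(N)\cap\cogen(N)=\gen(M)\cap\cogen(M)$; before you can speak of $M^g$ as ``the minimal $L$ with $\gen(L)=\gen(N)$'' you need the separate equalities $\gen(N)=\gen(M)$ and $\cogen(N)=\cogen(M)$, and you never derive them --- your first paragraph ends by restating the hypothesis. The derivation is short but essential: any module $X$ lies in $\gen(X)\cap\cogen(X)$, so $\gen\bigl(\gen(X)\cap\cogen(X)\bigr)=\gen(X)$ and dually; applying $\gen$ and $\cogen$ to the hypothesized equality gives $\gen(N)=\gen(M)$ and $\cogen(N)=\cogen(M)$. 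With that in hand, the step you only conjecture as ``the crux'' is immediate from the uniqueness already quoted before Lemma \ref{l:cc+gc}: $M^g\cong N^g$ and $M^c\cong N^c$, so $M^g$ and $M^c$ are summands of $N$; since $M\oplus P\cong M^g\oplus M^c$ by Lemma \ref{l:nakgc} and $M$ is basic (by minimality), every indecomposable summand of $M$ is a summand of $N$, i.e.\ $M$ is a summand of $N$.

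The second gap is the claim that $N$ is faithfully balanced. Your direct verification of the conditions of Corollary \ref{c:fb_nakayama} stalls at (FB1), which you leave as ``this is where \dots the minimality of $M$ enter'' with no argument, and your reason for abandoning Morita's theorem is mistaken: you set it aside for lack of a faithfully balanced starting point, but once $M$ is known to be a summand of $N$ the starting point is $M$ itself. Every remaining indecomposable summand $X$ of $N$ lies in $\add(N)\subseteq\gen(N)=\gen(M)$, so iterating Lemma \ref{Mor2} (adding one summand at a time) shows $N$ is faithfully balanced --- which is exactly how the paper concludes. Your plan's ordering, proving faithful balancedness of $N$ before proving $M\mid N$, is what makes the (FB1) step intractable and leaves it unfinished; reverse the order and both conclusions follow from the ingredients you already named.
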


\begin{proof}
Clearly $\gen(\gen(M)\cap\cogen(M)) = \gen(M)$ and $\cogen(\gen(M)\cap\cogen(M)) = \cogen(M)$, so we have $\gen(N)=\gen(M)$ and $\cogen(N)=\cogen(M)$. By the uniqueness of minimal covers and cocovers, $M^g\cong N^g$ and $N^c \cong M^c$. By Lemma~\ref{l:nakgc}, we conclude that $M$ is a summand of $N$. Now $N$ is faithfully balanced by Theorem~\ref{t:morita}.
\end{proof}

\section{Counting faithfully balanced modules}
\label{s:fbcount}
In this section we prove Theorem~\ref{t:fbcount}. Given a module $M$ for $\Lambda_n$, we write $t_r(M)$ for the number non-isomorphic indecomposable summands of $M$ with top $S[r]$, or equivalently in row $r$ in the Young diagram. We consider indeterminates $x_1,\dots,x_n$, and define
\[
k_n(x_1,\dots,x_n) 
= \sum_M \prod_{r=1}^n x_r^{t_r(M)} \in \Z[x_1,\dots,x_n]
\]
where the sum is over all basic faithfully balanced $\Lambda_n$-modules $M$. We define
\[
p_n(x_1,\dots,x_n) 
= \sum_M \prod_{r=1}^n x_r^{t_r(M)} \in \Z[x_1,\dots,x_n]
\]
where the sum is over all modules $M$ satisfying (FB0) and (FB1) in the statement of Theorem~\ref{t:fban}.

Let $[2,n] := \{ k\in\Z : 2\le k\le n\}$. In condition (FB2) in Theorem~\ref{t:fban}, it follows from (FB0) that the $M$ contains summands in the virtual cohooks associated to the leaves $(1,0)$ and $(n+1,n)$. Thus we may replace (FB2) by the conditions (FB2)${}_k$ that $M$ has a summand in $\cohook(k,k-1)$, for all $k\in [2,n]$. Given a subset $I\subseteq [2,n]$, we define $s_n^I(x_1,\dots,x_n)$ to be the sum of $\prod_{r=1}^n x_r^{t_r(M)}$ over all $M$ which satisfy (FB0), (FB1) and (FB2)${}_k$ for all $k\in I$, and $f_n^I(x_1,\dots,x_n)$ to be the sum over all $M$ which satisfy (FB0), (FB1) and fail (FB2)${}_k$ for all $k\in I$.

We write $\underline{x}_n = (x_1,\dots,x_n)$ and for a subset $J = \{j_1 <\dots < j_m \}$ of $[2,n]$, we write 
\[
\underline{x}_n^J = (x_1,\dots,\hat x_{j_1},\dots,\hat x_{j_m},\dots,x_n)
\]
where $\hat x_p$ means that the term $x_p$ is omitted.

\begin{lem}\label{l:6.1}
We have the following.
\begin{itemize}
\item[(i)]$f_n^I  (\underline{x}_n) = p_{n-|I|} (\underline{x}_n^I)$.
\item[(ii)]$s_n^I (\underline{x}_n) 
= \sum_{J\subseteq I} (-1)^{|J|} p_{n-|J|} (\underline{x}_n^J)$.
\item[(iii)]$k_n(\underline x_n)
= \sum_{J\subseteq [2,n]} (-1)^{|J|} p_{n-|J|} (\underline{x}_n^J)$.
\item[(iv)]$p_n (\underline{x}_n) 
= \sum_{J\subseteq [2,n]} k_{n-|J|}(\underline{x}_n^J)$.
\end{itemize}
\end{lem}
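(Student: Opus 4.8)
The plan is to deduce all four identities from one combinatorial construction. First I would fix a subset $J=\{j_1<\dots<j_m\}\subseteq[2,n]$ and consider deleting from the staircase Young diagram of $\Lambda_n$ the rows indexed by $J$ together with the columns indexed by $J-1:=\{j-1:j\in J\}$. I would check that, after the evident order-preserving relabelling of the surviving rows and columns, the remaining boxes form exactly the staircase Young diagram of $\Lambda_{n-m}$; the one point needing care is that a surviving box $M_{ij}$ with $i\le j$ is sent to a position again satisfying the corresponding inequality, which holds because the interval $(i,j]$ contains at most $j-i$ elements of $J$. Since $J\subseteq[2,n]$ the box $M_{1n}$ always survives, and it maps to $M_{1,n-m}$. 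This produces a bijection $M\mapsto M'$ between the basic $\Lambda_n$-modules whose indecomposable summands avoid every row in $J$ and every column in $J-1$ and the arbitrary basic $\Lambda_{n-m}$-modules, which is weight-preserving in the sense that $t_r(M)=t_{r'}(M')$ when the row $r\notin J$ is relabelled to $r'$, while $t_r(M)=0$ for $r\in J$.

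Next I would show that (FB0), (FB1), and each individual condition (FB2)${}_k$ transport along $M\mapsto M'$. Condition (FB0) is immediate from $M_{1n}\mapsto M_{1,n-m}$. For (FB1) the key remark is that the cohook of a surviving box $M_{ij}$ is made of the boxes strictly above it in its column and strictly to its right in its row; those lying in a deleted row or column cannot be summands of $M$, while the surviving ones correspond order-preservingly to the cohook of $M_{i'j'}$ in $\Lambda_{n-m}$, so ``$M_{ij}$ is a summand whose cohook contains a summand of $M$'' holds exactly when the analogous statement holds for $M_{i'j'}$ and $M'$. For (FB2) I would use the bookkeeping identity that, listed in increasing order, the $k'$-th surviving row equals one more than the $(k'-1)$-th surviving column for $2\le k'\le n-m$; this holds because the surviving rows form the set $[1,n]\setminus J$, the surviving columns shifted up by one form $[2,n+1]\setminus J$, and deleting $1$ from the first and $n+1$ from the second leaves the common set $[2,n]\setminus J$. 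Hence for $k\in[2,n]\setminus J$ occupying the $k'$-th position among the surviving rows, the virtual cohook $\cohook(k,k-1)$, which is the union of row $k$ and column $k-1$, maps onto $\cohook(k',k'-1)$ on surviving boxes, so $M$ fails (FB2)${}_k$ iff $M'$ fails (FB2)${}_{k'}$. Writing $J(M)$ for the set of $k\in[2,n]$ at which $M$ fails (FB2)${}_k$, it follows that if $M$ has every row in $J$ and every column in $J-1$ empty, then $J(M)=J$ precisely when $M'$ is faithfully balanced.

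With this dictionary the formulas follow formally. For (i), the modules counted by $f_n^I$ are exactly the basic $M$ satisfying (FB0) and (FB1) for which every row in $I$ and every column in $I-1$ is empty (failing (FB2)${}_k$ just means having no summand in the union of row $k$ and column $k-1$), and the construction identifies them, weight-preservingly, with the basic $\Lambda_{n-|I|}$-modules enumerated by $p_{n-|I|}(\underline{x}_n^I)$. For (ii), let $\mathcal{M}_0$ be the set of basic $\Lambda_n$-modules satisfying (FB0) and (FB1) and, for $k\in I$, let $B_k\subseteq\mathcal{M}_0$ consist of those failing (FB2)${}_k$; then $s_n^I$ weights $\mathcal{M}_0\setminus\bigcup_{k\in I}B_k$ and $f_n^J$ weights $\bigcap_{k\in J}B_k$, so inclusion–exclusion gives $s_n^I=\sum_{J\subseteq I}(-1)^{|J|}f_n^J$, and (i) turns this into the stated formula. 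Identity (iii) is the case $I=[2,n]$, because given (FB0) condition (FB2) is equivalent to all the (FB2)${}_k$, so $s_n^{[2,n]}=k_n$. For (iv) I would partition $\mathcal{M}_0$ according to the value of $J(M)\subseteq[2,n]$: any $M$ with $J(M)=J$ automatically has every row in $J$ and column in $J-1$ empty, so by the last sentence of the previous paragraph $M\mapsto M'$ restricts to a weight-preserving bijection from $\{M\in\mathcal{M}_0:J(M)=J\}$ onto the basic faithfully balanced $\Lambda_{n-|J|}$-modules, contributing $k_{n-|J|}(\underline{x}_n^J)$; summing over all $J$ yields $p_n(\underline{x}_n)=\sum_{J\subseteq[2,n]}k_{n-|J|}(\underline{x}_n^J)$.

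The main obstacle should be the geometric dictionary in the first two steps — checking that deleting the rows $J$ and columns $J-1$ genuinely returns the staircase of $\Lambda_{n-m}$, that cohooks (hence (FB1)) transport cleanly, and the index identity lining up the virtual cohooks so that (FB2) is respected. After that the combinatorics is routine: a direct bijection for (i), inclusion–exclusion for (ii), a specialization for (iii), and a partition-by-$J(M)$ argument for (iv).
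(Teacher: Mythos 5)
Your proof is correct and takes essentially the same route as the paper: (i) by shrinking the Young diagram after deleting the empty rows $I$ and columns $I-1$, (ii) by inclusion--exclusion, (iii) as the special case $I=[2,n]$, and (iv) by sorting modules according to the exact set of failed conditions, which is exactly the paper's ``another application of the inclusion--exclusion principle''. The paper's own proof is only a few lines, so your careful check that the shrinking bijection preserves (FB0) and (FB1) and lines up the virtual cohooks is just a detailed expansion of the same argument.
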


\begin{proof}
(i) To fail the condition (FB2)${}_k$ means that row $k$ and column $k-1$ of the Young diagram must be empty. If so we can shrink the diagram to obtain a Young diagram for a smaller $n$. 

(ii) Follows by the inclusion-exclusion principle.

(iii) This is a special case of (ii).

(iv) This follows by another application of the inclusion-exclusion principle.
\end{proof}

\begin{lem}\label{l:6.2}
We have
\[
p_{n+1}(\underline x_{n+1}) = 
\left( \prod_{i=1}^{n+1} (1+x_i) \right)
 \sum_{I \subseteq [2,n]} k_{n-|I|} (\underline x_n^I) \cdot \prod_{i\in I} \frac{1}{1+x_i}.
\]
\end{lem}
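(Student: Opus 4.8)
The plan is to compute $p_{n+1}$ directly, realising a $\Lambda_{n+1}$-module (up to $\add$) as a subset $\mathcal{S}\subseteq I_{n+1}$ and peeling off pieces of the staircase one at a time. By definition the configurations counted by $p_{n+1}$ are the $\mathcal{S}\subseteq I_{n+1}$ satisfying conditions (FB0) and (FB1) of Theorem~\ref{t:fban}, i.e.\ $(1,n+1)\in\mathcal{S}$ and $\cohook(i,j)\cap\mathcal{S}\neq\emptyset$ for every $(i,j)\in\mathcal{S}\setminus\{(1,n+1)\}$. First I would remove the two ``isolated'' simple modules $M_{1,1}=S[1]$ and $M_{n+1,n+1}=S[n+1]$: neither $(1,1)$ nor $(n+1,n+1)$ lies in $\cohook(i,j)$ for any $(i,j)\in I_{n+1}$, while $(1,n+1)\in\cohook(1,1)\cap\cohook(n+1,n+1)$, so $\mathcal{S}$ satisfies (FB0)+(FB1) iff $\mathcal{S}\setminus\{(1,1),(n+1,n+1)\}$ does, with those two boxes free. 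Writing $q_{n+1}(\underline{x}_{n+1})$ for the same generating function restricted to $\mathcal{S}\subseteq I_{n+1}\setminus\{(1,1),(n+1,n+1)\}$ (a polynomial in $x_1,\dots,x_n$ only), this gives
\[
p_{n+1}(\underline{x}_{n+1}) = (1+x_1)(1+x_{n+1})\, q_{n+1}(\underline{x}_{n+1}),
\]
and it remains to show $q_{n+1}(\underline{x}_{n+1})=\sum_{I\subseteq[2,n]}k_{n-|I|}(\underline{x}_n^I)\prod_{i\in[2,n]\setminus I}(1+x_i)$.

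Next I would separate the diagonal. For such an $\mathcal{S}$ put $\mathcal{S}_{\mathrm d}=\mathcal{S}\cap\{(k,k):2\le k\le n\}$ and $\mathcal{S}_{\mathrm{nd}}=\mathcal{S}\cap\{(i,j):1\le i<j\le n+1\}$. The elementary facts to check are: if $i<j$ then $\cohook(i,j)$ contains no diagonal box; $\cohook(k,k)$ consists only of off-diagonal boxes; and the shift $(i,j)\mapsto(i,j-1)$ is a poset isomorphism $\{(i,j):1\le i<j\le n+1\}\isoto I_n$ sending $(1,n+1)$ to $(1,n)$, cohooks of off-diagonal boxes to cohooks in $I_n$, and the cohook of the diagonal box $(k,k)$ to $\cohook(k,k-1)$, the virtual cohook of the leaf $(k,k-1)$. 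Granting these, $\mathcal{S}$ is admissible for $q_{n+1}$ exactly when $\mathcal{S}_{\mathrm{nd}}$ (transported to $I_n$) satisfies (FB0)+(FB1), and in addition $\mathcal{S}_{\mathrm{nd}}$ satisfies (FB2)${}_k$ for every $k$ with $(k,k)\in\mathcal{S}_{\mathrm d}$. Since $(k,k)$ lies in row $k$, summing over the allowed $\mathcal{S}_{\mathrm d}$ with $\mathcal{S}_{\mathrm{nd}}$ fixed produces the factor $\prod_{k\in G(\mathcal{S}_{\mathrm{nd}})}(1+x_k)$, where $G(\mathcal{S}_{\mathrm{nd}})\subseteq[2,n]$ is the set of $k$ for which (FB2)${}_k$ holds; thus
\[
q_{n+1}(\underline{x}_{n+1}) = \sum_{\mathcal{S}_{\mathrm{nd}}}\ \Big(\prod_{r=1}^{n}x_r^{t_r(\mathcal{S}_{\mathrm{nd}})}\Big)\prod_{k\in G(\mathcal{S}_{\mathrm{nd}})}(1+x_k),
\]
the sum ranging over all (FB0)+(FB1) configurations $\mathcal{S}_{\mathrm{nd}}$ for $\Lambda_n$.

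Finally I would group this sum according to the set $I=[2,n]\setminus G(\mathcal{S}_{\mathrm{nd}})$ of indices where (FB2) fails, so that
\[
q_{n+1}(\underline{x}_{n+1}) = \sum_{I\subseteq[2,n]}\Big(\sum_{\mathcal{S}_{\mathrm{nd}}:\ G(\mathcal{S}_{\mathrm{nd}})=[2,n]\setminus I}\ \prod_{r=1}^{n}x_r^{t_r(\mathcal{S}_{\mathrm{nd}})}\Big)\prod_{k\in[2,n]\setminus I}(1+x_k),
\]
and identify the inner sum with $k_{n-|I|}(\underline{x}_n^I)$. This is Lemma~\ref{l:6.1}(i) with ``fails (FB2)${}_k$ for all $k\in I$'' sharpened to ``fails (FB2)${}_k$ exactly for $k\in I$'': in the diagram-shrinking of that proof — delete row $k$ and column $k-1$ for each $k\in I$, obtaining $I_{n-|I|}$ — an (FB0)+(FB1) configuration for $\Lambda_n$ that fails (FB2)${}_k$ for all $k\in I$ goes to an arbitrary (FB0)+(FB1) configuration for $\Lambda_{n-|I|}$, and it satisfies (FB2)${}_k$ for all remaining $k\in[2,n]\setminus I$ iff its image satisfies every (FB2) condition of $\Lambda_{n-|I|}$, i.e.\ is faithfully balanced; the correspondence preserves weights in the variables $\underline{x}_n^I$. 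Hence $q_{n+1}(\underline{x}_{n+1})=\sum_{I\subseteq[2,n]}k_{n-|I|}(\underline{x}_n^I)\prod_{k\in[2,n]\setminus I}(1+x_k)$, and combining this with the first display and the identity $(1+x_1)(1+x_{n+1})\prod_{k\in[2,n]\setminus I}(1+x_k)=\prod_{i=1}^{n+1}(1+x_i)\cdot\prod_{k\in I}(1+x_k)^{-1}$ gives the formula. (One could instead expand $\prod_{k\in G}(1+x_k)$ and invoke Lemma~\ref{l:6.1}(ii) and~(iv), but the above seems cleaner.)

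The main obstacle is the decoupling in the second step: one must check carefully that admissibility for $q_{n+1}$ factors as ``(FB0)+(FB1) on the off-diagonal part, which is a copy of $I_n$'' together with the separate constraints imposed by the chosen diagonal boxes. The crux is the identification under the shift — the cohook of the diagonal box $(k,k)$ of $I_{n+1}$ becomes precisely the virtual cohook of the leaf $(k,k-1)$ of $I_n$ — so that appending $(k,k)$ is allowed by (FB1) if and only if the off-diagonal part meets that virtual cohook, i.e.\ satisfies (FB2)${}_k$. Once this is in place, the first step is immediate and the last step is only a mild strengthening of Lemma~\ref{l:6.1}(i).
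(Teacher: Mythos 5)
Your argument is correct, and its combinatorial core is the same as the paper's: the published proof of Lemma \ref{l:6.2} starts from exactly your decomposition, building a $\Lambda_{n+1}$-configuration satisfying (FB0)--(FB1) out of a choice of diagonal simples $M_{ii}$, $i\in I$, together with a shifted $\Lambda_n$-configuration $M_{ij}\mapsto M_{i,j+1}$, the point being that admissibility upstairs is equivalent to (FB0), (FB1) and (FB2)${}_k$ for $k\in I\cap[2,n]$ downstairs, with the boxes $(1,1)$ and $(n+1,n+1)$ free and accounting for $(1+x_1)(1+x_{n+1})$. Where you genuinely differ is in how this is converted into the stated formula: the paper sums over the diagonal set first, obtaining $(1+x_1)(1+x_{n+1})\sum_{I\subseteq[2,n]}\bigl(\prod_{i\in I}x_i\bigr)s_n^I(\underline x_n)$, and then performs two rounds of inclusion--exclusion and resummation (Lemma \ref{l:6.1}(ii) to pass from $s$ to $p$, then Lemma \ref{l:6.1}(iv) to pass from $p$ to $k$), whereas you fix the off-diagonal part, sum the diagonal choices directly into $\prod_{k\in G}(1+x_k)$, and partition by the exact (FB2)-failure set $I$, identifying each block with $k_{n-|I|}(\underline x_n^I)$ by a sharpened form of the shrinking bijection of Lemma \ref{l:6.1}(i). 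That sharpening is true and needs only the observation that after deleting row $k$ and column $k-1$ for all $k\in I$, the $k'$-th surviving row is paired with the $(k'-1)$-th surviving column (if row $r$ survives, so does column $r-1$, since $I\subseteq[2,n]$), so the surviving virtual-cohook conditions of $\Lambda_n$ correspond exactly to all virtual-cohook conditions of $\Lambda_{n-|I|}$; this is checking at the same level of detail as the paper's own Lemma \ref{l:6.1}(i). The trade-off: your route is sign-free and bijective (and yields in passing a bijective refinement of Lemma \ref{l:6.1}(iv), namely that exact-failure-set-$I$ configurations biject with faithfully balanced configurations for $\Lambda_{n-|I|}$), at the cost of the finer exact-failure bookkeeping, while the paper keeps the bijective input minimal and relegates the rest to formal alternating-sum manipulations.
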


\begin{proof}
Let $I \subseteq [1,n+1]$. Given a basic module $M$ for $\Lambda_n$, we obtain a module for $\Lambda_{n+1}$ of the form
\[
M' = 
\left( \bigoplus_{i\in I} M_{ii} \right)
\oplus
\left( \bigoplus_{M_{ij}\in\add(M)} M_{i,j+1}\right).
\]
Moreover $M'$ satisfies (FB0) and (FB1) if and only if $M$ satisfies (FB0), (FB1) and (FB2)${}_k$ for $k\in [2,n]\cap I$.
Thus
\[
p_{n+1}(\underline x_{n+1}) 
= (1+x_1)(1+x_{n+1})\cdot \sum_{I\subseteq [2,n]} \left( \prod_{i\in I} x_i \right) s_n^I (\underline x_n) .
\]
By Lemma \ref{l:6.1}(ii) this becomes
\[
(1+x_1)(1+x_{n+1})\cdot \sum_{I\subseteq [2,n]} 
 \left( \prod_{i\in I} x_i \right)  
\left( \sum_{J\subseteq I} (-1)^{|J|} p_{n - |J|} (\underline x_n^J)\right) .
\]
Letting $L = I \setminus J$ we can rewrite this as
\[
(1+x_1)(1+x_{n+1})\cdot \sum_{J\subseteq [2,n]} 
 \left(\prod_{j\in J} x_j \right) (-1)^{|J|} p_{n - |J|} (\underline x_n^J) 
  \sum_{L\subseteq [2,n]\setminus J}  \left( \prod_{\ell\in L} x_\ell \right).
\]
\[
= (1+x_1)(1+x_{n+1})\cdot \sum_{J\subseteq [2,n]} 
 \left(\prod_{j\in J} x_j \right) 
 (-1)^{|J|} p_{n - |J|} (\underline x_n^J) \cdot
 \left( \prod_{i \in [2,n]\setminus J} (1+x_i) \right).
\]
\[
= \left( \prod_{i=1}^{n+1} (1+x_i) \right) \sum_{J\subseteq [2,n]} 
 (-1)^{|J|} \left(\prod_{j\in J} \frac{x_j}{1+x_j} \right) 
  p_{n - |J|} (\underline x_n^J).
\]
By part (iv) of Lemma \ref{l:6.1} this becomes
\[
\left( \prod_{i=1}^{n+1} (1+x_i) \right) \sum_{J\subseteq [2,n]} 
 (-1)^{|J|} \left(\prod_{j\in J} \frac{x_j}{1+x_j} \right) 
  \sum_{K \subseteq [2,n]\setminus J} k_{n - |J|-|K|} (\underline x_n^{J\cup K}).
\]
Letting $I = J\cup K$ this becomes
\[
\left( \prod_{i=1}^{n+1} (1+x_i) \right) 
\sum_{I\subseteq [2,n]} 
  k_{n - |I|} (\underline x_n^I)
 \sum_{J\subseteq I} 
 (-1)^{|J|} \left(\prod_{j\in J} \frac{x_j}{1+x_j} \right) 
\]
\[
= \left( \prod_{i=1}^{n+1} (1+x_i) \right) 
\sum_{I\subseteq [2,n]} 
  k_{n - |I|} (\underline x_n^I)
  \left( \prod_{i\in I} \frac{1}{1+x_i} \right)
\]
as claimed.
\end{proof}

\begin{proof}[Proof of Theorem \ref{t:fbcount}]
Define 
\[
h_n(\underline x_n) = 
\prod_{r=1}^n \left( \prod_{s=1}^r (1+x_s) - 1 \right)
\]
as in the statement of the theorem. Suppose by induction that $k_m(\underline x_m) = h_m(\underline x_m)$ for all $m\le n$. We show that $k_{n+1}(\underline x_{n+1}) = h_{n+1}(\underline x_{n+1})$. For a subset $I$ of $[2,n]$ we have
\[
\left( \prod_{i=1}^{n+1} (1+x_i) \right)
h_{n-|I|}(\underline x_n^I) \prod_{i\in I} \frac{1}{1+x_i}
=
h_{n-|I|}(\underline x_n^I) \prod_{i\in [1,n+1]\setminus I} (1+x_i)
\]
\[
= h_{n-|I|}(\underline x_n^I) \left( \prod_{i\in [1,n+1]\setminus I} (1+x_i) -1\right) 
+ h_{n-|I|}(\underline x_n^I) 
\]
\[ 
= h_{n-|I|+1}(\underline x_{n+1}^I) 
+ h_{n-|I|}(\underline x_{n+1}^{I \cup \{ n+1\}}).
\]
By Lemma \ref{l:6.2} and the inductive hypothesis this gives
\[
p_{n+1}(\underline x_{n+1}) = 
 \sum_{I \subseteq [2,n]} 
\left( h_{n-|I|+1}(\underline x_{n+1}^I) 
+ h_{n-|I|}(\underline x_{n+1}^{I \cup \{ n+1\}}) \right)
=
 \sum_{I \subseteq [2,n+1]} 
h_{n+1-|I|}(\underline x_{n+1}^I).
\]
On the other hand, by Lemma \ref{l:6.1}(iv),
\[
p_{n+1}(\underline x_{n+1})
=
 \sum_{I \subseteq [2,n+1]} 
k_{n+1-|I|}(\underline x_{n+1}^I).
\]
By the inductive hypothesis, we can equate terms, giving $k_{n+1}(\underline x_{n+1}) = h_{n+1}(\underline x_{n+1})$, as required.
\end{proof}

Recall the notation $[n]_q = 1+q+q^2+\dots+q^{n-1}$ and $[n]_q! = [1]_q [2]_q \dots [n]_q$.

\begin{cor}
\label{c:fbnumbers}
For $\Lambda_n$ we have the following.
\begin{itemize}
\item[(i)]If $k_{n,s}$ denotes the number of basic faithfully balanced modules with $s$ summands, then 
\[ \sum_s k_{n,s} x^s = \prod_{i=1}^n ((1+x)^i -1).\]
\item[(ii)]$k_{n,s} = \sum_{(j_1, j_2 , \dots , j_n) \colon 1 \leq j_r \leq r, \sum_{r=1}^n j_r=s} \binom{1}{j_1}\binom{2}{j_2} \dots \binom{n}{j_n}$.
\item[(iii)]The number of basic faithfully balanced modules is $[n]_2 !:=\prod_{i=1}^n (2^i -1)$. The number of faithfully balanced modules in which the indecomposable summands have multiplicity at most $m$ is $\prod_{i=1}^n ((1+m)^i -1)$.
\item[(iv)]Any basic faithfully balanced module for $\Lambda_n$ has at least $n$ indecomposable summands, and the number of basic faithfully balanced modules with $n$ indecomposable summands is $n!$.
\item[(v)]The direct sum of all indecomposable modules is a faithfully balanced module with $N=n(n+1)/2$ indecomposable summands; there are $N-1$ basic faithfully balanced modules with $N-1$ summands.
\end{itemize}
\end{cor}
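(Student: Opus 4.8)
The plan is to derive Corollary~\ref{c:fbnumbers} directly from Theorem~\ref{t:fbcount}, i.e.\ from the identity $k_n(\underline x_n) = h_n(\underline x_n)$ with $h_n(\underline x_n) = \prod_{r=1}^n\left(\prod_{s=1}^r(1+x_s)-1\right)$, by specializing the indeterminates. For (i), I would set $x_1=\dots=x_n=x$; then $\prod_{s=1}^r(1+x_s)-1$ becomes $(1+x)^r-1$, and $k_n$ counts basic faithfully balanced modules weighted by $x^{\#\text{summands}}$ since $\sum_r t_r(M)$ is the total number of indecomposable summands, giving $\sum_s k_{n,s}x^s = \prod_{i=1}^n((1+x)^i-1)$. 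For (ii), expand each factor by the binomial theorem as $(1+x)^i-1 = \sum_{j_i=1}^i\binom{i}{j_i}x^{j_i}$ (the $j_i\ge 1$ because of the $-1$), multiply out, and collect the coefficient of $x^s$: it is the sum of $\binom{1}{j_1}\cdots\binom{n}{j_n}$ over all tuples $(j_1,\dots,j_n)$ with $1\le j_r\le r$ and $\sum j_r = s$. For (iii), put $x=1$ in (i) to get $\prod_{i=1}^n(2^i-1)$ (note the typo $[2]_2!$/$[2]_q!$ in the statement should read $[n]_2!$/$[n]_q!$ to match the abstract and introduction). For (iv), the number of faithfully balanced modules with each indecomposable summand occurring with multiplicity between $0$ and $q$ is obtained by replacing each indeterminate $x_r$ by $q$ in the generating function $k_n$, or equivalently by the same specialization argument: each summand in row $r$ can be included with multiplicity $0,1,\dots,q$, so one replaces the factor $(1+x)^i$ in (i) by $(1+q)^i$, getting $\prod_{i=1}^n((1+q)^i-1)$; alternatively just put $x=q$ in (i).

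For (v), first I would observe that every faithfully balanced module has at least $n$ summands: by Theorem~\ref{t:fbcount} (or more elementarily from conditions (FB0)--(FB2)), a faithfully balanced module must meet every virtual cohook $\cohook(k,k-1)$ for $k\in[1,n+1]$; since for $k=1,\dots,n$ the virtual cohooks $\cohook(k,k-1) = \{M_{k\ell}:n\ge\ell\ge k\}$ (together with column $k-1$, which is empty for $k=1$) are the full rows of the Young diagram, each of the $n$ rows must contain a summand — equivalently, in the polynomial $h_n$ every monomial $x_1^{t_1}\cdots x_n^{t_n}$ with nonzero coefficient has all $t_i\ge 1$, which is visible since each factor $\prod_{s=1}^r(1+x_s)-1$ has no constant term. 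The minimal degree is then $n$, attained by the monomial $x_1 x_2\cdots x_n$, and the coefficient of $x_1\cdots x_n$ in $h_n$ equals the product over $r$ of the coefficient of the linear part, i.e.\ of $x_r$ in $\prod_{s=1}^r(1+x_s)-1$, which is $1$ for each $r$... but that over-counts: more carefully, the coefficient of $x_1\cdots x_n$ in $\prod_{r=1}^n\left(\prod_{s=1}^r(1+x_s)-1\right)$ is computed by choosing, from each factor indexed by $r$, a monomial, so that the chosen monomials multiply to $x_1\cdots x_n$; from factor $r$ we may pick $x_s$ for any $s\le r$, and we need a system of distinct representatives — in fact for each $r$ from $n$ down to $1$ there are exactly $r$ choices of which not-yet-used variable $x_s$ ($s\le r$) to take, giving $n!$ (this is the permanent of the lower-triangular all-ones matrix). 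Alternatively, this is immediate from (ii): the coefficient $k_{n,n}$ requires $\sum j_r = n$ with $1\le j_r\le r$, forcing $j_r=1$ for all $r$, and then $\binom{1}{1}\binom{2}{1}\cdots\binom{n}{1} = n!$.

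For (vi), the direct sum of all $N=n(n+1)/2$ indecomposable modules trivially satisfies (FB0)--(FB2) (every row and column is fully occupied), so it is faithfully balanced with $N$ summands; it is the unique basic faithfully balanced module with $N$ summands, which corresponds to the top-degree monomial $x_1 x_2^2\cdots x_n^n$ in $h_n$ having coefficient $1$. For the count of basic faithfully balanced modules with $N-1$ summands: by (ii) this is $k_{n,N-1} = \sum_{\sum j_r = N-1,\,1\le j_r\le r}\binom{1}{j_1}\cdots\binom{n}{j_n}$; since $\sum_r r = N$, having $\sum_r j_r = N-1$ with $j_r\le r$ forces exactly one index $r_0$ to have $j_{r_0} = r_0-1$ and all others $j_r = r$, so the sum collapses to $\sum_{r_0=1}^n \binom{r_0}{r_0-1}\prod_{r\ne r_0}\binom{r}{r} = \sum_{r_0=1}^n r_0 = N$, hence there are $N-1$... here I must reconcile: $\sum_{r_0=1}^n r_0 = N$, but the term $r_0=1$ gives $j_1 = 0$, which violates $j_1\ge 1$; so that term is excluded and the count is $\sum_{r_0=2}^n r_0 = N-1$.

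The routine binomial bookkeeping in (ii) and the collapsing arguments in (v) and (vi) are straightforward; the only point requiring mild care is noticing that the constraint $j_r\ge 1$ (coming from the $-1$ in each factor of $h_n$) both forces the minimum number of summands to be $n$ and kills the spurious $r_0=1$ term in the $N-1$ count — I expect reconciling these edge cases with the stated formulas (and flagging the evident typo $[2]_q!$ for $[n]_q!$) to be the only genuine subtlety, the rest being direct specialization of Theorem~\ref{t:fbcount}.
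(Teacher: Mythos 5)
Your overall route is exactly the paper's: Corollary \ref{c:fbnumbers} is meant to be read off from Theorem \ref{t:fbcount} by specializing the variables, and your treatment of (i)--(iv) and (vi) is correct (including spotting that the $r_0=1$ term violates $j_1\ge 1$, which is what turns $N$ into $N-1$, and that $[2]_2!$, $[2]_q!$ are typos for $[n]_2!$, $[n]_q!$).

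Part (v), however, contains two genuinely false intermediate claims, even though the fallback argument you give via (ii) is correct and is all you need. First, it is not true that every basic faithfully balanced module has a summand in every row: condition (FB2)$_k$ only demands a summand in row $k$ \emph{or} in column $k-1$, and for instance $M_{13}\oplus M_{11}\oplus M_{33}$ (the third $\Lambda_3$ diagram in the introduction) is faithfully balanced with empty second row. Correspondingly $h_3$ contains the monomial $x_1^2x_3$, so the assertion that every monomial of $h_n$ has all $t_i\ge 1$ is false; the absence of constant terms in the factors only gives total degree $\ge n$, which is indeed enough for the lower bound $s\ge n$. Second, the coefficient of the squarefree monomial $x_1x_2\cdots x_n$ in $h_n$ is $1$, not $n!$: choosing one variable $x_s$ with $s\le r$ from each factor, all distinct, forces the identity choice (the permanent of the lower-triangular all-ones matrix is $1$, since $\sigma(r)\le r$ for all $r$ forces $\sigma=\mathrm{id}$). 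The number $k_{n,n}=n!$ is the coefficient of $x^n$ in the one-variable specialization, i.e.\ the sum of the coefficients of \emph{all} monomials of total degree $n$, not just the squarefree one; your computation from (ii), namely $\sum_r j_r=n$ with $1\le j_r\le r$ forcing $j_r=1$ and yielding $\binom{1}{1}\binom{2}{1}\cdots\binom{n}{1}=n!$, is the correct derivation. So (v) stands once the row-occupancy and permanent arguments are deleted and only the degree bound plus the (ii)-computation are kept.
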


\section{Number of faithfully balanced modules for quadratic Nakayama algebras}
\label{s:quadratic}
In order to study quadratic Nakayama algebras, we begin with a lemma about faithfully balanced modules for $\Lambda_n$.
For $n\ge 1$ and $0\le k\le 2$, we define $N_k(n)\in\N$ by
\[
N_0(n) = [n]_2!,
\quad
N_1(n) = 2^{n-1}[n-1]_2!,
\quad
N_2(n) = \begin{cases}
1 & (n\le 2) \\
2^{n-3} (2^n - 1) [n-2]_2! & (n \ge 3).
\end{cases}
\]
Recall that $S[n]$ is a simple projective module for $\Lambda_n$, and $S[1]$ is a simple injective.

\begin{lem}
\label{l:nknformula}
Fix a subset of the set $\{S[1],S[n]\}$ of cardinality $k$.
The number of basic faithfully balanced modules for $\Lambda_n$
having all of the modules in this subset as direct summands is $N_k(n)$.
\end{lem}

\begin{proof}
The case $k=0$ is Corollary \ref{c:fbnumbers}(iii). The two possible subsets of size $k=1$ give the same number of faithfully balanced modules by duality, so we may assume that the subset is $\{S[n]\}$. If the Young diagram for a faithfully balanced module has a non-empty second column (starting from the left), then the simple $S[n]$ is \emph{irrelevant} for the faithfully balanced condition in Theorem~\ref{t:fban}. On the other hand, if the second column is empty then $S[n]$ must be a direct summand of $M$. Let $M$ be a faithfully balanced module for $\Lambda_{n}$ with empty second column. Removing the second column and the simple $S[n]$ and shrinking the diagram gives a faithfully balanced module for $\Lambda_{n-1}$. In other words, there is a bijection between faithfully balanced modules for $\Lambda_{n-1}$ and faithfully balanced modules for $\Lambda_{n}$ with an empty second column. If we denote by $t_n$ the number of faithfully balanced modules for $\Lambda_n$ having non-empty second column and $S[n]$ as a summand, we have $N_0(n) = N_0(n-1) + 2t_n$, and the number of faithfully balanced modules with $S[n]$ as a direct summand is $N_0(n-1) + t_n = N_1(n)$. 

The case $k=2$ is similar, but slightly more technical so we only sketch the arguments. The case $n\le 2$ is clear, so assume $n\ge 3$. If we denote by $A$ the set of summands of $M$ in the second column and by $B$ the summands in the second row (starting from the top), we can split the set of faithfully balanced modules into $4$ subsets accordingly to the emptiness or non-emptiness of $A$ and $B$. 

Let $r$ be the number of faithfully balanced modules having $A\neq \emptyset$ and $B\neq\emptyset$ and having $S[1]$ and $S[n]$ as direct summands. In this case the modules $S[1]$ and $S[n]$ are both irrelevant for the condition of faithfully balanced module.  

Let $s$ be the number of faithfully balanced modules having $A\neq \emptyset$ and $B = \emptyset$ and having $S[1]$ and $S[n]$ as direct summands. In this case the module $S[n]$ is irrelevant for the condition of faithfully balanced module. 

Note that duality induces a bijection between the case $A\neq \emptyset, B = \emptyset$ and the case $A = \emptyset, B\neq \emptyset$. Moreover, the shrinking argument used in the first part shows that there is a bijection between the set of faithfully balanced modules having $A = \emptyset, B = \emptyset$ and the set of faithfully balanced modules for $\Lambda_{n-2}$. In other words, we have
\[
N_0(n) = 4 r + 4 s + N_0(n-2).
\]
Looking at the modules having $B = \emptyset$ and using a shrinking argument, we have
\[
N_0(n-1) = 2 s + N_0(n-2).
\]
Now the number of faithfully balanced modules with both $S[1]$ and $S[n]$ as summands is
\[
r + 2s + N_0(n-2) = N_2(n),
\]
as required.
\end{proof}

Now let $\Lambda$ be a \emph{quadratic Nakayama} algebra, say of the form $KQ/I$ where $Q$ is a linearly oriented quiver of type $A_n$ or $\tilde A_n$ (see \cite[Theorem 10.3]{SY11}), and $I$ is an admissible ideal generated by paths of length 2. Let $P_1,\dots,P_t$ be the indecomposable projective-injective $\Lambda$-modules, say of lengths $n_1,\dots,n_t$, let $\mathcal{G}= \gen(\kdual \Lambda)\cap \cogen(\Lambda)$ and let $g$ be the number of simple $\Lambda$-modules in $\mathcal{G}$, equivalently the number of simples wich occur as the socle of some $P_i$ and the top of some $P_j$. Define $k_i$ to be 2 if $\soc P_i$ and $\Top P_i$ are both in $\mathcal{G}$, 1 if only one is in $\mathcal{G}$, and 0 otherwise.

\begin{thm}
\label{t:quadratic}
If $\Lambda$ is a quadratic Nakayama algebra
as above, then the number of basic faithfully balanced $\Lambda$-modules is
$2^g N_{k_1}(n_1) \dots N_{k_t}(n_t)$.
\end{thm}

\begin{proof}
It is easy to see that the Auslander-Reiten quiver of $\Lambda$ is a concatenation of the Auslander-Reiten quivers of the algebras $\Lambda_{n_i}$. See Figure~\ref{ex_nakayama} for an example where $Q$ is of type $A_n$. In case $Q$ is of type $\tilde A_n$, the diagram is similar, but the bottom left and top right vertices must be identified, and the corresponding simple is in $\mathcal{G}$.

Corollary~\ref{c:fb_nakayama} implies that adding or deleting a simple in $\mathcal{G}$ as a summand of a module $M$ does not affect whether or not $M$ is faithfully balanced. Thus the number of basic faithfully balanced modules is $2^g$ times the number of those which have all simples in $\mathcal{G}$ as a summand. Clearly the basic modules $M$ which have all simples in $\mathcal{G}$ as summands are in 1-1 correspondence with collections of basic modules $M_1,\dots,M_t$ for the algebras $\Lambda_{n_1},\dots,\Lambda_{n_t}$, where $M_i$ has a copy of $S[n_i]$ as a summand if $\soc P_i\in \mathcal{G}$ and a copy of $S[1]$ as a summand if $\Top P_i\in\mathcal{G}$. Now Corollary~\ref{c:fb_nakayama} shows that $M$ is faithfully balanced if and only if the $M_i$ are faithfully balanced. The result thus follows from Lemma~\ref{l:nknformula}.
\end{proof}

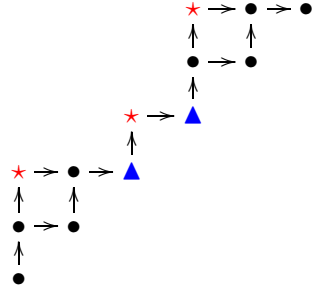
\begin{figure}[ht]
\[
\xymatrix@=0.3cm{
& & & \textcolor{red}{\star}\ar[r] & \bullet\ar[r] & \bullet \\
& & & \bullet\ar[r]\ar[u] & \bullet\ar[u] & \\
& & \textcolor{red}{\star}\ar[r] & \textcolor{blue}{\blacktriangle}\ar[u]\\
\textcolor{red}{\star}\ar[r]&\bullet\ar[r]&\textcolor{blue}{\blacktriangle}\ar[u]\\
\bullet\ar[r]\ar[u] & \bullet\ar[u]\\
\bullet\ar[u]
}
\]
\caption{Auslander-Reiten quiver of the algebra given by a quiver of type $A_6$ modulo the ideal generated by the paths from 2 to 4 and from 3 to 5. The red stars are the indecomposable projective-injective modules and the blue triangles are the simples in $\mathcal{G}$. The number of basic faithfully balanced modules
is $2^2 N_1(3) N_2(2) N_1(3) = 576$.}
\label{ex_nakayama}
\end{figure}

\section{Tree-like combinatorics for faithfully balanced modules}
\label{section_interleaved}
The purpose of the whole  section is to prove Theorem~\ref{t:fbncomb}. As explained in Theorem \ref{t:fban}, a basic faithfully balanced module can be identified with a collection of vertices in a staircase Young diagram. In order to be consistent with the literature on binary trees, we apply a rotation by an angle of $-\frac{\pi}{4}$ of the grid and we adopt the usual terminology of binary trees.

Let $M$ be a faithfully balanced module for $\Lambda_n$. The black box corresponding to the projective module $M_{1n}$ is at the top of the grid and is called the \emph{root}. The black boxes in the Young diagram are called \emph{vertices}. The first vertex in the cohook of a vertex $v$ which is on its right side (resp. its left side) is called, if it exists, the \emph{right parent} (resp. \emph{left parent}) of $v$. Conversely we say that $v$ is a \emph{right child} (resp. \emph{left child}) of its left parent (resp. right parent). The conditions $(FB1)$ and $(FB2)$ imply that every vertex (including the leaves) has at least one parent. 

We turn the collection of vertices into a graph in the Young diagram by adding a straight edge between each vertex (including the leaves) and each one of its parents.  If $M$ is a faithfully balanced module, we denote by $T_M$ the graph obtained as explained above and we call it the graph of $M$. From now on, we reserve the name \emph{vertex of $T_M$} for the vertices that are not the leaves. 

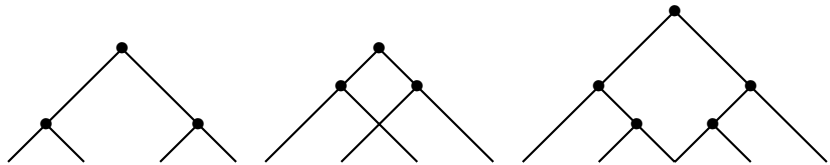
\begin{figure}[ht]
\centering
\begin{tikzpicture}
\node (1) at (0,0) {$\bullet$};
\node (2) at (1,-1) {$\bullet$};
\node (3) at (-1,-1) {$\bullet$};
\node (4) at (1.5,-1.5) {};
\node (5) at (0.5, -1.5) {};
\node (6) at (-0.5, -1.5) {};
\node (7) at (-1.5,-1.5) {} ; 
\draw[thick] (0,0)--(1,-1)--(1.5,-1.5)
			  (1,-1)--(0.5,-1.5)
			  (0,0)--(-1,-1)--(-0.5, -1.5)
			  (-1,-1)--(-1.5,-1.5);
\end{tikzpicture}
\begin{tikzpicture}
\node (1) at (0,0) {$\bullet$};
\node (2) at (0.5,-0.5) {$\bullet$};
\node (3) at (-0.5,-0.5) {$\bullet$};
\node (4) at (1.5,-1.5) {};
\node (5) at (0.5, -1.5) {};
\node (6) at (-0.5, -1.5) {};
\node (7) at (-1.5,-1.5) {} ; 
\draw[thick] (0,0)--(0.5,-0.5)--(1.5,-1.5)
			  (0.5,-0.5)--(-0.5, -1.5)
			  (0,0)--(-0.5,-0.5)--(-1.5,-1.5)
			  (-0.5,-0.5)--(0.5, -1.5);
\end{tikzpicture}
\begin{tikzpicture}[scale=2]
\node (1) at (0,-0.5) {$\bullet$};
\node (2) at (0.5,-1) {$\bullet$};
\node (3) at (0.25,-1.25) {$\bullet$};
\node (4) at (-0.5,-1) {$\bullet$};
\node (5) at (-0.25, -1.25) {$\bullet$};
\node (6) at (1, -1.5) {};
\node (7) at (0.5,-1.5) {} ;
\node (8) at (0,-1.5) {} ;
\node (9) at (-0.5,-1.5) {} ;
\node (10) at (-1,-1.5) {} ;
\draw[thick]  (0,-0.5)--(0.5,-1)--(1, -1.5)
			  (0.5,-1)--(0.25,-1.25)--(0.5,-1.5)
			  (0.25,-1.25)--(0,-1.5)
			   (0,-0.5)--(-0.5,-1)--(-1,-1.5)
			  (-0.5,-1)--(-0.25, -1.25)--(0,-1.5)
			  (-0.25, -1.25)--(-0.5,-1.5);

\end{tikzpicture}
\caption{The left-most and middle trees correspond to faithfully balanced modules for $\Lambda_3$. The right-most example is a minimal faithfully balanced module with $5$ summands for $\Lambda_4$.}\label{fig_ex_tree}
\end{figure}

\begin{lem}\label{lem_trees}
Let $n\in\mathbb{N}$, let $M$ be a minimal faithfully balanced module for $\Lambda_n$ and let $T_M$ be its graph. Then
\begin{enumerate}
    \item $T_M$ is a connected graph. 
    \item The number of vertices in $T_M$ is at least $n$. 
    \item $T_M$ is a rooted binary tree if and only if it has $n$ vertices.  
\end{enumerate}
\end{lem}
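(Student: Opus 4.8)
# Proof Proposal for Lemma \ref{lem_trees}

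The plan is to analyze the structure of $T_M$ using the combinatorial characterization of minimal faithfully balanced modules provided by Corollary~\ref{c:fb_nakayama}, together with the observation (already recorded just before the lemma) that conditions (FB1) and (FB2) force every vertex of the Young diagram, including the leaves, to have at least one parent. Throughout I will work with the rotated picture, so that ``parent'' means the nearest black box in the cohook on a given side, and the root is $M_{1n}$.

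For part (1), I would argue that $T_M$ is connected by showing that every vertex is joined to the root by a path in $T_M$. Since every non-root vertex $v$ has at least one parent, and a parent of $v$ lies strictly ``closer to the root'' in a suitable partial order (measured, say, by the length of the longest hook or by a height function on the staircase diagram — the right parent lies in a strictly higher row's cohook, the left parent in a strictly earlier column), one can induct on this height: repeatedly passing to a parent strictly decreases the height, and the only vertex of minimal height is the root $M_{1n}$ (guaranteed by (FB0)). Hence following parents from any $v$ terminates at the root, exhibiting a path. The same argument applied to leaves shows the leaves are attached too, so $T_M$ is connected.

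For part (2), the key is a counting/independence argument: the $n+1$ leaves and the root must all be present, and I claim that in a minimal faithfully balanced module one needs enough internal vertices that the total vertex count is at least $n$. The cleanest route is via Corollary~\ref{c:fbnumbers}(v), which is stated earlier and asserts exactly that any basic faithfully balanced $\Lambda_n$-module has at least $n$ indecomposable summands; since the vertices of $T_M$ are precisely the indecomposable summands of $M$, part (2) is immediate from that corollary. (If one prefers a self-contained argument, one can count edges: each of the $n+1$ leaves contributes at least one edge to a distinct parent, and an Euler-characteristic / degree count in the connected graph $T_M$ then bounds the number of internal vertices from below; but invoking the earlier corollary is simpler.)

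For part (3), I would prove both implications. If $T_M$ is a rooted binary tree — meaning every vertex has at most one left child and at most one right child, equivalently (by connectedness and the parent structure) it is a tree in which the root has no parent and every other vertex has exactly one parent — then a tree on $v$ vertices has $v-1$ edges, and matching this against the edge count forced by the $n+1$ leaves each needing a parent edge will pin down $v = n$. Conversely, if $T_M$ has exactly $n$ vertices, then since it is connected (part 1) with the $n+1$ leaves attached, an edge count shows it must be a tree (no room for extra edges/cycles), and the parent structure together with minimality forces each internal vertex to have at most two children, one on each side; that is the definition of a rooted binary tree. The main obstacle I anticipate is the bookkeeping in part (3): carefully relating ``number of leaf-parent edges'' to ``number of vertices'' requires knowing that distinct leaves cannot share a parent in an inconvenient way and that minimality rules out a vertex being an isolated extra summand — this is where one must use that $M$ is \emph{minimal} faithfully balanced, not merely faithfully balanced, invoking Lemma~\ref{l:nakgc} or the criticality argument to exclude redundant vertices. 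Once that rigidity is in place, the equality $v - 1 = (\text{number of leaves}) = n+1 - (\text{something})$ falls out and the two implications close up.
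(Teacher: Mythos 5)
Your part (1) is exactly the paper's argument (follow parents, which exist by (FB1)/(FB2) and strictly approach the root $M_{1n}$), so that is fine. For part (2), quoting Corollary \ref{c:fbnumbers}(v) is logically admissible, since that corollary was established earlier by the generating-function computation, but it is circular with respect to the declared aim of this section, which is to give an independent bijective proof of precisely that statement; the paper instead proves (2) self-containedly, by induction on $n$: it cuts $T_M$ at a child $S$ of the root into two graphs $T_S$ and $T_R$, observes that each again satisfies (FB0)--(FB2) and hence is the graph of a faithfully balanced module for a smaller $\Lambda$, and compares leaf counts ($n_S+n_R\geq n+2$ because the leaf created by the cut is counted on both sides).

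The genuine gap is in part (3). The edge count you appeal to is never set up, and as stated it does not close: ``the $n+1$ leaves each needing a parent edge'' only bounds the number of edges from below and yields no relation between $n$ and the number $v$ of summands. What is missing are the two structural facts that make any counting (or the paper's induction) work: (i) every summand has \emph{exactly} two children --- its nearest neighbour below it in its column and its nearest neighbour in its row towards the simple end, each possibly a leaf --- so the number of edges is exactly $2v$; and (ii) in a \emph{minimal} faithfully balanced module every non-root summand has exactly one parent, because a summand with both a left and a right parent can be deleted without violating (FB0)--(FB2). Point (ii) is the paper's key observation and the only place minimality enters; it is not true that minimality is needed to get ``at most two children, one on each side'' (that is automatic from the definition of parent), nor is the danger ``distinct leaves sharing a parent'' or an ``isolated extra summand'' (connectedness already excludes the latter). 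With (i) and (ii) one gets $2v=(v-1)+(n+1)+d$, where $d$ is the number of leaves having two parents, hence $v=n+d$; this proves (2) directly and shows $T_M$ is a tree exactly when $d=0$, i.e.\ when $v=n$ --- the obstruction to being a tree is a leaf with two parents, as in the right-hand example of Figure \ref{fig_ex_tree}, so Lemma \ref{l:nakgc} is not the relevant input. The paper itself avoids the count: from (ii) it deduces that between any two summands there is a unique path, and then proves both (2) and (3) by the root-splitting induction described above, where failure to be a tree is detected by a leaf lying in both $T_S$ and $T_R$, which forces $n_S+n_R>n+2$ and hence strictly more than $n$ summands. Either route is fine, but your sketch contains neither the decisive facts nor the bookkeeping, so as it stands part (3) is not proved.
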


\begin{proof}
By using (FB1) and (FB2) we see that each vertex and each leaf is connected to the root of $T_M$.

If a vertex $v$ has a left and a right parent we can remove it without breaking the conditions (FB0), (FB1) and (FB2). As a consequence, in a minimal faithfully balanced module every vertex has one left parent or one right parent but not both. This implies that there is a unique path in $T_M$ between two (non-leaf) vertices. Moreover, all the vertices but the root are trivalent. 

The second point is proved by induction on $n$. For $n=0$ and $n=1$ there is nothing to prove. Assume $n\geqslant 2$. The root of $T_M$ has at least one child, say $S$. We consider the subgraph $T_S$ of $T_M$ that consists of all the vertices connected to $S$ in $T_M-\{R\}$ where $R$ is the root of $T_M$. Let $T_R$ be the graph obtained by cutting between $R$ and $S$ and removing all the vertices of $T_S$. We add a leaf at the former position of $S$ and we remove all the leaves which are no longer connected to $R$. 

Since there is a unique path between two vertices, the number of vertices of $T_M$ is equal to the sum of the numbers of vertices of $T_S$ and $T_R$. However, a leaf may appear in $T_S$ and in $T_R$ (see Figure \ref{fig_ex_tree} for an example). 

We denote by $n_S$ and $n_R$ the number of leaves of $T_S$ and $T_R$. The graphs $T_S$ and $T_R$ satisfy the conditions (FB0), (FB1) and (FB2) so they can be identified with graphs of faithfully balanced modules for $\Lambda_{n_S-1}$ and $\Lambda_{n_R-1}$ respectively. By induction, we see that the number of direct summands in $M$ is larger than $n_S+n_R-2$. At least one leaf occurs in both $T_S$ and $T_R$ (the one that we add at the former position of $S$ in $T_R$), so $n_S+n_R 
\geq n+1+1$ and the result follows. 

The last point is also easy to prove by induction. 

\end{proof}

\begin{rem}
The two trees on the left of Figure \ref{fig_ex_tree} give the same abstract graph but two different faithfully balanced modules. For us it is important to keep the `shape' of the tree. It can be done by considering it in the Young diagram, or alternatively by fixing the positions of the root and the leaves of the tree.   
\end{rem}

Recall that binary trees can be defined inductively as follows. A binary tree is either the empty set or a tuple $(r,L,R)$ where $r$ is a singleton set and $L$ and $R$ are two binary trees. The empty set has no vertex but has one leaf. The set of leaves of $T = (r,L,R)$ is the disjoint union of the set of leaves of $L$ and $R$. The \emph{size} of the tree is its number of vertices (equivalently the number of leaves minus $1$). As can be seen in Figure \ref{fig_ex_tree}, we draw the trees with their root on the top and the leaves on the bottom. We will always implicitly label the leaves of a tree of size $n$ from $1$ to $n+1$ starting from the right-most leaf. Let us give an inductive definition for the graphs $T_M$.

\begin{dfn}
\label{d:interleaved}
An \emph{interleaved tree} with $0$ vertices is the empty set. An interleaved tree with $n>0$ vertices is the data of
\begin{itemize}
\item A singleton set $r$ called the root.
\item Two interleaved trees $T_R$ and $T_L$ with, respectively $n_R$ and $n_L$ vertices such that $n = n_R+n_L + 1$,
\item A strictly increasing function $\operatorname{lea}_R : \{2,\dots, n_R +1\} \to \{2,\dots,n\}$. 
\end{itemize}
The function $\operatorname{lea}_R$ is called the \emph{interleaving} function.
\end{dfn}

\begin{rem}
Let $T$ be an interleaved tree. If the interleaving function satisfies $\operatorname{lea}_R(i)=i$ for all $i$ we say that it is a \emph{trivial} interleaving function and we say that $T$ has \emph{trivial interleaving}. The classical binary trees can be seen as interleaved trees which are inductively constructed from interleaved trees with trivial interleaving functions. 
\end{rem}

\begin{lem}\label{lem_inter}
Let $M$ be a faithfully balanced module with exactly $n$ summands for $\Lambda_n$. The graph $T_M$ can be naturally seen as an interleaved tree. 
\end{lem}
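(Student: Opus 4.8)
The plan is to induct on $n$ and read off the interleaved tree structure on $T_M$ by splitting at the root. Since any basic faithfully balanced $\Lambda_n$-module has at least $n$ indecomposable summands (Corollary~\ref{c:fbnumbers}(v)), a module with exactly $n$ summands is minimal faithfully balanced, so part (3) of Lemma~\ref{lem_trees} applies: $T_M$ is a rooted binary tree with exactly $n$ vertices and $n+1$ leaves, which we label $1,\dots,n+1$ from the right as in the conventions of this section. For $n=0$ there is nothing to prove (the empty tree is an interleaved tree), so assume $n\ge 1$.

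First I would analyse the root $R=M_{1n}$, which sits in the corner of the staircase. A box $v$ has $R$ as its closest summand in its cohook only if $v$ lies in the first row of the diagram (to the right of $R$) or in the $n$-th column (below $R$); since the leaf $(1,0)$ at the end of the first row and the leaf $(n+1,n)$ at the end of the $n$-th column are always present, $R$ has exactly one child reached along the first row, and exactly one reached along the $n$-th column, and these are distinct. Let $T_R$ be the subtree of $T_M$ hanging from the first-row child and $T_L$ the subtree hanging from the $n$-th-column child (one of them is empty precisely when the corresponding row resp.\ column contains no black box other than $R$). Deleting $R$ from the tree $T_M$ leaves exactly these two connected components, so writing $n_R,n_L$ for their numbers of vertices we get $n_R+n_L+1=n$, and the leaf set $\{1,\dots,n+1\}$ of $T_M$ is the disjoint union of the leaf sets of $T_R$ and $T_L$.

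Next I would identify $T_R$ and $T_L$ as graphs of smaller faithfully balanced modules, which is exactly the surgery in the proof of Lemma~\ref{lem_trees}: cutting the edge joining $R$ to its first-row child, discarding everything outside $T_R$ and shrinking the Young diagram produces a faithfully balanced module for $\Lambda_{n_R}$ whose graph is $T_R$ (conditions (FB0), (FB1), (FB2) on the sub-diagram follow from those for $M$, using that no summand of $M$ lies in a cohook joining the two sub-diagrams), and symmetrically for $T_L$ and $\Lambda_{n_L}$. As $n_R,n_L<n$ and these modules have $n_R$ resp.\ $n_L$ summands, the inductive hypothesis makes $T_R$ and $T_L$ interleaved trees, with leaf sets $\{1,\dots,n_R+1\}$ and $\{1,\dots,n_L+1\}$. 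Finally, the unique path from the right-most leaf $(1,0)$ of $T_M$ to $R$ stays in the first row, so $(1,0)$ lies in $T_R$ and is its right-most leaf, hence carries label $1$ in both trees; dually the left-most leaf $(n+1,n)$ lies in $T_L$. Therefore the inclusion of the leaves of $T_R$ into those of $T_M$, read through the labellings, is a strictly increasing map sending $1$ to $1$ and $\{2,\dots,n_R+1\}$ into $\{2,\dots,n\}$; this is the required interleaving function $\operatorname{lea}_R$, and $(R,T_L,T_R,\operatorname{lea}_R)$ exhibits $T_M$ as an interleaved tree. Every choice made was forced by $T_M$, so the structure is canonical.

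I expect the main obstacle to be the third step: verifying precisely that each of the two pieces obtained by deleting the root is genuinely the graph of a faithfully balanced module for a smaller $\Lambda$, i.e.\ that the cohook conditions (FB0)--(FB2) localise correctly to the shrunken sub-diagrams and that the leaves of the pieces line up as the induction requires. Once that is granted, the binary-tree statement is Lemma~\ref{lem_trees}, and the remaining points (two children at the root, disjointness of the leaf sets, monotonicity of $\operatorname{lea}_R$, and the behaviour of the two extreme leaves) are bookkeeping inside the staircase diagram.
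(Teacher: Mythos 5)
Your argument is correct and is essentially the paper's own proof of this lemma: the paper likewise splits $T_M$ at the root into the right and left subtrees, identifies them with graphs of faithfully balanced modules for $\Lambda_{n_R}$ and $\Lambda_{n_L}$ (the same surgery as in the proof of Lemma~\ref{lem_trees}), and defines $\operatorname{lea}_R(i)=k$ when the $i$th leaf of $T_R$ is the $k$th leaf of $T_M$. You merely spell out details the paper leaves implicit (minimality plus Lemma~\ref{lem_trees}(3) for the binary-tree structure, the two distinct children of the root, and the bookkeeping of the extreme leaves and monotonicity), so there is nothing to object to.
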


\begin{proof}
This graph has a root and a left and a right subtrees denoted by $T_L$ and $T_R$. The two subtrees correspond to faithfully balanced modules for $\Lambda_{n_L}$ and $\Lambda_{n_R}$ respectively. The function $\operatorname{lea}_R$ is defined by $\operatorname{lea}_R(i) = k $ if the $i$th leaf of $T_R$ is the $k$th leaf of $T$ for $i\in \{2,\dots,n_R\}$.
\end{proof}

The interleaving function $\operatorname{lea}_R$ determines another strictly increasing function $\operatorname{lea}_L : \{1,\dots,n_L\}\to \{2,\dots,n\}\backslash \mathrm{Im}(\operatorname{lea}_R)$. Note that the function $\operatorname{lea}_R$ is not defined at $1$. This is just for convenience: this function gives the positions of the leaves of the right subtree and the first leaf of the right subtree is always $1$. Similarly, the function $\operatorname{lea}_L$ is not defined at $n_L+1$ because the last leaf of the left subtree is always $n+1$.

\begin{pro}
\label{p:fbinter}
Let $n\in\mathbb{N}$. 
\begin{enumerate}
\item The map sending a faithfully balanced module $M$ for $\Lambda_n$ to the interleaved tree $T_M$ is a bijection between the set of isomorphism classes of basic faithfully balanced modules with exactly $n$-summands for $\Lambda_n$ and the set of interleaved trees with $n$ vertices. 
\item It restricts as a bijection between the set of isomorphism classes of basic tilting modules for $\Lambda_n$ and the set of binary trees with $n$ inner vertices.
\end{enumerate} 
\end{pro}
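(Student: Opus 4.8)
\emph{Proof plan.} The plan is to prove (1) and (2) simultaneously by induction on $n$, exploiting the recursive definition of an interleaved tree; the case $n\le 1$ is immediate. Assume $n\ge 1$. By Lemma~\ref{lem_inter} the rule $M\mapsto T_M$ is well defined, and since the non-leaf vertices of $T_M$ are exactly the indecomposable summands of $M$, it sends a basic faithfully balanced $\Lambda_n$-module with $n$ summands to an interleaved tree with $n$ vertices. The engine of the argument is a \emph{cutting at the root}. Given such an $M$, its root is $M_{1n}$ by (FB0); since $M_{1n}$ lies in $\cohook(i,j)$ precisely when $j=n,\ i>1$ or $i=1,\ j<n$, the root has exactly two children in $T_M$, namely the first summand of $M$ met on travelling down column $n$ and the first met on travelling along row $1$ (or the relevant virtual cohook of the staircase, if there is none). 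These two children are the roots of subtrees $T_R$ and $T_L$. I claim $T_R=T_{M_R}$ and $T_L=T_{M_L}$ for uniquely determined basic faithfully balanced modules $M_R$ for $\Lambda_{n_R}$ and $M_L$ for $\Lambda_{n_L}$ with $n_R$ and $n_L$ summands and $n=n_R+n_L+1$, while the positions among the $n+1$ leaves of the $\Lambda_n$-staircase that are occupied by the $n_R+1$ leaves of $T_R$ record a strictly increasing map $\operatorname{lea}_R\colon\{2,\dots,n_R+1\}\to\{2,\dots,n\}$; thus $T_M=(r,T_{M_R},T_{M_L},\operatorname{lea}_R)$.

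The heart of part~(1) is to show that cutting is a bijection from basic faithfully balanced $\Lambda_n$-modules with $n$ summands onto the triples $(M_R,M_L,\operatorname{lea}_R)$ just described, with inverse the \emph{gluing} that reinserts $M_R$ and $M_L$ into the staircase of $\Lambda_n$ in the unique interleaved fashion prescribed by $\operatorname{lea}_R$ (spreading rows and columns apart accordingly) and readjoins $M_{1n}$. This is the main obstacle, and the point to be checked --- using the cohook criterion of Theorem~\ref{t:fban} --- is that the conditions (FB0)--(FB2) \emph{decouple}: condition (FB2) for the leaves $(1,0)$ and $(n+1,n)$ and condition (FB1) for the two children of the root hold automatically, because $M_{1n}$ lies in every cohook along row~$1$ and column~$n$; every remaining instance of (FB1) or (FB2) for the glued module is, under the insertion, exactly an instance of (FB1)/(FB2) for $M_R$ or for $M_L$; and conversely a module produced by gluing always satisfies (FB0)--(FB2). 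Granting this, cutting and gluing are mutually inverse and compatible with the recursive structure of interleaved trees, so the inductive hypothesis applied to $M_R$ and $M_L$, together with the definition of an interleaved tree, gives part~(1) for $\Lambda_n$. (One also records here, via Lemma~\ref{lem_trees}, that $T_M$ is genuinely a tree once it has $n$ vertices, so that the target of the map really is the set of interleaved trees with $n$ vertices.)

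For part~(2), any basic tilting module $T$ for $\Lambda_n$ has $n$ non-isomorphic summands and is faithfully balanced (as recalled in the introduction; see also Theorem~\ref{t:tilting}), hence corresponds under part~(1) to an interleaved tree with $n$ vertices; one must identify the tilting ones with those whose tree is a binary tree, i.e.\ has only trivial interleaving functions. Since $\Lambda_n$ is hereditary, $\pd T\le 1$ automatically, so a faithfully balanced $\Lambda_n$-module with $n$ summands is tilting if and only if it is rigid, and from the projective resolution $0\to M_{j+1,n}\to M_{in}\to M_{ij}\to 0$ one computes $\Ext^1(M_{ij},M_{kl})\neq 0\iff i<k\le j+1\le l$. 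It then remains to verify that, for such a module, rigidity holds if and only if the interleaving function at the root is trivial and $M_R$ and $M_L$ are themselves rigid; feeding this into the induction identifies the tilting modules with the binary trees. (Alternatively, once one has the implication ``$T$ tilting $\Rightarrow$ $T_T$ has trivial interleaving'', part~(2) follows by comparing cardinalities, both sides being the $n$th Catalan number \cites{Gab81,hille_vol_tilting,Rin16}.) The delicate step, as in part~(1), is geometric: translating the inequality $i<k\le j+1\le l$ into a statement about how the leaves of the two subtrees sit among the leaves of the whole staircase.
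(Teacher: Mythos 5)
Your plan is correct, and for part (1) it is essentially the paper's argument in inductive form: the paper also sends $M$ to $T_M$ via Lemma~\ref{lem_inter} and inverts the map by the explicit placement rule (the vertex whose subtree has rightmost leaf $i_r$ and leftmost leaf $i_l$ sits in box $(i_r,i_l-1)$), which is exactly the closed form of your recursive cut-and-glue; your deferred ``decoupling'' of (FB0)--(FB2) does go through, the key observation being that a summand placed at $(i,j)$ has both $i$ and $j+1$ among the leaf labels of its own subtree, so any row or column indexed by a leaf of $T_R$ contains only summands coming from $M_R$ and possibly the root $M_{1n}$ (symmetrically for $T_L$); hence cohooks never mix the two parts and each instance of (FB1)/(FB2) for the glued module reduces to the corresponding instance for $M_R$ or $M_L$ (or is absorbed by the root). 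For part (2) you take a more self-contained route than the paper: the paper identifies extensions with crossing pairs $M_{ac},M_{bd}$, $a<b<c<d$, by citing Lemma 8.1 of \cite{hille_vol_tilting}, whereas you compute $\Ext^1(M_{ij},M_{kl})\neq 0\iff i<k\le j+1\le l$ (which is correct, and usefully also covers the adjacent case $k=j+1$ that the crossing formulation leaves implicit) and run a rigidity recursion. That recursion is true as stated: the condition $i<k\le j+1\le l$ depends only on the relative order of the leaf labels $i,\,j+1,\,k,\,l+1$, so it is preserved under the order-preserving relabelling matching $T_R$-summands with summands of $M_R$; with trivial root interleaving the leaf sets of $T_R$ and $T_L$ are separated so no cross extensions occur, and with non-trivial interleaving the roots of $T_R$ and $T_L$ already produce a non-split extension. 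So the strategy closes and yields a proof independent of the external citation; the only caveat is that, as written, the two central verifications (the (FB)-decoupling and the rigidity recursion) are announced rather than carried out, so they would still need to be written up along the lines above.
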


\begin{proof}
By Lemma \ref{lem_inter} the graph $T_M$ is an interleaved tree. Conversely, let $T = (r,T_R,T_L,\operatorname{lea}_R)$ be a interleaved tree with $n$-vertices. We can place it in the Young diagram of staircase shape as follows: 

The root is placed in the box with coordinate $(1,n)$. The leaves of $T_R$ are placed accordingly to the function $\operatorname{lea}_R$ and the leaves of $T_L$ are placed accordingly to the function $\operatorname{lea}_L$. The position of each vertex is determined by the positions of the leaves. Precisely, if $v$ is the root of the subtrees with right-most leaf $i_r$ and left-most leaf $i_l$, then it is in the box with coordinates $(i_r,i_l-1)$.

In other words, there is a bijection between interleaved trees and collections of $n$ vertices in the Young diagram of triangular shape that satisfies the conditions of Theorem \ref{t:fban}.

For the second point, we remark that a faithfully balanced module with $n$ summands is a tilting module if and only if it has no self extensions. It remains to see that there is an extension between two indecomposable modules if and only if there is a non-trivial interleaving in the corresponding tree. There is a non-trivial interleaving in $T$ if and only if there are two indecomposable summands $M_{ac}$ and $M_{bd}$ with the property that $a<b< c+1 \leq d$. By Lemma  8.1 of \cite{hille_vol_tilting}, $\Ext^1(M_{ac},M_{bd})\neq 0$ if and only if $a<b\leq c+1\leq d$. The case $b=c+1$ cannot appear since $T_M$ is a tree. We can also see that the bijection restricts as the one defined in Section $9$ of \cite{hille_vol_tilting}.
\end{proof}

Using this inductive definition we can construct a simple bijection between the set of interleaved binary trees and the set of increasing binary trees introduced by Fran\c con in Section $2$ of \cite{francon}. The bijection uses two intermediate functions that we call \emph{untangling} and \emph{reordering}. At the level of abstract trees the functions do nothing, but they will change the positions of the leaves, and so the interleaving of the trees. These changes will be encoded in a labelling of the vertices of the tree. 

We start by considering interleaved trees which are \emph{labelled} by integers. Let $T$ be an interleaved tree with $n$ vertices. A label of $T$ is a sequence of pairwise distinct integers $V = (v_1,v_2,\dots,v_n)$. The integer $v_i$ is the label of the $i$-th vertex in the pre-order traversal of $T$ (recursively visit the root, the right subtree and the left subtree). Then $v_1$ is the label of the root of $T$ and if $T_R$ has $n_R$ vertices, the sequence $(v_2,\dots,v_{n_R+1})$ labels the vertices of the subtree $T_R$. The remaining are the labels of the left subtree. Note that the ordering of the elements of the sequence is important!

\begin{dfn}
An increasing interleaved tree is an interleaved tree $T$ together with a labelling of its vertices by pairwise distinct integers such that if $v$ is a child of $w$, then the label of $w$ is smaller than the label of $v$. 
\end{dfn}

\begin{figure}[ht]
\centering 
\begin{tikzpicture}[scale =0.85]
\node (1) at (0,0) {$1$};
\node (2) at (0.5,-0.5) {$2$};
\node (3) at (2,-2) {$3$};
\node (4) at (3.5,-3.5) {$4$};
\node (5) at (3, -4) {$5$};
\node (6) at (-1.5, -2.5) {$6$};
\node (7) at (-1,-1) {$7$} ; 
\node (8) at (0,-2) {$8$};
\node (9) at (0.5,-2.5) {$9$};
\node (10) at (-1.5,-3.5) {$10$};
\node (a) at (5,-5) {}; \node (b) at (4,-5) {}; \node (c) at (3,-5) {}; \node (d) at (2,-5) {}; \node (e) at (1,-5) {}; \node (f) at (0,-5) {}; \node (g) at (-1,-5) {}; \node (h) at (-2,-5) {}; \node (i) at (-3,-5) {}; \node (j) at (-4,-5) {};\node (k) at (-5,-5) {};
\draw[red,thick] 
	  (2)--(3)--(4)--(a)
	  (4)--(5)--(d)
	  (5)--(b)
	  (3)--(g)
	  (2)--(6)--(j)
	  (6)--(e);
\draw[blue,thick] (7)--(8)--(9)--(c)
            (9)--(h)
		    (8)--(10)--(i)
		    (10)--(f)
		    (7)--(k);
\draw[thick] (1)--(7)
	  (1)--(2);
\end{tikzpicture}
\caption{An example of an increasing interleaved tree.}\label{fig_increasing}
\end{figure}
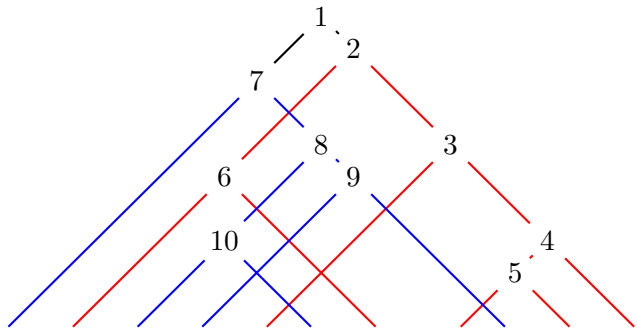
If $T$ is an interleaved tree of size $n$, we can always turn it into an increasing tree by associating to it the sequence of labels $(1,2,\dots,n)$. We say that the labelled interleaved tree $(T,V)$ is \emph{well-ordered} if the sequence $V$ is strictly increasing for the usual ordering of the integers. 

The first step of the bijection is given by the \emph{untangling} function that takes a \emph{well-ordered} increasing interleaved tree and gives an interleaved binary tree with \emph{trivial interleaving} function.

Let $\big(T=(r,T_R,T_L,\operatorname{lea}_R),V\big)$ be a well-ordered increasing interleaved tree. Let  $\operatorname{Unt}(T)=(r,T_R,T_L,\operatorname{triv})$ where $\operatorname{triv}$ is the trivial interleaving function. Let $\operatorname{Unt}(V)$ be the sequence consisting of the $v_i$s where $i$ runs \emph{first} through the positions of the leaves of the right subtree and \emph{then} through the positions of the leaves of the left subtree. In other words, $Unt(V)$ is the sequence obtained by concatenation of the sequences $(v_1)$, $(v_{\operatorname{lea}_R(i)})_{i\in \{2,\dots,n_R\}}$ and $(v_{\operatorname{lea}_L(i)})_{i\in \{1,\dots,n_L\}}$. The untangling function sends $(T,V)$ to $(\operatorname{Unt}(T),\operatorname{Unt(V)})$. See Figure \ref{fig_untangling} for an illustration. 

Conversely we define a \emph{reordering} function that takes an increasing interleaved tree with trivial interleaving function and well-ordered subtrees and produces a well-ordered interleaved tree.

Let $\big(T'=(r,T'_R,T'_L,\operatorname{triv}),V'\big)$ be an increasing interleaved tree. Let $\operatorname{Reo}(V')$ be the sequence putting the elements of $V'$ in a strictly increasing order. Let $\operatorname{Reo}(T') = (r,T'_L,T'_R,\operatorname{lea}'_R)$ be the interleaved binary tree where $\operatorname{lea}'_R(i)$ is defined as the position of $v_i$ in $\operatorname{Reo}(V')$. 

\begin{lem}\label{unt-reo}
The function $\operatorname{Unt}$ and $\operatorname{Reo}$ are mutually inverse bijections between the set of well-ordered interleaved binary trees and the set of increasing interleaved binary trees with trivial interleaving function and well-ordered subtrees. 
\end{lem}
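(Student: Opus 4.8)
The plan is to verify four things: that $\operatorname{Unt}$ sends a well ordered interleaved binary tree to an increasing interleaved binary tree with trivial interleaving function and well ordered subtrees, that $\operatorname{Reo}$ sends such a tree back to a well ordered one, and that $\operatorname{Reo}\circ\operatorname{Unt}$ and $\operatorname{Unt}\circ\operatorname{Reo}$ are the identities on the respective sets; the last two facts together already force both maps to be bijections. The first thing I would record is that neither operation alters the underlying abstract binary tree, nor the two subtrees $T_R,T_L$ with their own interleavings and labellings: each map only rewrites the top-level interleaving function and the length-$n$ label sequence. Moreover, the index $1$ together with the values $\operatorname{lea}_R(i)$ ($i$ ranging over the leaves of the right subtree other than the first) and $\operatorname{lea}_L(j)$ ($j$ ranging over the leaves of the left subtree other than the last) run exactly once over $\{1,\dots,n\}$, since $\mathrm{Im}(\operatorname{lea}_R)$ and $\mathrm{Im}(\operatorname{lea}_L)$ partition $\{2,\dots,n\}$; hence $\operatorname{Unt}(V)$ is a genuine rearrangement of $V$ of the same length, and similarly $\operatorname{Reo}(V')$ of $V'$.

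For well-definedness of $\operatorname{Unt}$: the output has trivial top interleaving by construction; the sublabellings that $V$ induces on $T_R$ and on $T_L$ are increasing subsequences of the increasing sequence $V$, so both subtrees stay well ordered; and the child-label-$<$-parent-label inequality is inherited on each subtree, while across the root it holds because $v_1$ (the least label, $V$ being increasing) remains the root label and each subtree keeps its own labels. For $\operatorname{Reo}$: by definition $\operatorname{Reo}(V')$ is the increasing rearrangement of $V'$, so $\operatorname{Reo}(T')$ is well ordered; the rule $i\mapsto(\text{position of }v_i\text{ in }\operatorname{Reo}(V'))$ is strictly increasing precisely because the subtrees of $(T',V')$ were already well ordered, hence is a legitimate interleaving function $\operatorname{lea}'_R$; and the increasing condition survives since we only permute and re-sort labels without changing any parent--child relation.

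It then remains to evaluate the two composites, which I expect to be the only point requiring real care, because here one has to match up three layers of indices: the pre-order label positions, the leaf positions used by $\operatorname{lea}_R$ (defined on $\{2,\dots,n_R+1\}$) and $\operatorname{lea}_L$ (defined on $\{1,\dots,n_L\}$), and the positions inside a sorted sequence. The underlying principle is the elementary fact that a finite sequence of distinct integers is recovered from its sorted form together with the tree data telling which positions belong to the right subtree and which to the left. Concretely, $\operatorname{Unt}$ applied to a well ordered $(T,V)$ records the function $\operatorname{lea}_R$ inside the rearranged sequence $\operatorname{Unt}(V)$; applying $\operatorname{Reo}$ re-sorts $\operatorname{Unt}(V)$, which returns $V$ since $V$ was already sorted, and then reads off the positions of the former right-leaf labels in the sorted sequence, returning exactly $\operatorname{lea}_R$ (using that in $V=(v_1<\dots<v_n)$ the value $v_k$ occupies position $k$). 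The composite $\operatorname{Unt}\circ\operatorname{Reo}$ is verified symmetrically: $\operatorname{Reo}$ sorts $V'$ and encodes the left/right splitting of its labels as $\operatorname{lea}'_R$, and $\operatorname{Unt}$ then reads $V'$ back off along $\operatorname{lea}'_R,\operatorname{lea}'_L$, recovering the original unsorted sequence and the trivial top interleaving. It would be prudent to check these calculations against the worked example around Figure~\ref{fig_increasing}; once the index conventions are pinned down, the remaining verifications are routine.
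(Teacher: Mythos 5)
Your argument is correct and is essentially the paper's own proof: you verify that $\operatorname{Unt}$ and $\operatorname{Reo}$ land in the stated sets (strictly increasing interleaving functions applied to an increasing label sequence keep the subtrees well ordered, and well ordered subtrees make $\operatorname{lea}'_R$ strictly increasing), and then use the fact that in a well ordered sequence the value $v_k$ occupies position $k$, so recording positions in the sorted sequence and re-sorting undo one another — exactly the paper's key step for both composites. The only blemish is the side remark that ``each subtree keeps its own labels'' under $\operatorname{Unt}$ — in fact the entries at positions $2,\dots,n_R+1$ change from $v_2,\dots,v_{n_R+1}$ to $v_{\operatorname{lea}_R(2)},\dots,v_{\operatorname{lea}_R(n_R+1)}$ — but nothing in your proof depends on it, since the increasing condition at the root already follows from $v_1$ being the minimal label and the subtrees being well ordered.
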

\begin{proof}

Let $(T,V)$ be a well-ordered increasing tree with $n$ vertices and $(T',V')$ be an increasing interleaved tree with trivial interleaving function and well-ordered left and right subtrees. 

By construction $\operatorname{Unt}(T,V)$ is an interleaved tree with trivial function. Since the interleaving functions are strictly increasing we see that the subtrees $T_R$ and $T_L$ of $\operatorname{Unt}(T)$ are well-ordered.

 Conversely, the left and right subtrees of $T'$ are well-ordered so, the function $\operatorname{lea}'_R$ in $\operatorname{Reo}(T',V')$ is strictly increasing. So $\operatorname{Reo}(T')$ is an interleaved tree and it is by construction well-ordered.
 
 For $i \in \{1,2,\dots, n_R +1 \}$, the $i$-th element of $\operatorname{Unt}(V)$ is $v_{\operatorname{lea}_R(i)}$. Since $V$ is well-ordered, $v_{\operatorname{lea}_R(i)}$ is the $\operatorname{lea}_R(i)$-th largest element of $V$. It follows that $\operatorname{Reo}$ and $\operatorname{Unt}$ are two mutually inverse bijections.  
\end{proof}

We can now describe a bijection between the set of interleaved binary trees and the set of increasing binary trees. 

Starting with an interleaved tree with $n$ vertices, we see it as an increasing interleaved tree with label $V = (1,2,\dots, n)$. Applying the function $\operatorname{Unt}$ we obtain an increasing interleaved tree with a trivial interleaving function and well-ordered left and right subtrees. Then, we continue the process by inductively applying the untangling function to the left and right subtrees. Since at each step we go down in the tree, the process ends. Since we inductively remove the non trivial interleaving, the result is an increasing binary tree. We call this algorithm the \emph{untangling procedure}.

Conversely, starting with an increasing binary tree we inductively apply the function $\operatorname{Reo}$ to the subtrees of increasing size. The result is an interleaved tree labelled by $(1,2,\dots,n)$. We call this algorithm the \emph{reordering procedure}.

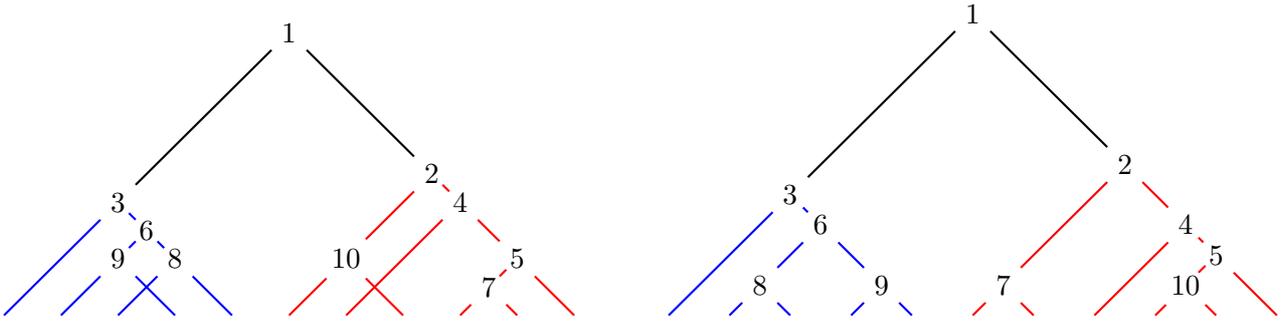
\begin{figure}[ht]
\begin{tikzpicture}[scale =0.750]
\node (1) at (0,0) {$1$};
\node (2) at (2.5,-2.5) {$2$};
\node (3) at (-3,-3) {$3$};
\node (4) at (3,-3) {$4$};
\node (5) at (4, -4) {$5$};
\node (6) at (-2.5, -3.5) {$6$};
\node (7) at (3.5,-4.5) {$7$} ; 
\node (8) at (-2,-4) {$8$};
\node (9) at (-3,-4) {$9$};
\node (10) at (1,-4) {$10$};
\node (a) at (5,-5) {}; \node (b) at (4,-5) {}; \node (c) at (3,-5) {}; \node (d) at (2,-5) {}; \node (e) at (1,-5) {}; \node (f) at (0,-5) {}; \node (g) at (-1,-5) {}; \node (h) at (-2,-5) {}; \node (i) at (-3,-5) {}; \node (j) at (-4,-5) {};\node (k) at (-5,-5) {};
\draw[red,thick] 
	  (2)--(4)--(5)--(5,-5)
	  (5)--(7)--(3,-5)
	  (7)--(4,-5)
	  (4)--(1,-5)
	  (2)--(10)--(0,-5)
	  (10)--(2,-5);
\draw[blue,thick] (3)--(6)--(8)--(-1,-5)
            (8)--(-3,-5)
		    (6)--(9)--(-4,-5)
		    (3)--(-5,-5)
		    (9)--(-2,-5);
\draw[thick] (1)--(3)
	  (1)--(2);
\end{tikzpicture}\hfill
\begin{tikzpicture}[scale =0.80]
\node (1) at (0,0) {$1$};
\node (2) at (2.5,-2.5) {$2$};
\node (3) at (-3,-3) {$3$};
\node (4) at (3.5,-3.5) {$4$};
\node (5) at (4, -4) {$5$};
\node (6) at (-2.5, -3.5) {$6$};
\node (7) at (3.5,-4.5) {$10$} ; 
\node (8) at (-1.5,-4.5) {$9$};
\node (9) at (-3.5,-4.5) {$8$};
\node (10) at (0.5,-4.5) {$7$};
\node (a) at (5,-5) {}; \node (b) at (4,-5) {}; \node (c) at (3,-5) {}; \node (d) at (2,-5) {}; \node (e) at (1,-5) {}; \node (f) at (0,-5) {}; \node (g) at (-1,-5) {}; \node (h) at (-2,-5) {}; \node (i) at (-3,-5) {}; \node (j) at (-4,-5) {};\node (k) at (-5,-5) {};
\draw[red,thick] 
	  (2)--(4)--(5)--(5,-5)
	  (5)--(7)--(3,-5)
	  (7)--(4,-5)
	  (4)--(2,-5)
	  (2)--(10)--(0,-5)
	  (10)--(1,-5);
\draw[blue,thick] (3)--(6)--(8)--(-1,-5)
            (8)--(-2,-5)
		    (6)--(9)--(-4,-5)
		    (3)--(-5,-5)
		    (9)--(-3,-5);
\draw[thick] (1)--(3)
	  (1)--(2);
\end{tikzpicture}
\caption{First and last steps of the untangling procedure applied to the example of Figure \ref{fig_increasing}.}\label{fig_untangling}
\end{figure}

\begin{pro}
\label{p:untang}
Let $n$ be an integer. 
\begin{enumerate}
    \item The untangling procedure induces a bijection between the set of interleaved trees with $n$ vertices and the set of increasing binary trees with $n$-vertices with inverse bijection given by the reordering procedure. 
    \item The map that sends an interleaved tree to the word obtained by reading in in-order (left subtree, root, right subtree) the label of the increasing binary tree given by the untangling procedure induces a bijection between the set of interleaved trees with $n$ vertices and the set of permutations on $\{1,2,\dots,n\}$. 
\end{enumerate}
\end{pro}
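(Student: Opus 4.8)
The plan is to establish part~(1) by induction on the number $n$ of vertices, using Lemma~\ref{unt-reo} as the ``one step at the root'' ingredient, and then to obtain part~(2) for free by composing the bijection of part~(1) with the classical correspondence between increasing binary trees and permutations.

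For part~(1) I would first phrase the induction in a slightly stronger form that is stable under passing to subtrees: for every finite set $S$ of distinct integers, the untangling procedure is a bijection from the set of well-ordered increasing interleaved trees with vertex-label set $S$ onto the set of increasing binary trees with vertex-label set $S$, the reordering procedure is its inverse, and in the output tree the root of every subtree carries the minimum of the labels occurring in that subtree. (Any $S$ with $|S|=n$ is order-isomorphic to $\{1,\dots,n\}$, so this is no more general than the case $S=\{1,\dots,n\}$; but the two recursive calls produced at each step come with disjoint label sets that are typically not initial segments, so it is cleaner to carry an arbitrary $S$ along.) The base cases $|S|\le 1$ are immediate. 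For the inductive step one applies the single-step function $\operatorname{Unt}$ at the root; by Lemma~\ref{unt-reo} the result $(\operatorname{Unt}(T),\operatorname{Unt}(V))$ is an increasing interleaved tree with trivial interleaving at the root and with well-ordered left and right subtrees $(T_R,V_R)$ and $(T_L,V_L)$. The key observation is that $\operatorname{Unt}$ leaves the root label untouched, so it remains $\min S$, while the label sets of $V_R$ and $V_L$ partition $S\setminus\{\min S\}$ (here one uses that $\operatorname{lea}_R$ and $\operatorname{lea}_L$ have disjoint images covering the positions $\{2,\dots,n\}$). Applying the induction hypothesis to the two well-ordered subtrees turns them into increasing binary trees with the min-at-subtree-root property, and grafting these below the root with the trivial interleaving produces an increasing binary tree, the property being preserved since $\min S$ lies below everything. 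Bijectivity and the identification of the inverse are then formal: the reordering procedure, performed bottom-up, first reorders the subtrees (the induction hypothesis says this undoes the untangling of the subtrees) and then applies the single-step $\operatorname{Reo}$ at the root, which undoes $\operatorname{Unt}$ at the root by Lemma~\ref{unt-reo}; hence the two procedures are mutually inverse, and since each recursive step strictly decreases the size they both terminate. Finally, specializing to $S=\{1,\dots,n\}$ and using that the forgetful map is a bijection between well-ordered increasing interleaved trees on $\{1,\dots,n\}$ and interleaved trees with $n$ vertices (the only well-ordered labelling is the pre-order labelling $(1,\dots,n)$) yields part~(1).

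For part~(2) one observes that the untangling procedure starting from the labelling $(1,\dots,n)$ merely permutes the positions of the labels, so the resulting increasing binary tree still carries the label set $\{1,\dots,n\}$ and reading it in in-order yields a permutation of $\{1,\dots,n\}$. It then suffices to invoke the classical fact that in-order reading is a bijection from increasing binary trees on $\{1,\dots,n\}$ to permutations of $\{1,\dots,n\}$: from a word $w=w_1\cdots w_n$ one reconstructs the tree by taking the smallest letter as the root, which splits $w$ into a left and a right factor on which one recurses, and the property ``root $=$ minimum of its subtree'' is exactly what makes this reconstruction inverse to in-order traversal (see \cite{francon}). Composing this with the bijection of part~(1) gives part~(2).

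The step I expect to be the main obstacle is the bookkeeping in part~(1): one must keep straight that ``increasing'' refers to labels growing along root-to-leaf paths while ``well-ordered'' refers to the pre-order word being increasing, that after a single untangling step only the pre-order-word condition survives for the whole tree while the path condition is restored on the subtrees, and that the label sets handed to the recursive calls --- though not of the form $\{1,\dots,m\}$ --- still support the induction. The clean conceptual point that makes all of this go through is that $\operatorname{Unt}$ never moves the root label, so the ``root of each subtree equals the minimum of that subtree'' property propagates through the recursion and certifies that the final object really is an increasing binary tree; everything else reduces to Lemma~\ref{unt-reo}.
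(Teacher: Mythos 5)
Your argument is correct and follows essentially the same route as the paper: part (1) is obtained by applying Lemma~\ref{unt-reo} recursively at each root (the paper states this in one line, while you spell out the induction, the bookkeeping of label sets, and the preservation of the root label), and part (2) is, as in the paper, the composition with the classical in-order bijection between increasing binary trees and permutations from Fran\c con. No gaps; the extra detail you supply (the ``minimum at the root of each subtree'' invariant and the bottom-up inversion) is exactly what the paper's terse proof leaves implicit.
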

\begin{proof}
The first point follows from Lemma \ref{unt-reo}. It is classical that reading the labels of the vertices of an increasing tree in in-order induces a bijection between the set of increasing binary trees and the set of permutations (see Section $2$ of \cite{francon} for more details). 
\end{proof}
\begin{rem}
It is clear that the untangling procedure restricts as a bijection between binary trees and well-ordered increasing binary trees since all the untangling functions are the identity if we start with a binary tree. 
\end{rem}
The bijection between interleaved trees and increasing binary trees is natural, however the induced bijection with the set of permutations does not seem to reflect the interesting combinatorial properties that we observed in Corollary \ref{c:fbnumbers}. For that, we consider another classical family counted by $n!$. 

\begin{dfn}
\label{d:selfbded}
A function $f : \{1,2,\dots,n\} \to \{1,2,\dots,n\}$ is \emph{self-bounded} if $f(i)\leq i$ for $i \in \{1,2,\dots,n\}$. (These functions are called `d\'ecroissantes' by Fran\c con in \cite{francon}.)
\end{dfn}

The untangling procedure is also a way of labelling the vertices of an interleaved tree: if $T$ is an interleaved tree, the untangling procedure gives an increasing binary tree. Reading the tree using a traversal gives a sequence of labels that we use to label the vertices of $T$ using the same traversal. 

This labelling can be described as follows. Let $T$ be an interleaved tree with $n$ vertices with right subtree having $n_R$ vertices and left subtree having $n_L$ vertices. Let $V = (1,2,\dots,n)$. The label of the root of $T$ is the first element of $V$, which is $1$. Let $V_R = (v_{\operatorname{lea}_R(i)})_{i\in \{2,\dots,n_R\}}$ and $V_L = (v_{\operatorname{lea}_L(i)})_{i\in \{1,2,\dots,n_L\}}$. The sequence $V_R$ is associated to the right child of the root and $V_L$ to the left child. The label of a vertex is given by the first element of its associated sequence $V$. The interleaving function splits the sequence as a right and a left sequences that are respectively associated to the right and left child of the vertex. For the rest of the section, we assume that all interleaved trees are labelled in this way.

If $T$ is an interleaved tree we construct a function $f_T$ as follows. First label the vertices of $T$ by the procedure described above. If $v$ is a vertex labelled by $i$ we let $f_T(i) = j$ where $j$ is the position of the most right leaf of the subtree with root $i$ in $T$. 

In terms of faithfully balanced modules the function $f$ is obtained by taking the index of the simple top of each of the indecomposable summand of the module in a suitable total ordering of the indecomposable summands.  

For example, if $T$ is the interleaved tree of Figure \ref{fig_increasing}, then the function $f_T$ is:

\begin{equation}\label{ex_fun} \begin{tabular}{|c|llllllllll|}
	\hline
   i &  1 & 2 & 3 & 4 & 5 & 6 & 7 & 8 & 9 & 10 \\
  \hline
  $f_T(i)$ & 1 & 1 & 3 & 1 & 1 & 3 & 5 & 6 & 3 & 2 \\
  \hline
\end{tabular}
\end{equation}

Before proving that the function $f_T$ is self-bounded, we need a technical lemma. 

\begin{lem}\label{induc_tree}
Let $T$ be an interleaved tree with right subtree $T_R$ and left subtree $T_L$. Let $V_R=(v_1,\dots,v_{n_R})$ and $U_L = (u_1,\dots,u_{n_L})$ be the sequences of labels of the vertices of the subtrees $T_R$ and $T_L$ respectively. 
\begin{enumerate}
\item A vertex of the tree $T_R$ is labelled by the integer $i$ if and only if the corresponding vertex of $T$ is labelled by $v_i\in V_R$. Moreover, we have $f_{T_R}(i)=j$ if and only if $f_{T}(v_i) = v_{j-1}$ with the convention that $v_0 =1$. 
\item A vertex of the tree $T_L$ is labelled by the integer $i$ if and only if the corresponding vertex of $T$ is labelled by $u_i \in U_L$. Moreover, we have $f_{T_L}(i)=j$ if and only if $f_T(u_i) = u_j$. 
\end{enumerate} 
\end{lem}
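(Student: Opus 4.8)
The statement is a bookkeeping lemma comparing the canonical labelling of an interleaved tree $T$ with the labellings of its two subtrees $T_R$ and $T_L$. The proof is purely combinatorial, unwinding the definition of the labelling procedure and of $f_T$. First I would recall precisely how the labelling of $T$ is produced: $V=(1,2,\dots,n)$, the root gets label $1$, the sequence $V_R=(v_{\operatorname{lea}_R(i)})_{i\in\{2,\dots,n_R\}}$ together with the label $1$ of the root in position $1$ is associated to the right child and recursively distributed inside $T_R$ by the interleaving functions, and similarly $V_L=(v_{\operatorname{lea}_L(i)})_{i}$ is associated to the left child and distributed inside $T_L$. The key observation is that the distribution rule inside $T_R$ is \emph{exactly} the distribution rule one would use to label $T_R$ as an interleaved tree in its own right, except that instead of feeding in the sequence $(1,2,\dots,n_R)$ one feeds in the sequence $V_R$ (with a formal extra entry $v_0=1$ coming from the root of $T$ playing the role of the ``virtual'' $0$th position). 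So the vertex that receives label $i$ when $T_R$ is labelled on its own is exactly the vertex that receives label $v_i$ when $T_R$ sits inside $T$; this proves the first sentence of each of (1) and (2). For $T_L$ there is no shift: the subtree $T_L$ is labelled on its own with $(1,\dots,n_L)$, and inside $T$ with $(u_1,\dots,u_{n_L})$, position for position, which gives the first sentence of (2) with $u_i$ in place of $i$.

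**Translating $f$.** For the statement about $f$, recall $f_T(i)=j$ means: the vertex of $T$ labelled $i$ is the root of a subtree whose right-most leaf is the $j$th leaf of $T$ (leaves numbered from the right). The main point is the correspondence between the leaves of $T_R$ (resp. $T_L$) as a standalone tree and as a subtree of $T$. By the definition of the interleaving, the leaves of $T_R$, which are numbered $1,\dots,n_R+1$ from the right inside $T_R$, sit in $T$ at the positions $1$ (the first leaf), $\operatorname{lea}_R(2),\dots,\operatorname{lea}_R(n_R+1)$; and the leaves of $T_L$ sit in $T$ at positions $\operatorname{lea}_L(1),\dots,\operatorname{lea}_L(n_L),n+1$. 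Here one must match the indexing conventions: $\operatorname{lea}_R$ is defined on $\{2,\dots,n_R+1\}$ and $\operatorname{lea}_L$ on $\{1,\dots,n_L\}$, the omitted values corresponding to the always-rightmost leaf $1$ of $T_R$ and always-leftmost leaf $n+1$ of $T_L$. Now if $v$ is a vertex in $T_R$ labelled $i$ (as a standalone tree), its subtree in $T_R$ has right-most leaf the $j$th leaf of $T_R$ with $j=f_{T_R}(i)$; inside $T$ this same subtree has the same right-most leaf, which is leaf number $\operatorname{lea}_R(j)$ of $T$ when $j\ge 2$, and leaf number $1$ of $T$ when $j=1$. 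Matching this with the formula for the labels ($v_i$ inside $T$, and $v_{j-1}$ with $v_0=1$ and $v_{j-1}=v_{\operatorname{lea}_R(j)}$-label $\dots$), one reads off $f_T(v_i)=v_{j-1}$; here I would be careful that in the notation of the lemma $v_i$ denotes the \emph{label} of the vertex and $V_R=(v_1,\dots,v_{n_R})$ is the sequence of labels, so ``position $\operatorname{lea}_R(j)$ of $T$'' is exactly ``the leaf whose position is recorded as $v_{j-1}$'' after the index shift. The same argument for $T_L$, with no index shift, gives $f_T(u_i)=u_j$ where $j=f_{T_L}(i)$.

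**Main obstacle.** The genuine content here is negligible; the real difficulty is entirely notational. The statement uses the letters $v_i,u_i$ simultaneously for entries of the label-sequence and (implicitly, via ``the position of the most right leaf'') for leaf positions, and it mixes the intrinsic numbering of leaves of $T_R,T_L$ with their numbering as leaves of $T$, which differ by the interleaving functions. The one-versus-two offset ($v_0=1$ coming from the root, and $\operatorname{lea}_R$ starting at index $2$) is exactly the place where a sign/index error is easy to make. So the step I expect to be the main obstacle is writing the argument for the $f$-part cleanly enough that the shift $f_{T_R}(i)=j \Leftrightarrow f_T(v_i)=v_{j-1}$ is manifestly correct rather than merely plausible; I would handle this by first proving a clean auxiliary statement identifying, for every subtree $T'$ of $T$ contained in $T_R$, its right-most leaf-position in $T$ in terms of its right-most leaf-position in $T_R$ via $\operatorname{lea}_R$, and only then specializing to subtrees rooted at a vertex. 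The base case (the right child of the root of $T$, i.e.\ all of $T_R$) and the inductive step (a vertex whose left/right children lie again in $T_R$) are both immediate from the recursive definition of the labelling.
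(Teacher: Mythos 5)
Your proposal is correct and follows essentially the same route as the paper's (very brief) proof: both arguments amount to observing that the recursive distribution of the sequence $V_R$ (resp.\ $V_L$) inside $T$ coincides position-for-position with the standalone labelling of $T_R$ (resp.\ $T_L$), and that matching leaf positions through $\operatorname{lea}_R$ and $\operatorname{lea}_L$ produces the index shift by one on the right side (whence the convention $v_0=1$) and no shift on the left side. You simply spell out in detail what the paper dispatches in two sentences, including the key identification $v_{j-1}=\operatorname{lea}_R(j)$, so there is no gap.
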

\begin{proof}
This follows from the description of the labelling of the tree. Since the right child of the root of $T$ is labelled by the second leaf of the right subtree, there is a  shift in the description of $f_{T}$ in terms of $f_{T_R}$. On the other hand, the left child of the root of $T$ is labelled by the first leaf of $T_L$, so there is no shift.  
\end{proof}
\begin{lem}
Let $T$ be an interleaved tree. Then the function $f_T$ is self-bounded.
\end{lem}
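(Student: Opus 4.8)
The plan is to induct on the number $n$ of vertices of $T$, using the recursive structure $T=(r,T_R,T_L,\operatorname{lea}_R)$ together with Lemma~\ref{induc_tree}. For $n\le 1$ the only vertex is the root $r$, which is labelled $1$; its subtree is all of $T$, whose rightmost leaf is leaf $1$, so $f_T(1)=1\le 1$. Now let $n\ge 2$. The root is again labelled $1$ with $f_T(1)=1$, so it suffices to bound $f_T$ on the vertices lying in $T_R$ and on those lying in $T_L$; for these we feed the inductive hypothesis for $T_R$ and $T_L$ into Lemma~\ref{induc_tree}.

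The one auxiliary fact needed is a monotonicity statement: writing $V_R=(v_1,\dots,v_{n_R})$ and $U_L=(u_1,\dots,u_{n_L})$ for the sequences of $T$-labels of the vertices of $T_R$ and of $T_L$ as in Lemma~\ref{induc_tree} (so $v_i$, resp.\ $u_i$, is the $T$-label of the vertex whose label in the intrinsic labelling of $T_R$, resp.\ $T_L$, equals $i$), both sequences are strictly increasing and every entry is $\ge 2$. Indeed, by the definition of the labelling these sequences are obtained from $V=(1,2,\dots,n)$ by extracting the entries in the positions prescribed by $\operatorname{lea}_R$, resp.\ $\operatorname{lea}_L$, and both interleaving functions are strictly increasing; the identification of the $i$-th extracted entry with the vertex of intrinsic label $i$ is exactly what Lemma~\ref{induc_tree} records. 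Also the label $1$ sits on the root, which lies in neither subtree, so $v_i,u_i\ge 2$.

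Now take a vertex of $T_R$ of intrinsic label $i$, so its label in $T$ is $v_i$, and set $j=f_{T_R}(i)$; the inductive hypothesis for $T_R$ gives $j\le i$. By Lemma~\ref{induc_tree}(1), $f_T(v_i)=v_{j-1}$ with the convention $v_0=1$. If $j=1$ then $f_T(v_i)=1<v_i$. If $j\ge 2$ then $i\ge j\ge 2$, so $1\le j-1\le i-1$, and since $V_R$ is strictly increasing $f_T(v_i)=v_{j-1}\le v_{i-1}<v_i$. In either case $f_T(v_i)\le v_i$. Likewise, for a vertex of $T_L$ of intrinsic label $i$ and $T$-label $u_i$, set $j=f_{T_L}(i)\le i$ (inductive hypothesis for $T_L$); Lemma~\ref{induc_tree}(2) gives $f_T(u_i)=u_j$, and $j\le i$ together with strict monotonicity of $U_L$ yields $f_T(u_i)=u_j\le u_i$. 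Since every vertex of $T$ is $r$ or lies in $T_R$ or in $T_L$, and the vertex labels exhaust $\{1,\dots,n\}$, this proves $f_T(k)\le k$ for all $k$, i.e.\ $f_T$ is self-bounded.

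The only point requiring care is the monotonicity statement of the second paragraph, namely that the intrinsic label $i$ of a vertex of $T_R$ (or $T_L$) really matches the $i$-th smallest $T$-label occurring on that subtree rather than some other extracted entry. This is precisely the content of Lemma~\ref{induc_tree}: restricting the labelling procedure from $T$ to a subtree agrees, up to the unique order-isomorphism carrying the extracted subsequence onto $\{1,\dots,n_R\}$ (resp.\ $\{1,\dots,n_L\}$), with running the procedure on that subtree by itself. Granting this, the rest is the direct computation above.
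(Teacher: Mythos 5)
Your proof is correct and follows essentially the same route as the paper's: induction on the number of vertices, the root case $f_T(1)=1$, and Lemma~\ref{induc_tree} combined with the (strict) monotonicity of the label sequences $V_R$ and $U_L$ to transfer the bound from $f_{T_R}$ and $f_{T_L}$ to $f_T$. You merely spell out more explicitly the monotonicity of the extracted subsequences and the case distinction $j=1$ versus $j\ge 2$, which the paper's argument uses implicitly in the inequality $f(v_i)=v_{j-1}<v_j\le v_i$.
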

\begin{proof}
We prove it by induction on the number of vertices of the trees. 

Let us denote by $V_R$ (resp. $U_L$) the labels of the right (resp. left) subtree of $T$. 

The root of $T$ is labelled by $1$, so $f_T(1)=1$. The root $r_R$ of the right subtree is labelled by the first element $v_R$ of $V_R$ and $f_T(v_R)=1<v_R$ and the root of the left subtree has for label the first leaf $v_L$ of the left subtree so $f_T(v_l)=v_L$.

If $v$ is a vertex of the right subtree, it corresponds to a vertex labelled by $i$ in the tree $T_R$ and by induction we have $f(i)\leq i$. By Lemma \ref{induc_tree} the vertex $v$ is labelled by the $i$-th element $v_i$ of $V_R$ and $f(v_i) = v_{j-1} < v_j \leq v_i$.  The proof is similar for the vertices of the left subtrees. 
\end{proof}

In order to show that the map $T\mapsto f_T$ is a bijection between interleaved trees and self-bounded functions, we show that the two sets have the same grammar. Using Lemma \ref{induc_tree} it is clear how to define a \emph{right} and a \emph{left sub-functions}.  We also need to define a partition of $\{2,\dots,n\}$ into two sequences $F_R$ and $F_L$. 

The sequence $F_R$ is the (totally ordered) sequence inductively constructed as follows. Let $1\neq i_1$ be the smallest integer such that $f(i_1) = 1$. If there is no such integer then $F_R$ is the empty sequence, otherwise $F_R = (i_1)$. For $i =i_1 +1 ,\dots, n$, if $f(i)=1$ or $f(i) \in F_R$, then add $i$ to $F_R$. The sequence $F_L$ is the sequence inductively constructed as follows. Let $i$ be the smallest integer such that $f(i)=i$ and for $i=2,\dots,n$ if $f(i)=j \in F_L$ or $f(i)=i$, then add $i$ to $F_L$. 

Looking at Lemma \ref{induc_tree} we see how to defined the left and right sub-functions of $f$. The right sub-function $f_R$ is defined by $f_R(i)=j$ if and only if $f(w_i) = w_{j-1}$ where $F_R = (w_1,\dots,w_{n_R})$ with the convention that $w_0 = 1$.  The left sub-function $f_L$ is defined by $f_L(i) = j$ if and only if $f(u_i)=u_j$ where $F_L = (u_1,\dots,u_{n_L})$.

In the case of the function (\ref{ex_fun}), we have $F_R = (2,4,5,7,10)$ and $F_L = (3,6,8,9)$. The sub-functions are
\begin{equation*}
\centering\begin{tabular}{|c|lllll|}
	\hline
   i &  1 & 2 & 3 & 4 & 5  \\
  \hline
  $f_R(i)$ & 1 & 1 & 1 & 4 & 2\\
  \hline
\end{tabular}
\ \ \ \ \ 
\begin{tabular}{|c|llll|}
	\hline
   i &  1 & 2 & 3 & 4  \\
  \hline
  $f_L(i)$ & 1 & 1 & 2 & 1 \\
  \hline
\end{tabular}
\end{equation*}

Using this decomposition of a self-bounded function we can inductively construct an interleaved tree: the root corresponds to $f(1)=1$, the interleaving function is defined by $\operatorname{lea}_r(i)$ is $i$th element of $F_R$. The right subtree corresponds to $f_R$ and the left subtree corresponds to $f_L$. The only function on the empty set corresponds to the trivial interleaved tree and the unique self-bounded function on a set with one element corresponds to the unique interleaved tree with one vertex.   

\begin{lem}\label{bij_lab}
Let $T$ be an interleaved tree with $n$ vertices and $f$ its self-bounded function. Then $\operatorname{Im}(\operatorname{lea}_R) = F_R$ and $\operatorname{Im}(\operatorname{lea}_L) = F_L$. 
\end{lem}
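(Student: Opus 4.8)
The plan is to prove the two equalities $\operatorname{Im}(\operatorname{lea}_R) = F_R$ and $\operatorname{Im}(\operatorname{lea}_L) = F_L$ simultaneously by induction on the number $n$ of vertices of $T$, using the inductive descriptions already in place: on the tree side, $T = (r, T_R, T_L, \operatorname{lea}_R)$ with $T_R$ carrying labels $V_R = (v_1,\dots,v_{n_R})$ and $T_L$ carrying labels $U_L = (u_1,\dots,u_{n_L})$; on the function side, $f = f_T$ decomposes into the subsequences $F_R, F_L$ of $\{2,\dots,n\}$ and the sub-functions $f_R, f_L$. The base cases $n=0$ and $n=1$ are immediate since all four sets are empty (or, for $n=1$, $\operatorname{lea}_R$ is the empty function and $F_R = F_L = \emptyset$). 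For the inductive step, the heart of the argument is to identify the set $F_R$ — which is defined \emph{dynamically}, by scanning $i = 2,\dots,n$ and adding $i$ whenever $f(i) = 1$ or $f(i)$ has already been added — with the set $\operatorname{Im}(\operatorname{lea}_R)$ of leaf-positions of the right subtree inside $T$.

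First I would unwind what membership in $\operatorname{Im}(\operatorname{lea}_R)$ means geometrically: a leaf of $T$ at position $\ell$ lies in the right subtree $T_R$ exactly when $\ell \in \operatorname{Im}(\operatorname{lea}_R)$, and this happens exactly when, reading from position $1$, the leaf is "captured" under the root's right child. On the function side I claim that the dynamically-built set $F_R$ records precisely those positions $i$ for which the vertex labeled $i$ lies in the right subtree of $T$. To see this, recall that by the labeling convention the label of the root is $1$, and $f_T(i)$ is the position of the right-most leaf of the subtree hanging below the vertex labeled $i$. A vertex labeled $i\ge 2$ lies in $T_R$ iff it is a descendant of the right child $r_R$ of the root; and it is a descendant of $r_R$ iff the chain of ancestors of the vertex labeled $i$, followed up one step at a time, reaches a vertex whose parent is the root and which is the right child — equivalently, iff $f_T(i)$ equals $1$ (when the vertex is $r_R$ itself, since the right-most leaf of $T_R$ is leaf $1$ by convention) or $f_T(i)$ is the label of some vertex already known to be in $T_R$. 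That is exactly the recursion defining $F_R$. A symmetric argument, using that the left child $r_L$ of the root has $f_T(\text{label of }r_L) = (\text{label of }r_L)$ because the right-most leaf of $T_L$ is the right-most leaf of $T$ and $f_T$ is evaluated at the label which, for $r_L$, is the position $v_L$ of the first leaf of $T_L$ — wait, here one must be careful; the cleanest route is to invoke Lemma \ref{induc_tree}, which already packages the relation between $f_T$ and $f_{T_R}, f_{T_L}$ together with the shift $v_0 = 1$ on the right and no shift on the left.

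Concretely, then, I would argue: by Lemma \ref{lem_inter}/the inductive structure, $T_R$ and $T_L$ are themselves interleaved trees with $n_R$ and $n_L$ vertices, and by Lemma \ref{induc_tree} their self-bounded functions are exactly $f_{T_R} = f_R$ and $f_{T_L} = f_L$ (after relabeling via $V_R$ and $U_L$). By the inductive hypothesis applied to $T_R$ and to $T_L$, the images of their own interleaving functions coincide with the sequences $F_R^{(R)}, F_L^{(R)}$ and $F_R^{(L)}, F_L^{(L)}$ built from $f_R$ and $f_L$; but what we actually need is the \emph{top-level} statement for $T$. So the real content is the claim in the previous paragraph — that $F_R$ (built from $f = f_T$) equals $\operatorname{Im}(\operatorname{lea}_R)$ — which I would prove directly by induction on $i = 2,\dots,n$: at each step, $i \in F_R$ iff $f(i) = 1$ or $f(i) \in F_R$ (already decided for smaller indices), and I match this against: the vertex labeled $i$ lies in $T_R$ iff its "parent position" $f(i)$ is leaf $1$ or lies in $T_R$, using that $f$ strictly decreases along the captured chains so the induction is well-founded. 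Then $\operatorname{Im}(\operatorname{lea}_R)$ is precisely the set of leaf-positions among those captured vertices, and one checks the bijection between captured vertices and their right-most leaves matches up; the complementary set $\{2,\dots,n\}\setminus F_R$ is $F_L$ by definition, and equals $\operatorname{Im}(\operatorname{lea}_L)$ since every position is a leaf-position of exactly one of the two subtrees (the leaves of $T$ being the disjoint union of the leaves of $T_R$ and $T_L$, with positions $1$ and $n+1$ handled by the convention that $\operatorname{lea}_R$ is undefined at $1$ and $\operatorname{lea}_L$ undefined at $n_L+1$).

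I expect the main obstacle to be bookkeeping the index shifts: the convention $v_0 = 1$ in Lemma \ref{induc_tree}, the fact that $\operatorname{lea}_R$ is defined on $\{2,\dots,n_R+1\}$ but $F_R \subseteq \{2,\dots,n\}$, and the asymmetry between the right subtree (whose first leaf is always leaf $1$ of $T$, hence "invisible" to $\operatorname{lea}_R$) and the left subtree (whose last leaf is always leaf $n+1$, invisible to $\operatorname{lea}_L$). Making the correspondence "vertex labeled $i$ is in $T_R$" $\iff$ "$i \in F_R$" precise — and checking it is compatible with the recursive definition of $F_R$ rather than merely plausible — is where care is needed; everything else is a routine disjoint-union count.
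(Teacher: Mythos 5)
Your argument is correct and is essentially the paper's own proof: both identify $F_R$, by following its recursive construction and inducting on the index $i$, with the set of labels of vertices of $T_R$ (which by the labelling convention is exactly $\operatorname{Im}(\operatorname{lea}_R)$, the leaf positions of $T_R$ other than leaf $1$), and then obtain $F_L=\operatorname{Im}(\operatorname{lea}_L)$ by complementation in $\{2,\dots,n\}$. The only quibble is terminological: $f(i)$ is the position of the right-most leaf of the subtree rooted at the vertex labelled $i$, not a ``parent position'', but your iff-characterisation of membership in $T_R$ is the correct one and is the same step the paper uses.
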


\begin{proof}
Let $i_1$ be the first element of $F_R$. Then $f(i_1)=1$, so $i_1$ is the label of the right child of the root of $T_R$. By construction of the labelling, it means that $i_1$ is the first element of $\operatorname{Im}(\operatorname{lea}_R)$. If $x$ is such that $f(x)=1$, then $x$ is in $T_R$. If $x$ is such that $f(x)= y$ with $y\in F_R$, then by induction $y$ is the label of a leaf of $T_R$. So the vertex labelled by $x$ is also in $T_R$. Since the vertices of $T_R$ are labelled by the leaves of $T_R$, we see that $F_R\subseteq \operatorname{Im}(\operatorname{lea}_R)$. 

Conversely if $x$ labels a leaf of $T_R$, then it labels a vertex of $T_R$ and $f(x)=1$ or $f(x)=y<x$ where $y$ labels a leaf of $T_R$.  So $F_R = \operatorname{Im}(\operatorname{lea}_R)$. The proof of the other case is left to the reader.
\end{proof}

\begin{thm}
\label{t:intsb}
The map sending an interleaved tree $T$ to the function $f_T$ is a bijection between the set of interleaved trees with $n$ vertices and the set of self-bounded functions on $\{1,2,\dots,n\}$. 
\end{thm}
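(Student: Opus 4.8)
The plan is to prove the theorem by exhibiting an explicit inverse map and checking, by induction on the number $n$ of vertices, that the two maps are mutually inverse. The candidate inverse is exactly the reconstruction procedure described just before Lemma~\ref{bij_lab}: from a self-bounded function $f$ on $\{1,\dots,n\}$ one extracts the greedily-defined sets $F_R$ and $F_L$ and the sub-functions $f_R,f_L$, declares the root to correspond to $f(1)=1$, sets $\operatorname{lea}_R(i)$ equal to the $i$-th element of $F_R$, and attaches as right and left subtrees the interleaved trees reconstructed recursively from $f_R$ and $f_L$. Write $T_f$ for the resulting tree; the base cases $n=0,1$ are immediate.

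First I would check that $f\mapsto T_f$ is well defined, i.e.\ that $F_R$ and $F_L$ genuinely partition $\{2,\dots,n\}$, that $f_R$ is a self-bounded function on $\{1,\dots,|F_R|\}$ and $f_L$ one on $\{1,\dots,|F_L|\}$, and that $|F_R|+|F_L|+1=n$, so that the recursion has the correct shape. This is a routine unwinding of the greedy definitions of $F_R$ and $F_L$, using that $f$ is self-bounded so that for each $i\ge 2$ exactly one of the four possibilities $f(i)=1$, $f(i)\in F_R$, $f(i)=i$, $f(i)\in F_L$ occurs.

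The heart of the argument is the inductive step, carried out in both directions. For $T_{f_T}=T$: given $T$ with right subtree $T_R$, left subtree $T_L$ and interleaving $\operatorname{lea}_R$, set $f=f_T$, which is self-bounded by the result established above. By Lemma~\ref{bij_lab} we have $\operatorname{Im}(\operatorname{lea}_R)=F_R$ and $\operatorname{Im}(\operatorname{lea}_L)=F_L$; since a strictly increasing function is determined by its domain and its image, the interleaving function reconstructed from $f$ coincides with $\operatorname{lea}_R$. Because the labeling uses $V=(1,2,\dots,n)$, the increasing sequence $V_R$ of labels carried by the vertices of $T_R$ inside $T$ equals the increasing enumeration $F_R$ of $\operatorname{Im}(\operatorname{lea}_R)$, and similarly on the left; feeding this into Lemma~\ref{induc_tree}, and keeping track of the shift $v_0=1$ on the right and the absence of such a shift on the left, gives $f_R=f_{T_R}$ and $f_L=f_{T_L}$. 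By the inductive hypothesis $T_{f_R}=T_R$ and $T_{f_L}=T_L$, and reassembling yields $T_{f_T}=T$. The direction $f_{T_f}=f$ is the mirror image: build $T_f$ from $f$, apply the inductive hypothesis to get $f_{T_{f_R}}=f_R$ and $f_{T_{f_L}}=f_L$, then recompute $f_{T_f}$ using Lemma~\ref{induc_tree} together with the fact that $\operatorname{Im}(\operatorname{lea}_R)=F_R$ by construction of $T_f$; one finds $f_{T_f}(1)=1=f(1)$, and on the vertices indexed by $F_R$ (resp.\ $F_L$) the value of $f_{T_f}$ matches $f$ because $f_R$ (resp.\ $f_L$) does, after the appropriate shift.

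I expect the main obstacle to be the bookkeeping reconciling the two descriptions of how a subtree's vertex labels sit inside those of the whole tree: on one hand the sequence $V_R$ produced by the labeling with $V=(1,\dots,n)$, on the other the purely combinatorial increasing sequence $F_R$ extracted greedily from $f$, together with the off-by-one shift ($v_0=w_0=1$) distinguishing the right subtree from the left. Everything rests on Lemma~\ref{bij_lab} matching these and on Lemma~\ref{induc_tree} translating $f_T$ into $f_{T_R}$ and $f_{T_L}$; once these are available the induction closes. As a sanity check one may also note that interleaved trees with $n$ vertices and self-bounded functions on $\{1,\dots,n\}$ are both counted by $n!$ (for the latter, $f(i)$ has exactly $i$ admissible values), so it would in fact suffice to prove that $T\mapsto f_T$ is injective, or that it is surjective.
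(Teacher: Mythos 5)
Your proposal is correct and follows essentially the same route as the paper: the paper's proof is precisely an induction on $n$ using Lemma~\ref{induc_tree} and Lemma~\ref{bij_lab}, with the inverse map being the greedy reconstruction from $F_R$, $F_L$, $f_R$, $f_L$ described before Lemma~\ref{bij_lab}. You have simply spelled out the well-definedness of that inverse and the two directions of the induction (including the shift $v_0=1$ on the right subtree), which the paper leaves implicit.
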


\begin{proof}
Using Lemmas \ref{induc_tree} and \ref{bij_lab}, the result follows by induction on $n$. 
\end{proof}

\begin{rem}
This bijection is not the composition of the untangling procedure and the bijection between increasing binary trees and self-bounded functions given in Section $4$ of \cite{francon}.
\end{rem}

In the classical case of binary trees, the bijection restricts as a bijection between the set of binary trees with $n$ vertices and the set of non-decreasing self-bounded functions on $\{1,2,\dots,n\}$. These functions are known to be counted by the Catalan numbers (See e.g. part (s) of Exercise 6.19 of \cite{stanely99}).

\begin{pro}
\label{p:binnd}
Let $T$ be an interleaved tree with $n$ vertices and $f_T$ its self-bounded function. Then $T$ is a binary tree if and only if $f_T$ is such that $f_T(1)\leq f_T(2)\leq \dots \leq f_T(n)$.
\end{pro}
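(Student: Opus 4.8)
The plan is to argue by induction on $n$, using the recursive description $T=(r,T_R,T_L,\operatorname{lea}_R)$ with $n=n_R+n_L+1$, together with the observation (immediate from the inductive definitions recalled before the statement) that $T$ is a binary tree if and only if $\operatorname{lea}_R$ is trivial and both $T_R$ and $T_L$ are binary trees. The base cases $n\in\{0,1\}$ are immediate. The two tools I will lean on are Lemma~\ref{bij_lab}, which identifies the set of labels occurring on the vertices of $T_R$ (resp.\ $T_L$) with $F_R=\operatorname{Im}(\operatorname{lea}_R)$ (resp.\ $F_L=\operatorname{Im}(\operatorname{lea}_L)$), so that $F_R\sqcup F_L=\{2,\dots,n\}$; and Lemma~\ref{induc_tree}, which, writing $F_R=\{w_1<\dots<w_{n_R}\}$ and $F_L=\{u_1<\dots<u_{n_L}\}$ and setting $w_0=1$, reads $f_T(w_i)=w_{f_{T_R}(i)-1}$ for $1\le i\le n_R$ and $f_T(u_i)=u_{f_{T_L}(i)}$ for $1\le i\le n_L$, together with $f_T(1)=1$. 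The remark tying these together is that $\operatorname{lea}_R$ is trivial exactly when $F_R$ is the initial segment $\{2,\dots,n_R+1\}$; when this holds, $w_i=i+1$ for all $i\ge 0$ and $u_i=n_R+1+i$, so the two formulas of Lemma~\ref{induc_tree} become honest translations, $f_T(k)=f_{T_R}(k-1)$ on $\{2,\dots,n_R+1\}$ and $f_T(k)=n_R+1+f_{T_L}(k-n_R-1)$ on $\{n_R+2,\dots,n\}$.

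For the implication ``$T$ binary $\Rightarrow f_T$ non-decreasing'', I would use the inductive hypothesis to get that $f_{T_R}$ and $f_{T_L}$ are non-decreasing, note that the translation formulas make $f_T$ non-decreasing on each of the intervals $\{2,\dots,n_R+1\}$ and $\{n_R+2,\dots,n\}$, and then check the two junctions: $f_T(1)=1\le f_T(2)$ because $f_T$ takes values $\ge 1$, and $f_T(n_R+1)=f_{T_R}(n_R)\le n_R<n_R+2\le n_R+1+f_{T_L}(1)=f_T(n_R+2)$, using that $f_{T_R}$ is self-bounded and $f_{T_L}(1)\ge 1$. The degenerate cases $n_R=0$ or $n_L=0$ reduce to the same checks or are vacuous.

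For the converse ``$f_T$ non-decreasing $\Rightarrow T$ binary'', the substantive point is to force $\operatorname{lea}_R$ to be trivial; once that is known, restricting the non-decreasing $f_T$ to the intervals $\{2,\dots,n_R+1\}$ and $\{n_R+2,\dots,n\}$ and using the translation formulas shows $f_{T_R}$ and $f_{T_L}$ are non-decreasing, so $T_R$ and $T_L$ are binary by induction, hence so is $T$. To force $\operatorname{lea}_R$ trivial I would first record a localisation fact: a subtree of $T$ rooted at a vertex of $T_R$ has all its leaves among the $n_R+1$ leaves of $T_R$, sitting in positions $\{1\}\cup F_R$ of $T$, and likewise a subtree rooted at a vertex of $T_L$ has all its leaves in positions $F_L\cup\{n+1\}$; since $f_T$ is self-bounded, this yields $f_T(i)\in\{1\}\cup F_R$ for $i\in F_R$ and $f_T(i)\in F_L$ for $i\in F_L$ (the value $n+1$ being excluded by self-boundedness). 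Now suppose $\operatorname{lea}_R$ is not trivial and let $m\ge 1$ be maximal with $\{2,\dots,m\}\subseteq F_R$; a counting argument gives $m<n_R+1$, $m+1\in F_L$, and that $F_R$ has an element exceeding $m+1$, so one can take $c$ to be the least such. Then $F_L\cap\{1,\dots,m+1\}=\{m+1\}$, whence $f_T(m+1)=m+1$ (it lies in $F_L$ and is $\le m+1$); and $(\{1\}\cup F_R)\cap\{1,\dots,c-1\}=\{1,\dots,m\}$ together with $f_T(c)<c$ (visible from the first formula of Lemma~\ref{induc_tree}, since $f_{T_R}$ is self-bounded) gives $f_T(c)\le m<m+1=f_T(m+1)$ while $m+1<c$, contradicting monotonicity. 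Hence $\operatorname{lea}_R$ is trivial.

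The main obstacle is this last paragraph: extracting a genuine descent of $f_T$ from the purely combinatorial failure of $\operatorname{lea}_R=\mathrm{id}$. The choice of the pair $(m+1,c)$ and the two pinning-down computations $f_T(m+1)=m+1$ and $f_T(c)\le m$ are the crux; everything else is bookkeeping with the $w_0=1$ convention, the off-by-one shift between parts (1) and (2) of Lemma~\ref{induc_tree}, and the routine verification of the junction inequalities in the easy direction.
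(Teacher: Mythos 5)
Your proof is correct and follows essentially the same route as the paper: induction on the recursive decomposition $(r,T_R,T_L,\operatorname{lea}_R)$ via Lemmas~\ref{induc_tree} and~\ref{bij_lab}, with the converse hinging on the same key facts that the first element of $F_L$ is a fixed point of $f_T$ while elements of $F_R$ map below themselves into $\{1\}\cup F_R$. Your converse is just a contrapositive packaging (extracting one explicit descent at the pair $(m+1,c)$) of the paper's direct propagation showing $F_L=\{y,y+1,\dots,n\}$, and your forward direction spells out with translation formulas what the paper summarizes as the labeling being well ordered.
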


\begin{proof}
If $T$ is a binary tree, its labelling is well-ordered, it follows that $f_T(i)\leq f_T(i+1)$. Conversely, if $f_{T}(1)=1\leq f_T(2)\leq \dots \leq f_T(n)$, then the sequence $F_R$ is of the form $(2,3,\dots,k)$ because if $y$ is the smallest integers which is not in this sequence, then $f_T(y)=y$. Since $y\leq f_T(y+1)$, the value of $f_T(y+1)$ is either $y$ or $y+1$. This implies that $y+1$ is also in $F_L$ and we see that $F_L = \{y,y+1,\dots,n\}$. So the interleaving function of $T$ is trivial and the left and right sub-functions both satisfy the non-decreasing property of the Lemma. The result follows by induction.  
\end{proof}

\begin{proof}[Proof of Theorem \ref{t:fbncomb}]
The bijection between (i) and (ii) is given by Proposition~\ref{p:fbinter}, between (ii) and (iii) by Proposition~\ref{p:untang}, and between (ii) and (iv) by Theorem~\ref{t:intsb} and Proposition~\ref{p:binnd}.
\end{proof}

For $M$ a faithfully balanced module for $\Lambda_n$ with $n$ summands, we define $\chi(M) = \sum_i n_i (i-1)\in \mathbb{N}$, where $n_i$ is the number of indecomposable summands of $M$ in row $i$ of the Young diagram, or equivalently with top $S[i]$, so $\Top M \cong \bigoplus_{i=1}^n S[i]^{n_i}$. See \cite{Foata} for the notion of a `mahonian statistic'.

\begin{pro}
The mapping $\chi:\fb(n)\to \N$ is a mahonian statistic, that is,
\[
\sum_{M\in \fb(n)} q^{\chi(M)} = [n]_q!
\]
\end{pro}

\begin{proof}
By Theorem~\ref{t:fbncomb}, the faithfully balanced modules $M$ with $n$ summands correspond to self-bounded functions $f$, and by the discussion after Definition~\ref{d:selfbded}, $\chi(M) = \sum_{i=1}^n (f(i)-1)$. Thus
\[
\sum_{M\in \fb(n)} q^{\chi(M)} = \sum_f q^{\sum_{i=1}^n (f(i)-1)} = \prod_{i=1}^n \biggl(\sum_{f(i)=1}^i q^{f(i)-1}\biggr) = [n]_q!.
\qedhere
\]
\end{proof}

Using Theorem \ref{t:fban} and Lemma \ref{lem_trees}, we see that a faithfully balanced module with exactly $n$ summands for $\Lambda_n$ corresponds to a data of vertices in the Young diagram of staircase shape satisfying the following two conditions

\begin{enumerate}
\item There is a vertex in the top left box of the diagram.
\item Each vertex or leaf has a vertex on its left in the same row or above it in the same column but not both. 
\end{enumerate}
This is very similar to the definition of \emph{tree-like} tableaux in the sense of \cite{tree-like}. If there is an empty row or an empty column in the faithfully balanced module $M$, we can simply remove it and shrink the diagram. We denote the result by $sh(M)$. 

\begin{pro}\label{shrinking}
The map sending $M$ to $sh(M)$ is a bijection between the set of faithfully balanced modules with exactly $n$ summands and the tree-like tableaux with $n$ pointed cells. 
\end{pro}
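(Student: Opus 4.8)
The plan is to work with the description, recalled just before the statement, of a basic faithfully balanced $\Lambda_n$-module $M$ with $n$ summands as a set of $n$ cells (the \emph{vertices}) in the staircase Young diagram $\Delta$ of $\Lambda_n$ satisfying: (1) the top-left cell $M_{1n}$ is a vertex, and (2) every vertex other than $M_{1n}$, and every one of the $n+1$ leaves, has a vertex to its left in its row or above it in its column, but not both. The first thing to record is purely combinatorial: deleting an empty row or an empty column of a Young diagram again yields a Young diagram, and does not change which of the surviving cells lie in a common row or a common column. Hence $sh(M)$ is a Young diagram $D$ carrying $n$ pointed cells, and I would check directly that it is a tree-like tableau. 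The cell $M_{1n}$ is a vertex lying in the top row and leftmost column of $\Delta$, so it survives shrinking and becomes the (pointed) root cell of $D$. Each non-root vertex has, by (2), exactly one parent (left or above); shrinking removes no cells and no points, so that parent and no other survives in $D$, and the ``exactly one of left/above'' condition holds in $D$ for every non-root pointed cell. Finally, every row and every column of $D$ contains a pointed cell by construction. Thus $sh(M)$ is a tree-like tableau with $n$ pointed cells, so $sh$ is well defined; note that this step does not use the leaves.

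For injectivity I would recover $M$ from $D:=sh(M)$, and here the leaves are essential. Write $r,c$ for the number of rows and columns of $D$, and $\lambda_1\ge\dots\ge\lambda_r$ for its row lengths. Condition (2) applied to the leaf $(k,k-1)$, for $k\in\{2,\dots,n\}$, says that exactly one of ``row $k$ of $\Delta$ is nonempty'' and ``column $k-1$ of $\Delta$ is nonempty'' holds; since also the top row and the leftmost column of $\Delta$ are always nonempty, it follows that $r+c=n+1$ and that, for $\rho(a)\le j\le n-1$, column $j$ of $\Delta$ is nonempty exactly when row $j+1$ is empty. Let $\rho(1)<\dots<\rho(r)$ be the nonempty rows of $\Delta$. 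Row $\rho(a)$ of $\Delta$ occupies columns $\rho(a),\dots,n$, of which column $n$ is nonempty and, counting the remaining nonempty ones via the previous equivalence,
\[
\lambda_a \;=\; 1+\#\{\,k\in\{\rho(a)+1,\dots,n\}:\text{row $k$ of $\Delta$ is empty}\,\}\;=\;1+(n-\rho(a))-(r-a)\;=\;c+a-\rho(a).
\]
Hence $\rho(a)=c+a-\lambda_a$ is determined by $D$; the row/column equivalence then determines which columns of $\Delta$ are nonempty, hence the full placement of $D$ inside $\Delta$, hence $M$. So $sh$ is injective.

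Finally, $sh$ is an injection from $\fb(n)$, which has $n!$ elements by Corollary~\ref{c:fbnumbers}(v), into the set of tree-like tableaux with $n$ pointed cells, which has $n!$ elements by \cite{tree-like}; hence $sh$ is a bijection. (One can instead avoid the enumeration and exhibit the inverse directly: for a tree-like tableau $\tau$ of shape $(r,c)$ with $r+c=n+1$, reinsert empty rows and columns so that the $a$-th kept row lands at position $\rho(a)=c+a-\lambda_a$ of $\Delta$ and the kept columns land in the complementary positions, then verify (1) and (2); the only nonformal point is the leaf conditions, which reduce to an identity relating the sequence $(\rho(a))_a$ to the analogous sequence attached to the conjugate partition $\lambda'$.) I expect the injectivity step to be the main obstacle: translating the leaf conditions into the statement about which rows and columns of $\Delta$ are empty and extracting the closed formula $\rho(a)=c+a-\lambda_a$ is the technical heart of the argument, and in the explicit-inverse approach the analogous difficulty is the conjugate-partition bookkeeping needed to verify those leaf conditions.
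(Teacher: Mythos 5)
Your proof is correct, but it parts ways with the paper's at the surjectivity step. The paper (which only sketches the argument) constructs the inverse map directly: it labels the $n+1$ leaves of the staircase and the $n+1$ border steps of a tree-like tableau from $1$ to $n+1$, uses these labels as row/column coordinates on both sides, and sends a pointed cell to the cell of the staircase with the same coordinates; it is then asserted to be straightforward that this lands in $\fb(n)$ and is inverse to $sh$. Your reconstruction formula $\rho(a)=c+a-\lambda_a$ amounts to exactly this placement rule (the boundary step attached to the $a$-th row of $D$ carries the label $a+c-\lambda_a$), so your injectivity argument is in substance the same computation, worked out in more detail than the paper's sketch. Where you genuinely differ is the finish: you conclude bijectivity by cardinality, invoking $\lvert\fb(n)\rvert=n!$ from Corollary \ref{c:fbnumbers}(v) and the enumeration of tree-like tableaux from \cite{tree-like}, instead of verifying that reinserting an arbitrary tree-like tableau into the staircase satisfies the vertex and leaf conditions of Theorem \ref{t:fban}. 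This is legitimate, since both counts are established independently of the proposition, but it buys less: the remark following the proposition uses the bijection to give another bijective proof that there are $n!$ such modules, and with your counting step that application would become circular, whereas the paper's explicit two-sided inverse (and your parenthetical variant, whose only real content is the leaf-condition check you flag) keeps it available. Two minor points: the quantifier ``$\rho(a)\le j\le n-1$'' appears before $\rho$ is defined --- the row/column equivalence holds for all $1\le j\le n-1$ --- and it is worth stating explicitly that shrinking preserves the relative order of the surviving rows and columns, which is what makes the placement of $D$ inside $\Delta$ unique once the sets of nonempty rows and columns are determined.
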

\begin{proof}
Since we will not need this bijection, we only sketch the proof. We label the leaves of the grid $\Lambda_n$ from $1$ to $n+1$ starting at the top right and finishing at the bottom left. The southwest border of a tree-like tableaux can be seen as a path formed by vertical and horizontal steps. It has exactly $n+1$ steps that we label from $1$ to $n+1$ starting at the top right and finishing at the bottom left. In both cases, the labelling induces a labelling of the rows and the columns of the diagram. The vertex at the intersection of the row $i$ and the column $j$ is said to have coordinates $(i,j)$.

Let $T$ be a tree-like tableau with $n$ pointed cells. We can construct a configuration of vertices in the Young tableau $(n,n-1,\dots,1)$ by sending the pointed cell with coordinates $(i,j)$ to the vertex with same coordinates in the Young tableaux of staircase shape. 

It is straightforward to check that the result is a faithfully balanced module and that this map is a bijection which is inverse to $M \mapsto sh(M)$. 
\end{proof}

\begin{rem}
Tree-like tableaux are known to be counted by $n!$, so this gives another easy bijective proof for the cardinality of $\fb (n)$. However, it is not completely obvious that there are $n$! tree-like tableaux with $n$ pointed cells. Proposition~\ref{shrinking} relates $\fb(n)$ with other fillings of Young tabeaux such as \emph{permutation tableaux} (see e.g. \cite{permutation-tableaux}) and \emph{alternating tableaux} (see e.g. \cite{alternative-tableaux}). 

Finally let us remark that there is a bijection $\Phi_2$ between tree-like tableaux and increasing binary trees that can be found in \cite{tree-like}. Composing it with the bijection of Proposition \ref{shrinking}, we have another bijection between the set $\fb(n)$ and the set of increasing binary trees with $n$ vertices. The two bijections give the same underlying tree but the labelling are quite different. \end{rem}

\section{On partial orders} \label{s:poset}
Let $\La$ be a finite-dimensional Nakayama algebra. We define a relation 
$\unlhd$ on minimal faithfully balanced modules by
\[  
N \unlhd M \ \Leftrightarrow \ \text{$\cogen (N) \subseteq \cogen (M)$ and $\gen (N) \supseteq \gen (M)$.}
\]
It is clearly reflexive and transitive, and by Theorem \ref{t:mfb-gen-cogen} it is also antisymmetric, so a partial order. The relation $\unlhd $ has a smallest element given by $\La $ and a largest element given by $\kdual \La $, so its (finite) Hasse diagram is connected. As before, our main interest is in its restriction to $\fb (n)$ for the algebra~$\Lambda_n$.

\begin{rem}\label{r:tilting}
If $\La$ is hereditary and $M,N$ are cotilting modules (implying  $\cogen (X)= \cogen^1(X)$, $\gen (X)= \gen_1(X)$ for $X=M,N$), then the following are equivalent: (a) $N \unlhd M$, (b) $\cogen (N) \subseteq \cogen (M)$, (c) $\gen (N) \supseteq \gen (M)$, and (d) $\Ext^1(N,M)=0$. This suggests many possible partial orders generalizing the usual partial order on tilting modules for a hereditary algebras (cf. \cite{HU}). For example we can consider the partial order $\le$ given by
\[  
N \leq M \ \Leftrightarrow \ \text{$\cogen^1 (N) \subseteq \cogen^1 (M)$ and $\gen_1 (N) \supseteq \gen_1 (M)$}.
\]
In Figure~\ref{fig_posetfb3} we show the Hasse diagrams for $\fb(3)$.
The poset induced by the relation $\unlhd$ seems to be the most interesting, since the two others do not give lattice structures on $\fb(n)$ when $n\geq 4$. 
\end{rem}

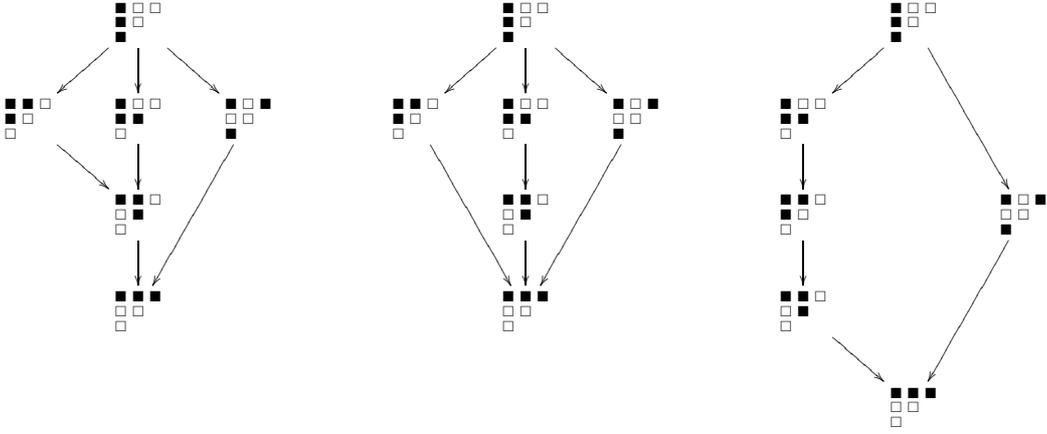
\begin{figure}[ht]
\[ 
\scalebox{0.7}{
\xymatrix{  &{\begin{smallmatrix}
\blacksquare & \square  & \square  \\[1pt]
\blacksquare  & \square\\[1pt]
\blacksquare 
\end{smallmatrix}} \ar[dr]\ar[dl]\ar[d] && && {\begin{smallmatrix}
\blacksquare & \square  & \square  \\[1pt]
\blacksquare  & \square\\[1pt]
\blacksquare 
\end{smallmatrix}}\ar[dr]\ar[dl]\ar[d] & && &{\begin{smallmatrix}
\blacksquare & \square  & \square  \\[1pt]
\blacksquare  & \square\\[1pt]
\blacksquare 
\end{smallmatrix}} \ar[ddr]\ar[dl] &\\
{\begin{smallmatrix}
\blacksquare & \blacksquare  & \square  \\[1pt]
\blacksquare  & \square\\[1pt]
\square 
\end{smallmatrix}}\ar[dr] & {\begin{smallmatrix}
\blacksquare & \square  & \square  \\[1pt]
\blacksquare  & \blacksquare\\[1pt]
\square 
\end{smallmatrix}}\ar[d] & {\begin{smallmatrix}
\blacksquare & \square  & \blacksquare  \\[1pt]
\square  & \square\\[1pt]
\blacksquare 
\end{smallmatrix}}\ar[ddl] & & {\begin{smallmatrix}
\blacksquare & \blacksquare  & \square  \\[1pt]
\blacksquare  & \square\\[1pt]
\square 
\end{smallmatrix}}\ar[ddr] & {\begin{smallmatrix}
\blacksquare & \square  & \square  \\[1pt]
\blacksquare  & \blacksquare\\[1pt]
\square 
\end{smallmatrix}} \ar[d] & {\begin{smallmatrix}
\blacksquare & \square  & \blacksquare  \\[1pt]
\square  & \square\\[1pt]
\blacksquare 
\end{smallmatrix}}\ar[ddl] && {\begin{smallmatrix}
\blacksquare & \square  & \square  \\[1pt]
\blacksquare  & \blacksquare\\[1pt]
\square 
\end{smallmatrix}} \ar[d] && \\
& {\begin{smallmatrix}
\blacksquare & \blacksquare  & \square  \\[1pt]
\square  & \blacksquare\\[1pt]
\square 
\end{smallmatrix}}\ar[d] && &&{\begin{smallmatrix}
\blacksquare & \blacksquare  & \square  \\[1pt]
\square  & \blacksquare\\[1pt]
\square 
\end{smallmatrix}}\ar[d] & &&  {\begin{smallmatrix}
\blacksquare & \blacksquare  & \square  \\[1pt]
\blacksquare  & \square\\[1pt]
\square 
\end{smallmatrix}}\ar[d] && {\begin{smallmatrix}
\blacksquare & \square  & \blacksquare  \\[1pt]
\square  & \square\\[1pt]
\blacksquare 
\end{smallmatrix}} \ar[ddl]\\
&{\begin{smallmatrix}
\blacksquare & \blacksquare  & \blacksquare  \\[1pt]
\square  & \square\\[1pt]
\square 
\end{smallmatrix}} && &&{\begin{smallmatrix}
\blacksquare & \blacksquare  & \blacksquare  \\[1pt]
\square  & \square\\[1pt]
\square 
\end{smallmatrix}} & && {\begin{smallmatrix}
\blacksquare & \blacksquare  & \square  \\[1pt]
\square  & \blacksquare\\[1pt]
\square 
\end{smallmatrix}}\ar[dr]&&\\
&&&& &&&& &{\begin{smallmatrix}
\blacksquare & \blacksquare  & \blacksquare  \\[1pt]
\square  & \square\\[1pt]
\square 
\end{smallmatrix}} &}}
\]
\caption{Hasse diagrams for $\fb(n)$ with respect to inclusion of $\cogen^1$-categories (left), $\leq$ (middle) and $\unlhd$ (right). The largest element is at the bottom.}
\label{fig_posetfb3} 
\end{figure}

\begin{dfn}
Using the canonical isomorphism of $K$-algebras $\varphi \colon \La_n^{op}\to \La_n$, the dual $\kdual M$ of any left $\Lambda_n$-module $M$ can be considered as a left $\Lambda_n$-module, which we denote by 
$M^\circ$. This defines a duality on the category $\lmod{\Lambda_n}$ which
preserves faithfully balancedness, so we have an involution 
$\fb (n) \to \fb (n)$ mapping $M \mapsto M^\circ$.
\end{dfn}

\begin{rem}
For any basic module $M$, the module $M^\circ$ can be found by reflecting the Auslander Reiten quiver along the symmetry axis passing through $M_{1,n}, M_{2, n-1}, M_{3, n-2}, \dots $ and these are the only indecomposable modules $X$ with $X^\circ\cong X$. In $\fb (3)$, we find two modules with $M^\circ \cong M$ and looking at our later example one can see $M \neq M^\circ$ for all $M \in \fb (4)$. We have $(\cogen^i (M))^\circ \cong \gen_i (M^\circ) $ for every $i \geq 0$. It is straightforward to see that
$M \unlhd N  \Leftrightarrow N^\circ \unlhd M^\circ$ for all $M,N \in \fb (n)$.
\end{rem}

Consider the poset $(\fb(n), \unlhd)$. A module $L$ is a common lower bound of $M$ and $N$ in $(\fb(n), \unlhd)$ if and only if $\cogen(L)\subseteq \cogen(M)\cap\cogen(N)$ and $\gen(L) \supseteq\gen(M)\cup\gen(N)$. For any two elements $M$ and $N$ in $(\fb(n), \unlhd)$ the module $\La_n$ is always a common lower bound of them. 

\begin{pro} 
\label{p:lattice}
The poset $(\fb(n), \unlhd)$ is a lattice for all $n\geq 1$.
\end{pro}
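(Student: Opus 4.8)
The plan is to show that any two elements $M,N\in\fb(n)$ have a greatest lower bound; the existence of a least upper bound then follows by applying the duality $M\mapsto M^o$, which reverses the order $\unlhd$ and preserves $\fb(n)$. Since the poset is finite with a least element $\La_n$, it suffices to prove that the set of common lower bounds of $M$ and $N$ is closed under a suitable "join of lower bounds" operation, or equivalently, that among all common lower bounds there is a largest one. A common lower bound $L$ is characterized by $\cogen(L)\subseteq\cogen(M)\cap\cogen(N)$ and $\gen(L)\supseteq\gen(M)\cup\gen(N)$. The natural candidate for the meet is therefore to build the module whose $\cogen$ is as large as possible and whose $\gen$ is as small as possible subject to these constraints, using Theorem~\ref{t:mfb-gen-cogen}: if we can produce a module $P$ with $\gen(P)=\gen(M)\cap\gen(N)$… no — we want $\gen$ large, so we want a module $P$ with $\cogen(P)=\cogen(M)\cap\cogen(N)$ and $\gen(P)$ equal to the generated subcategory $\gen(M\oplus N)$ generated by $\gen(M)\cup\gen(N)$, and then check that such a $P$ actually exists in $\fb(n)$ and is faithfully balanced.

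The key steps I would carry out are as follows. First, reduce to constructing, for each pair $(M,N)$, an explicit candidate module. I would work on the combinatorial side: by Lemma~\ref{lem_trees} and the bijection with interleaved trees (equivalently, configurations of $n$ boxes in the staircase Young diagram satisfying (FB0)--(FB2)), both $M$ and $N$ are encoded as such configurations. For Nakayama algebras, $\cogen(X)$ is determined by which submodules of the summands appear, and $\gen(X)$ by which quotients appear; since every indecomposable $\Lambda_n$-module $M_{ij}$ has a totally ordered set of submodules and of quotients, the conditions $\cogen(N)\subseteq\cogen(M)$ and $\gen(M)\subseteq\gen(N)$ translate into concrete inequalities on the row/column positions of the boxes. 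Second, I would define the meet $M\wedge N$ directly as a configuration: roughly, for each row take the "innermost" common constraint coming from $M$ and $N$, take the minimal faithfully balanced module with the prescribed $M^c$ (minimal cogenerator) equal to $M^c\wedge N^c$ in the appropriate sense, using Lemma~\ref{l:nakgc} which says $M\oplus P\cong M^g\oplus M^c$. Third, verify that this configuration still satisfies (FB0), (FB1), (FB2) — so that by Theorem~\ref{t:fban}/Corollary~\ref{c:fb_nakayama} it is faithfully balanced — and that it has exactly $n$ summands, so it lies in $\fb(n)$. Fourth, check the universal property: any common lower bound $L$ satisfies $L\unlhd M\wedge N$, which again reduces to the position inequalities.

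The main obstacle I expect is Step three together with the universal property check: it is not a priori clear that the naive "intersection" of two staircase configurations still satisfies the cohook conditions (FB1) and (FB2), nor that it has exactly $n$ summands rather than more. The cohook conditions are delicate because removing or merging boxes can destroy the property that every box and every virtual cohook contains a summand. I would handle this by phrasing everything in terms of the $M^g$ and $M^c$ decomposition of Lemma~\ref{l:nakgc}: a minimal faithfully balanced module is determined by the pair $(M^g,M^c)$ subject to compatibility, so the meet should be obtained by taking $(M\wedge N)^g$ to be the minimal cover generating $\gen(M)\cup\gen(N)$ and $(M\wedge N)^c$ to be the minimal cocover inside $\cogen(M)\cap\cogen(N)$, then invoking Theorem~\ref{t:mfb-gen-cogen} to conclude the resulting module is faithfully balanced and in $\fb(n)$. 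The antisymmetry already proved (via Theorem~\ref{t:mfb-gen-cogen}) guarantees uniqueness of the meet once existence is established, so the whole argument rests on this existence, which I would ultimately verify by the explicit staircase combinatorics, checking the finitely many local configurations around each box and virtual cohook.
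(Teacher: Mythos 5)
Your overall strategy (finite poset with top/bottom, so it suffices to construct meets; joins then come for free, e.g.\ via the duality $M\mapsto M^o$) matches the paper's, and you correctly identify the $\cogen$-part of the meet as the minimal cocover $C$ of $\cogen(M)\cap\cogen(N)$. But your candidate for the meet is mis-specified, and this is not a presentational issue: you insist that the meet $P$ should satisfy $\gen(P)=\gen(M)\cup\gen(N)$ (equivalently, that $(M\wedge N)^g$ is the minimal cover $G$ of $\gen(M)\cup\gen(N)$), and in general no element of $\fb(n)$ with this property exists. Take $n=4$, $M=M_{14}\oplus M_{13}\oplus M_{11}\oplus M_{33}$ and $N=M_{14}\oplus M_{13}\oplus M_{12}\oplus M_{22}$ (the second row of the paper's table of examples). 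Here $C=M_{14}\oplus M_{13}$ and $G=M_{14}\oplus M_{22}\oplus M_{33}$, but any common lower bound must have all its summands inside $\cogen(C)$, so $M_{22}$ cannot be a summand; to get $M_{22}$ into the $\gen$-category one is forced to use $M_{23}$ or $M_{24}$, whose presence makes $\gen$ strictly larger than $\gen(M)\cup\gen(N)$. The actual meet is $M_{14}\oplus M_{13}\oplus M_{23}\oplus M_{33}$. So the pair (minimal cover of the union, minimal cocover of the intersection) that you feed into the $(M^g,M^c)$-picture of Lemma~\ref{l:nakgc} is in general not realized by any module in $\fb(n)$, and the construction has to be corrected: the paper replaces each indecomposable summand $G_{i_\alpha}$ of $G$ (other than $M_{1n}$) by the \emph{shortest} indecomposable module with the same top which is a submodule of $C$ and has $G_{i_\alpha}$ as a quotient, and sets $L=C\oplus G'$ with $G'$ the sum of these replacements.

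The second gap is the appeal to Theorem~\ref{t:mfb-gen-cogen} to conclude that your candidate is faithfully balanced and lies in $\fb(n)$: that theorem presupposes that one already has a minimal faithfully balanced module with the prescribed $\gen\cap\cogen$, so it certifies nothing about a module assembled from prescribed cover/cocover data. In the paper this is exactly where the work lies: one checks (FB0)--(FB2) of Theorem~\ref{t:fban} directly for $L=C\oplus G'$ (the (FB2) check uses that $M$ and $N$ are faithfully balanced), one proves $|L|=n$ by an inclusion--exclusion count over the virtual cohooks (using crucially that $M,N$ have exactly $n$ summands), and one verifies the universal property $L'\unlhd L$ for every common lower bound $L'$ using the minimal-length choice of the replacements $M_{i_\alpha}$. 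Your plan defers all of this to ``checking finitely many local configurations,'' which is not an argument and, with the candidate as you defined it, would in any case fail on the example above.
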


\begin{proof}
Since the poset is finite with a greatest element it is enough to show that it is a meet semi-lattice (see e.g. \cite[Proposition 3.3.1]{stanely12}). Thus we need to show that any two elements $M,N \in \fb(n)$ have a meet. 

By Lemma~\ref{l:cogenchar}, an indecomposable module is in $\cogen(M)\cap\cogen(N)$ if and only if it embeds in an indecomposable summand of $M$ and in an indecomposable summand of $N$. Let $C$ be the basic module such that $\add(C)$ is the minimal cocover of $\cogen(M)\cap \cogen(N)$, see \cite[\S 2]{ASm1}.
It follows that for each simple $S[i]$, there is at most one indecomposable summand of $C$ with socle $S[i]$, and if it occurs, it is a summand of $M$ or $N$.
Let $G$ be the basic module such that $\add(G)$ is the minimal cover of $\add(\gen(M)\cup \gen(N))$. 
Again, for each simple $S[i]$ there is at most one indecomposable summand of $G$ with top $S[i]$.
Clearly we have $\cogen(C)=\cogen(M)\cap \cogen(N)$ and $\gen(G)=\gen(\gen(M)\cup \gen(N))$. 
Moreover $C\in \gen(\gen(M)\cup \gen(N)) = \gen(G)$. 

Note that $|C|\leq n$ and equality holds if and only if $C=\kdual \La_n=M=N$. Namely, if $|C|=n$, then $M$ and $N$ have summands with socle $S[i]$, for all $i$. But since they are in $\fb(n)$, Theorem~\ref{t:fban} implies they are isomorphic to $\kdual \Lambda_n$. There is nothing to prove if $M=N$, so we may assume that $|C|=t<n$.
We write 
$G=G_1\oplus G_2\oplus \dots \oplus G_s$
where the tops of $G_1,\dots,G_s$ are $S[i_1],\dots,S[i_s]$ with $1\le i_1<i_2<\dots<i_s\leq n$.
Since $M_{1n}$ is a summand of $G$, we have $i_1=1$. For each $2\leq \alpha \leq s$, let $H_\alpha$ be  the indecomposable module with $\Top(H_{\alpha})=S[i_{\alpha}]$ and having the following properties:
\begin{itemize}
    \item[(P1)] $H_{\alpha}$ is a submodule of $C$,
    \item[(P2)] $G_{\alpha}$ is a quotient of $H_{\alpha}$,
    \item[(P3)] $H_{\alpha}$ has minimal length with respect to (P1) and (P2).
\end{itemize}
(Observe that the projective cover of $S[i_\alpha]$ satisfies (P1) and (P2) since it embeds in $M_{1,n}$, which is a direct summand of $C$.) Define $H=\bigoplus_{2\leq \alpha \leq s} H_{\alpha}$ and $L=C\oplus H$. We have $C\in \gen(G)\subseteq \gen(H\oplus M_{1n})$. 

We show that $L$ is faithfully balanced. The indecomposable direct summands of $H$ are submodules of $C$ by construction and since $C\in \gen(G) \subseteq \gen(L)$, we see that the module $L$ satisfies (FB0) and (FB1) in Theorem~\ref{t:fban}. Consider $\cohook(i,i-1)$ for $2\leq i \leq n$. If $S[i-1]\in \cogen(L)$, then we have $\cohook(i,i-1)\cap \add(L)\neq \emptyset$. If $S[i-1]\notin \cogen(L)=\cogen(C)$, then it is not in $\cogen(M)$ or $\cogen(N)$. Without loss of generality we may assume $S[i-1]\notin \cogen(M)$, then we must have $S[i]\in \gen(M)$ since $M$ is faithfully balanced. Thus we have $S[i]\in \gen(H)\subseteq\gen(L)$ and $\cohook(i,i-1)\cap \add(L)\neq \emptyset$. This proves that $L$ also satisfies (FB2), so it is a faithfully balanced module. 

Now we show that $|L|=n$, so $L\in\fb(n)$. The virtual cohook $(i,i-1)$ of $L$ is non-empty and it has three possible shapes accordingly to the following two conditions: $S[i-1]$ is or is not a submodule of $L$ and $S[i]$ is or is not a quotient of $L$. Let us denote by $u$ the number of virtual cohooks $(i,i-1)$ for which $S[i-1]$ is a submodule of $L$ and $S[i]$ is a quotient of $L$. Applying the inclusion-exclusion principle to virtual cohooks yields $0=(n-1)-(t-1)-(s-1)+u$. If $u\neq 0$, then by the construction of $H$ we know that there exists some $i$ such that $\cohook(i,i-1)$ contains an indecomposable summand of $C$ and an indecomposable summand of $G$. But this contradicts the fact that $M,N\in \fb(n)$. Hence we have $u=0$ and $|L|=t+(s-1)=n$, as desired.

By construction $\cogen(L)=\cogen(C)$ and $\gen(L)=\gen(H\oplus M_{1n})$, which implies that $L$ is a common lower bound of $M$ and $N$. 

Assume $L'\in \fb(n)$ is also a common lower bound of $M,N$. This means that $\cogen(L') \subseteq \cogen(M)\cap \cogen(N) = \cogen(L) = \cogen(C)$ and $\gen(G)=\gen(\gen(M)\cup\gen(N))\subseteq \gen(L')$. We have to show that $\gen(L)\subseteq \gen(L')$ to prove that $L' \unlhd L$. Since $L'$ is in $\cogen(L')\subseteq \cogen(C)$, we see that every indecomposable direct summand of $L'$ is a submodule of $C$. On the other hand, $G_{\alpha} \in \gen(G)\subseteq \gen(L')$. This means that there is an indecomposable direct summand $H'_{\alpha}$ of $L'$ such that $G_{\alpha}\in \gen(H'_{\alpha})$. By minimality of $H_{\alpha}$, we see that $H_{\alpha}\in \gen(H'_{\alpha})$. It follows that $\gen(L)\subseteq \gen(L')$.
\end{proof}

\begin{exa}
The following table gives two examples of the construction above for $n=4$. 
\[
\begin{tabular}{|c|c|c|c|c|c|}
\hline
$M$ & $N$ & $C$ & $G$ & $H$ & $L$ \\
\hline\\[-1em]
$\begin{smallmatrix}
\blacksquare & \square  & \square & \square  \\[1pt]
\blacksquare  & \blacksquare & \square  \\[1pt]
\blacksquare & \square  \\[1pt]
\square 
\end{smallmatrix}$  & $\begin{smallmatrix}
\blacksquare & \blacksquare  & \blacksquare & \square  \\[1pt]
\square  & \square & \blacksquare  \\[1pt]
\square & \square  \\[1pt]
\square 
\end{smallmatrix}$  & $\begin{smallmatrix}
\blacksquare & \square  & \square & \square  \\[1pt]
\square  & \blacksquare & \square  \\[1pt]
\square & \square  \\[1pt]
\square 
\end{smallmatrix}$ & $\begin{smallmatrix}
\blacksquare & \square  & \square & \square  \\[1pt]
\blacksquare & \square & \square  \\[1pt]
\blacksquare & \square  \\[1pt]
\square 
\end{smallmatrix}$ & $\begin{smallmatrix}
\square & \square  & \square & \square  \\[1pt]
\blacksquare & \square & \square  \\[1pt]
\blacksquare & \square  \\[1pt]
\square 
\end{smallmatrix}$ & $\begin{smallmatrix}
\blacksquare & \square  & \square & \square  \\[1pt]
\blacksquare  & \blacksquare & \square  \\[1pt]
\blacksquare & \square  \\[1pt]
\square 
\end{smallmatrix}$ \\[1em]
\hline\\[-1em]
$\begin{smallmatrix}
\blacksquare & \blacksquare  & \square & \blacksquare  \\[1pt]
\square  & \square & \square  \\[1pt]
\square & \blacksquare \\[1pt]
\square 
\end{smallmatrix}$  & $\begin{smallmatrix}
\blacksquare & \blacksquare  & \blacksquare & \square  \\[1pt]
\square  & \square & \blacksquare  \\[1pt]
\square & \square  \\[1pt]
\square 
\end{smallmatrix}$  & $\begin{smallmatrix}
\blacksquare & \blacksquare  & \square & \square  \\[1pt]
\square  & \square & \square  \\[1pt]
\square & \square  \\[1pt]
\square 
\end{smallmatrix}$ & $\begin{smallmatrix}
\blacksquare & \square  & \square & \square  \\[1pt]
\square & \square & \blacksquare  \\[1pt]
\square & \blacksquare  \\[1pt]
\square 
\end{smallmatrix}$ & $\begin{smallmatrix}
\square & \square  & \square & \square  \\[1pt]
\square & \blacksquare & \square  \\[1pt]
\square & \blacksquare  \\[1pt]
\square 
\end{smallmatrix}$ & $\begin{smallmatrix}
\blacksquare & \blacksquare  & \square & \square  \\[1pt]
\square  & \blacksquare & \square  \\[1pt]
\square & \blacksquare  \\[1pt]
\square 
\end{smallmatrix}$ \\[1em]
\hline
\end{tabular}
\label{meettable}
\]
Figure~\ref{fig_fb4} shows the Hasse diagram of $(\fb (4), \unlhd )$. The underlying graph of the Hasse diagram can be visualized as a truncated octahedron with two disected hexagons, as in Figure~\ref{fig_intro}.
\end{exa}

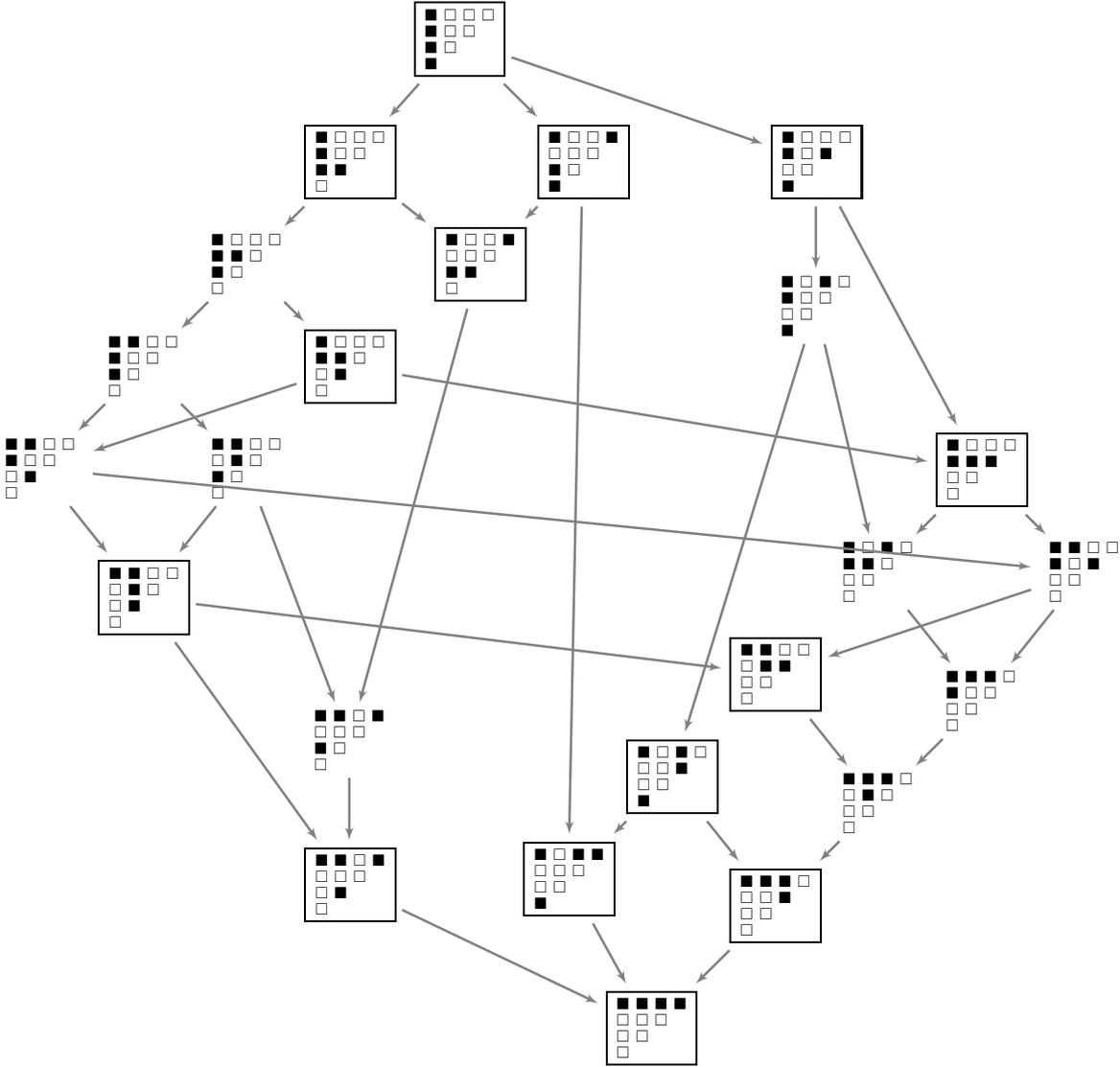
\begin{figure}[ht]
\begin{center}
\vskip 0.2cm
\scalebox{0.8}{
\tikzstyle{block} = [rectangle, fill=white!20,
    text width=4em, text centered, rounded corners, minimum height=1em, node distance=2.5cm]
\tikzstyle{line} = [draw, very thick, color=black!50, -latex']

\begin{tikzpicture}[scale=6, node distance = 1cm, auto]
    \node [block] (12) {\boxed{
\begin{smallmatrix}
\blacksquare & \square & \square & \square  \\[1pt]
\blacksquare  & \square & \square  \\[1pt]
\blacksquare & \square  \\[1pt]
\blacksquare 
\end{smallmatrix}
}};
\node [block, below right of=12, node distance=3cm] (24) {\boxed{\begin{smallmatrix}
\blacksquare & \square  & \square & \blacksquare  \\[1pt]
\square & \square & \square  \\[1pt]
\blacksquare & \square  \\[1pt]
\blacksquare 
\end{smallmatrix}}};

\node [block, left of=24, node distance=4cm] (11) {\boxed{\begin{smallmatrix}
\blacksquare & \square  & \square & \square  \\[1pt]
\blacksquare  & \square & \square  \\[1pt]
\blacksquare & \blacksquare  \\[1pt]
\square
\end{smallmatrix}}};

\node [block, right of=24, node distance=4cm] (10) {\boxed{\begin{smallmatrix}
\blacksquare & \square  & \square & \square  \\[1pt]
\blacksquare  & \square & \blacksquare  \\[1pt]
\square & \square  \\[1pt]
\blacksquare 
\end{smallmatrix}}};
 \node [block, below of=10] (9) {$\begin{smallmatrix}
\blacksquare & \square  & \blacksquare & \square  \\[1pt]
\blacksquare  & \square & \square  \\[1pt]
\square & \square  \\[1pt]
\blacksquare 
\end{smallmatrix}$};

 \node [block, below left of=11] (8) {$\begin{smallmatrix}
\blacksquare & \square  & \square & \square  \\[1pt]
\blacksquare  & \blacksquare & \square  \\[1pt]
\blacksquare & \square  \\[1pt]
\square
\end{smallmatrix}$};

\node [block, below left of=24 ] (23) {\boxed{
\begin{smallmatrix}
\blacksquare & \square  & \square & \blacksquare  \\[1pt]
\square  & \square & \square  \\[1pt]
\blacksquare & \blacksquare  \\[1pt]
\square 
\end{smallmatrix}
}};


 \node [block, below left of=8] (4) {$\begin{smallmatrix}
\blacksquare & \blacksquare  & \square & \square  \\[1pt]
\blacksquare  & \square & \square  \\[1pt]
\blacksquare & \square  \\[1pt]
\square
\end{smallmatrix}$};

 \node [block, below right of=8] (7) {\boxed{\begin{smallmatrix}
\blacksquare & \square  & \square & \square  \\[1pt]
\blacksquare  & \blacksquare & \square \\[1pt]
\square & \blacksquare  \\[1pt]
\square 
\end{smallmatrix}}};

\node [block, below left of=4] (3) {$\begin{smallmatrix}
\blacksquare & \blacksquare  & \square & \square  \\[1pt]
\blacksquare  & \square & \square  \\[1pt]
\square & \blacksquare  \\[1pt]
\square 
\end{smallmatrix}$};

\node [block, below right of=4] (16) {$\begin{smallmatrix}
\blacksquare & \blacksquare  & \square & \square  \\[1pt]
\square  & \blacksquare & \square  \\[1pt]
\blacksquare & \square  \\[1pt]
\square
\end{smallmatrix}$};

\node [block, below right of=9, node distance=4cm] (6) {\boxed{\begin{smallmatrix}
\blacksquare & \square  & \square & \square \\[1pt]
\blacksquare  & \blacksquare & \blacksquare  \\[1pt]
\square & \square  \\[1pt]
\square
\end{smallmatrix}}};

\node [block, below of=4, node distance=4cm] (15) {\boxed{\begin{smallmatrix}
\blacksquare & \blacksquare  & \square & \square  \\[1pt]
\square  & \blacksquare & \square  \\[1pt]
\square & \blacksquare  \\[1pt]
\square
\end{smallmatrix}}};

\node [block, below left of=6] (5) {$\begin{smallmatrix}
\blacksquare & \square  & \blacksquare & \square  \\[1pt]
\blacksquare  & \blacksquare & \square  \\[1pt]
\square & \square  \\[1pt]
\square 
\end{smallmatrix}$};

\node [block, below right of=6] (2) {$\begin{smallmatrix}
\blacksquare & \blacksquare  & \square & \square  \\[1pt]
\blacksquare  & \square & \blacksquare  \\[1pt]
\square & \square  \\[1pt]
\square
\end{smallmatrix}$};


\node [block, below of=6, node distance=4cm] (1) {$\begin{smallmatrix}
\blacksquare & \blacksquare  & \blacksquare & \square \\[1pt]
\blacksquare  & \square & \square  \\[1pt]
\square & \square  \\[1pt]
\square 
\end{smallmatrix}$}; 

 \node [block, below left of=5] (14) {\boxed{
\begin{smallmatrix}
\blacksquare & \blacksquare  & \square & \square  \\[1pt]
\square  & \blacksquare & \blacksquare  \\[1pt]
\square & \square  \\[1pt]
\square 
\end{smallmatrix}
}};

\node [block, below left of=14] (21) {\boxed{\begin{smallmatrix}
\blacksquare & \square & \blacksquare & \square \\[1pt]
\square  & \square & \blacksquare  \\[1pt]
\square & \square  \\[1pt]
\blacksquare
\end{smallmatrix}}};

\node [block, below left of=1] (13) {$\begin{smallmatrix}
\blacksquare & \blacksquare  & \blacksquare & \square  \\[1pt]
\square & \blacksquare & \square  \\[1pt]
\square & \square  \\[1pt]
\square
\end{smallmatrix}$};

\node [block, below of=11, node distance=10cm] (20) {$\begin{smallmatrix}
\blacksquare & \blacksquare  & \square & \blacksquare  \\[1pt]
\square  & \square & \square  \\[1pt]
\blacksquare & \square  \\[1pt]
\square
\end{smallmatrix}$};
 \node [block, below left of=21] (22) {\boxed{\begin{smallmatrix}
\blacksquare & \square  & \blacksquare & \blacksquare  \\[1pt]
\square  & \square & \square  \\[1pt]
\square & \square  \\[1pt]
\blacksquare 
\end{smallmatrix}}};

\node [block, below of=20] (19) {\boxed{\begin{smallmatrix}
\blacksquare & \blacksquare  & \square & \blacksquare  \\[1pt]
\square  & \square & \square  \\[1pt]
\square & \blacksquare  \\[1pt]
\square 
\end{smallmatrix}}};

\node [block, below left of=13] (17) {\boxed{\begin{smallmatrix}
\blacksquare & \blacksquare  & \blacksquare & \square  \\[1pt]
\square  & \square & \blacksquare  \\[1pt]
\square & \square  \\[1pt]
\square
\end{smallmatrix}}}; 
\node [block, below left of =17, node distance=3cm] (18) {\boxed{
\begin{smallmatrix}
\blacksquare & \blacksquare  & \blacksquare & \blacksquare  \\[1pt]
\square  & \square & \square  \\[1pt]
\square & \square \\[1pt]
\square
\end{smallmatrix}
}};

    \path [line] (12) -- (24);
    \path [line] (12) -- (10);
    \path [line] (12) -- (11);
    \path [line] (11) -- (8);
    \path [line] (24) -- (23);
    \path [line] (10) -- (9);
    \path [line] (11) -- (23);
    \path [line] (8) -- (4);
  \path [line] (24) -- (22);
  \path [line] (23) -- (20);
  \path [line] (9) -- (21);
  \path [line] (9) -- (5);
  \path [line] (10) -- (6);
   \path [line] (8) -- (7);
   \path [line] (4) -- (3);
   \path [line] (4) -- (16);
  \path [line] (7) -- (3);
   \path [line] (7) -- (6);
   \path [line] (3) -- (15);
  \path [line] (16) -- (15);
   \path [line] (16) -- (20);
   \path [line] (6) -- (5);
  \path [line] (6) -- (2);
   \path [line] (15) -- (19);
   \path [line] (15) -- (14);
  \path [line] (5) -- (1);
   \path [line] (14) -- (13);
   \path [line] (1) -- (13);
  \path [line] (20) -- (19);
   \path [line] (21) -- (22);
   \path [line] (21) -- (17);
  \path [line] (13) -- (17);
   \path [line] (19) -- (18);
   \path [line] (22) -- (18);
  \path [line] (17) -- (18);
  \path [line] (2) -- (1);
  \path [line] (2) -- (14);
  \path [line] (3) -- (2);
\end{tikzpicture}
}
\end{center}
\caption{Hasse diagram of $(\fb (4), \unlhd )$. The cotilting modules are in boxes.}
\label{fig_fb4}
\end{figure}

Given $M=X \oplus U \in \fb(n)$ with $X$ indecomposable, $X\neq M_{1n}$, we want to describe the possible indecomposable modules $Z$ such that $Z\oplus U \in \fb(n)$. 

Assume $X=M_{ij}$ is a \emph{splitting projective} module. If $\gen(X)\cap \add(U)\neq \{0\}$, we pick the unique $X_0=M_{it}\in \gen (X) \cap \add (U)$ of maximal length. If $\gen(X)\cap \add(U)= \{0\}$, we let $t = i-1$.
we define the \emph{internal cohook} as 
\[ 
\cohook_M(X) = \cohook(i,t)\cap (\gen(U) \cup \cogen (U)).
\] 
Assume $X=M_{ij}$ is a \emph{splitting injective} module. If $\cogen (X) \cap \add (U)\neq \{ 0\}$, we pick the unique $X_0=M_{vj}\in \cogen (X) \cap \add (U)$ of maximal length. If  $\cogen (X) \cap \add (U) = \{0\}$ we let $v = j+1$. We define the \emph{internal cohook} as 
\[ 
\cohook_M(X) = \cohook(v,j)\cap (\gen(U) \cup \cogen (U)). 
\]
Furthermore, we define a total order $\preceq$ on $\cohook_M(X)$ generated by the following covering relations: $A\preceq B$ if there is an irreducible map $A\to B$ or if $A$ and $B$ are both of minimal length in $\cohook_M(X)$ and $\Ext^1 (B,A)\neq 0$. This restricts to a total order on any subset. The module $X_0$ (or the leaf) is called the \emph{corner} of the internal cohook.

\begin{pro} \label{complements} Let $M=X\oplus U \in \fb (n)$ where $X$ is an indecomposable module, $X\neq M_{1n}$. For every indecomposable $Z$ the following are equivalent: 
\begin{itemize}
    \item[(1)] $Z \oplus U \in \fb (n)$ 
    \item[(2)] $Z\in \cohook_M(X)$. 
\end{itemize}
In particular, there is always an indecomposable injective $I$ and an indecomposable projective $P$ such that $I\oplus U, P\oplus U \in \fb(n)$. 
Assume now 
\[ 
\cohook_M(X) =\{P=Z_1\preceq Z_2 \preceq \dots \preceq Z_m =I\}
\]
then we have in $\fb (n)$
\[ Z_1 \oplus U \unlhd Z_2 \oplus U \unlhd \dots \unlhd Z_m \oplus U \] 
\end{pro}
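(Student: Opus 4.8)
The plan is to reduce to the case where $X$ is splitting projective, then to identify from Theorem~\ref{t:fban} the single faithfully balanced condition for which $X$ is the \emph{unique} witness in $M$, and finally to compare the order $\preceq$ with $\unlhd$ by a direct generation/cogeneration computation. For the reductions: since $|M|=n$ and $n$ is the least possible number of summands of a faithfully balanced $\Lambda_n$-module, $M$ is minimal faithfully balanced; as $X\neq M_{1n}$ and $M_{1n}$ is the only indecomposable projective-injective, Lemma~\ref{l:nakgc} shows $X$ is splitting projective or splitting injective, but not both. The involution $M\mapsto M^o$ preserves $\fb(n)$, reverses $\unlhd$, swaps splitting projectives with splitting injectives, and (as one checks directly on the staircase diagram) carries $\cohook_M(X)$ with its order $\preceq$ to $\cohook_{M^o}(X^o)$ with the reversed order; so I may assume $X=M_{ij}$ is splitting projective, and then $i\ge2$ since $M_{1n}$ is the longest summand in row $1$. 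Because $M_{1n}\mid U$ the module $U$ is faithful (Nakayama algebras being QF-$3$), so Lemma~\ref{Mor1} applies to $U$ and to every balanced $Z\oplus U$; and by Lemma~\ref{lem_trees} the graph $T_M$ is a rooted binary tree, so every vertex and every leaf has exactly one parent.

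The crucial structural step, which I expect to be the main obstacle, is the claim that inside $\cohook(i,t)$ the row arm $\{M_{i\ell}:\ell>t\}$ meets $\add(M)$ only in $X$ (this is just the statement that $X$ is the longest summand in its row), while the column arm $\{M_{kt}:k<i\}$ contains \emph{no} summand of $M$. For the second part I would argue by contradiction from minimality: if $X_0=M_{it}$ is a genuine summand, a summand in the column arm would give $X_0$ both a left and a right parent in $T_M$, which is then deletable by the argument in the proof of Lemma~\ref{lem_trees}; if $X_0$ is the leaf $(i,i-1)$, a summand in the column arm would make that leaf a child of two distinct vertices. Hence $\cohook(i,t)\cap\add(M)=\{X\}$, so $\cohook_M(X)\cap\add(U)=\emptyset$ and $Z\oplus U$ has exactly $n$ summands for each $Z\in\cohook_M(X)$. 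Together with the observation that every summand of $M$ in row $i$ has socle index $\le t$, this yields: a row-arm element $M_{i\ell}\in\cohook_M(X)$ lies in $\cogen(U)\setminus\gen(U)$, cogenerated by a summand $M_{k'\ell}\mid U$ with $k'<i$, and a column-arm element $M_{kt}\in\cohook_M(X)$ lies in $\gen(U)\setminus\cogen(U)$, generated by a summand $M_{k\ell'}\mid U$ with $\ell'>t$. In particular $P_i=M_{in}$ embeds in $M_{1n}$ and $I_t=M_{1t}$ is a quotient of $M_{1n}$, so $P_i,I_t\in\cohook_M(X)$, and these will be $Z_1$ and $Z_m$.

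For the equivalence: by Theorem~\ref{t:fban}, $Z\oplus U$ with $Z\notin\add(U)$ is faithfully balanced iff it satisfies (FB1) and (FB2), (FB0) being automatic. The only condition satisfied by $M$ for which $X$ is the unique witness is ``$\cohook(i,t)$ contains a summand'': $X$ has a parent above it in its column (its (FB1)-witness, since $i\ge2$ and the row arm of $\cohook(i,j)$ carries no summand), so $X$ is never the sole summand of a cohook of a position in its own column; and in row $i$ the only position whose nearest summand on the left is $X$ is $(i,t)$ itself (the leaf $(i,i-1)$ when $X_0$ is virtual), whose column arm is empty by the claim. Thus if $Z\in\cohook_M(X)$ then $Z\in\cohook(i,t)$ restores that condition, every other condition of $M$ witnessed by a summand of $U$ persists, and (FB1) holds for $Z$ since the $U$-summand exhibited above lies in $\cohook(Z)$; so $Z\oplus U\in\fb(n)$. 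Conversely, if $Z\oplus U\in\fb(n)$ then Lemma~\ref{Mor1} gives $Z\in\gen(U)\cup\cogen(U)$, and since ``$\cohook(i,t)$ contains a summand'' fails for $U$ but must hold for $Z\oplus U$, necessarily $Z\in\cohook(i,t)$, hence $Z\in\cohook_M(X)$. The ``in particular'' statement is immediate from $P_i,I_t\in\cohook_M(X)$.

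For the order: the irreducible maps inside $\cohook(i,t)$ are the surjections $M_{i\ell}\to M_{i,\ell-1}$ along the row arm and the inclusions $M_{kt}\to M_{k-1,t}$ along the column arm, and $\Ext^1(M_{i-1,t},M_{i,t+1})\neq0$ for the unique pair of shortest modules; so $\preceq$ is the total order $P_i=M_{in}\preceq\dots\preceq M_{i,t+1}\preceq M_{i-1,t}\preceq\dots\preceq M_{1t}=I_t$, restricting to a chain on $\cohook_M(X)$ with least element $P_i$ and greatest element $I_t$. By uniseriality, for indecomposable $V$ one has $V\in\gen(A\oplus B)$ iff $V\in\gen(A)$ or $V\in\gen(B)$, and dually, so $\gen(Z\oplus U)=\gen(Z)\cup\gen(U)$ and $\cogen(Z\oplus U)=\cogen(Z)\cup\cogen(U)$. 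Taking $Z\preceq Z'$ consecutive in $\cohook_M(X)$, I would verify $Z\oplus U\unlhd Z'\oplus U$ in three short cases: if both lie in the row arm, say $Z=M_{i\ell}$, $Z'=M_{i\ell'}$ with $\ell>\ell'$, then $\gen(Z')\subseteq\gen(Z)$ and $\cogen(Z)\subseteq\cogen(M_{k'\ell})\subseteq\cogen(U)$, giving both required inclusions; the case of two column-arm elements is dual; and at the single ``jump'' $Z=M_{i\ell_0}$, $Z'=M_{k_0t}$ one has $\cogen(Z)\subseteq\cogen(U)$ and $\gen(Z')\subseteq\gen(U)$, which again forces $Z\oplus U\unlhd Z'\oplus U$. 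Transitivity then gives $Z_1\oplus U\unlhd\dots\unlhd Z_m\oplus U$.
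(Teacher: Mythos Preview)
Your proof is correct and, since the paper's own proof is a single sentence (``The first part follows from Theorem~\ref{t:fban}. The rest is clear from the definitions.''), you are essentially supplying the argument the paper leaves implicit; the route through the cohook characterisation is exactly the intended one. One small simplification: your appeal to Lemma~\ref{Mor1} in the direction (1)$\Rightarrow$(2) is unnecessary, because (FB1) applied to $Z$ itself in $Z\oplus U$ already forces a summand of $U$ into $\cohook(Z)$, hence $Z\in\gen(U)\cup\cogen(U)$ directly.
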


\begin{proof}
The first part follows from Theorem \ref{t:fban}. The rest is clear from the definitions. 
\end{proof}

\begin{lem} 
Let $N,M\in \fb (n)$ and $N \unlhd M$. Then there is an $N'\in \fb(n)$ or an $M' \in \fb(n)$ such that 
\begin{itemize}
    \item[(a)] $N \unlhd N' \unlhd M$ and $\lvert \add(N) \cap \add (N')\rvert = n-1$, or 
    \item[(b)] $N \unlhd M' \unlhd M$ and $\lvert \add(M) \cap \add (M')\rvert = n-1$
\end{itemize}
is fulfilled. 
\end{lem}

\begin{proof}
The proof is purely combinatorial and requires a careful checking of the different possibilities. We write $M = Z \oplus V$ and $N = Z \oplus W$ where $\add(V)\cap \add(N) = \{0\}$ and $\add(W)\cap \add(M) = \{0\}$.

First assume that there is an indecomposable summand $X$ of $V$ that is \emph{splitting projective} in $\add(M)$. Since $X\in \gen(M) \subseteq \gen(N)$ there is an indecomposable summand $Y$ of $N$ such that $X\in \gen(Y)$. We choose it with minimal length with respect to this property and we let $U$ such that $M=U\oplus X$. Since $X$ is splitting projective, we see that $Y\in \add(W)$. Moreover, $Y\in \cogen(N)\subseteq \cogen(M)$. So $Y\in \cogen(U)$ and we see that
\begin{itemize}
\item $Y \in \cohook_M(X)$ and $Y\preceq X$. Proposition \ref{complements} tells us that $M' = U\oplus Y\in \fb(n)$.
\item By Proposition \ref{complements} we have $M'\unlhd M$.
\item $\cogen(M)=\cogen(M')$ because $X$ and $Y$ are in $\cogen(U)$.
\item $U\in \gen(M)\subset \gen(N)$ and $Y\in \gen(N)$.
\end{itemize}
The third points implies that $\cogen(N) \subseteq \cogen(M) = \cogen(M')$. The fourth point implies that $\gen(M')\subseteq \gen(N)$. In other words, we have $N\unlhd M'\unlhd M$.

If there is an indecomposable summand $Y$ of $W$ that is \emph{splitting injective} in $\add(N)$ we can construct $N'$ such that $N\unlhd N'\unlhd M$ by dualizing the previous argument. 

Now we assume that all the summands of $V$ are splitting injective in $\add(M)$ and all the summands of $W$ are splitting projective in $\add(N)$. We choose the indecomposable $X\in \add(V)$ maximal with respect to the index of its socle and then with respect to its top. As before, we write $M =U\oplus X$. Combinatorially, its column is the first (reading from left to right) that contains an element of $\add(W)$ and the module is the lowest element of $\add(W)$ in its column. In order to help the comprehension of the proof we draw in Figure \ref{fig_cohook} the shape of the Young diagram containing $M$ and $N$ at the neighborhood of $X$.
\begin{figure}[ht]
\centering 
\begin{tikzpicture}[scale =0.85]
\node (1) at (0,0) {$c$};
\node (2) at (0,2.5) {$\bullet$};
\node (3) at (0.75,2.5) {$X$};
\node (4) at (-1.5,2.5) {$\bullet$};
\node (5) at (-2,2.5) {$z$};
\draw (0,2.5)--(-1.5,2.5)
	  (0.5,0.5)--(-4,0.5)
	  (0.5,-0.5)--(-4,-0.5)
	  (-0.5,4)--(-0.5,-0.5)
	  (0.5,-0.5)--(0.5,4)      
      ;
\fill [pattern=horizontal lines] 
      (-0.5,2.75)--(0.5,2.75)--(0.5,4)--(-0.5,4)--cycle;
\fill [pattern=vertical lines] 
      (-0.5,0.5)--(-4,0.5)--(-4,-0.5)--(-0.5,-0.5)--cycle;
\end{tikzpicture}
\qquad \qquad
\begin{tikzpicture}[scale =0.85]
\node (1) at (0,0) {$c$};
\node (4) at (-1.5,2.5) {$\bullet$};
\node (5) at (-2,2.5) {$z$};
\node (6) at (-3,0) {$\bullet$};
\node (7) at (-2.5,0) {$Y$};
\node (8) at (-3,2) {$\bullet$};
\node (9) at (-2.5,2) {$w$};
\draw (0.5,0.5)--(-4,0.5)
	  (0.5,-0.5)--(-4,-0.5)
	  (-0.5,4)--(-0.5,-0.5)
	  (0.5,-0.5)--(0.5,4)    
	  (-3,0)--(-3,2)  
      ;
\fill [pattern=horizontal lines] 
      (-0.5,0.5)--(0.5,0.5)--(0.5,4)--(-0.5,4)--cycle;
\fill [pattern=vertical lines] 
      (-3.2,0.5)--(-4,0.5)--(-4,-0.5)--(-3.2,-0.5)--cycle;
\end{tikzpicture}
\caption{On the left the module $M$ on the right the module $N$.}\label{fig_cohook}
\end{figure}
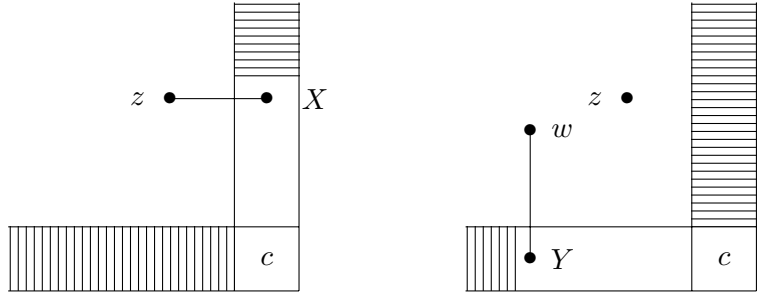
Let us start to explain the figure for the module $M$. Since $X$ is a splitting injective, there is $z\in \add(M)$ such that $X\in \gen(z)$ and there is no summands of $M$ in its column above it. We represent this by dashed horizontal lines. We denote by $c$ the corner of $\cohook_M(X)$. The hypothesis on $X$ implies that $z\in \add(Z)$ and that $c\in \add(Z)$ or is a leaf. Since $M$ is a faithfully balanced modules with exactly $n$ summands, there is no module in the same  row of $c$ on its left. We represent this by using dashed vertical lines. 

Let us move to the module $N$. By assumption, we have $\cogen(N)\subseteq \cogen(M)$. So, there is no indecomposable summand $I$ of $N$ such that $X\in \cogen(I)$. Combinatorial this means that the column of $c$ in $N$ is empty above (and equal to)  $X$. If there is an indecomposable summand $I$ of $N$ such that $c \subset I \subset X$, then by definition of $X$, this module is not a direct summand of $M$, so it is a splitting projective in $N$ module. But if $I \in \add (N)$ with $c\subsetneq I \subsetneq X$ is of maximal length it has to be a splitting injective. 
Since this is not possible, there is no module above $c$ in its column. Since $N$ is a faithfully balanced module, there is $Y\in \add(N)$ such that $c\in \gen(Y)$. In $Z$ there are no modules on the left of $c$, so $Y \in \add(W)$ and by assumption it is a splitting projective module in $\add(N)$. So there is $w\in \add(N)$ such that $Y$ is a proper submodule of $w$. Since $\cogen(N)\subseteq \cogen(M)$, we have $Y\in \cogen(U)$. In conclusion, we have:

\begin{itemize}
\item $Y\in \cohook_M(X)$, so $M' = U\oplus Y \in \fb(n)$ and $M'\unlhd M$.  
\item $U \in \gen(M) \subseteq \gen(N)$ and $Y\in \add(N)$, so $\gen(M')\subseteq \gen(N)$.
\item By construction if $I$ is an indecomposable module with $\soc(I)\neq \soc(c)$, then $I \in \cogen(M)$ if and only if $I \in \cogen(M')$. It follows that $\cogen(N)\subseteq \cogen(M')$.
\end{itemize}
It follows that $N\unlhd M'\unlhd M$. 
\end{proof}
\begin{cor} \label{c:covering-relation} 
If $N, M \in \fb (n)$ are neighbours in the Hasse diagram of $\unlhd $, then $\lvert \add (N) \cap \add (M)\rvert = n-1$.  
\end{cor}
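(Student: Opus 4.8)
The plan is to deduce this immediately from the Lemma that precedes it. Suppose $N, M \in \fb(n)$ are neighbours in the Hasse diagram of $\unlhd$; without loss of generality we may assume $N \unlhd M$ with $N \neq M$ and no element strictly between them. Before doing anything I would record the elementary observation that if $N$ and $M$ are basic modules each with exactly $n$ indecomposable summands and $N \not\cong M$, then $\lvert \add(N) \cap \add(M) \rvert \le n-1$, since otherwise the two modules would have the same indecomposable summands and hence be isomorphic. So the only content is to establish the reverse inequality.

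For that I would apply the preceding Lemma to the relation $N \unlhd M$. It produces either (a) a module $N' \in \fb(n)$ with $N \unlhd N' \unlhd M$ and $\lvert \add(N) \cap \add(N') \rvert = n-1$, or (b) a module $M' \in \fb(n)$ with $N \unlhd M' \unlhd M$ and $\lvert \add(M) \cap \add(M') \rvert = n-1$. In case (a), since $N$ and $M$ are neighbours, the three-term chain $N \unlhd N' \unlhd M$ must collapse, so $N'$ equals $N$ or $M$; the equality $\lvert \add(N) \cap \add(N') \rvert = n-1 \neq n$ rules out $N' = N$, forcing $N' = M$ and hence $\lvert \add(N) \cap \add(M) \rvert = n-1$. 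Case (b) is symmetric: $M'$ equals $N$ or $M$, and $\lvert \add(M) \cap \add(M') \rvert = n-1 \neq n$ excludes $M' = M$, so $M' = N$, again giving $\lvert \add(N) \cap \add(M) \rvert = n-1$. Combining this with the trivial upper bound yields the claim.

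There is essentially no hard step here, since the entire combinatorial difficulty — the careful case analysis over the possible shapes of the Young diagrams near the exchanged summand, using Proposition~\ref{complements} and the duality $M \mapsto M^o$ — has already been absorbed into the preceding Lemma. The only point that needs a word of care in the write-up is the use of the neighbour hypothesis to reduce the three-term chain to a two-term one, together with the elementary remark that distinct basic modules with $n$ summands share at most $n-1$ indecomposable summands.
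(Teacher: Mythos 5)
Your argument is correct and is exactly the intended deduction: the paper states the corollary as an immediate consequence of the preceding lemma, whose intermediate module $N'$ (or $M'$) must coincide with $M$ (or $N$) under the covering hypothesis, since equality with the other endpoint is excluded by the intersection having size $n-1$ rather than $n$. Your explicit remark that distinct basic modules with $n$ summands share at most $n-1$ summands is a harmless (and correct) addition.
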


Recall, whenever $N \unlhd M$ is a covering relation, we draw an arrow $N \to M$ in the Hasse diagram. 
\begin{cor}\label{c:cover_des}
Let $M\in \fb (n)$.
\begin{enumerate}
\item Let $X\in \add(M)$ be an indecomposable module. Assume that $X = Z_i$ in its internal cohook and we write $M=U\oplus X$. Then there is a cover relation $U\oplus X\unlhd U\oplus Y$ in $(\fb(n),\unlhd)$ if and only 
\begin{itemize}
\item $X$ is not injective.
\item $Z_{i+1}$, the successor of $X$ in its internal cohook, is not in $\add(M)$.
\item $Y$ = $Z_{i+1}$. 
\end{itemize}
\item In the Hasse diagram of $\unlhd$ we have:\\
The number of incoming arrows to (resp. outgoing from) $M$ is smaller or equal to the number of non-projective (resp. non-injective) indecomposable summands of $M$. 
\end{enumerate}

\end{cor}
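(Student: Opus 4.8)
The plan is to deduce part~(1) from Proposition~\ref{complements} and Corollary~\ref{c:covering-relation}, and then to obtain part~(2) by a counting argument combined with the order-reversing duality $M\mapsto M^{o}$ on $(\fb(n),\unlhd)$.

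First I would set up bookkeeping. Write $M=U\oplus X$ and, as in Proposition~\ref{complements}, $\cohook_M(X)=\{P=Z_1\preceq\dots\preceq Z_m=I\}$. Each $Z_a\oplus U$ lies in $\fb(n)$, hence is basic, so $Z_a\notin\add(U)$; as the $Z_a$ are also pairwise distinct, the chain $Z_1\oplus U\unlhd\dots\unlhd Z_m\oplus U$ of Proposition~\ref{complements} is strictly increasing and totally ordered, and for every $a\neq i$ we have $Z_a\notin\add(M)$, so $Z_{i+1}\notin\add(M)$ whenever it exists. Moreover, by the shape of an internal cohook its only injective element is $Z_m=I$, so ``$X=Z_i$ is not injective'' is equivalent to ``$i<m$'', equivalently ``$Z_{i+1}$ exists''. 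These facts immediately give the necessity in~(1): if $U\oplus X\unlhd U\oplus Y$ is a cover then $U\oplus Y\in\fb(n)$, so $Y\in\cohook_M(X)$ by Proposition~\ref{complements}, say $Y=Z_j$ with $j\neq i$; by totality of the chain $i<j$, and if $j>i+1$ then $U\oplus Z_{i+1}$ would lie strictly between $U\oplus X$ and $U\oplus Y$, contradicting that the relation is a cover; hence $Y=Z_{i+1}$, $X$ is not injective, and $Z_{i+1}\notin\add(M)$.

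The sufficiency in~(1) is the step I expect to be the main obstacle. One knows already that $U\oplus X=U\oplus Z_i\unlhd U\oplus Z_{i+1}$ and that these two modules are distinct, so it remains to exclude any module strictly in between. Suppose $U\oplus X\unlhd N$ is a cover with $N$ strictly below $U\oplus Z_{i+1}$. By Corollary~\ref{c:covering-relation}, $N$ and $M$ agree in all but one indecomposable summand. If the summand of $M$ missing from $N$ is $X$, then $N=U\oplus W$ and the necessity just proved forces $W=Z_{i+1}$, contradicting that $N$ is strictly below $U\oplus Z_{i+1}$. Otherwise $N=U'\oplus X\oplus W$ with $U=U_1\oplus U'$ and $W$ a new indecomposable summand; excluding this case requires a direct combinatorial check, running parallel to the proof of the Lemma preceding Corollary~\ref{c:covering-relation}. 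For it I would use that $\cogen(A\oplus B)=\cogen(A)\cup\cogen(B)$ and $\gen(A\oplus B)=\gen(A)\cup\gen(B)$ on indecomposables over $\Lambda_n$, split into the cases $X$ splitting projective and $X$ splitting injective in $M$, describe $W$ through Proposition~\ref{complements} applied to the summand $U_1$, and derive a contradiction with one of the inclusions $\cogen(N)\subseteq\cogen(U\oplus Z_{i+1})$ and $\gen(U\oplus Z_{i+1})\subseteq\gen(N)$ forced by $N\unlhd U\oplus Z_{i+1}$. (This argument in fact shows that every non-injective indecomposable summand of $M$ yields a cover, so the inequalities of part~(2) are equalities; but only the stated bounds are needed.)

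For part~(2): given an outgoing arrow $M\unlhd M'$, Corollary~\ref{c:covering-relation} singles out the unique indecomposable summand $X$ of $M$ not appearing in $M'$, and then $M=U\oplus X$, $M'=U\oplus Y$ with $Y$ the unique indecomposable summand of $M'$ not in $M$; by part~(1), $X$ is non-injective and $Y=Z_{i+1}$ depends only on $X$. Hence $M'\mapsto X$ injects the outgoing arrows of $M$ into the non-injective indecomposable summands of $M$, which is the outgoing bound. For incoming arrows I would use the duality $M\mapsto M^{o}$ on $\fb(n)$ with $N\unlhd M\Leftrightarrow M^{o}\unlhd N^{o}$: as an order-reversing involution of a finite poset it carries covering relations to covering relations with the direction reversed, so the incoming arrows of $M$ correspond bijectively to the outgoing arrows of $M^{o}$; since $(-)^{o}$ interchanges indecomposable projectives and indecomposable injectives and permutes indecomposable summands, the non-injective indecomposable summands of $M^{o}$ are the images of the non-projective indecomposable summands of $M$, and the incoming bound follows from the outgoing one.
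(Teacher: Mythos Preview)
Your necessity argument for part~(1) and your derivation of part~(2) are essentially the paper's proof, carried out more carefully. The paper's proof is very terse: it assumes a cover $M\unlhd N$, invokes Corollary~\ref{c:covering-relation} to write $N=U\oplus Y$, invokes Proposition~\ref{complements} to get $Y=Z_j$ with $j>i$, and then argues that if $Z_{i+1}\in\add(U)$ one obtains a factorization $M\unlhd V\oplus X\oplus Z_j\unlhd V\oplus Z_{i+1}\oplus Z_j=N$, contradicting the cover assumption. Your observation that every $Z_a\oplus U\in\fb(n)$ is basic, hence $Z_a\notin\add(U)$ automatically, is correct and in fact renders the paper's last factorization step vacuous: the case $Z_{i+1}\in\add(U)$ never occurs, so once you have the strict chain $U\oplus Z_1\lhd\dots\lhd U\oplus Z_m$, the cover forces $j=i{+}1$ immediately. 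Your part~(2) via the injection (outgoing arrows $\mapsto$ non-injective summand) and the duality $M\mapsto M^o$ is a clean way to extract what the paper leaves implicit.

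Where you go beyond the paper is in addressing sufficiency in~(1). The paper's written proof only establishes the ``only if'' direction, which is also all that is used for the inequalities in~(2); the ``if'' direction is not argued. Your sketch for sufficiency is along reasonable lines: a putative intermediate $N$ must, by Corollary~\ref{c:covering-relation}, lie under some cover $M\unlhd N'$ with $N'\unlhd N$, and if the swapped summand is $X$ you immediately get $N'=U\oplus Z_{i+1}$, a contradiction. The remaining case $N'=U'\oplus X\oplus W$ (a different summand $U_1$ of $M$ is exchanged) is where a genuine combinatorial check of the internal-cohook geometry is needed, and you rightly flag this as the main obstacle rather than claiming it is finished. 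Since the paper itself does not carry out this step, your write-up is already at least as complete as the paper's; if you want the full ``iff'', that case analysis must be supplied.
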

\begin{proof}
If $M\unlhd N$ is a cover relation, then the two modules differ by exactly one indecomposable summand. Say that $M = U\oplus X$ and $M = U\oplus Y$. By Proposition \ref{complements}, we see that $Y$ must be in the internal cohook of $X$. Say that $X = Z_i$ in this cohook. Then $Y = Z_{j}$ for $i<j$. Because it is a cover relation, $j$ is the smallest integer such that $Z_j\notin \add(U)$. If $Z_{i+1} \in \add(U)$, then we write $M = V \oplus X \oplus Z_{i+1}.$ Then $M\unlhd N$ is not a cover relation because it factorizes as $M\unlhd V \oplus X \oplus Z_{j} \unlhd V\oplus Z_{i+1} \oplus Z_j = N$.
\end{proof}

\begin{proof}[Proof of Theorem \ref{thm-lattice}]
The first point is proved in Proposition \ref{p:lattice}. The third point is proved in Corollary \ref{c:cover_des}. The Tamari lattice is isomorphic to the poset of basic cotilting modules for $\Lambda_n$ with partial order given by $T_1\leq T_2$ if and only if $\Ext^1(T_1,T_2)=0$. Looking at Remark \ref{r:tilting} we see that it is a subposet of $(\fb(n),\unlhd)$. Recall that in the poset of cotilting modules the meet of two cotilting modules $T_1$ and $T_2$ is a cotilting module $T_3$ such that $\cogen(T_3)=\cogen(T_1)\cap \cogen(T_2)$ (See for example Section $11$ of \cite{thomas_tamari} for the dual statement on tilting modules). By construction, the meet $L$ of two faithfully balanced modules $M$ and $N$ is such that $\cogen(L)=\cogen(M)\cap \cogen(N)$, so in order to prove the second point it is enough to show that $L$ is a cotilting module when $M$ and $N$ are cotilting modules. Since $L$ has exactly $n$ non-isomorphic summands, it is enough to show that it has no self-extension. 

Let $M$ and $N$ be two cotilting modules and denote by $L$ their meet in $(\fb(n),\unlhd)$. Recall from the proof of Proposition \ref{p:lattice} that $L=C\oplus H$ where every indecomposable summand of $C$ is a direct summand of $M$ or $N$ and every indecomposable summand of $H$ is a submodule of an indecomposable summand of $C$. 

Assume that there are two indecomposable summands $M_{ac}$ and $M_{bd}$ of $L$ such that $\Ext^1(M_{ac},M_{bd})\neq 0$. By \cite[Lemma 8.1]{hille_vol_tilting} this is equivalent to $a<b\leq c+1\leq d$. If both $M_{ac}$ and $M_{bd}$ are summands of $C$, then we may assume that $M_{ac}\in \add(M)\setminus \add(N)$ and $M_{bd}\in \add(N)\setminus \add(M)$ since $M$ and $N$ have no self-extension. Now we have $M_{ac}\in \add(C)\subseteq \cogen(N)$. Thus there exists a summand $M_{a'c}$ of $N$ with $a'<a$ and so $\Ext^1(M_{a'c}, M_{bd})\neq 0$ which contradicts the fact that $N$ is cotilting. If $M_{ac}$ is a summand of $H$, then by construction it is a submodule of a summand $M_{ec}$ (so $e<a$) of $C$ and $\Ext^1(M_{ec}, M_{bd})\neq 0$. So we can assume that $M_{ac}$ is a summand of $C$ and $M_{bd}$ is a summand of $H$. By construction $M_{bd}$ surjects onto an indecomposable summand $M_{bg}$ of $G$, so a summand of $X=M$ or $N$. The minimality condition (P3) imposed on the summands of $H$ implies that $c<g$. This implies that $\Ext^1(M_{ac}, M_{bg})\neq 0$ which contradicts the fact that $X$ is a cotilting module. This proves that $L$ has no self-extension.
\end{proof}
\bibliographystyle{KLM13}
\bibliography{biblio}
\end{document}